\documentclass{amsart}
\setlength{\textwidth}{\paperwidth}
\addtolength{\textwidth}{-2in}
\calclayout
\usepackage{amssymb,amsmath,amsfonts,epsfig,latexsym,tikz, amsrefs}
\usepackage{tikz-cd}
\usepackage{hyperref}
\usepackage{enumerate}
\usepackage{mathtools}
\usepackage{verbatim}
\usepackage{cleveref}
\usepackage[shortlabels]{enumitem}
\usepackage{subcaption}
\usepackage{ mathrsfs }

\usetikzlibrary{positioning}
\usetikzlibrary{matrix}
\usetikzlibrary{decorations}
\usetikzlibrary{decorations.pathreplacing, decorations.pathmorphing, angles,quotes}

\newtheorem{theorem}{Theorem}[section]
\newtheorem{definition}[theorem]{Definition}
\newtheorem{proposition}[theorem]{Proposition}
\newtheorem{lemma}[theorem]{Lemma}
\newtheorem{corollary}[theorem]{Corollary}

\newtheorem{question}[theorem]{Question}

\def\Q{\mathbb{Q}} 
\def\R{\mathbb{R}}

\def\Z{\mathbb{Z}}
\def\K{\mathcal{K}}

\def\1{\mathbf{1}}

\def\O{\mathcal{O}}
\def\cS{\mathcal{S}}

\def\<{\langle}
\def\>{\rangle}

\DeclareMathOperator{\id}{id}

\DeclareMathOperator{\Int}{Int}

\DeclareMathOperator{\rank}{rank}
\DeclareMathOperator{\Pyr}{Pyr}
\DeclareMathOperator{\Bipyr}{Bipyr}
\DeclareMathOperator{\Prism}{Prism}

\DeclareMathOperator{\JoinIdealLW}{JoinAdm}
\DeclareMathOperator{\JoinIdealCW}{CWJoinAdm}
\DeclareMathOperator{\Joincat}{\textbf{JoinAdm}}
\DeclareMathOperator{\JoinCWcat}{\textbf{CWJoinAdm}}
\DeclareMathOperator{\SFS}{Subdiv}
\DeclareMathOperator{\SFSCW}{CWSubdiv}
\DeclareMathOperator{\SFSCWcat}{\textbf{CWSubdiv}}
\DeclareMathOperator{\SFScat}{\textbf{Subdiv}}

\DeclareMathOperator{\EulPos}{\textbf{EulPos}}
\DeclareMathOperator{\CWPos}{\textbf{CWPos}}

\DeclareMathOperator{\Arr}{Arr}
\DeclareMathOperator{\Cyl}{Cyl}
\DeclareMathOperator{\CYL}{CYL}
\DeclareMathOperator{\MAP}{MAP}
\DeclareMathOperator{\CYLcat}{\textbf{CYL}}
\DeclareMathOperator{\MAPcat}{\textbf{MAP}}

\DeclareMathOperator{\Obj}{Obj}
\DeclareMathOperator{\Mor}{Mor}

\DeclareMathOperator{\face}{\mathcal{F}}
\DeclareMathOperator{\UH}{UH}

\newcommand{\Conv}[1]{\operatorname{Conv}\left\{{#1}\right\}}

\theoremstyle{definition}
\newtheorem{remark}[theorem]{Remark}
\newtheorem{example}[theorem]{Example}

\begin{document}
	
	\title[]{Subdivisions of lower Eulerian posets}
	%{Subdivisions of Eulerian posets,  $cd$-indices, and Kazhdan-Lusztig-Stanley theory}
	%{Poset subdivisions, $cd$-indices, and Kazhdan-Lusztig-Stanley theory}
	%{Subdivisions of lower Eulerian posets and Kazhdan-Lusztig-Stanley theory}          
	\date{\today}
	\author{Alan Stapledon}

\address{Sydney Mathematics Research Institute, L4.42, Quadrangle A14, University of Sydney, NSW 2006, Australia}
	\email{astapldn@gmail.com}
	\begin{abstract}
		There is a  natural notion  of a  subdivision of a lower Eulerian poset called a strong formal subdivision, which abstracts the notion of a polyhedral subdivision of a polytope, or a proper, surjective morphism of fans. We show that there is a canonical  
		bijection between strong formal subdivisions and triples consisting of a lower Eulerian poset, a corresponding rank function, and a non-minimal element such that the join with any other element exists. 
		The bijection uses the non-Hausdorff mapping cylinder construction introduced by Barmak and Minian. 
		A corresponding bijection for $CW$-posets is given, as well as  an application to computing the $cd$-index of an Eulerian poset. 
		A companion paper explores applications to Kazhdan-Lusztig-Stanley theory.

%		 every strong formal subdivision has a simple form: it can be realized as taking the join with a fixed element in a corresponding lower Eulerian poset, called its non-Hausdorff mapping cylinder. Conversely, given a lower Eulerian poset and a non-minimal element that admits a join with every other element, one can construct a strong formal subdivision. An application to computing the $cd$-index of an Eulerian poset is given.
	\end{abstract}
	
	\maketitle
	
	\vspace{-20 pt}

	\section{Introduction}
	
	A finite ranked poset is \emph{lower Eulerian} if contains a unique minimal element, and  every interval between distinct elements contains as many elements of even rank as odd rank.
	 A prototypical example of a lower Eulerian poset is the face lattice  $\face(P)$
	of a polytope $P$, 
	or, more generally, the face poset $\face(\cS)$ of a polyhedral subdivision $\cS$ of a polytope, i.e., the poset of all faces of $\cS$, including the empty face, under inclusion. Another example  is the face poset $\face(\Sigma)$ of a fan $\Sigma$ in a vector space $V$, i.e., the poset of all cones in $\Sigma$ under inclusion. 
	A \emph{formal subdivision} between lower Eulerian posets was 
	introduced by Stanley in \cite[Definition~7.4]{Stanley92} as a way to abstract the notion of subdivisions of polytopes, or refinements of fans. 
		The notion of a \emph{strong formal subdivision} %$\sigma: X \to Y$
	of lower Eulerian posets was introduced in \cite{KatzStapledon16}*{Definition~3.17} and further developed in \cite{DKTPosetSubdivisions}, replacing the notion of a formal subdivision, and also generalizing the notion of subdivisions of Gorenstein*-posets introduced in   \cite{EKDecompositionTheoremCDIndex}*{Definition~2.6}.  See Definition~\ref{def:sfs} for the definition.
	Let $\SFS$  be the set of strong formal subdivisions $\sigma: X \to Y$ between  lower Eulerian posets $X$ and $Y$  with rank functions $\rho_X$ and $\rho_Y$ respectively (see Definition~\ref{def:bijections}). 	
	For example, given a polyhedral subdivision $\cS$ of a polytope $P$, the corresponding strong formal subdivision is the 
	%A prototypical example of a strong formal subdivision is the 
	function
	$\sigma: \face(\cS) \to \face(P)$, where for every face $F$ in $\cS$,  $\sigma(F)$ is the smallest face of $P$ containing $F$. As another example, consider a linear map $\phi: V' \to V$ inducing a proper, surjective morphism between fans $\Sigma'$ and $\Sigma$ in real vector spaces $V'$ and $V$ respectively, e.g. $\phi$ is the identity and $\Sigma'$ is a refinement of $\Sigma$. Then the corresponding strong formal subdivision is the 
	function
	$\sigma: \face(\Sigma') \to \face(\Sigma)$, where for every cone $C'$ in $\Sigma'$,  $\sigma(C')$ is the smallest cone of $\Sigma$ containing $\phi(C')$. By using the standard technique of %It is well-known that by 
	placing a polytope at height $1$ in a higher dimensional space and taking cones generated by faces of either $\cS$ or $P$, we may view the first example as a special case of the second example (see Section~\ref{ss:fans}).

	Let $\Gamma$ be a lower Eulerian poset 
		with %rank function $\rho_\Gamma: \Gamma \to \Z$ and  
		unique minimial element $\hat{0}_\Gamma$.  
		%Let $q$ be an 
		% element of $\Gamma$ that is not the minimal element. 
		We say that an element $q$ of $\Gamma$ is \emph{join-admissible} if for any element $z$ in $\Gamma$, 
		the join (or  least upper bound) $z \vee q$ exists (see Definition~\ref{def:joinadmissible}). 
		%, i.e., 
		%the set of upper bounds $\{ z' \in \Gamma : z,q \le z' \}$ is nonempty and has a unique minimal element. 
		For example, $\hat{0}_\Gamma$ is join-admissible. 
		%If $\Gamma$ is Eulerian, then its unique maximal element is join-admissible. 
	Let $\JoinIdealLW^\circ$ be the set of triples $(\Gamma, \rho_\Gamma, q)$,
	where $\Gamma$ is a lower Eulerian poset  with rank function $\rho_\Gamma$  and %$q \neq \hat{0}_\Gamma$ 
	$q$ is a join-admissible element of $\Gamma$ with $q \neq \hat{0}_\Gamma$ (see Definition~\ref{def:bijections}). The main result of this paper is the following.

\begin{theorem}(see Theorem~\ref{thm:mainsimplified})\label{thm:intromainsimplified}
	There is a canonical bijection between $\SFS$ and $\JoinIdealLW^\circ$. 
\end{theorem}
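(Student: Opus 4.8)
The plan is to exhibit explicit mutually inverse maps $\Phi\colon \SFS \to \JoinIdealLW^\circ$ and $\Psi\colon \JoinIdealLW^\circ \to \SFS$, both built from the non-Hausdorff mapping cylinder. Write $\MC(\sigma)$ for the non-Hausdorff mapping cylinder of a strong formal subdivision $\sigma\colon X \to Y$: its underlying set is the disjoint union $X \sqcup Y$, the induced orders on $X$ and on $Y$ are retained, and for $x \in X$, $y \in Y$ we declare $x \le y$ exactly when $\sigma(x) \le_Y y$. Thus $Y$ sits inside $\MC(\sigma)$ as the principal filter above $\hat{0}_Y$ and $X$ as the complementary order ideal.

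\textbf{Forward map.} Given $\sigma\colon X \to Y$ I would set $\Gamma = \MC(\sigma)$ and $q = \hat{0}_Y \in Y \subseteq \Gamma$. Since $\hat{0}_X$ is the global minimum of $\Gamma$ and $\hat{0}_X < \hat{0}_Y$, we have $q \neq \hat{0}_\Gamma$. The crucial computation is that join with $q$ recovers $\sigma$: for $x \in X$ every common upper bound of $x$ and $q = \hat{0}_Y$ lies in $Y$ and dominates $\sigma(x)$, so $x \vee q = \sigma(x)$, while for $y \in Y$ one has $y \ge q$ and $y \vee q = y$. In particular $q$ is join-admissible. The rank function $\rho_\Gamma$ is the common extension of $\rho_X$ on the ideal $X$ and of an appropriate shift of $\rho_Y$ on the filter $Y$; its existence as a genuine rank function is exactly the rank-compatibility packaged into Definition~\ref{def:sfs}, and it forces $\rho_\Gamma(q)$ to be the rank offset of the subdivision. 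Set $\Phi(\sigma) = (\Gamma, \rho_\Gamma, q)$.

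\textbf{Reverse map.} Given $(\Gamma, \rho_\Gamma, q) \in \JoinIdealLW^\circ$, I would recover $Y = \Gamma_{\ge q}$, the principal filter, which is lower Eulerian with minimum $q$ because every interval of $\Gamma$ between distinct elements is Eulerian; $X = \Gamma \setminus \Gamma_{\ge q}$, an order ideal, hence lower Eulerian with minimum $\hat{0}_\Gamma$ using $q \neq \hat{0}_\Gamma$; and $\sigma\colon X \to Y$, $\sigma(x) = x \vee q$, which is well defined precisely because $q$ is join-admissible. Set $\Psi(\Gamma, \rho_\Gamma, q) = \sigma$. By construction $\MC(\sigma)$ reassembles $\Gamma$ with its order and rank data, so $\Psi \circ \Phi = \id$ and $\Phi \circ \Psi = \id$ become formal once both maps are known to be well defined.

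\textbf{Main obstacle.} The substance lies in two well-definedness checks. First, for $\Phi$, that $\MC(\sigma)$ is lower Eulerian: the only intervals not already controlled by $X$ and $Y$ are the mixed intervals $[x,y]$ with $x \in X$, $y \in Y$, and I would show these are Eulerian by identifying the open interval $(x,y)$ with the data that the axioms of a strong formal subdivision impose on the fibers $\{x' : \sigma(x') \le y\}$, invoking the Barmak--Minian fact that the order complex of $\MC(\sigma)$ is homotopy equivalent to the ordinary mapping cylinder of the induced map of order complexes to pin down the relevant M\"obius values. Second, for $\Psi$, that $\sigma(x) = x \vee q$ satisfies every clause of Definition~\ref{def:sfs}: order-preservation and $\sigma(\hat{0}_X) = q$ are immediate, while the fiber and Eulerian clauses translate into the already-available Eulerian property of intervals of $\Gamma$ together with join-admissibility of $q$. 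Verifying the rank bookkeeping in both directions --- that the single function $\rho_\Gamma$ simultaneously encodes $\rho_X$, $\rho_Y$, and the rank offset --- is the most delicate point, and I expect it to be the crux of the argument.
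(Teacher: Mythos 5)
Your two maps are exactly the paper's $\CYL$ and $\MAP$, and the forward direction is handled essentially as you describe (though the paper verifies that the mixed intervals $[x,y]$ are Eulerian by a direct two-line computation with the alternating-sum identity \eqref{eq:strongsubdivision}, not via the Barmak--Minian homotopy equivalence of order complexes; that equivalence controls the homotopy type of the whole cylinder, and extracting the interval-by-interval rank-parity counts from it would itself take an argument, so the algebraic route is both shorter and actually what is needed).

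The genuine gap is in the reverse direction. A strong formal subdivision is by Definition~\ref{def:sfs} required to be \emph{strongly surjective}: for all $x$ and $y$ with $\sigma(x)\le y$ there must exist $x'\ge x$ with $\sigma(x')=y$ and $\rho_X(x')=\rho_Y(y)$. For $\sigma(x)=x\vee q$ this does not ``translate into the already-available Eulerian property of intervals together with join-admissibility'' in any immediate way, and your proposal never addresses it; you instead identify the rank bookkeeping as the crux, which is in fact the routine part. The paper devotes Lemma~\ref{lem:hatsigma} to exactly this point: one first reduces to the case $\rho_\Gamma(x\vee q)-\rho_\Gamma(x)=1$, then lifts a maximal chain $\hat\sigma(x)=y_0<\cdots<y_s=y$ in $\Gamma_{\ge q}$ step by step to a chain $x=x_0<\cdots<x_s$ in $\Gamma\smallsetminus\Gamma_{\ge q}$, using at each step that the rank-$2$ interval $[x_j,y_{j+1}]$ is isomorphic to $B_2$ (thinness of $\Gamma$), so that it contains exactly one coatom other than $y_j$, which is then forced to lie outside $\Gamma_{\ge q}$ and to have join $y_{j+1}$ with $q$. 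Without this chain-lifting argument (or a substitute for it), the map $\Psi$ is not known to land in $\SFS$ and the bijection is not established.
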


    The \emph{non-Hausdorff mapping cylinder} of an order-preserving map of posets (see Definition~\ref{def:nonHausdorffmc}) was introduced by Barmak and Minian
\cite{BMSimpleHomotopy}*{Definition~3.6} with applications in 
% . See Definition~\ref{def:nonHausdorffmc}  for the definition, and see  
\cite{BarmakQuillensTheoremA,BarmakAlgebraicTopologyFinite,FMCylinder}.
% for applications. 
%(see Definition~\ref{def:nonHausdorffmc} and \cite{BarmakAlgebraicTopologyFinite}*{Section~2.8}). 
With the notation above,
under the bijection of Theorem~\ref{thm:intromainsimplified}, $\Gamma$ is   the non-Hausdorff mapping cylinder  associated to a strong formal subdivision $\sigma$. Conversely, $\sigma$ is obtained by setting 
 $Y = \{ z \in \Gamma : q \le z \}$,  $X = \Gamma \smallsetminus Y$, and 
$\sigma(x) = x \vee q$ for all $x \in X$. See Definition~\ref{def:bijections} for details.

More generally, $\SFS$ and $\JoinIdealLW^\circ$ are naturally objects of corresponding categories $\SFScat$ and  $\Joincat^\circ$
respectively, and we have the following result.

\begin{theorem}(see Theorem~\ref{thm:main})\label{thm:intromain}
	There is a canonical isomorphism of categories between $\SFScat$ and $\Joincat^\circ$. 
\end{theorem}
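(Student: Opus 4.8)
The plan is to promote the object-level bijection of Theorem~\ref{thm:intromainsimplified} to an isomorphism of categories by exhibiting a pair of functors $F\colon \SFScat \to \Joincat^\circ$ and $G\colon \Joincat^\circ \to \SFScat$ that act on objects by the two mutually inverse assignments of Definition~\ref{def:bijections}. Recall that $F$ sends a strong formal subdivision $\sigma\colon X \to Y$ to the triple $(\Gamma,\rho_\Gamma,q)$, where $\Gamma$ is the non-Hausdorff mapping cylinder of $\sigma$ (Definition~\ref{def:nonHausdorffmc}) and $q$ is the minimal element of the copy of $Y$ sitting inside $\Gamma$, while $G$ sends a triple $(\Gamma,\rho_\Gamma,q)$ to the strong formal subdivision determined by $Y = \{z \in \Gamma : q \le z\}$, $X = \Gamma \smallsetminus Y$, and $\sigma(x) = x \vee q$. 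Since Theorem~\ref{thm:intromainsimplified} already identifies these as mutually inverse bijections on objects, the remaining task is entirely about morphisms: I must define $F$ and $G$ on arrows, check that each preserves identities and composition, and verify that $GF$ and $FG$ are the identity on morphisms as well.

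To define $F$ on a morphism from $\sigma\colon X \to Y$ to $\sigma'\colon X' \to Y'$, I would unwind the definition of a morphism in $\SFScat$ as a pair of order-preserving maps on the source and target posets that respect the recorded rank data and intertwine $\sigma$ with $\sigma'$. Because the order relations added in the mapping cylinder are exactly those dictated by $\sigma$, these compatible maps glue along the decomposition $\Gamma = X \sqcup Y$ to a single order-preserving map $\Gamma \to \Gamma'$; this map is rank-preserving because $\rho_\Gamma$ is determined by the rank functions of $X$ and $Y$, and it carries $q$ to $q'$ because the $Y$-component sends the minimal element of $Y$ to that of $Y'$. Conversely, to define $G$ on a morphism $\psi\colon (\Gamma,\rho_\Gamma,q) \to (\Gamma',\rho_{\Gamma'},q')$ in $\Joincat^\circ$, I would restrict $\psi$ along $\Gamma = X \sqcup Y$: since $\psi(q) = q'$ and $\psi$ is order-preserving, $\psi$ maps $Y = \{z \ge q\}$ into $Y' = \{z' \ge q'\}$ and correspondingly $X$ into $X'$, yielding a candidate pair of maps.

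The step I expect to be the main obstacle is verifying that this restricted pair is genuinely a morphism of strong formal subdivisions, i.e.\ that it intertwines $\sigma$ and $\sigma'$. Concretely this reduces to the identity $\psi(x \vee q) = \psi(x) \vee q'$ for $x \in X$, which is the morphism-level shadow of the relation $\sigma(x) = x \vee q$ underlying the object bijection; establishing it should rely essentially on the join-admissibility of $q$ and $q'$ together with the join-compatibility built into morphisms of $\Joincat^\circ$. Once this is in hand, functoriality of $G$ follows formally, and so does that of $F$ from functoriality of the cylinder construction. Finally, the equalities $GF = \id$ and $FG = \id$ on morphisms hold because in each round trip every map is reconstructed from its restrictions to the canonical $X$- and $Y$-parts, which are untouched by the passage through the cylinder; combined with the object statement of Theorem~\ref{thm:intromainsimplified}, this gives the desired isomorphism of categories.
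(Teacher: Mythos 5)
Your overall architecture matches the paper's: the functors you call $F$ and $G$ are exactly $\CYLcat$ and $\MAPcat$, defined on morphisms by gluing a commuting pair $(\phi_1,\phi_2)$ into a single map of mapping cylinders and by restricting a map $\psi$ along $\Gamma = X \sqcup Y$, and the round-trip identities on morphisms are indeed formal. However, you have misplaced the difficulty, and the step you wave through contains a real gap. First, the identity $\psi(x\vee q)=\psi(x)\vee q'$ is not something to be proved when defining $G$: it is part of the definition of a morphism in $\Joincat^\circ$ (Definition~\ref{def:categorify}), so it is a hypothesis rather than an obstacle; on the $F$ side the corresponding identity follows in one line from commutativity of the square. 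Second, your claim that since $\psi(q)=q'$ and $\psi$ is order-preserving, $\psi$ maps $Y$ into $Y'$ ``and correspondingly $X$ into $X'$'' does not follow: order-preservation gives $\psi(Y)\subseteq Y'$, but nothing formal prevents an element of $X$ from landing in $Y'$. Establishing $\psi^{-1}(Y')=Y$ is the genuinely hard point of Lemma~\ref{lem:MAPwelldefined}; the paper reduces to a maximal $x\in X$ with $\psi(x)\in Y'$, shows $\Gamma_{\ge x}=B_1$ using the Eulerian condition, and derives a contradiction with the parity statement of Lemma~\ref{lem:parity} because $\Gamma'_{\ge\psi(x)}$ would be a singleton. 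Without this argument your construction of $G$ on morphisms is not well-defined.

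There is a second omission on the $F$ side. To obtain a morphism of $\Joincat^\circ$ it is not enough that the glued map $\phi\colon\Cyl(\sigma)\to\Cyl(\sigma')$ be order-preserving and compatible with ranks; by definition it must be a strong formal subdivision, i.e.\ strongly surjective and satisfying the alternating-sum condition \eqref{eq:strongsubdivisionequality}. Verifying strong surjectivity of $\phi$ (which requires lifting through the $Y$-part using strong surjectivity of $\phi_2$) and checking \eqref{eq:strongsubdivisionequality} (which collapses to the corresponding identity for $\phi_2$ because $\phi^{-1}(y')\subseteq Y$ for $y'\in Y'$) occupies most of Lemma~\ref{lem:CYLwelldefined} and is not addressed in your sketch. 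Likewise, on the $G$ side you must check that the restrictions $\phi_1,\phi_2$ are themselves strong formal subdivisions, which the paper obtains from Remark~\ref{rem:restrictsfs} once $\psi^{-1}(Y')=Y$ is known.
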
	

	Section~\ref{sec:examples} contains numerous examples illustrating the above theorems. This includes new methods for constructing strong formal subdivisions (see Section~\ref{ss:constructingsfs}), and descriptions of all strong formal subdivisions to the one and two element lower Eulerian posets (see Example~\ref{ex:B0}, Example~\ref{ex:B1}, Corollary~\ref{cor:boundaryEulerian}, Corollary~\ref{cor:nearEulerian}). 
	We now describe an example that is  explained in detail in Section~\ref{ss:facelatticepolytope}. Let $Q$ be a polytope. 
	Then  all elements of the face lattice $\face(Q)$ are join-admissible. 
	Consider the rank function $\rho_{\face(Q)}(G) = \dim G + 1$ for $G \in \face(Q)$. 
	Let $\Gamma = \face(Q)$, $\rho_\Gamma = \rho_{\face(Q)}$,  and let $q = F$ be a nonempty face of $Q$. 
	Then there is a corresponding projective, surjective (and, in particular, proper) morphism between fans $\Sigma'$ and $\Sigma$, where $\Sigma$ is a pointed cone, such that the corresponding strong formal subdivision $\sigma: \face(\Sigma') \to \face(\Sigma)$
	corresponds to $(\Gamma, \rho_\Gamma, q)$ under Theorem~\ref{thm:intromainsimplified}. 
	% is the function determined by $\Gamma$ and $q$ in \eqref{eq:sfs}. 
	Moreover, every strong formal subdivision induced by a projective, surjective morphism between a fan and a pointed cone appears in this way. For example, if $F = Q$, then, after possible translation, we may assume that the origin lies in the interior of $Q$, and then $\Sigma'$ is the fan over the faces of $Q$ and $\Sigma = \{ 0 \}$ (see Example~\ref{ex:polytopeB0}). As a special case, if $\cS$ is a regular polyhedral subdivision of a polytope $P$, then there is a  polytope $Q$ and vertex $v$ of $Q$ such that the corresponding strong formal subdivision $\sigma: \face(\cS) \to \face(P)$ described earlier
	corresponds to $(\Gamma, \rho_\Gamma, q)$ under Theorem~\ref{thm:intromainsimplified}. 
	 %is the function determined by $\Gamma$ and $q$ in \eqref{eq:sfs}. 
	 For example, if $\cS$ is the trivial subdivision of $P$, then $Q$ is the pyramid over $P$ with apex $q$, and $\sigma$ is the identity function (see Example~\ref{ex:polytopeidentity}).

	With Theorem~\ref{thm:intromainsimplified} in hand and with the notation above, one wants to relate invariants associated with $\Gamma$ to invariants associated to $\sigma$. We describe this for the $cd$-index.
	A poset is \emph{Eulerian} if it is lower Eulerian and contains a unique maximal element. Let $B$ be an Eulerian poset such that $|B| > 1$. 
	The \emph{$cd$-index} $\Phi(B) = \Phi(B;c,d)$ is a polynomial in non-commuting variables $c$ and $d$ that encodes the number of chains of elements of $B$ with a prescribed sequence of ranks. See Section~\ref{ss:cdindexbackground} for the definition and \cite{StanleyFlagfVectors,BayerCDIndexSurvey} for surveys. 
	In \cite[p.485]{StanleyFlagfVectors}, Stanley also introduced the 
	notion of a near-Eulerian poset (see Definition~\ref{def:nearEulerian}). 
	For example, an Eulerian poset $B$ with $|B| > 1$ is near-Eulerian. 
	The \emph{local $cd$-index} $\ell^{\Phi}(B) = \ell^{\Phi}(B;c,d)$ of a near-Eulerian poset $B$ (see \eqref{eq:localcdindex}) was 
	 introduced by Ehrenborg and Karu  in a special case in order to prove a decomposition theorem for the $cd$-index of a Gorenstein*-poset \cite{EKDecompositionTheoremCDIndex}*{Theorem~5.6}. This decomposition theorem was later generalized in  \cite{DKTPosetSubdivisions}*{Theorem~1.1}, where the authors introduced the general definition of $\ell^{\Phi}(B)$. For example, if $B$ is Eulerian with $|B| > 1$, then $\ell^{\Phi}(B) = 0$. In general, 	the local $cd$-index may be considered a measure of how far $B$ is from being Eulerian.
	 
	 % The general case was later introduced and the decomposition theorem generalized in. 
	 
	 Let $\Gamma$ be an Eulerian poset with $|\Gamma| > 1$, with unique minimal element $\hat{0}_\Gamma$, and with unique maximal element $\hat{1}_\Gamma$. Let $q$ be a join-admissible element of $\Gamma$ with 
	 $q \notin \{ \hat{0}_\Gamma, \hat{1}_\Gamma\}$. Consider the corresponding strong formal subdivision $\sigma: X \to Y$ determined by Theorem~\ref{thm:intromainsimplified}. Then $Y$ is Eulerian with unique minimal element $\hat{0}_Y = q$, and  unique maximal element $\hat{1}_Y = \hat{1}_\Gamma$. 
	 Given $z \le z'$ in a poset $B$, let $[z,z'] = \{ z'' \in B : z \le z'' \le z' \}$ denote the corresponding interval. 
	 Given an element $y$ in $Y$,  the interval $[y,\hat{1}_Y]$ is an Eulerian poset. 	We let
	 $X_{\le y} =  \sigma^{-1}([\hat{0}_Y,y])  = \{ x \in X : \sigma(x) \le y \}$ and 
	 $X_{< y} = \{ x \in X : \sigma(x) < y \}$.
	 For example,  $X_{\le \hat{1}_Y} = X$ and we write $\partial X = X_{< \hat{1}_Y}$. 
	Given a poset $B$, let $\overline{B}$ be obtained from $B$ by adjoining a unique maximal element. 	 
	 Then  $\{ X_{\le y} : y \neq \hat{0}_Y \}$ are near-Eulerian posets, $\overline{X_{\le \hat{0}_Y}} = [\hat{0}_\Gamma,q]$ is an Eulerian poset, and 
	$\{ \overline{X_{< y}} :  y \neq \hat{0}_Y \}$ are Eulerian posets \cite{DKTPosetSubdivisions}*{Proposition~2.9, Proposition~4.4, Remark~4.5}.
%	 Our next result is  Proposition~\ref{prop:cdsubdivision}, which  states that
%	\begin{equation*}
%	\Phi(\Gamma) = \ell^{\Phi}(X) + \frac{1}{2} \left[ \Phi(\overline{\partial X}) c + %\Phi_{\overline{f^{-1}(\hat{0}_Y)}} 
%	\Phi(\overline{X_{ \le \hat{0}_Y}}) c \Phi(Y) +  \sum_{\hat{0}_Y < y < \hat{1}_Y} \left(\ell^{\Phi}(X_{\le y})c + \Phi(\overline{X_{< y}}) d \right)\Phi([y,\hat{1}_Y]) \right].
%\end{equation*}	 

	\begin{proposition}(see Proposition~\ref{prop:cdsubdivision})\label{prop:introcdsubdivision}
		With the notation above, 
			\begin{equation*}
			\Phi(\Gamma) = \ell^{\Phi}(X) + \frac{1}{2} \left[ \Phi(\overline{\partial X}) c + %\Phi_{\overline{f^{-1}(\hat{0}_Y)}} 
			\Phi(\overline{X_{ \le \hat{0}_Y}}) c \Phi(Y) +  \sum_{\hat{0}_Y < y < \hat{1}_Y} \left(\ell^{\Phi}(X_{\le y})c + \Phi(\overline{X_{< y}}) d \right)\Phi([y,\hat{1}_Y]) \right].
		\end{equation*}	
	\end{proposition}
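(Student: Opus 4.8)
The plan is to compute $\Phi(\Gamma)$ from the flag $f$-vector of $\Gamma$ by sorting the chains of $\Gamma$ according to where they first meet $Y$, and then to recognise the resulting pieces as the invariants on the right-hand side. By Theorem~\ref{thm:intromainsimplified} the poset $\Gamma$ is the non-Hausdorff mapping cylinder of $\sigma\colon X\to Y$, so as a set $\Gamma=X\sqcup Y$ with $Y=\{z:q\le z\}$ an up-set, $\hat 0_\Gamma\in X$, $\hat 1_\Gamma=\hat 1_Y\in Y$, and, for $x\in X$ and $y\in Y$, one has $x\le y$ in $\Gamma$ exactly when $\sigma(x)=x\vee q\le y$. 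Consequently every chain $\hat 0_\Gamma=z_0<\dots<z_m=\hat 1_\Gamma$ has a unique transition index $k$ with $z_0,\dots,z_{k-1}\in X$ and $z_k,\dots,z_m\in Y$, and I would organise the computation of the $ab$-index $\Psi(\Gamma)$ (of which $\Phi(\Gamma)$ is the $cd$-form, since $\Gamma$ is Eulerian) by the transition element $y:=z_k$.

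First I would factor the contribution of a fixed $y$. The upper segment $z_k<\dots<z_m$ is an arbitrary chain of the Eulerian interval $[y,\hat 1_Y]$ issuing from its minimum, so summing over upper segments contributes the factor $\Phi([y,\hat 1_Y])$. The lower segment $z_0<\dots<z_{k-1}$ lies in $X_{\le y}$, since each $z_i<y$ forces $\sigma(z_i)\le y$; appending the single step to $y$ identifies it with a maximal chain of the completion $\overline{X_{\le y}}$, whose adjoined top is realised by $y$. At the level of flag numbers this matching is a convolution, and passing to $ab$-indices yields
\begin{equation*}
\Psi(\Gamma)=\sum_{y\in Y}\Psi(\overline{X_{\le y}})\,\omega_y\,\Psi([y,\hat 1_Y]),
\end{equation*}
where $\Psi$ denotes the $ab$-index and $\omega_y$ is the degree-one datum recording that $y$ occupies rank $\rho_\Gamma(y)$ in the glued chain.

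The crux is to turn each summand into $cd$-form. Because $X_{\le y}$ is near-Eulerian for $y\neq\hat 0_Y$ (\cite{DKTPosetSubdivisions}*{Proposition~2.9, Proposition~4.4}), its completion is not Eulerian, and I would invoke the defining relation \eqref{eq:localcdindex} to split $\Psi(\overline{X_{\le y}})$ into an interior part, governed by the local $cd$-index $\ell^\Phi(X_{\le y})$, and a boundary part, governed by the Eulerian index $\Phi(\overline{X_{<y}})$. This split is matched by the transition $\omega_y$ through the dichotomy at the top element $z_{k-1}$ of the lower segment: a \emph{tight} transition $\sigma(z_{k-1})=y$ keeps $z_{k-1}$ in the interior $X_{\le y}\smallsetminus X_{<y}$ and appends a single occupied rank, contributing $\ell^\Phi(X_{\le y})\,c$; a \emph{gapped} transition $\sigma(z_{k-1})<y$ puts $z_{k-1}$ in the boundary $X_{<y}$ and forces an extra unoccupied rank just below $y$, so that ranks $\rho_\Gamma(y)-1,\rho_\Gamma(y)$ assemble into $d$ and the contribution is $\Phi(\overline{X_{<y}})\,d$. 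Averaging against the $a\leftrightarrow b$ reversal — legitimate because $\Gamma$ is Eulerian, so $\Psi(\Gamma)$ is invariant under this reversal and hence is the $cd$-polynomial $\Phi(\Gamma)$ — produces the overall factor $\tfrac12$ and the intermediate summand $\tfrac12\bigl(\ell^\Phi(X_{\le y})\,c+\Phi(\overline{X_{<y}})\,d\bigr)\Phi([y,\hat 1_Y])$. Equivalently, this whole step is the specialisation to $\sigma$ of the decomposition theorem \cite{DKTPosetSubdivisions}*{Theorem~1.1}.

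Finally I would isolate the two boundary values of $y$, where this description simplifies. For $y=\hat 0_Y=q$ one has $X_{<q}=\emptyset$, so no gapped transitions occur and the $d$-term is absent; moreover $\overline{X_{\le q}}=[\hat 0_\Gamma,q]$ is Eulerian, so its entire index is interior and $\ell^\Phi(X_{\le q})=\Phi(\overline{X_{\le\hat 0_Y}})$, leaving the single term $\tfrac12\Phi(\overline{X_{\le\hat 0_Y}})\,c\,\Phi(Y)$. For $y=\hat 1_Y$ the upper segment is empty, $\Phi([\hat 1_Y,\hat 1_Y])=1$, and since $\hat 1_Y=\hat 1_\Gamma$ is the global maximum there is no transition datum to append; thus the contribution is simply $\Psi(\overline X)$, which by \eqref{eq:localcdindex} is already packaged as $\ell^\Phi(X)+\tfrac12\Phi(\overline{\partial X})\,c$, with $\partial X=X_{<\hat 1_Y}$, so that $\ell^\Phi(X)$ emerges with coefficient one outside the bracket. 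Collecting the generic sum over $\hat 0_Y<y<\hat 1_Y$ with these two boundary contributions gives the stated identity. I expect the main obstacle to be the third step: pinning down the transition datum $\omega_y$, proving that the tight and gapped cases align precisely with the interior and boundary parts furnished by \eqref{eq:localcdindex}, and justifying the symmetrisation that supplies the factor $\tfrac12$, since this is exactly where the Eulerian structure of $\Gamma$ and the definition of the local $cd$-index must be used in tandem.
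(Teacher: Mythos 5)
Your proposal is correct, and its skeleton matches the paper's proof of Proposition~\ref{prop:cdsubdivision}: both decompose each maximal chain of $\Gamma$ at the first element lying in $Y$, giving $\Psi(\Gamma)=\Psi(X)+\Psi(\overline{X_{\le\hat 0_Y}})\,b\,\Psi(Y)+\sum_{\hat 0_Y<y<\hat 1_Y}\Psi(X_{\le y})\,b\,\Psi([y,\hat 1_Y])$, and both then split the near-Eulerian pieces via \eqref{eq:psiB'}, i.e.\ $\Psi(X_{\le y})=\ell^{\Psi}(X_{\le y})+\Psi(\overline{X_{<y}})a$. You differ in how the factor $\tfrac{1}{2}$ arises: the paper performs a second, independent chain count, indexed by the last chain element lying in $X$, and averages the two resulting identities, whereas you average the single identity with its image under the substitution $a\leftrightarrow b$, using that the $ab$-index of an Eulerian poset and the polynomial $\ell^{\Psi}$ of a near-Eulerian poset are fixed by this substitution. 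The two routes are equivalent --- the paper's second expression is, term by term, exactly the $a\leftrightarrow b$ image of its first --- so your version is a legitimate shortcut that trades the second combinatorial count for the existence of the $cd$-index of the constituent Eulerian posets (which the statement presupposes anyway). The one imprecision is your ``tight versus gapped transition'' narrative: the dichotomy $\sigma(z_{k-1})=y$ versus $\sigma(z_{k-1})<y$ does not literally partition the chains counted by $\Psi(X_{\le y})$ into the two summands of \eqref{eq:psiB'}, since \eqref{eq:localcdindex} is an algebraic identity of flag polynomials rather than a partition of chains; but since you ultimately invoke \eqref{eq:localcdindex} itself, this is harmless --- after the split, the letter $b$ recording the rank of $y$ symmetrizes to $\tfrac{1}{2}c$ against the local part and the word $ab$ symmetrizes to $\tfrac{1}{2}d$ against the boundary part, which is exactly how the paper's averaged expression produces the stated formula.
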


	When $\Gamma$ is a Gorenstein* poset, all terms in the above equation have nonnegative integer coefficients (see Example~\ref{ex:Gorensteinstarnonnegative}). 
In Example~\ref{ex:identitycdindex} and Example~\ref{ex:cdbipyramid} we recover formulas of Ehrenborg and Readdy for the $cd$-indices of the pyramid, bipyramid, and prism of an Eulerian poset $B$ with $|B| > 1$	\cite{ERCoproductscdindex}. 

	A second set of invariants associated to $\Gamma$ comes from Kazhdan-Lusztig-Stanley theory,  which has been developed, for example, in \cite{Stanley92,DyerHecke,BrentiTwistedIncidence,BrentiPKernels,KatzStapledon16,ProudfootAGofKLSpolynomials,FMVChowFunctions}. 
	See \cite{BPIntersectionCohomology} for a recent survey on the connections with intersection cohomology.
	On the other hand,
	one may consider local $h$-polynomials associated to a strong formal subdivision $\sigma$. These appear in connection with the decomposition theorem applied to toric varieties  \cite{Stanley92,KatzStapledon16,deCataldoMiglioriniMustata18,KaruRelativeHardLefschetz}, as well as in connection with the action of monodromy on the cohomology of the Milnor fiber of a nondegenerate hypersurface singularity \cite{Stapledon17,LPSLocalMotivic,SaitoMixed,STMonodromies}. We explore the connection between these two sets of invariants in detail in a companion paper \cite{StapledonKLSTheory}.

	We now consider examples of lower Eulerian posets coming from topology. 
	A finite poset is a \emph{$CW$-poset} if it is the poset of closed cells,  including the empty cell, of a regular  $CW$-complex under inclusion (see Definition~\ref{def:regularCWcomplex} and Definition~\ref{def:CWposet}). 
	See \cite{BjornerPosets} for an alternative characterization of $CW$-posets. 	The notion of a \emph{$CW$-regular subdivision} of $CW$-posets was introduced by Stanley  in \cite{Stanley92}. We generalize this to the notion of a \emph{strong $CW$-regular subdivision} of $CW$-posets (see Definition~\ref{def:CWsubdivision}). 
	A strong $CW$-regular subdivision is a strong formal subdivision (see Remark~\ref{rem:CWregularisformal}). 
	The \emph{rank}  of a strong formal subdivision $\sigma: X \to Y$ is $\rank(\sigma) = \rank(X) - \rank(Y)$ (see Definition~\ref{def:stronglysurjective}), and Stanley's $CW$-regular subdivisions are precisely the strong 
	$CW$-regular subdivisions of rank $0$. 
	
	Let $\SFSCW \subset \SFS$  be the set of strong $CW$-regular subdivisions $\sigma: X \to Y$ between  $CW$-posets $X$ and $Y$  with rank functions $\rho_X$ and $\rho_Y$ respectively, and let $\JoinIdealCW^\circ \subset \JoinIdealLW^\circ$ be the set of triples $(\Gamma, \rho_\Gamma, q)$,
	where $\Gamma$ is a  $CW$-poset  with rank function $\rho_\Gamma$  and $q$ is a join-admissible element of $\Gamma$ such that $q \neq \hat{0}_\Gamma$ and $\Gamma_{\ge q}$ is a $CW$-poset (see Definition~\ref{def:CWbijections}). We show that the canonical bijection in Theorem~\ref{thm:intromainsimplified} restricts to give the following result. %we have the following analogue of . 
	
	\begin{theorem}(see Theorem~\ref{thm:mainsimplifiedCW})\label{thm:intromainsimplifiedCW}
		There is a canonical bijection between $\SFSCW$ and $\JoinIdealCW^\circ$. 
	\end{theorem}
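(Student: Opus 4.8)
The plan is to show that the canonical bijection of Theorem~\ref{thm:intromainsimplified} restricts to a bijection between the subsets $\SFSCW \subset \SFS$ and $\JoinIdealCW^\circ \subset \JoinIdealLW^\circ$. Since the main bijection and its inverse are already in hand, it suffices to verify the two inclusions: that the bijection carries $\SFSCW$ into $\JoinIdealCW^\circ$, and that its inverse carries $\JoinIdealCW^\circ$ into $\SFSCW$. Throughout I would use Bj\"orner's characterization of $CW$-posets \cite{BjornerPosets}: a poset $P$ with unique minimal element $\hat{0}$ and $|P| > 1$ is a $CW$-poset precisely when, for every $z > \hat{0}$, the order complex of the open interval $(\hat{0},z)$ is homeomorphic to a sphere of dimension $\rho(z)-2$. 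The key structural observation driving both directions is that, writing $\Gamma$ for the non-Hausdorff mapping cylinder, $Y = \Gamma_{\ge q}$, and $X = \Gamma \smallsetminus Y$, no element of $Y$ lies below an element of $X$: if $z \le x$ with $z \in Y$ and $x \in X$, then $q \le z \le x$ forces $x \in \Gamma_{\ge q} = Y$, a contradiction. Consequently, for each $x \in X$ the interval $[\hat{0}_\Gamma, x]$ in $\Gamma$ coincides with the corresponding interval in $X$.

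For the forward inclusion, start with $\sigma \colon X \to Y$ in $\SFSCW$ and let $(\Gamma, \rho_\Gamma, q)$ be the associated triple. That $q$ is join-admissible with $q \neq \hat{0}_\Gamma$ is immediate from membership in $\JoinIdealLW^\circ$, and $\Gamma_{\ge q} = Y$ is a $CW$-poset by hypothesis, so the only remaining point is that $\Gamma$ itself is a $CW$-poset. For $z = x \in X$, the interval $[\hat{0}_\Gamma, x]$ equals the corresponding interval in $X$, so the order complex of $(\hat{0}_\Gamma, x)$ is a sphere because $X$ is a $CW$-poset. For $z = y \in Y$, one checks directly that $[\hat{0}_\Gamma, y] = X_{\le y} \cup [\hat{0}_Y, y]$ is the non-Hausdorff mapping cylinder of the restricted strong formal subdivision $\sigma \colon X_{\le y} \to [\hat{0}_Y, y]$ with distinguished element $q = \hat{0}_Y$, and the assertion that the order complex of $(\hat{0}_\Gamma, y)$ is a sphere of dimension $\rho_\Gamma(y)-2$ is exactly the local sphericity condition built into the definition of a strong $CW$-regular subdivision (Definition~\ref{def:CWsubdivision}). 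This establishes $(\Gamma, \rho_\Gamma, q) \in \JoinIdealCW^\circ$.

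For the reverse inclusion, start with $(\Gamma, \rho_\Gamma, q)$ in $\JoinIdealCW^\circ$ and let $\sigma \colon X \to Y$ be the associated strong formal subdivision, with $Y = \Gamma_{\ge q}$ a $CW$-poset by hypothesis. By the interval-inheritance observation above, every interval $[\hat{0}_\Gamma, x]$ with $x \in X$ lies entirely in $X$, so $X$ inherits the Bj\"orner sphere condition from $\Gamma$ and is itself a $CW$-poset. It remains to check that $\sigma$ satisfies the defining conditions of a strong $CW$-regular subdivision; but via the same mapping-cylinder identification of $[\hat{0}_\Gamma, y]$, each such condition is the statement that the order complex of $(\hat{0}_\Gamma, y)$ is a sphere for $y \in Y$, which holds because $\Gamma$ is a $CW$-poset. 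Hence $\sigma \in \SFSCW$, and the two inclusions together with Theorem~\ref{thm:intromainsimplified} give the claimed bijection.

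The main obstacle is the term-by-term identification, in both directions, of the local sphericity conditions in Definition~\ref{def:CWsubdivision} with the Bj\"orner sphere conditions on the intervals $[\hat{0}_\Gamma, y]$ of the mapping cylinder. Everything rests on recognizing $[\hat{0}_\Gamma, y]$ as the mapping cylinder of the restricted subdivision $\sigma \colon X_{\le y} \to [\hat{0}_Y, y]$, so that the defining data of a strong $CW$-regular subdivision must be matched precisely against the topology of these intervals; once this dictionary is set up, the remaining steps are the routine inheritance-of-intervals arguments indicated above.
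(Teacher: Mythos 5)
Your overall strategy coincides with the paper's: both proofs establish the theorem by checking that $\CYL$ maps $\SFSCW$ into $\JoinIdealCW^\circ$ and that $\MAP$ maps $\JoinIdealCW^\circ$ into $\SFSCW$, and the easy parts (that $X$ and the intervals $[\hat{0}_\Gamma,x]$ for $x \in X$ inherit everything from $\Gamma$ because $X$ is a lower order ideal) are handled the same way. However, there is a genuine gap at the step you describe as setting up a ``dictionary'': you assert that the Bj\"orner sphericity of the open intervals $(\hat{0}_\Gamma, y)$ for $y \in Y$ is \emph{exactly} the ball/sphere condition of Definition~\ref{def:CWsubdivision}, and that once this is recognized the rest is routine. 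It is not a definitional matching. The open interval $(\hat{0}_\Gamma,y)$ is the poset $(X_{\le y}\smallsetminus\{\hat{0}_X\}) \cup [\hat{0}_Y,y)$ with the mapping-cylinder order, and its order complex is the barycentric subdivision of a non-Hausdorff mapping cylinder; such order complexes are in general only weak homotopy equivalent to the corresponding topological mapping cylinder, whereas Bj\"orner's criterion demands an actual homeomorphism with a sphere. Conversely, knowing that $[\hat{0}_\Gamma,y]$ is the face poset of a regular $CW$-ball does not tautologically give that the subcomplex $\K_{X,\le y}$ of a complex realizing $X$ is a regular $CW$-ball with boundary $\K_{X,<y}$.

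Bridging these two descriptions is precisely the content of the paper's Lemmas~\ref{lem:KXKYstar}, \ref{lem:CWmappingcylinder} and \ref{lem:CWconverse}, and it requires nontrivial topological input that your proposal omits. In the forward direction the paper constructs, by induction on $|Y|$, an explicit homeomorphism $|\K_X| \cong |\K_Y \star \partial\Delta^r|$ compatible with $\sigma$, extending it cell by cell via Alexander's trick (Remark~\ref{rem:Alexander}), and then re-cells $|\K_Y \star \Delta^r|$ to exhibit a regular $CW$-complex with face poset $\Cyl(\sigma)$; this, not an appeal to Bj\"orner's criterion, is how $\Gamma$ is shown to be a $CW$-poset. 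In the reverse direction, the paper shows that $|\K_{X,\le y}|$ is the complement in the sphere $|\partial\K_{[\hat{0}_\Gamma,y]}|$ of the interior of an embedded ball $|\K_{\Cyl(\widehat{\sigma})}|$, and concluding that this complement is itself a ball with the prescribed boundary invokes the generalized Schoenflies theorem. Your proposal would need to supply arguments of this kind (or an alternative route to actual homeomorphisms, not just homotopy equivalences) before the two inclusions are established.
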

	
	Moreover,
		%More generally, 
		$\SFSCW$ and $\JoinIdealCW^\circ$ are naturally objects of corresponding categories $\SFSCWcat$ and  $\JoinCWcat^\circ$
	respectively, and 
	 the canonical isomorphism in Theorem~\ref{thm:intromain} restricts to give the following result. %we have the following analogue of . 
	%we have the following result.
	
	\begin{theorem}(see Theorem~\ref{thm:mainCW})\label{thm:intromainCW}
		There is a canonical isomorphism of categories between $\SFSCWcat$ and $\JoinCWcat^\circ$. 
	\end{theorem}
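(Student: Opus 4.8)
The plan is to obtain Theorem~\ref{thm:intromainCW} as a formal consequence of the isomorphism of categories in Theorem~\ref{thm:intromain} together with the object-level bijection in Theorem~\ref{thm:intromainsimplifiedCW}, without reopening the non-Hausdorff mapping cylinder construction. Write $F \colon \SFScat \to \Joincat^\circ$ for the canonical isomorphism of categories supplied by Theorem~\ref{thm:intromain}, chosen so that on objects it realizes the bijection of Theorem~\ref{thm:intromainsimplified}. The first thing I would verify, directly from Definition~\ref{def:CWbijections}, is that $\SFSCWcat$ and $\JoinCWcat^\circ$ are the \emph{full} subcategories of $\SFScat$ and $\Joincat^\circ$ on the object sets $\SFSCW$ and $\JoinIdealCW^\circ$ respectively; that is, the $CW$-hypothesis is a condition imposed on objects only, and a morphism between two $CW$-objects is precisely an ambient morphism in the larger category.

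Granting fullness, the remainder is a standard argument. By Theorem~\ref{thm:intromainsimplifiedCW}, the object bijection underlying $F$ restricts to a bijection $\SFSCW \to \JoinIdealCW^\circ$, and both directions of this are available. For any pair $\sigma, \sigma' \in \SFSCW$ one has $\Hom_{\SFSCWcat}(\sigma,\sigma') = \Hom_{\SFScat}(\sigma,\sigma')$ by fullness, and since $F$ is an isomorphism of categories it carries this hom-set bijectively onto $\Hom_{\Joincat^\circ}(F\sigma, F\sigma') = \Hom_{\JoinCWcat^\circ}(F\sigma, F\sigma')$, where the final equality uses fullness on the target side together with $F\sigma, F\sigma' \in \JoinIdealCW^\circ$. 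Thus the restriction of $F$ is a functor $\SFSCWcat \to \JoinCWcat^\circ$ that is bijective on objects and fully faithful, hence an isomorphism of categories; its inverse is the corresponding restriction of $F^{-1}$, which lands in $\SFSCWcat$ by reading the object bijection of Theorem~\ref{thm:intromainsimplifiedCW} in the reverse direction. Preservation of identities and composition is automatic, as the functors are honest restrictions of $F$ and $F^{-1}$.

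The only substantive point beyond this bookkeeping is the fullness claim, and that is where I expect the main obstacle to lie. One must rule out that a morphism of triples in $\Joincat^\circ$ between two $CW$-triples could correspond under $F^{-1}$ to a map of $CW$-subdivisions violating some cellular compatibility demanded by $\SFSCWcat$, and conversely. I would settle this by confirming, from Definition~\ref{def:CWbijections}, that neither $\SFSCWcat$ nor $\JoinCWcat^\circ$ imposes any morphism-level $CW$-condition beyond those already present in $\SFScat$ and $\Joincat^\circ$, so that both are genuinely full subcategories. Once this is in place the argument above applies verbatim, and the isomorphism of Theorem~\ref{thm:intromain} restricts to the desired isomorphism of Theorem~\ref{thm:intromainCW}.
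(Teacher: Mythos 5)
Your argument stands or falls on the fullness claim, and that claim is false. By Definition~\ref{def:CWcategorify}, $\SFSCWcat$ is the arrow category of $\CWPos$, whose \emph{morphisms} are required to be strong $CW$-regular subdivisions, not merely strong formal subdivisions; the paper explicitly remarks (citing Example~\ref{ex:CWB0counter}, a rational homology sphere that is not a sphere) that $\CWPos$ is \emph{not} the full subcategory of $\EulPos$ on $CW$-posets. Hence a square in $\SFScat$ between two objects of $\SFSCWcat$ need not be a morphism of $\SFSCWcat$: its vertical arrows $\phi_1,\phi_2$ are only strong formal subdivisions and may fail the $CW$-regularity condition. Likewise $\JoinCWcat^\circ$ is not full in $\Joincat^\circ$: its morphisms must be strong $CW$-regular subdivisions $\phi\colon\Gamma\to\Gamma'$ that moreover restrict to strong $CW$-regular subdivisions $\Gamma_{\ge q}\to\Gamma'_{\ge q'}$. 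So the step you flagged as the ``only substantive point'' is exactly where the argument breaks, and it cannot be repaired by inspecting Definition~\ref{def:CWbijections}, which concerns objects only; the morphism-level conditions live in Definition~\ref{def:CWcategorify}.

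What the theorem actually requires, beyond the object-level restriction supplied by Theorem~\ref{thm:mainsimplifiedCW}, is a proof that the \emph{non-full} morphism conditions on the two sides correspond under $\CYLcat$ and $\MAPcat$. Concretely: given a commutative square with $\sigma,\sigma'$ strong $CW$-regular subdivisions and $\phi_1,\phi_2$ strong formal subdivisions, one must show that $\phi_1$ is a strong $CW$-regular subdivision if and only if the induced map $\phi\colon\Cyl(\sigma)\to\Cyl(\sigma')$ is one (with $\phi_2$ being a shared requirement on both sides). This is Lemma~\ref{lem:CWmorphisms}, whose proof is genuinely topological — it uses the homeomorphism $|\K_\Gamma|\cong|\K_Y\star\Delta^r|$ from Lemma~\ref{lem:CWmappingcylinder} and Corollary~\ref{cor:CWcyl} to identify the relevant subcomplexes as regular $CW$-balls and spheres. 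None of this is present in your proposal, so the argument has a genuine gap rather than being an alternative route to the same conclusion.
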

	
	For example,  consider   a polyhedral subdivision $\cS$ of a polytope $P$, with corresponding strong formal subdivision $\sigma: \face(\cS) \to \face(P)$, and lower Eulerian poset $\Gamma$ corresponding to $\sigma$ under Theorem~\ref{thm:intromainsimplified}. Then $\sigma$ is a strong $CW$-regular subdivision of rank $0$ (see Example~\ref{ex:CWsubdivision}). 
		We have seen that if $\cS$ is a regular subdivision, then $\Gamma$ is the face lattice of a polytope.
	In general, Theorem~\ref{thm:intromainsimplifiedCW} implies that  $\Gamma$ is a $CW$-poset. See Example~\ref{ex:CWsubdivisionmappingcylinder} for an explicit description. 

 	More generally, consider a linear map $\phi: V' \to V$ inducing a proper, surjective morphism between fans $\Sigma'$ and $\Sigma$ in real vector spaces $V'$ and $V$ respectively, with  corresponding strong formal subdivision 
	$\sigma: \face(\Sigma') \to \face(\Sigma)$, and lower Eulerian poset $\Gamma$ corresponding to $\sigma$ under Theorem~\ref{thm:intromainsimplified}. Then $\sigma$ is a strong $CW$-regular subdivision of rank equal to $\dim \ker(\phi)$ (see Lemma~\ref{lem:CWproper}). We have seen that if $\Sigma$ is a pointed cone and the morphism is projective, then $\Gamma$ is the face lattice of a polytope. If we only assume that the morphism is projective, then $\Gamma$ is the face poset of a fan (see Example~\ref{ex:CWpropermappingcylinder}).  	In general, Theorem~\ref{thm:intromainsimplifiedCW} implies that  $\Gamma$ is a $CW$-poset. See Example~\ref{ex:CWpropermappingcylinder} for an explicit description of a corresponding regular $CW$-complex. 
	
	We mention the following corollary of the proof of Theorem~\ref{thm:intromainsimplifiedCW}. Let $\sigma: X \to Y$ be a strong $CW$-regular subdivision with $CW$-poset $\Gamma$ corresponding to $\sigma$ under Theorem~\ref{thm:intromainsimplifiedCW}. 
	Let $|\K|$ denote the underlying topological space associated with a regular $CW$-complex $\K$, and let $r = \rank(\sigma)$.  If $\K_X$, $\K_Y$, $\K_\Gamma$ are regular $CW$-complexes with face posets $X$, $Y$, $\Gamma$ respectively, then 
	there is a homeomorphism between $|\K_\Gamma|$ and the topological join of $|\K_Y|$ and an 
	$r$-dimensional simplex, which restricts to a homeomorphism between 
	$|\K_X|$ and the topological join of $|\K_Y|$ with the boundary of an $r$-dimensional simplex (see Lemma~\ref{lem:CWmappingcylinder}).
%	%a corollary of the proof of Theorem~\ref{thm:intromainsimplifiedCW} is that 
%	$|\K_X|$ is homemorphic to the topological join of $|\K_Y|$ with the boundary of an $r$-dimensional simplex, and
%	% (see Lemma~\ref{lem:KXKYstar}), and 
%	$|\K_\Gamma|$ is homeomorphic to the topological join of $|\K_Y|$ and an 
%	$r$-dimensional simplex (see Lemma~\ref{lem:CWmappingcylinder}). 
	For example, when $r = 0$, $|\K_\Gamma|$ is homeomorphic to the cone of $|\K_X| \cong |\K_Y|$.

%	 	As examples, we show that  Eulerian posets of positive rank with the unique maximal element removed are precisely the posets that admit  a (necessarily unique) strong formal subdivision to the one element poset (see Example~\ref{ex:B0}).  Also, near-Eulerian  posets are precisely the posets that admit  a (necessarily unique) strong formal subdivision to the  poset with two elements, one minimal and one maximal (see Example~\ref{ex:B1}). 
	
	%  	 correspond to 
	%  	The above result gives new methods for constructing strong formal subdivisions (see Section~\ref{ss:constructingsfs}). 
	
	Finally, we mention that strong formal subdivisions are  defined more generally for locally Eulerian posets, i.e., posets on which every interval is Eulerian but may not have a unique minimal element \cite{KatzStapledon16}*{Definition~3.17}. We outline how Theorem~\ref{thm:intromainsimplified} and Theorem~\ref{thm:intromain} extend to this case at the end of Section~\ref{sec:generalizations}. 

	We briefly outline the contents of the paper. In Section~\ref{sec:background}, we recall background material to be used throughout the paper. In Section~\ref{sec:subdivisions} we further develop the theory of strong formal subdivisions and state our main results,  Theorem~\ref{thm:intromainsimplified} and Theorem~\ref{thm:intromain}.  In Section~\ref{sec:examples}, we present a number of examples illustrating the main results. In Section~\ref{sec:proof}, we prove our main results. In Section~\ref{sec:cdindex}, we apply our main results to $cd$-indices and prove Proposition~\ref{prop:introcdsubdivision}. 
	In Section~\ref{sec:CWsubdivisions}, we define strong $CW$-regular subdivisions and state and prove Theorem~\ref{thm:intromainsimplifiedCW} and Theorem~\ref{thm:intromainCW}.
	In Section~\ref{sec:generalizations}, we provide some open questions and discuss possible generalizations.  
	%and sketch the generalization of our main results to locally, rather than lower, Eulerian posets. 
				
				\emph{Notation and conventions}: All posets are finite posets, and all vector spaces are finite-dimensional. 	All cones in this paper will be polyhedral cones, i.e., the intersection of finitely many half-spaces in a vector space.
				All regular $CW$-complexes are finite. For convenience, we consider the single element poset to be a $CW$-poset equal to the face poset of the empty regular $CW$-poset. 
				If $Z$ and $Z'$ are homeomorphic topological spaces, then we write $Z \cong Z'$. 
				 We write $|S|$ for the cardinality of a finite set $S$. 
				
				%Let $\Gamma$ be a lower Eulerian poset and let $q$ be an   element of $\Gamma$. We say that the pair $(\Gamma,q)$ is \emph{join-admissible} if  $z \vee q$ is well-defined for all $z \in \Gamma$. Assume that $q$ has positive rank, i.e., $q  \neq \hat{0}_\Gamma$. 
				%Then we associate to the pair $(\Gamma,q)$ a map of posets:
				%\[
				%\sfs(\Gamma,q)  : \{ z \in \Gamma : q \not\leq z \} \to \{ z \in \Gamma : q \leq z \}, 
				%\]
				%\[
				%z \mapsto z \vee q. 
				%\]
				%
				%\alan{TODO: define what it means for an element to have rank $r + 1$}
				%
				%\begin{theorem}\label{thm:sfsbijection}
				%	Fix a nonnegative integer $r$. Then the assignment $(\Gamma,q) \mapsto \sfs(\Gamma,q)$ induces a bijection between join-admissible pairs $(\Gamma,q)$, where $\Gamma$ is a lower Eulerian poset and $q \in \Gamma$ has rank $r + 1$, and strong formal subdivisions between lower Eulerian posets of rank $r$. 
				%\end{theorem}

\section{Background}\label{sec:background}

In this section we provide background material on posets, polytopes, and fans that will be used throughout the paper.

\subsection{Background on posets}\label{ss:posets}

We first recall some background on posets and refer the reader to  \cite[Chapter~3]{StanleyEnumerative} and \cite{DKTPosetSubdivisions}*{Section~2}
for more details. All posets in this paper will be finite posets, and we often identify isomorphic posets when it is convenient. The empty set is considered to be a poset. 
%It will be convenient to consider the empty set to be a poset. 

Let $B$ be a  poset. An interval of $B$ is a closed interval unless specified otherwise. Given elements $z \le z'$ in $B$, we write 
$[z,z'] = \{ z'' \in B : z \le z'' \le z' \}$, $[z,z') = \{ z'' \in B : z \le z'' < z' \}$, 
and $(z,z') = \{ z'' \in B : z < z'' < z' \}$. 
We say that $z'$ \emph{covers} $z$ if $z < z'$ and $z \le z'' \le z'$ implies that $z'' \in \{ z,z'\}$.  
If $B$ contains a unique minimal element, we will denote it $\hat{0}_B$, or simply $\hat{0}$. %, when $B$ is understood from context.
Similarly, if $B$ contains a unique maximal element, we will denote it $\hat{1}_B$, or simply $\hat{1}$. 
%The \emph{M\"obius} function $\mu$ on $B$ %from intervals of $B$ to $\Z$ 
%is defined recursively by $\mu(z,z) = 1$ and $\mu(z,z') = - \sum_{z \le z'' < z} \mu(z,z'')$ for $z < z'$ in $B$.  
We let $\overline{B} = B \cup \{\hat{1}\}$ be the poset obtained from $B$ by adjoining a  maximal element $\hat{1}$. 
%It will be convenient to consider the empty poset to have a unique maximal and 
A chain $z_0 < z_1 < \cdots < z_s$ in $B$ has length $s$. 
%Given a poset $B$, 
A maximal chain is a chain that is not a proper subchain of another chain in $B$. %The empty chain has length $-1$. 

%\alan{careful is the empty set a lower order ideal etc.}

A \emph{lower order ideal} of a poset $B$ is a subset $I \subset B$ such that if $z \le z'$ in $B$ and $z' \in I$, then $z \in I$. Similarly, an \emph{upper order ideal} of $B$ is a subset $I \subset B$ such that if $z \le z'$ in $B$ and $z \in I$, then $z' \in I$.
Given a subset $S$ of $B$,  the lower order ideal generated by $S$ is $\{ z \in B : z \le z' \textrm{ for some } z' \in S\}$, and the  upper order ideal generated by $S$ is $\{ z \in B : z' \le z \textrm{ for some } z' \in S\}$. For any $z \in B$, denote the lower order ideal   generated by $z$ by $B_{\le z} = \{ z' \in B : z' \le z \}$, and denote the upper order ideal generated by $z$ by $B_{\ge z} = \{ z' \in B : z \le z' \}$.  Also, let $B_{>z} = \{ z' \in B : z < z' \}$.

A function $\sigma: X \to Y$ between posets $X$ and $Y$ is \emph{order-preserving} if $x \le x'$ in $X$ implies that $\sigma(x) \le \sigma(x')$ in $Y$. 
In that case, given an element $y$ in $Y$, we write 
$X_{\le y}$ and $X_{< y}$ to denote the lower order ideals $\sigma^{-1}(Y_{\le y})$ and $\sigma^{-1}(Y_{< y})$ respectively. 
Throughout, we let $\id_B: B \to B$ be the identity function on $B$.  
%Observe that $X_{\le y}$ and  $X_{< y}$ are lower order ideals

%In that case, given an element $y$ in $Y$, we write 
%$X_{\le y} = \sigma^{-1}(Y_{\le y})$ and $X_{< y} = \sigma^{-1}(Y_{< y})$. 
%Observe that $X_{\le y}$ and  $X_{< y}$ are lower order ideals if they are nonempty. \alan{do need last qualifier}

\begin{definition}
	Let $B$ be a nonempty poset. A function $\rho_B: B \to \Z$ is a \emph{rank function} for $B$ if $\rho_B(z') = \rho_B(z) + 1$ whenever $z'$ covers $z$. For any $z \le z'$ in $B$, we write $\rho_B(z,z') := \rho_B(z') - \rho_B(z)$. 
	If a rank function for $B$ exists, then we call $B$ a \emph{ranked poset}. The \emph{rank} $\rank(B)$ of $B$ is the length of the longest maximal chain in $B$. 
	%	If $B$ contains a unique minimal element $\hat{0}$ and a rank function for $B$ exists, then 
	%	In this case, the \emph{natural rank function} is the unique rank function with $\rho_B(\hat{0}) = 0$.  
	%	
	%	A \emph{ranked poset} is a pair $(B,\rho_B)$, where $\rho_B$ is a rank function for $B$. 
\end{definition}

Note that if $\rho_B: B \to \Z$ is a rank function,  then 
%$(B, \rho_B)$ is a ranked poset, 
then every maximal chain of an interval $[z,z']$ of $B$ has length
%$\rho_B(z') - \rho_B(z)$.  
$\rho_B(z,z')$. 
For any integer $s$, we define a shifted rank function 
$\rho_B[s] : B \to \Z$ by $\rho_B[s](z) = \rho_B(z) + s$ for all $z \in B$. 
Observe that if $B$ contains a unique minimal element $\hat{0}_B$, then any two rank functions $\rho_B$ and $\rho_B'$ for $B$ satisfy $\rho_B' = \rho_B[s]$, where $s = \rho_B'(\hat{0}_B) - \rho_B(\hat{0}_B) \in \Z$.   In this case, the \emph{natural rank function} is the unique rank function with $\rho_B(\hat{0}_B) = 0$. 
Suppose that $X$ and $Y$ are ranked posets with rank functions $\rho_X$ and $\rho_Y$ respectively. A function $\sigma: X \to Y$ is \emph{rank-increasing} if $\rho_X(x) \le \rho_Y(\sigma(x))$ for all $x$ in $X$.  
%For example, if $B$ is the empty set, then $B$ is a ranked poset with $\rank(B) = -1$ and a  unique rank function given by the unique function $\rho_B: B \to \Z$.  In this case, if $Y$ is a ranked poset, then the unique function $\sigma: B \to Y$ is order-preserving and rank-increasing.  

% For example, a poset $B$ is \emph{graded} if every maximal chain of $B$ has the same length. In that case, $B$ admits a unique rank function with value $0$ on all minimal elements. \alan{maybe remove}

\begin{definition}\label{def:locallyEulerian}
	Let $B$ be a ranked poset with rank function $\rho_B: B \to \Z$. 
	Then $B$ is \emph{locally Eulerian} if for any $z < z'$ in $B$,  $\sum_{z \le z'' \le z'} (-1)^{\rho_B(z'')} = 0$, i.e., the interval $[z,z']$ contains as many elements of even rank as odd rank.
	%	
	%	Let $(B,\rho)$ be a ranked poset. Then $B$ is \emph{locally Eulerian} if for any $z < z'$ in $B$, we have  $\sum_{z \le z'' \le z'} (-1)^{\rho(z'')} = 0$, i.e., the interval $[z,z']$ contains as many elements of even rank as odd rank.
	%In this case, we say that $B$ is 
	A locally Eulerian poset is 
	\emph{lower Eulerian} if it contains %at most one minimal element. 
	a unique minimal element. %, denoted $\hat{0}_B$. 
	A lower Eulerian poset is \emph{Eulerian} if it contains 
%	at most one maximal element. 
	a unique maximal element. 
\end{definition}

Since the restriction of any two rank functions for $B$ to an interval agree up to a shift, it follows that 
%Observe that the locally Eulerian condition in 
Definition~\ref{def:locallyEulerian} is independent of the choice of rank function $\rho_B$. 
%Hence it makes sense to refer to a poset $B$ as locally Eulerian, lower Eulerian, or Eulerian, if $B$ admits a rank function $\rho$ and $(B,\rho)$ satisfies the corresponding conditions of Definition~\ref{def:locallyEulerian}. 
For example, 
if $I$ is a nonempty lower order ideal of a lower Eulerian poset, 
%and all maximal elements of $I$ have the same rank, 
then $I$ is lower Eulerian.  
If $B$ is locally Eulerian and $z \in B$, then  $B_{\ge z}$ is lower Eulerian.
%If $B$ is locally Eulerian (respectively Eulerian) and $z \in B$, then  $B_{\ge z}$ is lower Eulerian (respectively Eulerian). 
%If $B$ is the empty poset, then $B$ is Eulerian. 

\begin{example}\label{ex:boolean}
	For a nonnegative integer $n$, let $B_n$ be the poset of all (possibly empty) subsets of $[n] := \{ 1,\ldots, n\}$. 
	Then $B_n$ is Eulerian  of rank $n$ and is called the \emph{Boolean algebra} on $n$ elements. 	
	The natural rank function $\rho_{B_n}: B_n \to \Z$ is given by
	%A rank function is defined by 
	%$\rho: B \to \Z$, 
	$\rho_{B_n}(J) = |J|$  for all $J \subset [n]$. For example, $B_0$ is the one element poset, and $B_1 = \{ \hat{0}, \hat{1} \}$. 
\end{example}

If $B$ is an Eulerian poset of positive rank, then the boundary of $B$ is $\partial B = B \smallsetminus \{ \hat{1} \}$. We recall the following generalization introduced by Stanley. 

\begin{definition}\cite[p.485]{StanleyFlagfVectors}\label{def:nearEulerian}
	A nonempty poset $B$ is \emph{near-Eulerian} if  there exists an Eulerian poset of positive rank $\tilde{\Sigma} B$ and a maximal element $\hat{z} \in \partial (\tilde{\Sigma} B)$  such that $B = \partial (\tilde{\Sigma} B) \smallsetminus \{ \hat{z} \}$. The \emph{boundary} $\partial B$ is the lower order ideal of $B$ generated by all elements $z \in B$ %(necessarily of corank $1$)
	such that $B_{\ge z} = B_1$, i.e., precisely one element of $B$ is strictly greater than $z$. We call $\tilde{\Sigma} B$ the \emph{semisuspension} of $B$. 
\end{definition}

With the notation of Definition~\ref{def:nearEulerian}, let $\rho_{\tilde{\Sigma} B}: \tilde{\Sigma} B \to \Z$ be a rank function. Since every element $z \in \tilde{\Sigma} B$ with $\rho_{\tilde{\Sigma} B}(z,\hat{1}_{\tilde{\Sigma} B}) = 2$ is covered by precisely two elements, 
it follows that   $B$ is locally Eulerian with $\rank(B) = \rank(\tilde{\Sigma} B) - 1$, and 
$\partial B = \{ z \in \tilde{\Sigma} : z < \hat{z} \}$. In particular, $\tilde{\Sigma} B$ is uniquely determined from $B$ by first adjoining an element that is greater than all elements in $\partial B$, and then adjoining a maximal element.

For example, if $B$ is Eulerian with positive rank, then $B$ is near-Eulerian. Observe that a near-Eulerian poset has an even number of elements, while the boundary of an Eulerian poset of positive rank has an odd number of elements. In particular, the one element poset $B_0 = \partial B_1$ is Eulerian, but not near-Eulerian.

Let $B$ be a poset and consider two elements $z$ and $z'$ in $B$. 
If $B_{\ge z} \cap B_{\ge z'}$ is nonempty and contains a unique minimal element, then this element is the \emph{join} (or least upper bound) of $z$ and $z'$ and is denoted $z \vee z'$.  
In this case, we say that the join  
$z \vee z'$ exists. We need the following well-known property.

\begin{remark}\label{rem:joinorderpreserving}
	If $z \le z'$,  and if 
	$z \vee q$ and $z' \vee q$ exist for some $q \in B$, then $z \vee q \le z' \vee q$. To see this, observe that $B_{\ge z'} \subset B_{\ge z}$ and hence $z' \vee q \in B_{\ge z'} \cap B_{\ge q} \subset B_{\ge z} \cap B_{\ge q} = B_{\ge z \vee q}$. 
\end{remark}

We will use the following notation. 

\begin{definition}\label{def:joinadmissible}
	An element $q$ of a poset $B$ is \emph{join-admissible} if 
	$z \vee q$ exists for all $z \in B$.	 
	% 	 An upper order ideal $I$ of a poset $\Gamma$ is \emph{join-admissible} if $z \vee I$ exists for all $z \in \Gamma$. We say that an element $q$ of $\Gamma$ is \emph{join-admissible} if 
	% 	 $z \vee q$ exists for all $z \in \Gamma$, i.e.,  the upper order ideal $\Gamma_{\ge q}$ is join-admissible. 
\end{definition}

Similarly, one can define the meet (or greatest lower bound) $z \wedge z'$, when it exists. If $z \vee z'$ and $z \wedge z'$ exist for any two elements $z,z'$ of $B$, then $B$ is a \emph{lattice}. For example, the Boolean algebra $B_n$ %in Example~\ref{ex:boolean} 
is a lattice. 
% and Example~\ref{ex:polytope} are lattices.
By \cite{StanleyEnumerative}*{Proposition~3.3.1}, $B$ is a lattice if and only if $B$ contains a unique minimal element and $z \vee z'$  exists for any two elements $z,z'$ in $B$. 
%The semisuspensions $\{ \tilde{\Sigma} B_n : n \ge 2 \}$ are examples of Eulerian posets that are not lattices. 
As a nonexample, given any Eulerian poset of rank at least $2$, the semisuspension 
$\tilde{\Sigma} B$ is not a lattice. Indeed, if $z$ and $z'$ are distinct maximal elements of $\partial B$, then $z \vee z'$ does not exist in $\tilde{\Sigma} B$.

We next consider several operations on posets. If $B$ is a poset, then the \emph{dual poset} $B^*$ is the poset with the same elements as $B$ and with all orderings reversed, i.e., $z \le z'$ in $B$ if and only if $z' \le z$ in $B^*$. Then $B$ is Eulerian if and only if $B^*$ is Eulerian. For example, $B_n^* = B_n$. 

If $B$ and $B'$ are posets, then the \emph{direct product} is the poset $B \times B' = \{ (z,z') : z \in B, z' \in B' \}$ with $(z_1,z_1') \le (z_2,z_2')$ in $B \times B'$ if and only if $z_1 \le z_2$ in $B$ and $z_1' \le z_2'$ in $B'$. If  $\sigma: X \to Y$ and $\sigma': X' \to Y'$ are  functions between posets $X$ and $Y$, and $X'$ and $Y'$ respectively, then let 
$\sigma \times \sigma': X \times X' \to Y \times Y'$ be defined by $(\sigma \times \sigma')(x,x') = (\sigma(x),\sigma(x'))$ for all $(x,x') \in X \times X'$. 
%If $\sigma$ and $\sigma'$ are order-preserving, then $\sigma \times \sigma'$ is order-preserving. 
If $B$ and $B'$ are ranked posets with rank functions $\rho_B$ and $\rho_{B'}$ respectively, then $B \times B'$ is a ranked poset with rank function
$\rho_{B \times B'}(z,z') = \rho_B(z) + \rho_{B'}(z')$ for all $z \in B$ and $z' \in B'$. In this case, %if $B$ and $B'$ are nonempty, then  
 $\rank(B \times B') = \rank(B) + \rank(B')$. 
%If either $B$ or $B'$ is the empty poset, then $B \times B'$ is the empty poset. 
The \emph{pyramid} of $B$ is  $\Pyr(B) = B \times B_1$.  
For example, $B_n \times B_{n'} = B_{n + n'}$ and $\Pyr(B_n) = B_{n + 1}$. 
If $B$ and $B'$ are locally Eulerian (respectively lower Eulerian or Eulerian), then $B \times B'$ is locally Eulerian (respectively lower Eulerian or Eulerian).
Also,  $(B \times B')^* = B^* \times (B')^*$, and, in particular,  $\Pyr(B)^* = \Pyr(B^*)$. 
%We have $(B \times B')^* = B^* \times (B')^*$. In particular, $\Pyr(B)^* = \Pyr(B^*)$. 
For example, if $B$ is an Eulerian poset of positive rank, then $\Pyr(\partial B)$ is near-Eulerian with semisuspension $\Pyr(B)$ and boundary $\partial B \times \{ \hat{0} \}$. 
%\alan{should include this example? revisit this for computing the local $cd$-index? see Example~\ref{ex:identitycdindex}}

%$\Pyr(B)$ is the semisuspension of the near-Eulerian poset $\Pyr(\partial B)$ with $\partial \Pyr(\partial B) = \partial B \times \{ \hat{0} \}$. 

Let $B$ and $B'$ be Eulerian posets of positive rank with rank functions $\rho_B$ and $\rho_{B'}$ respectively. The \emph{dual diamond product} $B \diamond^\ast B'$ is the Eulerian poset of positive rank uniquely determined by the condition that $\partial(B \diamond^\ast B') = \partial B \times \partial B'$, with rank function $\rho_{B \diamond^\ast B'}$ restricting to 
$\rho_{\partial B \times \partial B'}$. 
%
% the poset of rank $\rank(B) + \rank(B') - 1$ defined by $B \diamond^\ast B' = \overline{\partial B \times \partial B'}$.  
%In fact, $B \diamond^\ast B'$ is Eulerian.   \alan{see ref for a new proof}
The \emph{bipyramid} of $B$ is $\Bipyr(B) = B \diamond^\ast B_2$.
The \emph{diamond product} is the Eulerian poset $B \diamond B' = (B^* \diamond^\ast B^*)^*$, and the \emph{prism} of $B$ is $\Prism(B) = B \diamond B_2 = \Bipyr(B^*)^*$. Equivalently, $B \diamond B'$ is the Eulerian poset of positive rank uniquely determined by the condition that 
$(B \diamond B') \smallsetminus \{ \hat{0}_{B \diamond B'}\} = (B \smallsetminus \{ \hat{0}_{B}\}) \times (B' \smallsetminus \{ \hat{0}_{B'}\})$. It has a rank function $\rho_{B \diamond B'}$ that restricts to 
$\rho_{(B \smallsetminus \{ \hat{0}_{B}\}) \times (B' \smallsetminus \{ \hat{0}_{B'}\})}$. 
%, i.e., $B \diamond^\ast B'$ is the Eulerian poset with positive rank and  boundary 
%$(B \smallsetminus \{ \hat{1}_B\}) \times (B' \smallsetminus \{ \hat{1}_{B'}\})$.
%$\partial B \times \partial B'$. 

%Let $B$ be a poset and fix a field $k$. The \emph{order complex} $\O(B)$ is
%the simplicial complex with vertices given by the elements of $B$ and faces given by chains of elements of $B$. 

If $B$ is an Eulerian poset of positive rank and $B'$ is a lower Eulerian poset, then the \emph{star product} $B \ast B'$ is the disjoint union of $\partial B$ 
and $B' \smallsetminus \{ \hat{0}_{B'} \}$ with $z \le z'$ in 
$B \ast B'$  if and only if one of the following conditions hold:
\begin{enumerate}
	\item $z,z' \in \partial B$ and $z \le z'$ in $B$, or,
	\item $z,z' \in B' \smallsetminus \{ \hat{0}_{B'} \}$ and $z \le z'$ in $B'$, or,
	\item $z \in \partial B$ and $z' \in B' \smallsetminus \{ \hat{0}_{B'} \}$.
\end{enumerate}
One may verify that $B \ast B'$ is lower Eulerian. In particular,  $B \ast B'$ is Eulerian of positive rank if and only if $B'$ is Eulerian of positive rank.  Moreover, if $\rho_B$ and $\rho_{B'}$ are the natural rank functions of $B$ and $B'$ respectively, then the natural rank function $\rho_{B \ast B'}$ of  $B \ast B'$ is given by
\[
\rho_{B \ast B'}(z) = \begin{cases}
	\rho_B(z) &\textrm{ if } z \in \partial B, \\
	\rho_{B'}[\rank(B) - 1](z) &\textrm{ if } z \in B' \smallsetminus \{ \hat{0}_{B'} \}. 
\end{cases}
\]
This construction first appeared in \cite{StanleyFlagfVectors}. For example,  $B \ast B_0 = \partial B$, $B_1 \ast B' = B'$, $B \ast B_1 = B$,  and $B \ast B_2 = \tilde{\Sigma} B$.

Finally, we briefly discuss some aspects of the topology of posets. 
The \emph{order complex} 
$\O(B)$ of a poset $B$
is the simplicial complex with vertices given by the elements of $B$ and faces given by chains of elements of $B$. If $x < x'$ in $B$, then we write $\O(x,x') = \O((x,x'))$.  
Let $\tilde{H}_i(Z;\Q)$ denote the $i$th reduced homology group over $\Q$ of a topological space $Z$.

\begin{definition}\label{def:Gorensteinstar}
	Let $B$ be an Eulerian poset of positive rank with rank function $\rho_B$.
	Then $B$ is \emph{Gorenstein*} if for any $x < x'$ in $B$ with $\rho_B(x,x') \ge 2$, 
	\[
	\tilde{H}_i(\O(x,x');\Q) = \begin{cases}
		k &\textrm{ if } i = \rho_B(x,x') - 2, \\
		0 &\textrm{ otherwise. }
	\end{cases}
	\]
\end{definition}

Equivalently, $B$ is Gorenstein* if and only if $B$ is Eulerian of positive rank and Cohen-Macaulay over $\Q$ \cite{StanleySurveyEulerian}. Equivalently, $B$ is Gorenstein* if it contains unique minimal and maximal elements, and  $\O(\hat{0}_B,\hat{1}_B)$ is a homology sphere over $\Q$ of dimension $\rank(B) - 2$ \cite{EKDecompositionTheoremCDIndex}*{Definition~2.1}. 
For example, the face lattice of a polytope is Gorenstein* (c.f. Example~\ref{ex:CWEulerian}). 
By Definition~\ref{def:Gorensteinstar}, any interval of positive rank in a 
Gorenstein* poset is a Gorenstein* poset.

Let $B$ be a near-Eulerian poset. Then $B$ is \emph{near-Gorenstein*} if the semisuspension $\tilde{\Sigma} B$ is Gorenstein*  \cite{EKDecompositionTheoremCDIndex}*{Lemma~2.3}. 
In this case, $\overline{\partial B}$ is an interval of positive rank in $\tilde{\Sigma} B$ and hence is Gorenstein*. 
For example, Gorenstein* posets  are near-Gorenstein* \cite{EKDecompositionTheoremCDIndex}*{Definition~2.2}. 
	As another example, the face poset of a polyhedral subdivision of a polytope is near-Gorenstein* 
(see  Example~\ref{ex:CWball}). 

%Let $B$ be an Eulerian poset of positive rank with rank function $\rho_B$. Let $k$ be a field. Let $\tilde{H}_i(Z;k)$ denote the $i$th reduced homology group over $k$ of a topological space $Z$.
%Then $B$ is \emph{Gorenstein*} over $k$ if for any $x < x'$ in $B$ with $\rho_B(x,x') \ge 2$, 
%\[
%\tilde{H}_i(\Delta(x,x');k) = \begin{cases}
	%	k &\textrm{ if } i = \rho_B(x,x') - 2, \\
	%	0 &\textrm{ otherwise. }
	%\end{cases}
	%\]
	%Equivalently, $B$ is Gorenstein* over $k$ if and only if $B$ is Eulerian and Cohen-Macaulay over $k$ \cite{StanleySurveyEulerian}. Equivalently, $B$ is Gorenstein* if $\Delta(\hat{0}_B,\hat{1}_B)$ is a homology sphere over $k$ of dimension $\rank(B) - 2$ \cite{EKDecompositionTheoremCDIndex}*{Definition~2.1}. 
	%Let $B$ be a near-Eulerian poset. Then $B$ is \emph{near-Gorenstein*} over $k$ if $\tilde{\Sigma} B$ is Gorenstein* over $k$  \cite{EKDecompositionTheoremCDIndex}*{Lemma~2.3}. For example, Gorenstein* posets over $k$ are near-Gorenstein* over $k$ \cite{EKDecompositionTheoremCDIndex}*{Definition~2.2}. 
	%\alan{also mention $\overline{\partial B}$ is Gorenstein*}

	\subsection{Background on polytopes}\label{ss:polytopes} We recall some background on polytopes. We refer the reader to \cite{DRSTriangulations10} and \cite{ZieglerLectures}  for more details. All vector spaces in this paper are finite-dimensional.

	Let $n$ be a nonnegative integer and let $P \subset V$ be an $(n - 1)$-dimensional polytope in a real vector space $V$. 
	%In what follows, we will consider polytopes up to affine equivalence. 
	The \emph{face lattice} $\face(P)$ is the poset of faces of $P$ ordered by inclusion. Here  the empty face is considered a face of $P$ of   dimension $-1$. Then $\face(P)$ is a lattice and is Eulerian of rank $n$ with natural rank function $\rho_{\face(P)}(F) = \dim F + 1$. For example, 
	if $\Delta^{n - 1}$ denotes the simplex with $n$ vertices, then $\face(\Delta^{n - 1}) = B_n$.

	Given a face $F$ of $P$, the \emph{quotient polytope} $P/F$ is a polytope (well-defined up to projective transformation) such that $\face(P/F) = \face(P)_{\ge F}$ \cite{BMIntersectionHomologyKalai}. 
	For example, if $F$ is a vertex and $L = \{ v \in V : h(v) = 0 \}$ is an affine hyperplane for some affine function $h$ such that $L \cap P = F$ and $P \subset \{ v \in V : h(v) \ge  0 \}$,   then $P/F$ is the intersection of $P$ with  $\{ v \in V : h(v) = \epsilon \}$ for any $0 < \epsilon \ll  1$. 
	%Explicitly, let $L$ be an affine plane of $V$ of dimension $\dim L = \dim P - \dim F - 1$ that intersects the interior of $F$, but not the boundary of $F$.  
	Let  $P' \subset V'$ be  a polytope in  a real vector space $V'$. 
	Let $0_V$ and $0_{V'}$ denote the origin in $V$ and $V'$ respectively.
	The \emph{free join} $P \star P'$ is the convex hull of 
	$P \times \{ 0_{V'} \} \times \{ 0 \}$ and 
	$\{ 0_V \} \times P'  \times \{ 1 \}$ in $V \oplus V' \oplus \R$.
	The \emph{pyramid} $\Pyr(P)$ is the free join of $P$ and $P' = \{ 0 \} \subset \R$.  With the notation of Section~\ref{ss:posets},
	$\face(P \star P') = \face(P) \times \face(P')$ and $\face(\Pyr(P)) = \Pyr(\face(P))$. The \emph{apex} of $\Pyr(P)$ is the vertex 
	$(0_V,0,1)$. 
	The \emph{product} $P \times P'$  is $\{ (z,z') \in V \oplus V' : z \in P, z' \in P' \}$, and the \emph{prism} $\Prism(P)$ is the product of $P$ and $P' = [-1,1] \subset \R$. Then $\face(P \times P') = \face(P) \diamond \face(P')$ and  $\face(\Prism(P)) = \Prism(\face(P))$. 
	The \emph{free sum} (also known as the \emph{direct sum}) $P \oplus P'$  of $P$ and $P'$ is the convex hull of $P \times \{ 0_{V'} \}$ and 
	$\{ 0_V \} \times P'$ in $V \oplus V'$.  Assume that $0_V$ lies in the relative interior of $P$,  and $0_{V'}$ lies in the relative interior of $P'$.  Then $\face(P \oplus P') = \face(P) \diamond^\ast \face(P')$. In this case, the
	\emph{bipyramid} $\Bipyr(P)$ of $P$ is the free sum of $P$ and $P' = [-1,1] \subset \R$, %Then $\face(P \oplus P') = \face(P) \diamond^\ast \face(P')$ 
	and  $\face(\Bipyr(P)) = \Bipyr(\face(P))$. The \emph{apices} of $\Bipyr(P)$ are $(0_V,1)$ and $(0_V,-1)$.

	A \emph{polyhedral subdivision} $\cS$ of $P$ is a finite collection of polytopes whose union is $P$, such that if $Q \in \cS$, then any face of $Q$ lies in $\cS$, and if $Q' \in \cS$, then $Q \cap Q'$ is a common face of $Q$ and $Q'$. The \emph{face poset} $\face(\cS)$ of $\cS$ is the poset of elements of $\cS$ ordered by inclusion. Then $\face(\cS)$ is a lower Eulerian poset with natural rank function $\rho_{\face(P)}(F) = \dim F + 1$. If $\cS'$ is a polyhedral subdivision of $P$, then $\cS'$ is a \emph{refinement} of $\cS$ if every polytope in $\cS'$ is contained in a polytope of $\cS$.  In that case, we have an induced function
	\[
	\sigma: \face(\cS') \to \face(\cS),
	\]
	where $\sigma(F')$ is the smallest element of $\cS$ containing $F'$. 
	For example, the trivial subdivision of $P$ is the subdivision consisting of the faces of $P$, with facet poset $\face(P)$.  
	
	Let $A \subset P$ be a finite subset containing the vertices of $P$. Given a function $\omega: A \to \R$, the \emph{upper convex hull} $\UH(\omega)$ of $\omega$ is the convex hull of 
	$\{ (a,\lambda) : a \in A, \omega(a) \le \lambda \} \subset V \oplus \R$. Let $\pi: V \oplus \R \to V$ denote projection onto the first coordinate. Let $\cS(\omega)$ be the polyhedral subdivision $\{ \pi(F) : F \textrm{ is a bounded face of } \UH(\omega)\}$ of $P$. A polyhedral subdivision $\cS$ of $P$ is \emph{regular} if $\cS = \cS(\omega)$ for some function $\omega$. 
	Let $B \subset P$ be a finite subset and let $A$ be the union of $B$ and the vertices of $P$. Define $\omega_B: A \to \R$ by $\omega(a) = -1$ if $a \in B$ and $\omega(a) = 0$ otherwise. Then $\cS(\omega_B)$ is the \emph{pulling refinement} of $P$ by $B$.

	\subsection{Background on fans}\label{ss:fans}
	
	We recall some background on cones and fans. We refer the reader to \cite{CoxLittleSchenck11} and \cite{FultonIntroductionToricVarieties}  for more details. Recall that all vector spaces in this paper are finite-dimensional.
	All cones in this paper will be polyhedral cones. 
	
	%, i.e., the intersection of finitely many half-spaces in a vector space. 
	Let $C$ be an $n$-dimensional (polyhedral) cone in a real vector space $V$. The \emph{face lattice} $\face(C)$ is the poset of faces of $C$ ordered by inclusion.  Then $\face(C)$ is a lattice and is Eulerian of rank $n$. 
	We say that $C$ is \emph{pointed} if the minimal face of $C$ is the origin in $V$. 
	If $C$ is pointed, then the natural rank function is given by $\rho_{\face(C)}(F) = \dim F$. 
	Given a polyhedron $Q$ contained in $V$, let $C(Q)$ be the smallest cone containing $Q$.

	Given an $(n - 1)$-dimensional polytope $P$ in $V$, consider the $n$-dimensional pointed cone $C(P \times \{ 1\})$, where  $P \times \{ 1 \}$ lies in $V \oplus \R$. Then $\face(C(P \times \{1\})) = \face(P)$. 
	Conversely, if $C$ is a pointed cone and $H$ is an affine hyperplance 
	%not containing the origin 
	that intersects all the rays of $C$ at a single nonzero point, then $P = C \cap H$ is a polytope and $\face(C) = \face(P)$.
	If $C = \{ 0 \}$, then $P$ is the empty polytope. 
	We conclude that we may view face lattices of pointed cones as  face lattices of polytopes and vice versa. 
	
	\begin{example}\label{ex:polyhedron}
		Let $Q$ be a polyhedron in a real vector space $V$. Then $Q$ can be written as the Minkowski sum of a (not necessarily unique) polytope and a unique cone $C_Q$, called the \emph{recession cone} of $Q$. Then $C(Q \times \{1\}) \subset V \oplus \R$ is a pointed cone with faces equal to $\{ C(F \times \{ 1 \}):  F \textrm{ face of } Q \}$, together with the nonzero faces of $C_Q \times \{ 0 \}$. 
	\end{example}
	
	%Given an $(n - 1)$-dimensional polyhedron $Q$ in $V$, consider the $n$-dimensional pointed cone 
	%$C(Q \times \{ 1\})$, where  $Q \times \{ 1 \}$ in $V \oplus \R$.
	%For example, if $P$ is a polytope, i.e., $P$ is a bounded polyhedron, then $\face(C(P \times \{1\})) = \face(P)$. 

	%Given an $(n - 1)$-dimensional polyhedron $Q$, let $C(Q)$ be the $n$-dimensional pointed cone spanned by $Q \times \{ 1 \}$ in $V \oplus \R$. For example, if $P$ is a polytope, i.e., $P$ is a bounded polyhedron, then $\face(C(P)) = \face(P)$. 
	%If $P$ is the empty polytope, then $C(P) = \{ 0 \}$. 
	%Conversely, if $C$ is a pointed cone and $H$ is an affine hyperplance 
	%%not containing the origin 
	%that intersects all the rays of $C$ at a single nonzero point, then $P = C \cap H$ is a polytope and $\face(C) = \face(P)$.
	%If $C = \{ 0 \}$, then $P$ is the empty polytope. 
	% We conclude that we may view face lattices of pointed cones as  face lattices of polytopes and vice versa. 
	%%Here the pointed cone consisting of  the origin in a vector space corresponds to the empty polytope.  
	
	Recall that a  \emph{fan} $\Sigma$ in a real vector space $V$ is a finite collection of pointed cones such that 
	\begin{enumerate}
		\item if $C \in \Sigma$ and $F$ is a face of $C$, then $F \in \Sigma$, and,
		\item if $C,C' \in \Sigma$, then $C \cap C'$ is a common face of $C$ and $C'$. 
	\end{enumerate}
	The \emph{support} $|\Sigma|$ of $\Sigma$ is the union of the cones of $\Sigma$ in $V$.  
	%The \emph{dimension} $\dim \Sigma$ of $\Sigma$ is the dimension of the linear span 
	We say that $\Sigma$ is \emph{complete} if $|\Sigma| = V$. 
	%We say that $\Sigma$ has \emph{convex support} if $|\Sigma|$ is a (convex, polyhedral) cone.  
	The \emph{face poset} $\face(\Sigma)$  is the poset of cones in $\Sigma$ ordered by inclusion.  Then $\face(\Sigma)$ is lower Eulerian with natural rank function $\rho_{\face(\Sigma)}(C) = \dim C$. 
	%The unique minimal cone $L$ in $\Sigma$ is a linear subspace of $V$. 
	%We say that $\Sigma$ is \emph{pointed} if $L = \{ 0 \}$, or, equivalently, if all cones in $\Sigma$ are pointed. Observe that the images of the cones of $\Sigma$ in $V/L$ form a pointed fan with the same face poset as $\Sigma$. 
	%Hence we will often assume that fans are pointed. If $\Sigma$ is pointed, then the natural rank function is given by $\rho_{\face(\Sigma)}(C) = \dim C$. 

	Let $\Sigma'$ be a fan in a real vector space $V'$. Let $\phi: V' \to V$ be a linear map. Then $\phi$ defines a \emph{morphism of fans} 
	%, denoted $\phi: \Sigma' \to \Sigma$, 
	from $\Sigma'$ to $\Sigma$ 
	if for every cone $C'$ in $\Sigma'$ there is a cone $C$ in $\Sigma$ such that $\phi(C') \subset C$. 
	In that case, we have an induced function
	\[
	\sigma: \face(\Sigma') \to \face(\Sigma),
	\]
	where $\sigma(C')$ is the smallest element of $\Sigma$ containing $\phi(C')$.
	A morphism of fans is \emph{proper} if $\phi^{-1}(|\Sigma|) = |\Sigma'|$, and is \emph{surjective} if $\phi$ is surjective.
	A proper morphism of fans is \emph{projective} if there exists
	$p: |\Sigma'| \to \R$ that is 	piecewise linear  with respect to $\Sigma'$, and  
%	a 
%	piecewise linear function $p$ on $\Sigma'$ 
	such that the restriction of $p$ to $\phi^{-1}(C)$ is strictly convex with respect to the restriction $\Sigma'|_{\phi^{-1}(C)}$ for every maximal cone $C$ in $\Sigma$.

	For example, if $\Sigma'$ is complete and $V = \Sigma = \{ 0 \}$, then the unique linear map $\phi: V' \to V$ is a  proper, surjective morphism of fans.
	In that case, we say that $\Sigma'$ is \emph{projective} if the morphism of fans induced by $\phi$ is projective.    
	If $V' = V$ and the identity map on $V$ induces a proper morphism of fans from $\Sigma'$ to $\Sigma$, then we say that $\Sigma'$ is a \emph{refinement} of $\Sigma$. Note that a refinement of fans is proper and surjective. 
	If $\cS$ is a polyhedral subdivision of a polytope $P$, let $\Sigma(\cS)$ be the fan with cones $\{ C(F \times \{ 1\}) : F \in \cS \}$. If $\cS'$ is a polyhedral subdivision refining $\cS$, then $\Sigma(\cS')$ is a refinement of $\Sigma(\cS)$. If $\cS$ is the trivial subdivision of $P$, then $\cS'$ is a regular polyhedral subdivision if and only if the corresponding refinement $\Sigma(\cS') \to C(P \times \{ 1\})$ is projective.

	%A \emph{degenerate fan} $\Sigma$ in $V$ satisfies all the above conditions of a fan except the condition that the cones are pointed. In that case, there is a unique minimal cone $L$ in $\Sigma$ which is a linear subspace of $V$. Moreover, the projection $p: V \to V/L$. 

\section{Subdivisions of lower Eulerian posets}\label{sec:subdivisions}
%\section{Strong formal subdivisions}\label{sec:subdivisions}

In this section, we state our main results. In Section~\ref{ss:basic} we recall the definition and basis properties of strong formal subdivisions, and prove some lemmas to be used later. In Section~\ref{ss:bijection} we state Theorem~\ref{thm:mainsimplified}, and in Section~\ref{ss:functorial} we state Theorem~\ref{thm:main}. 

%overview of section: note that we will take the opportunity to further develop theory
%main sections are the main theorem and examples. Depending on taste, some readers may want to read examples section simultaneously with section on main theorem

\subsection{Basic properties of strong formal subdivisions}\label{ss:basic}

We now recall the definition and describe some properties of strong formal subdivisions. We refer the reader to \cite{KatzStapledon16}*{Section~3} and \cite{DKTPosetSubdivisions}*{Section~2} for more details. 
%We also develop a few additional properties. 

Suppose that $X$ and $Y$ are ranked posets with rank functions $\rho_X$ and $\rho_Y$ respectively. Recall from Section~\ref{ss:posets} that a function $\sigma: X \to Y$ is order-preserving if $x \le x'$ in $X$ implies that $\sigma(x) \le \sigma(x')$ in $Y$, and is rank-increasing if $\rho_X(x) \le \rho_Y(\sigma(x))$ for all $x$ in $X$.

%A function $\sigma: X \to Y$ between posets $X$ and $Y$ is \emph{order-preserving} if $x \le x'$ in $X$ implies that $\sigma(x) \le \sigma(x')$ in $Y$. Suppose that $X$ and $Y$ are ranked posets and fix rank functions $\rho_X$ and $\rho_Y$ respectively. Then $\sigma$ is \emph{rank-increasing} if $\rho_X(x) \le \rho_Y(\sigma(x))$ for all $x$ in $X$.  

\begin{definition}\cite[Definition~3.16]{KatzStapledon16}\label{def:stronglysurjective}
	Let  $\sigma: X \to Y$ be an order-preserving, rank-increasing function between %nonempty 
	ranked posets $X$ and $Y$  with rank functions $\rho_X$ and $\rho_Y$ respectively. Then $\sigma$ is  \emph{strongly surjective} if $\sigma$ is surjective and for all $x \in X$ and $y \in Y$ with $\sigma(x) \leq y$, there exists $x' \in X$ such that $x \le x'$, $\rho_X(x') = \rho_Y(y)$, and $\sigma(x') = y$. In that case, the \emph{rank} of $\sigma$ is $\rank(\sigma) = \rank(X) - \rank(Y)$. 
\end{definition}

Consider the setup of Definition~\ref{def:stronglysurjective}. 
By Lemma~\ref{lem:liftchains} below, any maximal chain in $Y$ is the image under $\sigma$ of a maximal chain in $X$.  In particular, $\rank(\sigma) = \rank(X) - \rank(Y)$ is a nonnegative integer. 
%In Definition~\ref{def:stronglysurjective}, observe that 
%%Also, observe that if $\sigma$ is strongly surjective, then 
%any chain in $Y$ is the image under $\sigma$ of a chain in $X$. 
%In particular, $\rank(\sigma) = \rank(X) - \rank(Y)$ is a nonnegative integer. 

\begin{lemma}\label{lem:liftchains}
	Let $\sigma: X \to Y$ be a strongly surjective function between ranked posets $X$ and $Y$ with rank functions $\rho_X$ and $\rho_Y$ respectively. 
	Then every maximal chain $y_0 < y_1 < \cdots < y_s$ in $Y$ is the image under $\sigma$ of a maximal chain $x_0 < x_1 < \cdots <  x_r$ in $X$ such that  $r - s = \rho_Y(y_0) - \rho_X(x_0)$. 
	
	%		Moreover, if $x_0 \in X$ and $y_0 \in Y$ denote the initial elements of the chains with $\sigma(x_0) = y_0$, then 
	%		Then the image under $\sigma$ of a maximal chain $x_0 < x_1 < \cdots <  x_r$ in $X$ is a maximal chain $y_0 < y_1 < \cdots < y_s$ in $Y$ with $r - s = \rho_Y(y_0) - \rho_X(x_0)$. Moreover, every maximal chain in $Y$ appears in this way. 
	
	%		Assume that $\sigma: X \to Y$ is strongly surjective. Consider a maximal chain $y_0 < y_1 < \cdots < y_s$ in $Y$. Then there exists a maximal chain $x_0 < x_1 < \cdots <  x_r$ in $X$ such that $s \le r$, $\sigma(x_0) = \cdots  = \sigma(x_{r - s}) = y_0$, $\sigma(x_{s - i}) = y_{s - i}$, and $\rho_X(x_{s - i}) = \rho_Y(y_{s - i})$ for $0 \le i \le s$. Moreover, $r - s = \rho_Y(y_0) - \rho_X(x_0)$. 
\end{lemma}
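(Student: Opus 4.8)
The plan is to build the lifted chain from the bottom up, using the defining lifting property of strong surjectivity to \emph{align} ranks at each element of the given chain, and then to fill in the remaining gaps with maximal chains of intervals in $X$. The rank gap between $\rho_X(x_0)$ and $\rho_Y(y_0)$ will be absorbed entirely in the bottom interval, which is what accounts for the identity $r - s = \rho_Y(y_0) - \rho_X(x_0)$.

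First I would record two structural facts about the maximal chain $y_0 < \cdots < y_s$: since it is maximal and $Y$ is ranked, $y_0$ is a minimal element of $Y$, $y_s$ is a maximal element of $Y$, and $\rho_Y(y_j) = \rho_Y(y_0) + j$ for all $j$. To anchor the construction, I would choose a minimal element $x_0$ of $X$ with $\sigma(x_0) = y_0$: surjectivity gives some $x$ with $\sigma(x) = y_0$, and taking any minimal $x_0 \le x$, order-preservation yields $\sigma(x_0) \le y_0$, whence $\sigma(x_0) = y_0$ because $y_0$ is minimal. Next I would produce a sequence $x_0 = w_0, w_1, \ldots, w_s$ in $X$ with $w_{j-1} \le w_j$, $\sigma(w_j) = y_j$, and $\rho_X(w_j) = \rho_Y(y_j)$ for $j \ge 1$. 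These come directly from strong surjectivity (Definition~\ref{def:stronglysurjective}): given $w_{j-1}$ with $\sigma(w_{j-1}) = y_{j-1} \le y_j$, the defining property produces $w_j \ge w_{j-1}$ with $\rho_X(w_j) = \rho_Y(y_j)$ and $\sigma(w_j) = y_j$. I would then define $x_0 < x_1 < \cdots < x_r$ by concatenating, for each $j$, a maximal chain of the interval $[w_{j-1}, w_j]$ of $X$; consecutive elements are then covers, and telescoping gives $r = \rho_X(w_s) - \rho_X(w_0) = \rho_Y(y_s) - \rho_X(x_0) = \rho_Y(y_0) + s - \rho_X(x_0)$, i.e. $r - s = \rho_Y(y_0) - \rho_X(x_0)$.

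The two points needing care — and which I view as the heart of the argument — are that this chain maps \emph{onto} the given chain and that it is \emph{maximal} in $X$. For the image: any $x''$ with $w_{j-1} \le x'' \le w_j$ satisfies $y_{j-1} = \sigma(w_{j-1}) \le \sigma(x'') \le \sigma(w_j) = y_j$ by order-preservation, and since $y_j$ covers $y_{j-1}$ this forces $\sigma(x'') \in \{y_{j-1}, y_j\}$; hence $\sigma$ carries the whole chain into $\{y_0, \ldots, y_s\}$, and since it hits each $w_j$ it maps onto $y_0 < \cdots < y_s$. For maximality in $X$: the element $x_0$ is minimal by construction and consecutive elements are covers, so it remains to check that the top element $x_r = w_s$ is maximal. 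Any $x'' > w_s$ would satisfy $\sigma(x'') \ge \sigma(w_s) = y_s$, hence $\sigma(x'') = y_s$ since $y_s$ is maximal in $Y$; but then $\rho_X(x'') > \rho_X(w_s) = \rho_Y(y_s) = \rho_Y(\sigma(x''))$ contradicts the rank-increasing property of $\sigma$.

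I expect no single deep step here; the main obstacle is the bookkeeping: confining the $\sigma$-values on each interval $[w_{j-1}, w_j]$ via the order-preservation sandwich, and verifying extremality of both endpoints. The only genuinely nonuniform feature is the bottom interval $[w_0, w_1]$, whose length $\rho_Y(y_1) - \rho_X(x_0)$ may exceed $1$ and thereby absorbs the rank discrepancy; every higher interval has length exactly $1$ because the ranks of the $w_j$ are aligned with those of the $y_j$.
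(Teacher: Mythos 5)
Your proof is correct and follows essentially the same route as the paper: anchor at a minimal preimage of $y_0$, lift the chain step by step via strong surjectivity with ranks aligned to $\rho_Y$, fill the gaps with maximal chains of intervals, and use the rank-increasing property to show the top element is maximal. The only cosmetic difference is that the paper absorbs the rank gap $\rho_Y(y_0)-\rho_X(x_0)$ in a preliminary chain lying entirely in the fiber over $y_0$, whereas you absorb it into the first interval $[w_0,w_1]$; both are handled by the same order-preservation sandwich argument.
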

\begin{proof}
	%	   Consider the image $y_0 = \sigma(x_0) \le \sigma(x_1) \le  \cdots \le  \sigma(x_r) = y_s$ in $Y$ of the chain $x_0 < x_1 < \cdots <  x_r$ in $X$. We claim that $y_0$ is a minimal element of $Y$. Indeed, if $y \le y_0$, then by strong surjectivity there exists $x \le x'$ such that $\sigma(x) = y$, $\sigma(x') = y_0$, and $\rho_X(x') = \rho_Y(y_0)$. 
	Since $\sigma$ is surjective, there exists $x_0$ in $X$ such that $\sigma(x_0) = y_0$. If $x \le x_0$, then $\sigma(x) \le \sigma(x_0) = y_0$. Since $y_0$ is minimal in $Y$, it follows that $\sigma(x) = y_0$. Hence we may assume that $x_0$ is a minimal element in $X$.  
	Strong surjectivity implies that there exists a  chain $x_0 < x_1 < \cdots < x_m$ such that $\sigma(x_0) = \sigma(x_1) = \cdots = \sigma(x_m) = y_0$, $\rho_X(x_m) = \rho_Y(y_0)$, and $m = \rho_X(x_m) - \rho_X(x_0) = \rho_Y(y_0) - \rho_X(x_0)$.

	Strong surjectivity then allows us to construct a chain $x_m < \cdots < x_{m + s}$, where $s = \rho_Y(y_s) - \rho_Y(y_0)$, $\sigma(x_{m + i}) = y_i$, and $\rho_X(x_{m + i}) = \rho_Y(y_i) = \rho_Y(y_0) + i$ for $0 \le i \le s$. Let $r = m + s$. 
	
	Note that $x_{r}$ is a maximal element of $X$. 
	Indeed if $x_{r} \le x$, then $\sigma(x_{r}) = y_s \le \sigma(x)$, and the maximality of $y_s$ implies that $\sigma(x) = y_s$. 
	On the other hand, $\rho_X(x_{r}) \le \rho_X(x) \le \rho_Y(\sigma(x)) = \rho_Y(y_s) = \rho_X(x_{r})$, and hence 
	$\rho_X(x_{r}) = \rho_X(x)$ and $x_{r} = x$. 
	
	Then the concatenation $x_0 < x_1 < \cdots <  x_r$ is our desired maximal chain. 
	
	%		Since $\sigma$ is surjective and order-preserving and $y_0$ is minimal in $Y$, there exists a minimal element $x_0$ in $X$ such that $\sigma(x_0) = y_0$. Strong surjectivity implies that there exists a  chain $x_0 < x_1 < \cdots < x_m$ such that $\sigma(x_0) = \sigma(x_1) = \cdots = \sigma(x_m) = y_0$, $\rho_X(x_m) = \rho_Y(y_0)$, and $m = \rho_X(x_m) - \rho_X(x_0) = \rho_Y(y_0) - \rho_X(x_0)$. Strong surjectivity then allows us to extend the chain to $x_0 < x_1 < \cdots < x_m < \cdots < x_{m + s}$, where $s = \rho_Y(y_s) - \rho_Y(y_0)$, $\sigma(x_{m + i}) = y_i$, and $\rho_X(x_{m + i}) = \rho_Y(y_i) = \rho_Y(y_0) + i$ for $0 \le i \le s$. Note that $x_{m + s}$ is a maximal element of $X$ since $\sigma$ is order-preserving and rank-increasing, and $y_{s}$ is maximal in $Y$. Finally, we let $r = m + s$. 

\end{proof}

\begin{remark}\label{rem:zerotozero}
	Let  $\sigma: X \to Y$ be an order-preserving,  surjective function between  posets $X$ and $Y$.  Suppose that $X$ has a unique minimal element $\hat{0}_X$. Then	$\sigma(\hat{0}_X) = \hat{0}_Y$ is the unique minimal element of $Y$. Indeed, for any $y \in Y$, there exists $x \in X$ such that $\sigma(x) = y$, and then $\hat{0}_X \le x$ implies $\sigma(\hat{0}_X) \le \sigma(x) = y$. 
\end{remark}

\begin{example}\label{ex:lowertolower}
	Let $\sigma: X \to Y$ be a strongly surjective function between ranked posets $X$ and $Y$ with rank functions $\rho_X$ and $\rho_Y$ respectively. 
	%	
	%		Let  $\sigma: X \to Y$ be an order-preserving, rank-increasing, surjective function between ranked posets $X$ and $Y$  with rank functions $\rho_X$ and $\rho_Y$ respectively.
	% %Suppose that $\sigma: X \to Y$ is surjective, and 
	% Suppose that $X$ has a unique minimal element $\hat{0}_X$. Then	$\sigma(\hat{0}_X) = \hat{0}_Y$ is the unique minimal element of $Y$. Indeed, for any $y \in Y$, there exists $x \in X$ such that $\sigma(x) = y$, and then $\hat{0}_X \le x$ implies $\sigma(\hat{0}_X) \le \sigma(x) = y$. 
	% 
	% Assume that $\sigma$ is strongly surjective.
	Then Lemma~\ref{lem:liftchains} implies that $\rank(\sigma) = \rank(X) - \rank(Y) = \rho_Y(\hat{0}_Y) - \rho_X(\hat{0}_X)$ (c.f. \cite{KatzStapledon16}*{Definition~3.17}). 
	This implies that $\rho_X$ determines $\rho_Y$. Indeed, $\rho_X$ is the natural rank function for $X$ shifted by $\rho_X(\hat{0}_X)$, and $\rho_Y$ is the natural rank function for $Y$ shifted by $\rho_X(\hat{0}_X) + \rank(\sigma)$. 
	
	% Recall that $\rho_X$ is a shift of the natural rank function by $\rho_X(\hat{0}_X)$. Then 
	% 
	% In particular, $\rho_X$ determines $\rho_Y$; if $\rho_X$ is the natural rank function shifted by $m$, then $\rho_Y$ is the natural rank function shifted by $m + \rank(\sigma)$. 
\end{example}

We now arrive at our main definition.
%In \cite{KatzStapledon16}, the definition below was given for locally Eulerian posets, rather than the special case of lower Eulerian posets. In this paper, we will restrict to the case of lower Eulerian posets, both for simplicity and since this case includes our examples of interest. In Remark~\ref{rem:generalizelocally}, we explain how our main result can be generalized to the case of locally Eulerian posets. 
%%In contrast to  \cite{KatzStapledon16}, we also allow the notion of a degenerate strong formal subdivision. 

\begin{definition}\cite[Definition~3.17]{KatzStapledon16}\label{def:sfs}
	Let  $\sigma: X \to Y$ be an order-preserving, rank-increasing, strongly surjective function between  
	lower Eulerian posets $X$ and $Y$  with rank functions $\rho_X$ and $\rho_Y$ respectively.
	Then $\sigma$ is a \emph{strong formal subdivision} if 
	for all $x \in X$ and $y \in Y$ such that $\sigma(x) \le y$, we have
	\begin{equation}\label{eq:strongsubdivisionequality}
		\sum_{ \substack{x \le x' \in X \\ \sigma(x') = y} } (-1)^{\rho_Y(y) - \rho_X(x')} = 1.
	\end{equation}

\end{definition}

By \cite[Lemma~3.18]{KatzStapledon16}, condition \eqref{eq:strongsubdivisionequality} in Definition~\ref{def:sfs} may be replaced by the condition that  for all $x \in X$ and $y \in Y$ such that $\sigma(x) \le y$, we have
\begin{equation}\label{eq:strongsubdivision}
	\sum_{ \substack{x \le x' \in X \\ \sigma(x') \le y} } (-1)^{\rho_Y(y) - \rho_X(x')} = \begin{cases}
		1 &\textrm{ if } \sigma(x) = y, \\
		0 &\textrm{ otherwise. }
	\end{cases}
\end{equation}

%Let $Y$ be a lower Eulerian poset with rank function $\rho_Y$. Let $X$ be the empty poset, and let $\sigma: X \to Y$ be the unique function to $Y$. 
%In what follows, it will be convenient to regard $\sigma$ as a strong formal subdivision of rank $-1$. We will call $\sigma$ a \emph{degenerate strong formal subdivision}, and call a strong formal subdivision from Definition~\ref{def:sfs} a \emph{nondegenerate strong formal subdivision}. 

%If $\rho_X$ and $\rho_Y$ are a choice of rank functions in Definition~\ref{def:sfs}, then we will say that $\sigma$ is a strong formal subdivision with respect to $\rho_X$ and $\rho_Y$. 
%In what follows, if $\sigma : X \to Y$ is a strong formal subdivision, $x \in X$, and $y \in Y$, then we write write $\rho_Y(y) - \rho_X(x)$  for any choice of $\rho_X$ and $\rho_Y$ in Definition~\ref{def:sfs}. This is well-defined by Remark~\ref{rem:rankfunctionunique} below.

\begin{remark}\label{rem:rankfunctionunique}
	In Definition~\ref{def:sfs}, 
	the rank functions $\rho_X$ and $\rho_Y$ are uniquely determined up to a common shift. % in the following sense. 
	% do not play an important role in Definition~\ref{def:sfs}. Indeed, if 
	Indeed, if the conditions of  Definition~\ref{def:sfs} are satisfied for some choice of $\rho_X$ and $\rho_Y$, then they are satisfied for any common shift $\rho_X[m]$ and $\rho_Y[m]$. Conversely, 
	%Hence we may assume that $\rho_X$ is the natural rank function for $X$. 
	by Example~\ref{ex:lowertolower}, if $\rho_X$ is the natural rank function shifted by $m$ then %it follows that 
	$\rho_Y$ is the natural rank function for $Y$ shifted by $m + \rank(\sigma)$. 
\end{remark}

%In what follows, we will sometimes refer to a strong formal subdivision of lower Eulerian posets. In that case, we implicitly assume that the posets are equipped with a choice of rank function. 

\begin{example}\label{ex:polytope}\cite{KatzStapledon16}*{Lemma~3.25}
	%With the notation of Section~\ref{ss:polytopes}, 
	Recall from Section~\ref{ss:polytopes} that given 
	polyhedral subdivisions $\cS'$ and $\cS$ of a polytope $P$ such that $\cS'$ is a refinement of $\cS$, there is an induced function between the corresponding  face posets
	$\sigma: \face(\cS') \to \face(\cS),$
	where $\sigma(F')$ is the smallest element of $\cS$ containing $F'$. 
	Let $\rho_{\face(\cS')}$ and $\rho_{\face(\cS)}$ be the natural rank functions for $\face(\cS')$ and $\face(\cS)$ respectively. 
	Then $\sigma$ is a strong formal subdivision of rank $0$. See also Example~\ref{ex:CWsubdivision}. 
	%See also Example~\ref{ex:CWsubdivision}. 
	
	%	For example, the trivial subdivision of $P$ is the subdivision consisting of the faces of $P$, with facet poset $\face(P)$.  
\end{example}

\begin{example}\label{ex:proper}
	Recall from Section~\ref{ss:fans} that given a linear map $\phi: V' \to V$ inducing a proper, surjective morphism between fans $\Sigma'$ and $\Sigma$, there is induced function between the corresponding  face posets
	$\sigma: \face(\Sigma') \to \face(\Sigma),$
	where $\sigma(C')$ is the smallest element of $\Sigma$ containing $\phi(C')$. Let $r = \dim \ker(\phi)$. 
	Let $\rho_{\face(\Sigma')}$ be the natural rank function, and let 
	$\rho_{\face(\Sigma)}$ be the natural rank function shifted by $r$, i.e., $\rho_{\face(\Sigma)}(C) = \dim C + r = \dim \phi^{-1}(C)$.  
	Then  $\sigma$ is  a strong formal subdivision of rank  $r$ by Remark~\ref{rem:CWregularisformal} and Lemma~\ref{lem:CWproper} below.

%	It will follow from Remark~\ref{rem:CWregularisformal} and Lemma~\ref{lem:CWproper} below that $\sigma$ is  a strong formal subdivision of rank  $r$. 
\end{example}

\begin{example}\label{ex:identitypre}\cite{KatzStapledon16}*{Example~3.19}
	%The identity function from a lower Eulerian poset to itself is a strong formal subdivision of rank $0$. This example will be discussed in detail in Example~\ref{ex:identity}. 
	Let $B$ be a lower Eulerian poset with rank function $\rho_B$. Let $X = Y = B$ and let $\sigma: X \to Y$ be the identity map $\id_B$. Then $\id_B$ is  a strong formal subdivision of  rank $0$. This example will be discussed in detail in Example~\ref{ex:identity}. 
\end{example}

%\begin{example}
%	Let $B$ be a lower Eulerian poset  with the natural rank function $\rho$. 
%	By adjoining the minimal element $\hat{0}$ to a chain, we may view elements of the order complex $\O(B \smallsetminus \{ \hat{0} \})$ as chains in $B$ starting at $\hat{0}$. 
%	Define the rank of such a chain to be the length of the chain. 
%	The barycentric subdivision of $B$ is the  map $\sigma: \O(B \smallsetminus \{ \hat{0} \}) \to B$ taking a chain starting at $\hat{0}$ to its maximal element. By \cite{KatzStapledon16}*{Lemma~3.23}, $\sigma$ is a strong formal subdivision of rank $0$. 
%	
%%	Let 
%%	$\Bary(B)$ be the set of all chains in $B$ containing $\hat{0}$, with partial order given by refinement. Then $\Bary(B)$ is a lower Eulerian poset, and may be identified with the poset of faces of 
%% the order complex 	$\O(B \smallsetminus \{ \hat{0} \})$. The map $\sigma: \Bary(B) \to B$ taking a chain to its maximal element is a strong formal subdivision of rank $0$. 
%\end{example}

Our main result Theorem~\ref{thm:main} gives an alternative characterization of strong formal subdivisions. In the following two remarks, we mention two other known characterizations. 

% Alternative characterizations of strong formal subdivisions have been given:  in terms of  Kazhdan-Lusztig-Stanley theory in \cite{KatzStapledon16}*{Proposition~3.29} (see the proof of Lemma~\ref{lem:composesfs}), and  in terms of Eulerian and near-Eulerian posets  in \cite{DKTPosetSubdivisions}*{Proposition~4.4, Remark~4.5}. We will give another characterization by proving Theorem~\ref{thm:sfsbijection}. \alan{TODO: fix}

\begin{remark}\label{rem:altcharKLStheory}	
	We briefly recall an alternative characterizations of strong formal subdivisions in terms of  Kazhdan-Lusztig-Stanley theory  \cite{KatzStapledon16}*{Proposition~3.29}. 	 This will be used in the proof of Lemma~\ref{lem:composesfs} below.
	Let $\sigma:  X \to Y$ be an order-preserving, rank-increasing  function between lower Eulerian posets $X$ and $Y$ with rank functions $\rho_X$ and $\rho_Y$ respectively.
	Let $R = \Z[t^{\pm 1/2}]$. Then \cite{KatzStapledon16}*{Definition~2.2} assigns to a lower Eulerian poset $B$ a pair of $R$-modules $\mathcal{A}^B \subset R^B$, and assigns a pushforward map 
	$\sigma_*: R^X \to R^Y$ to $\sigma$.
	%	
	%	any  order-preserving, rank-increasing  function $\sigma: X \to Y$ of lower Eulerian posets with rank functions $\rho_X$ and $\rho_Y$ respectively. 
	Then \cite{KatzStapledon16}*{Proposition~3.29, Corollary 4.6} states that if $\sigma$ is strongly surjective, then $\sigma$ is a strong formal subdivision  if and only if $\sigma_*(\mathcal{A}^{B}) = \mathcal{A}^{B'}$.

	%	 Then \cite{KatzStapledon16}*{Proposition~3.29, Corollary 4.6} states that
	%	 if $\sigma: X \to Y$ is an order-preserving function between lower Eulerian posets $X$ and $Y$, then $\sigma$ is a strong formal subdivision
	%	 if and only if there exists rank functions $\rho_X$ for $X$ and $\rho_Y$ for $Y$ such that $\sigma$ is rank-increasing, strongly surjective, and 
	%	$\sigma_*(\mathcal{A}^{B}) = \mathcal{A}^{B'}$. 

	%	Here we only need $\rho_X$ and $\rho_Y$ to be defined  up to a common shift. 
	%	% with rank functions $\rho_X$ and $\rho_Y$ respectively. 
	%	Then \cite{KatzStapledon16}*{Proposition~3.29, Corollary 4.6} states that if $\sigma$ is an order-preserving, rank-increasing, strongly surjective map between lower Eulerian posets (so that $\rho_X$ and $\rho_Y$ are well-defined up to a common shift by Example~\ref{ex:lowertolower}), then $\sigma$ is a strong formal subdivision if and only if $\sigma_*(\mathcal{A}^{B}) = \mathcal{A}^{B'}$. 
\end{remark}

\begin{remark}\label{rem:altcharnearEulerian}
	We also recall the following alternative characterization of strong formal subdivisions %between lower Eulerian posets 
	\cite{DKTPosetSubdivisions}*{Proposition~4.4, Remark~4.5}. 
	Let $\sigma: X \to Y$ be an order-preserving, rank-increasing, surjective function between lower Eulerian posets $X$ and $Y$ with rank functions $\rho_X$  and $\rho_Y$ respectively.  
	Recall that we write $X_{\le y} = \sigma^{-1}([\hat{0}_Y,y])$ and $X_{< y} = \sigma^{-1}([\hat{0}_Y,y))$ for $y \in  Y$. 
	Then $\sigma$ is a strong formal subdivision
	if and only if 
	%there exists rank functions $\rho_X$ for $X$ and $\rho_Y$ for $Y$ such that $\sigma$ is rank-increasing, and 	
	%	Let $\sigma: X \to Y$ be an  order-preserving, rank-increasing, surjective map between lower Eulerian posets with rank functions $\rho_X$ and $\rho_Y$ respectively. %Let $r = \dim Y - \dim X \ge 0$. 
	%	Then $\sigma$ is a strong formal subdivision %of rank $r$ 
	%	(with respect to $\rho_X$ and $\rho_Y$)
	%	if and only if 
	for any $y$ in $Y$, $X_{\le y}$ is a lower Eulerian poset of rank $\rho_Y(y) - \rho_X(\hat{0}_X)$, 
	and either 
	\begin{enumerate}
		\item $y = \hat{0}_Y$ and $X_{\le y}$ is the boundary of an Eulerian poset of positive rank, or,
		\item $y \neq \hat{0}_Y$ and $X_{\le y}$ is near-Eulerian with boundary $X_{< y}$. 
	\end{enumerate}
\end{remark}

\begin{remark}\label{rem:KEdef}
	Let $X$ and $Y$ be Gorenstein* posets of the same rank. Let $\rho_X$ and $\rho_Y$ be the natural rank functions for $X$ and $Y$ respectively. in \cite{EKDecompositionTheoremCDIndex}*{Definition~2.6}, Ehrenborg and Karu define a \emph{subdivision} to be a surjective, order-preserving function $\sigma : X \to Y$ such that for all $y$ in $Y$ with $y \neq \hat{0}_Y$, 
	$X_{\le y}$ is near-Gorenstein* of rank $\rho_Y(y)$ with boundary $X_{< y}$.    
	Using Remark~\ref{rem:altcharnearEulerian}, one may verify that  
	a subdivision in this sense is a strong formal subdivision of rank $0$. 
	%	See also Remark~\ref{rem:CWregularisformal}. 
\end{remark}

We recall the following basic properties of strong formal subdivisions. 

% \begin{remark}\label{rem:composesfs}\cite{KatzStapledon16}*{Remark~3.30}
	% 	Strong formal subdivisions compose in the sense that if $\sigma: X \to Y$ and $\tau: Y \to Z$ are strong formal subdivisions of ranks $r$ and $s$ respectively, then $\tau \circ \sigma: X \to Y$ is a strong formal subdivision of rank $r + s$. \alan{reference "converse" if end up including it}
	% \end{remark}

\begin{remark}\label{rem:restrictsfs}\cite{KatzStapledon16}*{Remark~3.20, Remark~3.21}. 
	If $\sigma: X \to Y$ is a strong formal subdivision %of rank $r$ 
	and $I$ is a nonempty lower order ideal of $Y$, then $\sigma$ restricts to a strong formal subdivision $\sigma^{-1}(I) \to I$.
	% of rank $r$. 
	For any $x \in X$, $\sigma$ restricts to a strong formal subdivision $X_{\ge x} \to Y_{\ge \sigma(x)}$ of rank $\rho_Y(\sigma(x)) - \rho_X(x)$. 
\end{remark}

By \cite{KatzStapledon16}*{Remark~3.30},  the composition of strong formal subdivisions %of ranks $r$ and $s$ 
is a strong formal subdivision. %of rank $r + s$ 
We briefly recall the proof and then extend the ideas to prove a partial converse. 
% This 
% In the remainder of the paper, this will only be used in Example~\ref{ex:B0} and Example~\ref{ex:B1} so we only briefly recall the relevant details from \cite{KatzStapledon16}. 

\begin{lemma}\label{lem:composesfs}
	Let  $\sigma : X \to Y$ and $\tau: Y \to Z$ be order-preserving, rank-increasing  
	functions between lower Eulerian posets $X$, $Y$, $Z$ with   rank functions $\rho_X$, $\rho_Y$, $\rho_Z$  respectively.  
	Assume that $\sigma$ is a strong formal subdivision. 
	%  and $\tau$ is rank-increasing. 
	Then $\tau$ is a strong formal subdivision  if and only if $\tau \circ \sigma$ is a strong formal subdivision. 
	
	%		Let  $\sigma : X \to Y$ and $\tau: Y \to Z$ be order-preserving, rank-increasing  
	%	functions between lower Eulerian posets with rank functions $\rho_X$, $\rho_Y$, $\rho_Z$ for $X$, $Y$, $Z$ respectively.  
	%	Assume that $\sigma$ is a strong formal subdivision with respect to the $\rho_X$ and $\rho_Y$. 
	%	%  and $\tau$ is rank-increasing. 
	%	Then $\tau$ is a strong formal subdivision with respect to $\rho_Y$ and $\rho_Z$  if and only if $\tau \circ \sigma$ is a strong formal subdivision with respect to $\rho_X$ and $\rho_Z$. 
	
	%	Let  $\sigma : X \to Y$ and $\tau: Y \to Z$ be order-preserving  
	%	functions between locally Eulerian posets.  
	%	Assume that $\sigma$ is a strong formal subdivision of rank $r$. 
	%	Assume that $s = \rank(Y) - \rank(Z)$ is nonnegative and $\tau$ is rank-increasing. 
	%	Then $\tau$ is a strong formal subdivision of rank $s$ if and only if $\tau \circ \sigma$ is a strong formal subdivision of rank $r + s$. 
\end{lemma}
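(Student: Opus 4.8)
The plan is to use the Kazhdan--Lusztig--Stanley characterization recalled in Remark~\ref{rem:altcharKLStheory}, which splits ``strong formal subdivision'' into two independent conditions: being strongly surjective, and having the pushforward preserve the distinguished submodule, i.e. $\sigma_*(\mathcal{A}^X) = \mathcal{A}^Y$. The entire argument then rests on two facts that interact well with composition. First, strong surjectivity is closed under composition, and, crucially for the converse, can be detected after precomposing with $\sigma$. Second, the pushforward is functorial, $(\tau \circ \sigma)_* = \tau_* \circ \sigma_*$, so the $\mathcal{A}$-condition for a composite is governed multiplicatively.

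For the forward implication --- which recovers the known result \cite{KatzStapledon16}*{Remark~3.30} that a composition of strong formal subdivisions is a strong formal subdivision --- I would first check that if $\sigma$ and $\tau$ are strongly surjective then so is $\tau \circ \sigma$: given $x \in X$ and $z \in Z$ with $\tau(\sigma(x)) \le z$, lift $z$ through $\tau$ to some $y' \ge \sigma(x)$ with $\rho_Y(y') = \rho_Z(z)$ and $\tau(y') = z$, then lift $y'$ through $\sigma$ to $x' \ge x$ with $\rho_X(x') = \rho_Y(y')$ and $\sigma(x') = y'$. The $\mathcal{A}$-condition then follows from functoriality, since $\tau_*(\sigma_*(\mathcal{A}^X)) = \tau_*(\mathcal{A}^Y) = \mathcal{A}^Z$, so $\tau \circ \sigma$ is a strong formal subdivision.

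For the partial converse, assume $\sigma$ and $\tau \circ \sigma$ are strong formal subdivisions; I must show $\tau$ is one. The heart of the matter is establishing that $\tau$ is strongly surjective. Surjectivity of $\tau$ follows from that of $\tau \circ \sigma$. For the lifting property, given $y \in Y$ and $z \in Z$ with $\tau(y) \le z$, I would pick a preimage $x$ of $y$ under $\sigma$, lift $z$ through the composite to get $x' \ge x$ with $\rho_X(x') = \rho_Z(z)$ and $\tau(\sigma(x')) = z$, and set $y' = \sigma(x')$. Then $y' \ge y$ and $\tau(y') = z$, while the rank-increasing inequalities $\rho_Z(z) = \rho_X(x') \le \rho_Y(y') \le \rho_Z(\tau(y')) = \rho_Z(z)$ squeeze to force $\rho_Y(y') = \rho_Z(z)$, giving the required lift. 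Once $\tau$ is known to be strongly surjective, Remark~\ref{rem:altcharKLStheory} applies, and the $\mathcal{A}$-condition follows by cancellation: from $\sigma_*(\mathcal{A}^X) = \mathcal{A}^Y$ and $(\tau \circ \sigma)_*(\mathcal{A}^X) = \mathcal{A}^Z$ together with functoriality, $\tau_*(\mathcal{A}^Y) = \tau_*(\sigma_*(\mathcal{A}^X)) = (\tau \circ \sigma)_*(\mathcal{A}^X) = \mathcal{A}^Z$.

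I expect the main obstacle to be the converse strong-surjectivity argument: it is the only place where one genuinely uses the interplay between the three rank functions, and one must check that $\rho_X, \rho_Y, \rho_Z$ are mutually consistent (ensured by Remark~\ref{rem:rankfunctionunique} applied to the strong formal subdivisions $\sigma$ and $\tau \circ \sigma$), so that the pushforward identities hold with the given normalizations. The $\mathcal{A}$-module bookkeeping, by contrast, is purely formal once functoriality of $\sigma \mapsto \sigma_*$ is in hand.
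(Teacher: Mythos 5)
Your proposal is correct and follows essentially the same route as the paper's proof: both reduce to the Kazhdan--Lusztig--Stanley characterization of Remark~\ref{rem:altcharKLStheory}, handle the $\mathcal{A}$-module condition by functoriality of the pushforward, and establish strong surjectivity of $\tau$ in the converse by lifting through $\tau\circ\sigma$ and squeezing with the rank-increasing inequalities $\rho_X(x') \le \rho_Y(y') \le \rho_Z(z) = \rho_X(x')$. The only difference is cosmetic: the paper does not need the normalization worry you raise at the end, since the argument works verbatim with the given rank functions.
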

\begin{proof}
	%	We first recall the alternative characterization of strong formal subdivisions in \cite{KatzStapledon16}*{Proposition~3.29}. 
	%	Let $R = \Z[t^{\pm 1/2}]$. Then \cite{KatzStapledon16}*{Definition~2.2} assigns to a locally Eulerian poset $B$ a pair of $R$-modules $\mathcal{A}^B \subset R^B$, and assigns to an  order-preserving, rank-increasing  map $f: B \to B'$ of locally Eulerian posets, a pushforward map 
	%	$f_*: R^B \to R^{B'}$. 
	%	Then \cite{KatzStapledon16}*{Proposition~3.29, Corollary 4.6} states that if $f$ is strongly surjective, then $f$ is a strong formal subdivision if and only if $f_*(\mathcal{A}^{B}) = \mathcal{A}^{B'}$. 
	We use the characterization of strong formal subdivisions in Remark~\ref{rem:altcharKLStheory}.
	By \cite{KatzStapledon16}*{Lemma~3.28}, 
	$\tau \circ \sigma$ is order-preserving and rank-increasing,  and  $(\tau \circ \sigma)_* = \tau_* \circ \sigma_*$. 
	%We use the characterization of strong formal subdivisions from Remark~\ref{rem:altcharKLStheory}. 

	We can now recall the proof of \cite{KatzStapledon16}*{Remark~3.30}: 
	if
	$\tau$ is a strong formal subdivision, 
	%if $\sigma$ and $\tau$ are strong formal subdivisions, 
	then it is easy to check that
	$\tau \circ \sigma$ is strongly surjective. Also, $(\tau \circ \sigma)_*(\mathcal{A}^X)  = \tau_*(\mathcal{A}^Y) = \mathcal{A}^Z$. Hence $\tau \circ \sigma$ is a strong formal subdivision. 
	%  with respect to $\rho_X$ and $\rho_Z$.
	
	We need to prove the converse. Assume that 
	$\tau \circ \sigma$ is a strong formal subdivision. 
	%$\sigma$ and  $\tau \circ \sigma$  are strong formal subdivisions. 
	Then 
	$\tau_*(\mathcal{A}_Y) = \tau_*(\sigma_*(\mathcal{A}_X)) = (\tau \circ \sigma)_*(\mathcal{A}_X) = \mathcal{A}_Z$. To show that $\tau$ is a strong formal subdivision,  % with respect to $\rho_Y$ and $\rho_Z$,
	 it remains to show that $\tau$ is strongly surjective. 
	
	%	 Let $\rho_X$, $\rho_Y$, $\rho_Z$ denote the rank functions corresponding to $X$, $Y$, $Z$ respectively. 
	Consider $y \in Y$ and $z \in Z$ such that $\tau(y) \le z$. 
	%Let $r = \dim X - \dim Y$ and $s = \dim Y - \dim Z$. 
	Since $\sigma$ is surjective, there exists $x \in X$ such that $\sigma(x) = y$.
	Since $\tau \circ \sigma$ is strongly surjective, there exists $x' \in X$ such that $x \le x'$, $\rho_X(x') = \rho_Z(z)$, and $\tau(\sigma(x')) = z$.	 Set $y' = \sigma(x') \in Y$.  Then $y = \sigma(x) \le y'$ and $\tau(y') = z$. Since $\sigma$ and $\tau$ are rank-increasing, 
	$\rho_X(x') \le \rho_Y(y') \le \rho_Z(z) = \rho_X(x')$, and we deduce that $\rho_Y(y') = \rho_Z(z)$, completing the proof.
\end{proof}

Let $\sigma : X \to Y$ and $\tau: Y \to Z$ be order-preserving, rank-increasing
functions between lower Eulerian posets  $X$, $Y$, $Z$ with   rank functions $\rho_X$, $\rho_Y$, $\rho_Z$  respectively. In contrast to Lemma~\ref{lem:composesfs}, the  following example  shows that  if
$\tau$ and $\tau \circ \sigma$ are strong formal subdivisions, then 
$\sigma$ is not necessarily a strong formal subdivision.

\begin{example}
	Let $X$ be all proper subsets of a $3$-element set $\{ a,b,c\}$, i.e., $X = \partial B_3$.  Let $Y'$ be all subsets of $\{ a, b \}$ and let $Y$ be obtained from $Y'$ by adjoining an element $\hat{z}$ such that $\{  y \in Y : y < \hat{z} \} = \{ \emptyset, a , b \}$. That is, $Y' = B_2$ and $Y = \partial (\tilde{\Sigma} B_2)$. Consider $X$ and $Y$ with the corresponding natural rank functions. Let $Z = B_0$ be the one element poset. Then Example~\ref{ex:B0} implies that the unique maps of posets from $X$ to $Z$ and from $Y$ to $Z$ are strong formal subdivisions. 
	%	
	%	 from the boundary of an Eulerian poset of positive rank to $B_0$ is a strong formal subdivision. In particular, the unique maps from $X$ and $Y$ to $Z$ are strong formal subdivisions.  
	On the other hand, define $\sigma: X \to Y$ by $\sigma(\emptyset) = \emptyset$, $\sigma(a) = a$, $\sigma(b) = b$, $\sigma(ab) = ab$, $\sigma(c) = b$, $\sigma(ac) = \sigma(bc) = \hat{z}$. Then $\sigma$ is order-preserving, rank-increasing and strongly surjective, but $\sigma^{-1}(\hat{z}) = \{ ac, bc \}$ and $\sigma$ is not a strong formal subdivision.  
\end{example}

We will also use the following simple observation. 

% We will be particularly interested in strong formal subdivisions between lower Eulerian posets. 
%  We make a few simple observations. 

% Firstly, we have the following example of a strong formal subdivision
% 
% For strong formal subdivisions between lower Eulerian posets, we have the following simple observation. 

\begin{lemma}\label{lem:parity}
	Let $\sigma: X \to Y$ be a  strong formal subdivision
	between lower Eulerian posets $X$ and $Y$ with rank functions $\rho_X$ and $\rho_Y$ respectively. % for $X$ and $Y$ respectively. 
	Then 
	$$\sum_{x \in X} (-1)^{\rho_X(x)} = \sum_{y \in Y} (-1)^{\rho_Y(y)}.$$ In particular,  $|X| = |Y|$ mod $2$. 
	%$|X|$ is even if and only if $|Y|$ is even. 
\end{lemma}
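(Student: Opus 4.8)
The plan is to prove the identity by partitioning $X$ into the fibers of $\sigma$ and evaluating each fiber sum using the defining relation \eqref{eq:strongsubdivisionequality} of a strong formal subdivision. The essential observation is that the fiber over a given $y \in Y$ can be expressed as an instance of \eqref{eq:strongsubdivisionequality} by choosing the lower bound $x$ to be the unique minimal element $\hat{0}_X$ of $X$.

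First I would recall that since $X$ is lower Eulerian it has a unique minimal element $\hat{0}_X$, and by Remark~\ref{rem:zerotozero} we have $\sigma(\hat{0}_X) = \hat{0}_Y$, so $\sigma(\hat{0}_X) = \hat{0}_Y \le y$ for every $y \in Y$. Applying \eqref{eq:strongsubdivisionequality} with $x = \hat{0}_X$, and noting that the condition $\hat{0}_X \le x'$ is automatic for every $x' \in X$, the sum ranges over the entire fiber $\sigma^{-1}(y)$ and gives
\[
\sum_{\substack{x' \in X \\ \sigma(x') = y}} (-1)^{\rho_Y(y) - \rho_X(x')} = 1
\]
for each $y \in Y$. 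Factoring out $(-1)^{\rho_Y(y)}$ and using $(-1)^{-\rho_X(x')} = (-1)^{\rho_X(x')}$, this rearranges to the key fiberwise identity
\[
\sum_{\substack{x' \in X \\ \sigma(x') = y}} (-1)^{\rho_X(x')} = (-1)^{\rho_Y(y)}.
\]

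Summing this identity over all $y \in Y$ and regrouping the left-hand side by the value $\sigma(x')$ yields
\[
\sum_{x \in X} (-1)^{\rho_X(x)} = \sum_{y \in Y} \sum_{\substack{x' \in X \\ \sigma(x') = y}} (-1)^{\rho_X(x')} = \sum_{y \in Y} (-1)^{\rho_Y(y)},
\]
which is the desired equality; here I use only that $\sigma$ is a (single-valued) function, so that $\{\sigma^{-1}(y)\}_{y \in Y}$ partitions $X$. Finally, for the parity statement I would observe that each term $(-1)^{\rho_X(x)}$ is odd, hence $\sum_{x \in X} (-1)^{\rho_X(x)} \equiv |X| \pmod 2$ and likewise $\sum_{y \in Y} (-1)^{\rho_Y(y)} \equiv |Y| \pmod 2$; since the two integer sums are equal, $|X| \equiv |Y| \pmod 2$ follows. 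There is no genuine obstacle beyond recognizing the correct specialization $x = \hat{0}_X$ of the defining equation; the argument is a direct double count.
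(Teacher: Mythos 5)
Your proof is correct and is essentially identical to the paper's: both apply \eqref{eq:strongsubdivisionequality} with $x = \hat{0}_X$ (using $\sigma(\hat{0}_X) = \hat{0}_Y \le y$) to evaluate each fiber sum, then sum over $y \in Y$ and regroup. The paper writes this as a single two-line computation, but the underlying argument is the same.
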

\begin{proof}
	The second statement follows by considering the first statement modulo $2$. 
	Using \eqref{eq:strongsubdivisionequality}, we compute
	\begin{align*}
		\sum_{y \in Y} (-1)^{\rho_Y(y)} &= \sum_{y \in Y} (-1)^{\rho_Y(y)} \sum_{\substack{\hat{0}_X \le x \in X \\ \sigma(x) = y}} (-1)^{\rho_Y(y) - \rho_X(x)} =  \sum_{x \in X} (-1)^{\rho_X(x)}. 
	\end{align*}
\end{proof}

\subsection{A bijection for strong formal subdivisions}\label{ss:bijection}

In this section, we state %and prove 
%our main theorem 
Theorem~\ref{thm:mainsimplified}, which gives an alternative way of viewing strong formal subdivisions of lower Eulerian posets. The proof will be given in Section~\ref{sec:proof}.
A functorial version of this result will appear in Theorem~\ref{thm:main} in Section~\ref{ss:functorial}.  
We continue with the notation of the previous sections.

We recall the definition of the non-Hausdorff mapping cylinder. 

%We begin with the definition.

\begin{definition}\label{def:nonHausdorff} \cite{BMSimpleHomotopy}*{Definition~3.6}\label{def:nonHausdorffmc}
	Let $\sigma : X \to Y$ be an order-preserving function between posets. 
	The \emph{non-Hausdorff mapping cylinder} $\Cyl(\sigma)$ is the poset with elements given by the disjoint union %$X \sqcup Y$ 
	of $X$ and $Y$, 
	with $z \le z'$ in $\Cyl(\sigma)$ if and only if one of the following conditions hold:
	\begin{enumerate}
		\item $z,z' \in X$ and $z \le z'$ in $X$,
		\item $z,z' \in Y$ and $z \le z'$ in $Y$,
		\item $z \in X$, $z' \in Y$, and $\sigma(z) \le z'$ in $Y$.
	\end{enumerate}
\end{definition}

In Definition~\ref{def:nonHausdorff}, one easily checks that the  order-preserving property of $\sigma$ implies that $\Cyl(\sigma)$ is a well-defined poset. 
%For example, if
%$X$ is the empty poset, then $\Cyl(\sigma) = Y$. 

%\begin{remark}
We explain one motivation for calling $\Cyl(\sigma)$ the non-Hausdorff mapping cylinder (see \cite{BarmakAlgebraicTopologyFinite}*{Section~2.8}).
Given a continuous map $f: A \to B$ between topological spaces, the mapping cylinder is the topological space given by the disjoint of  $A \times [0,1]$ and  $B$, modulo the relation $(a,1) \sim f(a)$ for all $a \in A$.  A finite poset $B$ can be given a topology with open sets equal to lower order ideals. Then $B$ is a finite $T_0$-space and Alexandroff  showed in \cite{AlexandroffDiskrete} that this gives a correspondence between finite $T_0$-spaces and finite posets  (see, for example, \cite{BarmakAlgebraicTopologyFinite}*{Section~1.1}). In particular, $B_1 = \{ \hat{0}, \hat{1} \}$ is a non-Hausdorff space, with  $\hat{1}$ the unique closed point; from this perspective, $B_1$ is called the Sierpi\'nski space.  Then the poset $\Cyl(\sigma)$ is isomorphic as a topological space to the quotient of the disjoint union of $\Pyr(X) = X \times B_1$  and $Y$, modulo the relation $(x,\hat{1}) \sim \sigma(x)$ for all $x \in X$. 
%\end{remark}

%Later, we will need the following remark.
%
%
%\begin{remark}\label{rem:Gammamaxmin}
%	Let $\sigma : X \to Y$ be an order-preserving map of posets.
%	Then the maximal elements of $\Cyl(\sigma)$ are precisely the maximal elements of $Y$. In particular, $\Cyl(\sigma)$ has a  unique maximal element if and only if $Y$ has a unique maximal element. 
%	
%	Suppose that $\sigma$ is surjective. Then it  
%	follows from Definition~\ref{def:nonHausdorffmc}  that the minimal elements of $\Cyl(\sigma)$ are precisely the minimal elements of $X$. In particular, $\Cyl(\sigma)$ has a  unique minimal element if and only if $X$ has a unique minimal element. \alan{not sure need this anymore??}
%	%	$X$ is lower Eulerian if and only if $\Cyl(\sigma)$ is lower Eulerian. 
%	%Moreover, if $X$ has a unique minimal element $\hat{0}_X$, then since $\sigma$ is surjective and order-preserving, every element of $Y$ is
%	%greater than or equal to $\sigma(\hat{0}_X)$, and hence $\sigma(\hat{0}_X)$ is the unique minimal element of $Y$.
%	%	Suppose that $X$ is lower Eulerian. 
%	%	Since $\sigma$ is surjective and order-preserving, 
%	%	every element of $Y$ is greater than or equal to $\sigma(\hat{0}_X)$. 
%	%	Hence $\sigma(\hat{0}_X)$ is the unique minimal element of $Y$.
%\end{remark}

We will need the following lemma.

\begin{lemma}\label{lem:usestronglysurjective}
	Let $\sigma: X \to Y$ be a strongly surjective function between ranked posets $X$ and $Y$ with rank functions $\rho_X$ and $\rho_Y$ respectively. Consider some $x \in X$ and $y \in Y$.  Then
	$y$ covers $\sigma(x)$ in $\Cyl(\sigma)$ if and only if $\sigma(x) = y$ and $\rho_X(x) = \rho_Y(y)$. 
\end{lemma}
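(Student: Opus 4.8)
The plan is to unwind the covering relation straight from Definition~\ref{def:nonHausdorffmc}, exploiting the one structural asymmetry of $\Cyl(\sigma)$: no element of $Y$ ever lies below an element of $X$. Since the asserted conclusion forces $\sigma(x) = y$, the relevant covering is that of the element $x \in X$ by the element $y \in Y$, so I would first record the two comparabilities I will use repeatedly. By clause~(3) of Definition~\ref{def:nonHausdorffmc} (with $z' = \sigma(x)$) we always have $x \le \sigma(x)$ in $\Cyl(\sigma)$, and again by clause~(3) we have $x \le y$ in $\Cyl(\sigma)$ if and only if $\sigma(x) \le y$ in $Y$. I would also invoke the standard rank-function rigidity noted after the definition of rank function: if $z \le z'$ in a ranked poset with $\rho(z) = \rho(z')$, then $z = z'$, since every maximal chain of $[z,z']$ has length $\rho(z') - \rho(z)$.

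For the reverse implication, I would assume $\sigma(x) = y$ and $\rho_X(x) = \rho_Y(y)$. Then $x \le y$ (as $\sigma(x) = y \le y$) and $x \neq y$ since the two lie in different summands, so $x < y$. To verify the covering, take any $z''$ with $x \le z'' \le y$ in $\Cyl(\sigma)$ and split on its summand: if $z'' \in Y$ then $y = \sigma(x) \le z'' \le y$ forces $z'' = y$; if $z'' \in X$ then $x \le z''$ in $X$ and $\sigma(z'') \le y = \sigma(x)$, while order-preservation gives $\sigma(x) \le \sigma(z'')$, so $\sigma(z'') = y$, and the rank-increasing property then yields $\rho_X(z'') \le \rho_Y(y) = \rho_X(x) \le \rho_X(z'')$, whence $\rho_X(z'') = \rho_X(x)$ and $z'' = x$ by rigidity. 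Thus nothing lies strictly between, and $y$ covers $x$.

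For the forward implication I would assume $y$ covers $x$, so $x < y$ and hence $\sigma(x) \le y$ in $Y$. Because $x \le \sigma(x) \le y$ in $\Cyl(\sigma)$ with $\sigma(x) \in Y$ distinct from $x$, the covering condition forces $\sigma(x) = y$. Then, applying strong surjectivity to the pair $\sigma(x) \le y$ (here $\sigma(x) = y$), I obtain $x' \in X$ with $x \le x'$, $\sigma(x') = y$, and $\rho_X(x') = \rho_Y(y)$; now $x \le x' \le y$ in $\Cyl(\sigma)$, so the covering condition forces $x' = x$, giving $\rho_X(x) = \rho_Y(y)$. The only point requiring care — and the closest thing to an obstacle — is locating the two ``blocking'' elements: $\sigma(x)$ itself supplies the element that pins down $\sigma(x) = y$, while strong surjectivity is precisely what produces the second blocking element $x'$ needed to extract the rank equality; beyond keeping track of which summand each element lies in and reapplying the rank-rigidity fact, the argument is routine.
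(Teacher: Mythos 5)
Your proof is correct and follows essentially the same route as the paper's: in the forward direction you invoke strong surjectivity to produce an element $x'$ with $x \le x' \le y$ and $\rho_X(x') = \rho_Y(y)$ that the covering relation squeezes down to $x$, and in the reverse direction you split the intermediate element by summand and use the rank-increasing property together with rank rigidity. The only cosmetic differences are that you (correctly, as the paper's own proof and later uses make clear) read the covering relation as ``$y$ covers $x$,'' and that you first pin down $\sigma(x)=y$ via the blocking element $\sigma(x)$ before applying strong surjectivity, whereas the paper extracts both conclusions at once from the single element $x'$.
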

\begin{proof}
	Suppose that $y$ covers $\sigma(x)$ in $\Cyl(\sigma)$. By strong surjectivity, there exists $x \le x'$ in $X$ such that 
	$\sigma(x') = y$ and $\rho_X(x') = \rho_Y(y)$. Then $x \le x' < y$ in $\Cyl(\sigma)$ implies that  $x = x'$, as desired. 
	
	Conversely, suppose that  $\sigma(x) = y$ and $\rho_X(x) = \rho_Y(y)$. Suppose that $x \le z \le y$ in $\Cyl(\sigma)$. If $z \in Y$, then $y = \sigma(x) \le z \le y$ implies that $z = y$. If $z \in X$, then $\rho_Y(y) = \rho_X(x) \le \rho_X(z) \le \rho_Y(\sigma(z)) \le \rho_Y(y)$ implies that $\rho_X(x) = \rho_X(z)$ and hence $x = z$. We conclude that $y$ covers $\sigma(x)$ in $\Cyl(\sigma)$. 
\end{proof}

To state our results we introduce the following sets and functions.  Later, we will consider the sets below as objects of corresponding categories and the functions below as functors between these categories.  See Definition~\ref{def:functorialbijections}. Recall from Definition~\ref{def:joinadmissible} that 
an element $q$ of a poset $\Gamma$ is join-admissible if 
$z \vee q$ exists for all $z \in \Gamma$.	
%Below, if $B$ is the empty poset, then it will be convenient to think of $B$ as  containing a unique `join-admissible element' denoted $\hat{0}_B$ with $\rho_B(\hat{0}_B,\hat{0}_B) = 0$. 
%\alan{TODO: add a reference to generalization to locally Eulerian posets in which this is more sensible}

%if $\Gamma$ is the empty set, then it will be convenient to 
%$\hat{0}_\Gamma$ to be a join-admissible element of $\Gamma$. 
% %consider the empty set as a join-admissible element of $\Gamma$. 

\begin{definition}\label{def:bijections}
	Let $\SFS$  be the set of strong formal subdivisions $\sigma: X \to Y$ between  lower Eulerian posets $X$ and $Y$  with rank functions $\rho_X$ and $\rho_Y$ respectively. 	
	%Let $\SFS^\circ \subset \SFS$ be the subset of nondegenerate strong formal subdivisions. 
	
	Let $\JoinIdealLW$ be the set of triples $(\Gamma, \rho_\Gamma, q)$,
	where $\Gamma$ is a lower Eulerian poset  with rank function $\rho_\Gamma$,  and %$q \neq \hat{0}_\Gamma$ 
	$q$ is a join-admissible element of $\Gamma$. 
	Let $\JoinIdealLW^\circ \subset \JoinIdealLW$ be the subset of all triples $(\Gamma, \rho_\Gamma, q)$ such that $q \neq \hat{0}_\Gamma$. 	
	
	Define a function
	$$\CYL :  \SFS \to \JoinIdealLW^\circ,$$ 
		\[
	\CYL(\sigma: X \to Y) = (\Cyl(\sigma), \rho_{\Cyl(\sigma)},\hat{0}_Y), 
	\]
	where $\Cyl(\sigma)$ is the non-Hausdorff mapping cylinder of $\sigma$, and 
	\begin{equation}\label{eq:rhoCyl}
		\rho_{\Cyl(\sigma)}(z) =  \begin{cases}
			\rho_X(z) &\textrm{ if } z \in X, \\
			\rho_Y(z) + 1 &\textrm{ if } z \in Y. 
		\end{cases}
	\end{equation}
	Define a function $$\MAP:   \JoinIdealLW^\circ \to \SFS,$$ 
		\begin{equation*}%\label{eq:sfs}
		\MAP(\Gamma,\rho_\Gamma,q) : \Gamma \smallsetminus \Gamma_{\ge q} \to \Gamma_{\ge q},
	\end{equation*}
	\[
	x \mapsto x \vee q,
	\]
	where the rank functions on $\Gamma \smallsetminus \Gamma_{\ge q}$ and $\Gamma_{\ge q}$ are  the corresponding restrictions of $\rho_\Gamma$ shifted by $0$ and $-1$ respectively.

\end{definition}

We are now ready to state our main theorem. Below, the statement includes the fact that the functions $\CYL$ and $\MAP$ in Definition~\ref{def:bijections} are well-defined. 

\begin{theorem}\label{thm:mainsimplified}
	
	The functions $\CYL: \SFS \to  \JoinIdealLW^\circ$  and $\MAP:  \JoinIdealLW^\circ \to \SFS$ are mutually inverse bijections. 
	%Moreover, they restrict to mutually inverse bijections between $\SFS^\circ$ and $\JoinIdealLW^\circ$. 
	%	The functions $\CYL$ and $\MAP$ are mutually inverse bijections between 
	%	the sets $\SFS$ and  $\JoinIdealLW$. 
\end{theorem}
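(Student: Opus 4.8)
The plan is to verify that the two maps are well-defined (that $\CYL$ lands in $\JoinIdealLW^\circ$ and that $\MAP$ lands in $\SFS$) and then to check that they are mutually inverse by direct computation. The computations are driven by a single identity: in $\Cyl(\sigma)$ one has $\Cyl(\sigma)_{\ge x} = X_{\ge x} \sqcup Y_{\ge \sigma(x)}$ and $\Cyl(\sigma)_{\ge \hat{0}_Y} = Y$, so that $x \vee \hat{0}_Y = \sigma(x)$ for every $x \in X$. Conversely, for $(\Gamma,\rho_\Gamma,q)$ and $\sigma = \MAP(\Gamma,\rho_\Gamma,q)$, an element $x \in \Gamma \smallsetminus \Gamma_{\ge q}$ satisfies $x \le z$ for $z \in \Gamma_{\ge q}$ if and only if $x \vee q \le z$, which matches exactly the three relations of Definition~\ref{def:nonHausdorff}, so $\Cyl(\sigma) = \Gamma$ as posets. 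Combined with the bookkeeping of the rank shifts in Definition~\ref{def:bijections} (on $X$ one keeps $\rho_\Gamma$, on $Y$ one shifts by $-1$ and then $\CYL$ shifts back by $+1$), these two observations give $\MAP \circ \CYL = \id_{\SFS}$ and $\CYL \circ \MAP = \id$ on the level of posets, functions, and rank functions; so the real content is the well-definedness of the two maps.

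For $\CYL$, the element $\hat{0}_X$ is the unique minimal element of $\Cyl(\sigma)$ by Remark~\ref{rem:zerotozero}, and $\rho_{\Cyl(\sigma)}$ is a rank function: the only covers needing checking are the mixed ones $x \lessdot y$, and by Lemma~\ref{lem:usestronglysurjective} these occur exactly when $\sigma(x) = y$ and $\rho_X(x) = \rho_Y(y)$, whence $\rho_{\Cyl(\sigma)}(y) = \rho_Y(y)+1 = \rho_X(x)+1$. The one substantive point is the Eulerian condition for a mixed interval $[x,y]$ with $x \in X$ and $y \in Y$. I would split it as the disjoint union of its $X$-part $\{x' \in X : x \le x',\ \sigma(x') \le y\}$ and its $Y$-part $[\sigma(x),y]_Y$. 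The alternating sum over the $X$-part is computed by \eqref{eq:strongsubdivision} and equals $(-1)^{\rho_Y(y)}$ if $\sigma(x)=y$ and $0$ otherwise, while the $Y$-part, carrying the extra sign from \eqref{eq:rhoCyl}, contributes $(-1)^{\rho_Y(y)+1}$ if $\sigma(x)=y$ (a singleton) and $0$ otherwise (a positive-rank Eulerian interval of $Y$); the two contributions cancel. Finally the identity $x \vee \hat{0}_Y = \sigma(x)$ shows $\hat{0}_Y$ is join-admissible, and $\hat{0}_Y \ne \hat{0}_X = \hat{0}_{\Cyl(\sigma)}$, so the triple lies in $\JoinIdealLW^\circ$.

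For $\MAP$, write $X = \Gamma \smallsetminus \Gamma_{\ge q}$ and $Y = \Gamma_{\ge q}$; these are lower Eulerian (a nonempty lower order ideal, respectively an upper interval), with $\hat{0}_X = \hat{0}_\Gamma$ since $q \ne \hat{0}_\Gamma$. The map $x \mapsto x \vee q$ is order-preserving by Remark~\ref{rem:joinorderpreserving} and rank-increasing since $x < x \vee q$ whenever $x \not\ge q$. The subdivision condition is again an interval computation: for $x \in X$ and $y \in Y$ with $x \vee q \le y$ one has $x \vee q \le y \Leftrightarrow x \le y$, and the interval $[x,y]_\Gamma$ splits as the disjoint union of $[x,y]_\Gamma \smallsetminus \Gamma_{\ge q}$ (the $x'$ with $\sigma(x') \le y$) and $[\,x \vee q,\,y\,]_\Gamma = [x,y]_\Gamma \cap \Gamma_{\ge q}$. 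Since $x < y$, the Euler relation on $[x,y]_\Gamma$ lets me replace the alternating sum over the first piece by minus the sum over $[\,x\vee q,\,y\,]_\Gamma$, which vanishes unless $x \vee q = y$; after accounting for the $-1$ shift on $Y$ this yields exactly \eqref{eq:strongsubdivision}.

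The main obstacle is strong surjectivity of $\MAP$, which amounts to showing that every mixed interval $[x,y]_\Gamma$ (with $x \not\ge q \le y$ and $x < y$) has a coatom $x'$ with $x' \not\ge q$; such an $x'$ then satisfies $x' \vee q = y$ and $\rho_\Gamma(x') = \rho_\Gamma(y)-1$, giving the required lift. This is where join-admissibility of $q$ is essential and the merely Eulerian structure does not suffice: already the face poset of a $2$-gon is Eulerian with a vertex lying in both edges, i.e.\ an atom below every coatom. I would argue by contradiction, assuming every coatom of $[x,y]_\Gamma$ is $\ge q$ and proving by downward induction on rank that every $v \in [x,y]_\Gamma$ with $v < y$ is $\ge q$. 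For the inductive step, if $v < y$ had $v \not\ge q$ while all elements covering $v$ are $\ge q$, then each such cover is an upper bound of $\{v,q\}$, so $v \vee q$ (which exists by join-admissibility and satisfies $v < v \vee q \le y$) lies below every cover of $v$; choosing a cover below $v \vee q$ then forces $v$ to have a unique cover, contradicting the diamond property of the positive-rank Eulerian interval $[v,y]_\Gamma$. Taking $v = x$ contradicts $x \not\ge q$, so the desired coatom exists. Together with order-preservingness, rank-increasingness, and \eqref{eq:strongsubdivision}, this makes $\MAP(\Gamma,\rho_\Gamma,q)$ a strong formal subdivision by Definition~\ref{def:sfs}, and the two composition identities above then complete the proof.
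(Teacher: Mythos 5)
Your proposal is correct, and its overall architecture matches the paper's: prove well-definedness of both maps, then verify the two compositions directly via the identities $x \vee \hat{0}_Y = \sigma(x)$ in $\Cyl(\sigma)$ and $\Cyl(\MAP(\Gamma,\rho_\Gamma,q)) = \Gamma$. Your two interval computations (splitting a mixed interval of $\Cyl(\sigma)$ into its $X$- and $Y$-parts to check the Eulerian condition, and splitting $[x,y]_\Gamma$ into $[x,y]_\Gamma \smallsetminus \Gamma_{\ge q}$ and $[x\vee q,y]_\Gamma$ to check \eqref{eq:strongsubdivision}) are the same as in the paper's two well-definedness lemmas. Where you genuinely diverge is the strong surjectivity of $\MAP(\Gamma,\rho_\Gamma,q)$, which the paper isolates as Lemma~\ref{lem:hatsigma} and which is indeed the crux. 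The paper argues constructively and upward: it reduces to $\rho_\Gamma(x, x\vee q)=1$ along a maximal chain of $[x, x\vee q]$, then lifts the chain $x\vee q = y_0 < \cdots < y_s = y$ step by step, using thinness to pick out the unique middle element of the $B_2$-interval $[x_j, y_{j+1}]$ other than $y_j$ and join-admissibility to see it avoids $\Gamma_{\ge q}$. You argue by contradiction and downward: if every coatom of $[x,y]_\Gamma$ were in $\Gamma_{\ge q}$, a descending induction would force every $v < y$ in the interval into $\Gamma_{\ge q}$ (any $v \notin \Gamma_{\ge q}$ whose covers all lie in $\Gamma_{\ge q}$ has every cover equal to $v \vee q$, hence a unique cover, impossible in an Eulerian interval of rank $\ge 2$), contradicting $x \notin \Gamma_{\ge q}$. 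Both use only join-admissibility and the diamond property; yours is shorter and produces the needed coatom in one stroke, while the paper's version lifts an entire maximal chain, which it reuses elsewhere (e.g.\ in Corollary~\ref{cor:graded}). Two points to tighten: the "unique cover" contradiction only applies when $\rho_\Gamma(v,y) \ge 2$, with the rank-$1$ case being exactly your base case covered by the standing assumption on coatoms; and surjectivity of $\MAP(\Gamma,\rho_\Gamma,q)$ should be noted explicitly as the instance of your coatom statement starting from $x = \hat{0}_\Gamma$.
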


The proof will be given in Section~\ref{sec:proof}. 
See Theorem~\ref{thm:main} for a generalization to an isomorphism of categories. See Section~\ref{sec:examples} for numerous examples. 
We next make some observations and deduce some corollaries.

\begin{remark}
	Let $\sigma: X \to Y$ be a strong formal subdivision between lower Eulerian posets with rank functions $\rho_X$ and $\rho_Y$ respectively, corresponding under Theorem~\ref{thm:mainsimplified} to a triple $(\Gamma, \rho_\Gamma, q)$ with $\Gamma = \Cyl(\sigma)$, $\rho_\Gamma$ determined by \eqref{eq:rhoCyl}, and $q = \hat{0}_Y$. 
	We claim that $\rho_\Gamma(\hat{0}_\Gamma,q) = \rank(\sigma) + 1$.
		Indeed,	using Example~\ref{ex:lowertolower}, we have
	\[
	\rho_\Gamma(\hat{0}_\Gamma,q) =  \rho_Y(\hat{0}_Y) - \rho_X(\hat{0}_X) + 1 = \dim X -  \dim Y + 1 = \rank(\sigma) + 1. 
	\]

	%		In the proof of Theorem~\ref{thm:mainsimplified}, we will show that if $\sigma: X \to Y$ is a strong formal subdivision of lower Eulerian posets with respect to rank functions $\rho_X$ and $\rho_Y$, corresponding to a pair $(\Gamma,q)$ with $\Gamma = \Cyl(\sigma)$ and $q = \hat{0}_Y$, then $\Gamma$ is lower Eulerian with rank function
	%\begin{equation}\label{eq:rhoCylsigmav1}
	%	\rho_{\Cyl(\sigma)}(z) =  \begin{cases}
		%		\rho_X(z) &\textrm{ if } z \in X, \\
		%		\rho_Y(z) + 1 &\textrm{ if } z \in Y. 
		%	\end{cases}
	%\end{equation}	
	%%		Let $\sigma: X \to Y$ be a strong formal subdivision of lower Eulerian posets $X$ and $Y$ with rank functions $\rho_X$ and $\rho_Y$.
	%%		Suppose that $\sigma$ corresponds to a triple $(\Gamma,\rho_\Gamma,q)$ via Corollary~\ref{cor:mainsimplified} and Corollary~\ref{cor:qalternativesimplified}. 
	%		By Example~\ref{ex:lowertolower}, 
	%		\[
	%		\rank(\sigma) = \dim X -  \dim Y = \rho_Y(\hat{0}_Y) - \rho_X(\hat{0}_X) = \rho_\Gamma(\hat{0}_\Gamma,q) - 1. 
	%		\]
\end{remark}

A poset is \emph{graded} if every maximal chain of $B$ has the same length. 

\begin{corollary}\label{cor:graded}
		Let $\sigma: X \to Y$ be a  strong formal subdivision between  lower Eulerian posets with rank functions $\rho_X$ and $\rho_Y$ respectively,
 corresponding under Theorem~\ref{thm:mainsimplified} to a triple $(\Gamma, \rho_\Gamma, q)$ with $\Gamma = \Cyl(\sigma)$, $\rho_\Gamma$ determined by \eqref{eq:rhoCyl}, and $q = \hat{0}_Y$. Then $\rank(\Gamma) = \rank(X) + 1$. Moreover, $X$ is graded if and only if $Y$ is graded if and only if 
	$\Gamma$ is graded. 
	
	%		Suppose that $\sigma: X \to Y$ is a strong formal subdivision between lower Eulerian posets and $\Gamma = \Cyl(\sigma)$ is the corresponding non-Hausdorff mapping cylinder. 
	%		Then $\rank(\Gamma) = \rank(X) + 1$. Moreover, 
	%		$\Gamma$ is graded if and only if $X$ and $Y$ are graded.
	
	%		Then $\Gamma$ is graded if and only if $X$ and $Y$ are graded. Moreover, in that case, $\rank(\Gamma) = \rank(X) + 1$.
	
	%		, and $\CYL(\sigma) = (\Gamma,q)$ under the bijection of Theorem~\ref{thm:mainsimplified}. 
\end{corollary}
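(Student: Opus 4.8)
The plan is to reduce all the claims to a statement about the rank values of maximal elements, exploiting that $X$, $Y$, and $\Gamma$ each possess a unique minimal element together with a rank function. For such a poset $B$ with unique minimal element $\hat{0}_B$, every maximal chain starts at $\hat{0}_B$ and ends at a maximal element $w$, and, by the remark following the definition of a rank function, it has length $\rho_B(w) - \rho_B(\hat{0}_B)$. Consequently $\rank(B) = \max_w \rho_B(w) - \rho_B(\hat{0}_B)$, where $w$ ranges over the maximal elements of $B$, and $B$ is graded if and only if $\rho_B$ is constant on the set of maximal elements of $B$. I would record this reformulation first, since it converts both the rank formula and the three gradedness equivalences into statements about maximal elements.

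Next I would identify the maximal elements of $\Gamma = \Cyl(\sigma)$. Since $x < \sigma(x)$ in $\Gamma$ for every $x \in X$ (condition (3) of Definition~\ref{def:nonHausdorffmc}), no element of $X$ is maximal in $\Gamma$, while an element $y \in Y$ has the same elements above it in $\Gamma$ as in $Y$; hence the maximal elements of $\Gamma$ are exactly the maximal elements of $Y$. Because $\rho_\Gamma(y) = \rho_Y(y) + 1$ on $Y$ and $\rho_\Gamma(\hat{0}_\Gamma) = \rho_X(\hat{0}_X)$, this immediately yields that $\rho_\Gamma$ is constant on the maximal elements of $\Gamma$ if and only if $\rho_Y$ is constant on the maximal elements of $Y$, proving $\Gamma$ graded $\iff$ $Y$ graded, and it reduces the rank computation to $\rank(\Gamma) = \max_{y}\rho_Y(y) + 1 - \rho_X(\hat{0}_X)$, where $y$ ranges over maximal elements of $Y$.

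The crux is to compare the maximal elements of $X$ and $Y$ using strong surjectivity. I would show: (i) if $x$ is maximal in $X$, then $\sigma(x)$ is maximal in $Y$ and $\rho_X(x) = \rho_Y(\sigma(x))$; and (ii) if $y$ is maximal in $Y$, then there is a maximal $x \in X$ with $\sigma(x) = y$ and $\rho_X(x) = \rho_Y(y)$. Both follow by applying the lift guaranteed by Definition~\ref{def:stronglysurjective} and using the rank-increasing property to force equality of ranks, with maximality ruling out any strictly larger lift. Together (i) and (ii) establish that the sets $\{\rho_X(x) : x \text{ maximal in } X\}$ and $\{\rho_Y(y) : y \text{ maximal in } Y\}$ coincide. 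Equality of these sets gives $X$ graded $\iff$ $Y$ graded at once, since a singleton on one side forces a singleton on the other, and equality of their maxima yields $\rank(\Gamma) = \max_x \rho_X(x) + 1 - \rho_X(\hat{0}_X) = \rank(X) + 1$. I expect steps (i) and (ii) to be the main obstacle, as this is where strong surjectivity and rank-increasingness must be combined carefully; the remaining bookkeeping is routine.
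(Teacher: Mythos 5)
Your proof is correct. The underlying engine is the same as the paper's -- strong surjectivity plus rank-increasingness force every maximal element of $X$ to map to a maximal element of $Y$ of equal rank, and every maximal element of $Y$ to lift to a maximal element of $X$ of equal rank -- but your packaging is genuinely different and arguably cleaner. The paper enumerates the maximal chains of $\Gamma$ explicitly as concatenations of a maximal chain in $[\hat{0}_X,x]$ with a maximal chain in $Y_{\ge \sigma(x)}$ (via Lemma~\ref{lem:usestronglysurjective}), computes the length formula \eqref{eq:r+s+1} using Example~\ref{ex:lowertolower}, and reads off the rank identity and the three gradedness equivalences from that formula case by case. You instead reduce everything to the observation that, for a ranked poset with a unique minimal element, gradedness is equivalent to the rank function being constant on maximal elements and the rank equals the maximum such value; then the identification of the maximal elements of $\Cyl(\sigma)$ with those of $Y$, together with the equality of the two sets of ranks of maximal elements, delivers all claims simultaneously. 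What the paper's route buys is an explicit description of all maximal chains of $\Gamma$ (reused implicitly elsewhere); what yours buys is the avoidance of the chain-length bookkeeping and of the dependence on Example~\ref{ex:lowertolower}. The two facts you flag as the crux, (i) and (ii), are exactly the arguments appearing in the paper's proof of the implication ``$\Gamma$ graded $\Rightarrow$ $X$ graded'' and at the end of the proof of Lemma~\ref{lem:liftchains}, respectively, so no new difficulty is hidden there.
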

\begin{proof}
	%	Suppose that $\sigma$ is a strong formal  subdivision  with respect to rank functions $\rho_X$ and $\rho_Y$, and
	%	consider the corresponding rank function $\rho_\Gamma$
	%	defined by \eqref{eq:rhoCylsigmav1}. 
	Using Lemma~\ref{lem:usestronglysurjective}, the maximal chains in $\Gamma = \Cyl(\sigma)$ have the following description: consider any $x \in X$ and $y \in Y$ such that $\sigma(x) = y$ and $\rho_X(x) = \rho_Y(y)$. Note that every $y \in Y$ appears in this way. Then concatenate a maximal chain $\hat{0}_X = x_0 < \cdots < x_r = x$ in $[\hat{0}_X,x] \subset X$ with a maximal chain $y = y_0 < \cdots < y_s$ in $Y_{\ge y}$ to obtain a maximal chain in $\Gamma$.  Here $r = \rho_X(x) - \rho_X(\hat{0}_X)$. By Example~\ref{ex:lowertolower},  
	the maximal chain has length
	\begin{equation}\label{eq:r+s+1}
		r + s + 1 =  \rho_X(x) - \rho_X(\hat{0}_X) + \rho_Y(y_s) - \rho_Y(y) + 1 = \rank(X) + 1 + \rho_Y(y_0,y_s)  - \rank(Y).
	\end{equation}
	Choosing $y_s$ such that $\rho_Y(y_0,y_s) = \rank(Y)$ shows that $\rank(\Gamma) = \rank(X) + 1$. 
	
	In the argument above, the maximal chain in $\Gamma$ has length $\rank(\Gamma)$ if and only if $\rho_Y(y_0,y_s) = \rank(Y)$. If $Y$ is graded, then this condition holds for all maximal chains and we deduce that $\Gamma$ is graded. If $\Gamma$ is graded, then choosing $y_0 = \hat{0}_Y$ above, we deduce that $Y$ is graded. 
	
	Assume that $\Gamma$ is graded. Consider a maximal chain $\hat{0}_X = x_0 < \cdots < x_r = x$ in $X$, and let $\sigma(x) = y$. If $y \le y'$ in $Y$, then the maximality of $x$ and the strong surjectivity condition implies that $y' = y$ and $\rho_X(x) = \rho_Y(y)$. Then  $\hat{0}_X = x_0 < \cdots < x_r = x < y$ is a maximal chain in $\Gamma$ of length $\rank(\Gamma) = \rank(X) + 1$. We conclude that $X$ is graded.
	
	Finally, assume that $X$ is graded. By Lemma~\ref{lem:liftchains} and Example~\ref{ex:lowertolower}, 
	every maximal chain in $Y$ has length $s = \rank(X) - \rho_Y(\hat{0}_Y) + \rho_X(\hat{0}_X) = \rank(X) - \rank(\sigma) = \rank(Y)$. We conclude that $Y$ is graded.

	%	 Suppose that $Y$ is graded. Consider a maximal chain in $\Gamma$ as described above. Then $\rho_Y(y_0,y_s) = \rank(Y)$, and the chain has length $r + s + 1 =  \rank(X) + 1$. We deduce that $\Gamma$ is graded. 
	%	 Conversely, if $\Gamma$ is graded, 

	% and $s = \rank(Y_{\ge y}) = \rank(Y) - \rho_Y(y) + \rho_Y(\hat{0}_Y)$.
\end{proof}

%\begin{remark}
%%	A poset is \emph{graded} if every maximal chain of $B$ has the same length. 
%
%%	Suppose that $\sigma: X \to Y$ is a strong formal subdivision between lower Eulerian posets with respect to rank functions $\rho_X$ and $\rho_Y$, and $\CYL(\sigma) = (\Gamma,q)$ under the bijection of Theorem~\ref{thm:mainsimplified}. We claim that $\Gamma$ is graded if and only if $X$ and $Y$ are graded. Moreover, in that case, $\rank(\Gamma) = \rank(X) + 1$. 
%	
%	Indeed, consider the corresponding rank function 
%	on $\Gamma$ defined by \eqref{eq:rhoCylsigmav1}. 
%	First suppose that $X$ and $Y$ are graded. 
%	Using the strong surjectivity condition, the maximal chains in $\Gamma = \Cyl(\sigma)$ have the following description: consider $x \in X$ and $y \in Y$ such that $\sigma(x) = y$ and $\rho_X(x) = \rho_Y(y)$. Then concatenate a maximal chain $\hat{0}_X = x_0 < \cdots < x_r = x$ in $[\hat{0}_X,x] \subset X$ with a maximal chain $y = y_0 < \cdots < y_s$ in $Y_{\ge y}$ to obtain a maximal chain in $\Gamma$.  Here $r = \rho_X(x) - \rho_X(\hat{0}_X)$ and $s = \rank(Y_{\ge y}) = \rank(Y) - \rho_Y(y) + \rho_Y(\hat{0}_Y)$. Using Example~\ref{ex:lowertolower}, the maximal chain has length
%	$r + s + 1 =  \rank(Y) + \rho_Y(\hat{0}_Y)  - \rho_X(\hat{0}_X) + 1 = \rank(Y) + \rank(\sigma) + 1 = \rank(X) + 1$. 
%	
%	Conversely, assume that $\Gamma$ is graded. Consider a maximal chain  $\hat{0}_X = x_0 < \cdots < x_r = x$ in $X$. Let $y = \sigma(x)$, and consider a maximal chain $y = y_0 < \cdots < y_s$ in $Y_{\ge y}$. 
%\end{remark}

Not every lower Eulerian poset admits a non-minimal join-admissible element. We present one obstruction below.

\begin{corollary}\label{cor:oddequaleven}
	Let $\Gamma$ be a lower Eulerian poset that contains a join-admissible element $q \neq \hat{0}_\Gamma$. 
	Then $\Gamma$ has as many elements of even rank as odd rank. In particular, $|\Gamma|$ is even. 
\end{corollary}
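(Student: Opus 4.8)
The plan is to realize $\Gamma$ as the non-Hausdorff mapping cylinder of a strong formal subdivision via Theorem~\ref{thm:mainsimplified}, and then to apply the parity identity of Lemma~\ref{lem:parity}. First I would fix any rank function $\rho_\Gamma$ for $\Gamma$, which exists since a lower Eulerian poset is ranked; note that $\sum_{z \in \Gamma} (-1)^{\rho_\Gamma(z)}$ is independent of this choice up to an overall sign, so its vanishing is a well-posed statement. Since $q$ is join-admissible with $q \neq \hat{0}_\Gamma$, the triple $(\Gamma, \rho_\Gamma, q)$ lies in $\JoinIdealLW^\circ$, so I would feed it to $\MAP$ to obtain a strong formal subdivision $\sigma: X \to Y$ with $X = \Gamma \smallsetminus \Gamma_{\ge q}$ and $Y = \Gamma_{\ge q}$.

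Next I would exploit the rank conventions of Definition~\ref{def:bijections}: the rank function on $X$ is the restriction of $\rho_\Gamma$, while the one on $Y$ is the restriction of $\rho_\Gamma$ shifted by $-1$. Applying Lemma~\ref{lem:parity} to $\sigma$ then gives
$$\sum_{x \in X} (-1)^{\rho_\Gamma(x)} = \sum_{y \in Y} (-1)^{\rho_\Gamma(y) - 1} = - \sum_{y \in Y} (-1)^{\rho_\Gamma(y)}.$$
Since $\Gamma$ is the disjoint union of $X = \Gamma \smallsetminus \Gamma_{\ge q}$ and $Y = \Gamma_{\ge q}$, rearranging gives $\sum_{z \in \Gamma} (-1)^{\rho_\Gamma(z)} = 0$, which is exactly the assertion that $\Gamma$ has equally many elements of even and odd rank. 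The claim that $|\Gamma|$ is even is then immediate, as this common count is half of $|\Gamma|$.

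The only place genuine content enters is the $-1$ shift on the rank function of $Y$: it turns the equality supplied by Lemma~\ref{lem:parity} into the statement that the two partial sums are negatives of one another, so they cancel upon recombination over $\Gamma$. I do not expect any real obstacle; the argument is essentially bookkeeping once Theorem~\ref{thm:mainsimplified} has been invoked to produce the subdivision, and the main thing to be careful about is tracking the sign conventions correctly.
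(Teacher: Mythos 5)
Your proposal is correct and follows essentially the same route as the paper: both invoke Theorem~\ref{thm:mainsimplified} to write $\Gamma$ as the non-Hausdorff mapping cylinder of a strong formal subdivision $\sigma: X \to Y$, and then apply the parity identity of Lemma~\ref{lem:parity} together with the $-1$ rank shift on $Y$ from \eqref{eq:rhoCyl} to conclude that $\sum_{z \in \Gamma}(-1)^{\rho_\Gamma(z)} = 0$. The sign bookkeeping you highlight is exactly the content of the paper's one-line computation.
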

\begin{proof}
	Fix a choice of rank function $\rho_\Gamma$ for $\Gamma$. 
	By Theorem~\ref{thm:mainsimplified}, there exists a strong formal subdivision $\sigma: X \to Y$ %of rank $r$ 
	between lower Eulerian posets $X$ and $Y$ with rank functions $\rho_X$ and $\rho_Y$ respectively
	such that $\Gamma = \Cyl(\sigma)$ and  
	\begin{equation*}
		\rho_{\Gamma}(z) = \begin{cases}
			\rho_X(z) &\textrm{ if } z \in X, \\
			\rho_Y(z) + 1 &\textrm{ if } z \in Y. 
		\end{cases}
	\end{equation*}
	By Lemma~\ref{lem:parity}, 
	\[
	\sum_{z \in \Gamma} (-1)^{\rho_\Gamma(z)} = \sum_{x \in X} (-1)^{\rho_X(x)} - \sum_{y \in Y} (-1)^{\rho_Y(y)} = 0. 
	\]
\end{proof}

The converse to Corollary~\ref{cor:oddequaleven} is false. 
Observe that any near-Eulerian poset has as many even elements as odd elements. Also, observe that if $q$ is a join-admissible element of a lower Eulerian poset $\Gamma$ then $q$ is a common lower bound for all maximal elements of $\Gamma$.  
%a near-Eulerian poset such that the only common lower bound of the maximal elements is $\hat{0}_B$.
%lower Eulerian poset $\Gamma$ with as many elements of even rank as odd rank such that the only common lower bound of the maximal elements is $\hat{0}_B$. Then this gives a counterexample to the converse statement. 
Let $P$ be a square and let $e$ be an edge, and consider the near-Eulerian poset $B = \face(P) \smallsetminus \{ e, P \}$. 
%Let $B$ be the near-Eulerian poset obtained from the face poset of $P$ by removing the elements corresponding to $e$ and $P$.  %Then $B$ is a near-Eulerian poset, and  
%%and has $1,4,3$ elements of ranks $0,1,2$ respectively. On the other hand, 
Then the maximal elements of $B$ correspond to the three edges of $P$ distinct from $e$. The common intersection of these edges is the empty face, and hence  $B$ contains no join-admissible elements except $\hat{0}_B$.

% common intersection of the three edges of $P$ distinct from $e$, corresponding to the maximal elements of $B$, is the empty face, and hence  $B$ contains no join-admissible elements except $\hat{0}_B$.

%On the other hand, for any nonempty face $F$ of $P$ with $F \notin \{ e,P \}$, there is a vertex $v$ of $P$  such that the smallest face of $P$ containing both $F$ and $v$ is $P$. 
%Hence $B$ contains no join-admissible elements except $\hat{0}_B$. 

We will also need the following lemma, which is a reformulation of 
\cite{EKDecompositionTheoremCDIndex}*{Lemma~2.10}. Let $B$ be a ranked poset. % with rank function $\rho: B \to \Z$. 
We say that a poset $B$ is \emph{locally Gorenstein*} if every interval of positive rank is Gorenstein*.
A locally Gorenstein* poset is \emph{lower Gorenstein*} if it contains a unique minimal element. For example, Gorenstein* posets are precisely lower Gorenstein* posets that contain a unique maximal element. 

\begin{lemma}\label{lem:GorensteinstarsubdivEK}
		Let $\sigma: X \to Y$ be a strong formal subdivision between lower Eulerian posets with rank functions $\rho_X$ and $\rho_Y$ respectively,
 corresponding under Theorem~\ref{thm:mainsimplified} to a triple $(\Gamma, \rho_\Gamma, q)$ with $\Gamma = \Cyl(\sigma)$, $\rho_\Gamma$ determined by \eqref{eq:rhoCyl}, and $q = \hat{0}_Y$. If $\Gamma$ is a lower Gorenstein* poset then
	%$\sigma$ is a Gorenstein* subdivision.
	for any $y$ in $Y$, either 
	\begin{enumerate}
		\item $y = \hat{0}_Y$ and $X_{\le y}$ is the boundary of
		a Gorenstein* poset, or,
		\item\label{i:nearGorenstein} $y \neq \hat{0}_Y$ and $X_{\le y}$ is near-Gorenstein* with boundary $X_{< y}$.
	\end{enumerate}
\end{lemma}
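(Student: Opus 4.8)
The plan is to translate the hypothesis into the subdivision picture supplied by Theorem~\ref{thm:mainsimplified}, split into the two cases of the statement, and in each case reduce the assertion to the Gorenstein* property of a suitable interval of $\Gamma$, with the homological core of the second case supplied by \cite{EKDecompositionTheoremCDIndex}*{Lemma~2.10}. First I would record the structural facts. Under Theorem~\ref{thm:mainsimplified} we have $\Gamma = \Cyl(\sigma)$ with $X = \Gamma \smallsetminus \Gamma_{\ge q}$ and $Y = \Gamma_{\ge q}$, where $q = \hat{0}_Y$. Since no element of $Y$ lies below an element of $X$ in the cylinder order, $X$ is a lower order ideal of $\Gamma$; hence every interval of positive rank of $X$ is an interval of $\Gamma$, and therefore Gorenstein* when $\Gamma$ is lower Gorenstein*, so $X$ is itself lower Gorenstein*. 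I would also recall from Remark~\ref{rem:altcharnearEulerian} that for $y \neq \hat{0}_Y$ the poset $X_{\le y}$ is near-Eulerian with boundary $X_{<y}$.

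For $y = \hat{0}_Y$, the whole statement is bookkeeping in the cylinder order. The elements of $\Gamma$ below $q = \hat{0}_Y$ are exactly $X_{\le \hat{0}_Y} \cup \{q\}$ with $q$ on top, so $[\hat{0}_\Gamma, q] = \overline{X_{\le \hat{0}_Y}}$ and its boundary is $X_{\le \hat{0}_Y}$. Using \eqref{eq:rhoCyl} and Example~\ref{ex:lowertolower}, this interval has rank $\rho_\Gamma(\hat{0}_\Gamma, q) = \rho_Y(\hat{0}_Y) - \rho_X(\hat{0}_X) + 1 = \rank(\sigma) + 1 > 0$, so it is Gorenstein* whenever $\Gamma$ is lower Gorenstein*. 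Thus $X_{\le \hat{0}_Y}$ is the boundary of a Gorenstein* poset, which is conclusion (1).

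For $y \neq \hat{0}_Y$ I would show the semisuspension $\tilde{\Sigma}(X_{\le y})$ is Gorenstein*, which is the definition of $X_{\le y}$ being near-Gorenstein* with boundary $X_{<y}$. Since $X_{\le y}$ is near-Eulerian, $\tilde{\Sigma}(X_{\le y})$ is obtained from $X_{\le y}$ by adjoining an element $\hat{z}$ above $X_{<y}$ and then a maximal element $\hat{1}$, and it is already Eulerian of positive rank; so by Definition~\ref{def:Gorensteinstar} it remains only to check that $\tilde{H}_i(\O(u,v);\Q)$ is concentrated in degree $\rho(u,v)-2$, where it is $\Q$, for all $u < v$ in $\tilde{\Sigma}(X_{\le y})$. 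For intervals with both endpoints in $X_{\le y}$ this is immediate: the adjoined elements $\hat{z},\hat{1}$ do not lie in such an open interval, so it coincides with an open interval of $X$, which is lower Gorenstein*. The remaining intervals are precisely those with top endpoint in $\{\hat{z},\hat{1}\}$, and verifying the homology-sphere condition for them is exactly the statement that coning off the boundary sphere $\O(X_{<y})$ of the homology ball $\O(X_{\le y})$ produces a homology sphere. This is the content of \cite{EKDecompositionTheoremCDIndex}*{Lemma~2.10}, whose hypotheses are furnished by the near-Eulerian structure of $X_{\le y}$ sitting inside the lower Gorenstein* poset $X$; I would invoke it to finish.

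The main obstacle is this last homological step for $y \neq \hat{0}_Y$: establishing that adjoining $\hat{z}$ and $\hat{1}$ to the near-Eulerian poset $X_{\le y}$ yields a Gorenstein* poset, equivalently that $(\O(X_{\le y}), \O(X_{<y}))$ has the relative homology of a ball modulo its boundary sphere. Everything preceding it — identifying $X$ as a lower Gorenstein* lower order ideal, the $y=\hat{0}_Y$ case via $[\hat{0}_\Gamma,q]$, and the reduction of the interior intervals to the lower Gorenstein* property of $X$ — is formal manipulation of the cylinder order and the inheritance of the Gorenstein* property by intervals of $\Gamma$.
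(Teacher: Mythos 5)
Your case $y=\hat{0}_Y$ is exactly the paper's argument and is fine: $\overline{X_{\le \hat{0}_Y}}=[\hat{0}_\Gamma,q]$ is an interval of positive rank in $\Gamma$, hence Gorenstein*. The problem is in the case $y\neq\hat{0}_Y$. You invoke \cite{EKDecompositionTheoremCDIndex}*{Lemma~2.10} with hypotheses ``furnished by the near-Eulerian structure of $X_{\le y}$ sitting inside the lower Gorenstein* poset $X$,'' but those are not the hypotheses of that lemma, and they are not strong enough to give the conclusion. The lemma takes as input a Gorenstein* poset together with a join-admissible element $q$, and its output is that the complement of the upper ideal generated by $q$ is near-Gorenstein*. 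Knowing only that $X$ is lower Gorenstein* controls the homology of \emph{intervals} of $X$; it says nothing about the global homology of the lower order ideal $X_{\le y}$, which is what the ``homology ball with boundary sphere'' assertion requires. (A lower order ideal of a lower Gorenstein* poset need not be a homology ball: think of the face poset of a simplicial complex, where every interval is Boolean but the complex itself can have arbitrary homology.) Concretely, your argument for $y\neq\hat{0}_Y$ never leaves $X$, so it never uses the elements of $Y$ in $[q,y]$ — and that is precisely the information that forces $X_{\le y}$ to be a homology ball.

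The correct application, which is what the paper does, is to the interval $[\hat{0}_\Gamma,y]$ of $\Gamma$: this is an interval of positive rank in a lower Gorenstein* poset, hence Gorenstein*, and $q$ is join-admissible in it (for $z\le y$ one has $z\vee q\le y$ since $y\in\Gamma_{\ge q}$). Its complement of the upper ideal generated by $q$ is exactly $X_{\le y}$, so Ehrenborg--Karu's lemma (with ``lattice'' relaxed to ``$q$ join-admissible,'' as the paper notes) gives that $X_{\le y}$ is near-Gorenstein*; the identification of the boundary as $X_{<y}$ then comes from Remark~\ref{rem:altcharnearEulerian}, as you say. Your partial re-derivation of the Gorenstein* condition for intervals contained in $X_{\le y}$ is correct but becomes unnecessary once the lemma is applied to the right object. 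As written, the step you yourself flag as ``the main obstacle'' is exactly the step your proof does not close.
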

\begin{proof}
	If $y = \hat{0}_Y$, then $\overline{X_{\le y}} = [\hat{0}_\Gamma,y]$ is an interval of positive rank in $\Gamma$, and hence is Gorenstein*. 
	If $y \neq \hat{0}_Y$, then  $X_{\le y}$ is near-Gorenstein* by \cite{EKDecompositionTheoremCDIndex}*{Lemma~2.10} (applied to the Gorenstein* poset $[\hat{0}_\Gamma,y]$ with join-admissible element $q$; note that Ehrenborg and Karu assume the poset is a %Gorenstein* 
	lattice, but the proof only uses that $q$ is join-admissible). 
The boundary of $X_{\le y}$ is  $X_{< y}$ by Remark~\ref{rem:altcharnearEulerian}. 
\end{proof}

\subsection{A functorial bijection for strong formal subdivisions}\label{ss:functorial}

In this section, we state  Theorem~\ref{thm:main}, which is a functorial version of Theorem~\ref{thm:mainsimplified}. 
The proof will be given in Section~\ref{sec:proof}.
We continue with the notation of the previous sections. 

We first discuss some reasons why functoriality is important. When working with posets we often identify isomorphic posets. Here an isomorphism of posets is an order-preserving function with an order-preserving inverse.
%, i.e., posets with mutually inverse order-preserving bijections between them. 
We similarly need the notion of 
%Hence we need a notion 
of an isomorphism between  two strong formal subdivisions, and we want the bijection of Theorem~\ref{thm:mainsimplified} to respect isomorphisms. 
%are `isomorphic'. 
Moreover, in the companion paper \cite{StapledonKLSTheory}, we need the notion of  an automorphism of a strong formal subdivision when we consider group actions. 
These issues are resolved by generalizing Theorem~\ref{thm:mainsimplified}  to an isomorphism of categories.
A corollary of the proof is a method to construct a  strong formal subdivision from a commutative diagram of strong formal subdivisions (see \eqref{eq:CYLcat}), as well as an interesting involution on the strong formal subdivisions appearing in this way (see Remark~\ref{rem:involution}). 
% This is provided by Definition~\ref{def:categorify}. 

%On the one hand, functoriality is important; when working with posets and functions between posets one often wants to identity `isomorphic' objects. Also, in Section~\ref{TODO}, we will be interested in automorphisms of strong formal subdivisions, which have not been defined  at this point (see Definition~\ref{def:categorify}). On the other hand, subsequent examples and applications in the paper do not otherwise logically depend on this section, so the reader is welcome to defer reading it on a first pass. 
%% and skip to the examples of Theorem~\ref{thm:mainsimplified} in Section~\ref{sec:examples}. 

%\alan{fix} In this appendix, we extend the bijection in  Theorem~\ref{thm:mainsimplified} to an isomorphism of categories. 

We briefly recall some basic category theory. Recall that a category $C$ consists of objects and morphisms, denoted $\Obj(C)$ and $\Mor(C)$ respectively. 
We require that
morphisms compose in the sense that if $\alpha: a \to b$ and $\beta: b \to c$ are morphisms then there is a well-defined morphism $\beta \circ \alpha: a \to c$. We require that composition is associative, i.e., if $\gamma: c \to d$ is a morphism then $\gamma \circ (\beta \circ \alpha) = (\gamma \circ \beta) \circ \alpha$. Also, for any object $a$ in $C$, there is an identity morphism $\id_a : a \to a$, and we require that $\alpha \circ \id_a = \id_b \circ \alpha = \alpha$ for all choices of $\alpha: a \to b$. A functor $F: C \to D$ between categories takes each object $a$ of $C$ to an object $F(a)$ of $D$, and takes each morphism $\alpha: a \to b$ of $C$ to a morphism $F(\alpha): F(a) \to F(b)$.  We require that $F(\id_a) = \id_{F(a)}$ for all objects $a$ in $C$, and $F(\beta \circ \alpha) = F(\beta) \circ F(\alpha)$ for all morphisms $\alpha: a \to b$ and $\beta: b \to c$ in $C$. 
%The functor  $F: C \to D$ is \emph{fully faithful} if for any $a,b \in \Obj(C)$, the induced function from $\{ \alpha : a \to b : \alpha \in \Mor(C) \}$ to $\{ \alpha : F(a) \to F(b) : \alpha \in \Mor(D) \}$ sending $\alpha$ to $F(\alpha)$ is bijective.
If $C$ is a category then a subcategory $C'$ of $C$ is a \emph{full subcategory} if all morphisms in $C$ between objects in $C'$ are morphisms in $C'$. Given any collection $A$ of objects in $C$, there is a unique full subcategory of $C$ whose objects are precisely the objects in $A$. 

Given a category $C$, the \emph{arrow category} $\Arr(C)$ is the category with objects given by morphisms in $C$, i.e., $\Obj(\Arr(C)) = \Mor(C)$, and a morphism from $\alpha: a \to b$ to $\beta: c \to d$ given by a commutative square
\[
\begin{tikzcd} a \arrow[r, "\alpha"] \arrow[d, "\phi_1"] & b \arrow[d, "\phi_2"] \\ c \arrow[r, "\beta"] &  d. \end{tikzcd}
\]
Composition of morphisms is given by vertically stacking commutative diagrams on top of each other to obtain a new commutative diagram. The identity morphism on $\alpha: a \to b$ is obtained by setting $\alpha = \beta$, $\phi_1 = \id_a$, and $\phi_2 = \id_b$ above. 

%With a slight abuse of notation, 
We now define categories $\SFScat$ and $\Joincat$ whose objects are the sets $\SFS$  and $\JoinIdealLW$ respectively from Definition~\ref{def:bijections}. 
	Recall that $\SFS$  is the set of strong formal subdivisions $\sigma: X \to Y$ between  lower Eulerian posets $X$ and $Y$  with rank functions $\rho_X$ and $\rho_Y$ respectively. 	
Recall that  $\JoinIdealLW$ is the set of triples $(\Gamma, \rho_\Gamma, q)$,
where $\Gamma$ is a lower Eulerian poset  with rank function $\rho_\Gamma$,  and %$q \neq \hat{0}_\Gamma$ 
$q$ is a join-admissible element of $\Gamma$. 
Recall that $\JoinIdealLW^\circ \subset \JoinIdealLW$ is the subset of all triples $(\Gamma, \rho_\Gamma, q)$ such that $q \neq \hat{0}_\Gamma$.

\begin{definition}\label{def:categorify}
	Let $\EulPos$ be the category with elements given by  pairs $(B,\rho_B)$, where $B$ is a lower Eulerian poset and $\rho_B$ is a rank function for $B$, and with
	morphisms given by strong formal subdivisions. 
%	Let $\EulPos^\circ$ be the full subcategory of $\EulPos$ with objects given by pairs $(B,\rho_B)$ in $\Obj(\EulPos)$ with $B$ nonempty. 
	We define $\SFScat$  to be the corresponding arrow category $\Arr(\EulPos)$. 
	
	Let $\Joincat$ be the category with objects $\JoinIdealLW$. We describe the morphisms. 
	A morphism  between objects $(\Gamma, \rho_\Gamma, q)$ and $(\Gamma', \rho_{\Gamma'}, q')$ in  $\Joincat$ is %defined by
	a strong formal subdivision 
	$\phi: \Gamma \to \Gamma'$ such that $\phi(q) = q'$ 
	%	and $\Gamma_{\ge q} = \phi^{-1}(\Gamma_{\ge q'}')$ 
	and
	$\phi(z \vee q) = \phi(z) \vee q'$ for all $z \in \Gamma \smallsetminus \Gamma_{\ge q}$. 
%	There is also unique morphism from the empty triple $\emptyset_{\JA}$ to every object in $\Joincat$. Let $\Gamma = \emptyset$. We say that the morphism from $\emptyset_{\JA}$ to $(\Gamma', \rho_{\Gamma'}, q')$ is defined by the unique degenerate strong formal subdivision from $\Gamma$  to $\Gamma'$,  and the morphism from $\emptyset_{\JA}$ to itself is defined by the unique degenerate strong formal subdivision from $\Gamma$  to itself. 
	Let $\Joincat^\circ$ be the full subcategory of $\Joincat$ with objects 
	$\JoinIdealLW^\circ$.

%	given by
%	triples $(\Gamma, \rho_\Gamma, q)$,
%	where $\Gamma$ is a  lower Eulerian poset  with rank function $\rho_\Gamma$,  and $q$ is a join-admissible element of $\Gamma$. 
%	% and $q \neq \hat{0}_\Gamma$ is a join-admissible element of $\Gamma$. 
%	A morphism  between objects $(\Gamma, \rho_\Gamma, q)$ and $(\Gamma', \rho_{\Gamma'}, q')$ in  $\Joincat$ is given by
%	a strong formal subdivision 
%	$\phi: \Gamma \to \Gamma'$ such that $\phi(q) = q'$ 
%	%	and $\Gamma_{\ge q} = \phi^{-1}(\Gamma_{\ge q'}')$ 
%	and
%	$\phi(z \vee q) = \phi(z) \vee q'$ for all $z \in \Gamma \smallsetminus \Gamma_{\ge q}$.
	
	%	by order-preserving, rank-increasing functions $\sigma: X \to Y$ of ranked posets $X$ and $Y$ with rank functions $\rho_X$ and $\rho_Y$ respectively.  
	
	%	Let $\RkdPos$ denote the category with elements given by pairs $(B,\rho_B)$, where $B$ is a  ranked poset and $\rho_B$ is a rank function for $B$, and 
	%	morphisms are given by order-preserving, rank-increasing functions $\sigma: X \to Y$ of ranked posets $X$ and $Y$ with rank functions $\rho_X$ and $\rho_Y$ respectively.  
	%	
	%	Let $\SFScat$  be the full subcategory of $\Arr(\RkdPos)$  whose objects are strong formal subdivisions $\sigma: X \to Y$ of  lower Eulerian posets $X$ and $Y$ with rank functions $\rho_X$ and $\rho_Y$ respectively. 
\end{definition}

%We next define a category $\Joincat$ whose objects are precisely the elements of the corresponding set in Definition~\ref{def:bijections}. 
%
%\begin{definition}\label{def:categorifyJoinAdm}
%	Let $\Joincat$ be the category with objects given by
%	 triples $(\Gamma, \rho_\Gamma, q)$,
%	where $\Gamma$ is a  lower Eulerian poset  with rank function $\rho_\Gamma$,  and $q \neq \hat{0}_\Gamma$ is a join-admissible element of $\Gamma$. 
%    A morphism  between objects $(\Gamma, \rho_\Gamma, q)$ and $(\Gamma', \rho_{\Gamma'}, q')$ in  $\Joincat$ is given by
%	an order-preserving, rank-increasing map $\phi: \Gamma \to \Gamma'$ such that $\phi(q) = q'$ 
%%	and $\Gamma_{\ge q} = \phi^{-1}(\Gamma_{\ge q'}')$ 
%	and
%	$\phi(z \vee q) = \phi(z) \vee q'$ for all $z \in \Gamma \smallsetminus \Gamma_{\ge q}$.
%\end{definition}

Note that in the definition of the morphisms in $\Joincat$ in Definition~\ref{def:categorify} above, the condition 	$\phi(z \vee q) = \phi(z) \vee q'$ holds for all $z \in \Gamma$. Indeed, if $z \in \Gamma_{\ge q}$, then $\phi(z) \in \Gamma_{\ge q'}'$ since $\phi$ is order-preserving, and hence   	$\phi(z \vee q) = \phi(z) = \phi(z) \vee q'$. 

It follows from Example~\ref{ex:identitypre} and Lemma~\ref{lem:composesfs}  that $\EulPos$ %in Definition~\ref{def:categorify} 
is a well-defined category.
To check that $\Joincat$ is a well-defined category, we need to additionally verify that composition of morphisms is well-defined. 
Consider objects $\alpha = (\Gamma, \rho_\Gamma,q)$, $\alpha' = (\Gamma', \rho_{\Gamma'},q')$, and $\alpha'' = (\Gamma'', \rho_{\Gamma''},q'')$ in  $\Joincat$. Consider strong formal subdivisions  $\phi: \Gamma \to \Gamma'$ and  $\phi': \Gamma' \to \Gamma''$ defining morphisms from $\alpha$ to $\alpha'$ and from $\alpha'$ to $\alpha''$ respectively.  We need to check that the strong formal subdivision $\phi' \circ \phi$ defines a morphism from $\alpha$ to $\alpha''$.  We compute
$(\phi' \circ \phi)(q) = \phi'(q') = q''$, and for all $z \in \Gamma$, 
$(\phi' \circ \phi)(z \vee q) = \phi'(\phi(z) \vee q') = \phi'(\phi(z)) \vee q'',$ as desired.

\begin{remark}
	Observe that there is a functor from $\Joincat$ to $\EulPos$ that `forgets' $q$, i.e., sends $(\Gamma,\rho_\Gamma,q)$ to $(\Gamma,\rho_\Gamma)$, and sends a strong formal subdivision $\phi: \Gamma \to \Gamma'$ to itself. 
	Also, there is a functor from  $\EulPos$  to $\Joincat$, that sends $(\Gamma,\rho_\Gamma)$ to $(\Gamma,\rho_\Gamma,\hat{0}_\Gamma)$, and 
	sends a strong formal subdivision $\phi: \Gamma \to \Gamma'$ to itself. This identifies $\EulPos$ with the full subcategory of $\Joincat$ with objects of the form $(\Gamma,\rho_\Gamma,\hat{0}_\Gamma)$. With this identification, we claim that $\Joincat$ is the union of $\EulPos$ and 
	$\Joincat^{\circ}$. 
	
	To establish the claim, consider a morphism  between objects $(\Gamma, \rho_\Gamma, q)$ and $(\Gamma', \rho_{\Gamma'}, q')$ in  $\Joincat$  %defined by
	given by a strong formal subdivision 
	$\phi: \Gamma \to \Gamma'$. We need to show that $q = \hat{0}_\Gamma$ if and only if $q' = \hat{0}_{\Gamma'}$. 	If $q = \hat{0}_\Gamma$, then $q' = \phi(q) = \hat{0}_{\Gamma'}$ by Remark~\ref{rem:zerotozero}. Conversely, suppose that $q' = \hat{0}_{\Gamma'}$. 
%	By Remark~\ref{rem:restrictsfs}, $\phi$ restricts to a strong formal subdivision between $\Gamma_{\le q'}$ and $\{ q' \}$. 	
	By Remark~\ref{rem:altcharnearEulerian}, $\Gamma_{\le q'}$ is the boundary of an Eulerian poset of positive rank. In particular, 
	$|\Gamma_{\le q'}|$ is odd. By Corollary~\ref{cor:oddequaleven},  the only join-admissible element in $\Gamma_{\le q'}$ is $\hat{0}_\Gamma$. 
	If $z \in \Gamma_{\le q'}$, then $\phi(z \vee q) = \phi(z) \vee q' = q'$ and hence $z \vee q \in \Gamma_{\le q'}$. We deduce that $q$ is a join-admissible element of $\Gamma_{\le q'}$, and hence $q = \hat{0}_\Gamma$. 		
\end{remark}

%$\alpha, \alpha', \alpha''$ in $\Joincat$, a morphism from $\alpha$ to $\alpha'$ defined by a strong formal subdivision  $\phi: \Gamma \to \Gamma'$, and a morphism from $\alpha'$ to $\alpha''$ defined by a strong formal subdivision  $\phi': \Gamma' \to \Gamma''$. By Lemma~\ref{lem:composesfs}, we may consider the strong formal subdivision $\phi' \circ \phi: \Gamma \to \Gamma''$. We need to check that $\phi' \circ \phi$ defines a morphism from $\alpha$ to $\alpha''$.  We compute
%$(\phi' \circ \phi)(q) = \phi'(q') = q''$, and for all $z \in \Gamma$, 
%$(\phi' \circ \phi)(z \vee q) = \phi'(\phi(z) \vee q') = \phi'(\phi(z)) \vee q''.$
%Also, the identity map from $\Gamma$ to $\Gamma$ defines the identity morphism of $\alpha$; see Example~\ref{ex:identitypre}. We conclude that $\Joincat$ is well-defined. 
%Similarly, %One verifies using 
%Example~\ref{ex:identitypre} and Lemma~\ref{lem:composesfs} show that $\EulPos$ %in Definition~\ref{def:categorify} 
%is a well-defined category, and hence $\SFScat$ and $\SFScat^\circ$ are well-defined. 

We may view the functions $\CYL$ and $\MAP$ in Definition~\ref{def:bijections} as defining functions between the objects of 
$\SFScat$ and $\Joincat^\circ$. We next extend these to functors.

\begin{definition}\label{def:functorialbijections}
	Define functors 
	\[
	\CYLcat: \SFScat \to \Joincat^\circ,
	\]
	\[
	\MAPcat: \Joincat^\circ \to \SFScat,
	\]
	where the image of an object under $\CYLcat$ or $\MAPcat$ is 
	given by applying $\CYL$ or $\MAP$ respectively (see Definition~\ref{def:bijections}).
	The image of a morphism under $\CYLcat$ is defined by
	\begin{equation}\label{eq:CYLcat}
				\CYLcat \left(\begin{tikzcd} X \arrow[r, "\sigma"] \arrow[d, "\phi_1"] & Y \arrow[d, "\phi_2"] \\ X' \arrow[r, "\sigma'"] &  Y' \end{tikzcd}\right) = (\phi: \Cyl(\sigma) \to \Cyl(\sigma')),
	\end{equation}
%		\[
%		\CYLcat \left(\begin{tikzcd} X \arrow[r, "\sigma"] \arrow[d, "\phi_1"] & Y \arrow[d, "\phi_2"] \\ X' \arrow[r, "\sigma'"] &  Y' \end{tikzcd}\right) = (\phi: \Cyl(\sigma) \to \Cyl(\sigma')),
%		\]
		where $\phi$ is the strong formal subdivision defined by
		\[
		\phi(z) = \begin{cases}
				\phi_1(z) &\textrm{ if } z \in X, \\
				\phi_2(z) &\textrm{ if } z \in Y. 
			\end{cases}
		\]
	Consider a morphism in $\Joincat^\circ$ between objects $(\Gamma,\rho_\Gamma,q)$ and $(\Gamma',\rho_{\Gamma'},q')$ given by a strong formal subdivision 
	$\phi: \Gamma \to \Gamma'$.
	% Here either $\alpha = \emptyset_{\JA}$ and $\Gamma = \emptyset$, or $\alpha = (\Gamma,\rho_\Gamma,q)$. Similarly, 
	%either $\alpha' = \emptyset_{\JA}$ and $\Gamma' = \emptyset$, or $\alpha' = (\Gamma',\rho_\Gamma',q')$.
	Suppose that $\MAP(\Gamma,\rho_\Gamma,q)$ is a strong formal subdivision $\sigma: X \to Y$, and  $\MAP(\Gamma',\rho_{\Gamma'},q')$ is a strong formal subdivision $\sigma': X' \to Y'$.
	Then 
	the image of the morphism under $\MAPcat$ is defined by
	\[
	\MAPcat(\phi: \Gamma \to \Gamma') = 
	\begin{tikzcd} X \arrow[r, "\sigma"] \arrow[d, "\phi_1"] & Y \arrow[d, "\phi_2"] \\ X' \arrow[r, "\sigma'"] &  Y' \end{tikzcd},
	\]
	where $\phi_1(x) = \phi(x) \in X' \subset \Gamma'$ for all $x \in X$, and
	$\phi_2(y) = \phi(y) \in Y' \subset \Gamma'$ for all $y \in Y$. 

\end{definition}

\begin{remark}\label{rem:cylinderphi}
	With the notation of \eqref{eq:CYLcat} in Definition~\ref{def:functorialbijections} above, the non-Hausdorff mapping cylinder $\Cyl(\phi)$ is the poset with elements given by the disjoint union %$X \sqcup Y$ 
	of $X$, $X'$, $Y$, and $Y'$, 
	and $z \le z'$ in $\Cyl(\phi)$ if and only if one of the following conditions hold:
	\begin{enumerate}
		\item $z,z' \in X$ and $z \le z'$ in $X$,
		\item $z,z' \in X'$ and $z \le z'$ in $X'$,
		\item $z,z' \in Y$ and $z \le z'$ in $Y$,
		\item $z,z' \in Y'$ and $z \le z'$ in $Y'$,
		\item $z \in X$, $z' \in Y$, and $\sigma(z) \le z'$ in $Y$,
		\item $z \in X$, $z' \in X'$, and $\phi_1(z) \le z'$ in $X'$,
		\item $z \in X'$, $z' \in Y'$, and $\sigma'(z) \le z'$ in $Y'$,
		\item $z \in Y$, $z' \in Y'$, and $\phi_2(z) \le z'$ in $Y'$,
		\item $z \in X$, $z' \in Y'$, and $(\phi_2 \circ \sigma)(z) = (\sigma' \circ \phi_1)(z) \le z'$ in $Y'$.
	\end{enumerate}
\end{remark}

We are now ready to state our theorem. 
Below, the statement includes the fact that the functors $\CYLcat$ and $\MAPcat$ in Definition~\ref{def:bijections} are well-defined.

\begin{theorem}\label{thm:main}
	The functors $\CYLcat: \SFScat \to  \Joincat^\circ$  and $\MAPcat:  \Joincat^\circ \to \SFScat$ are mutually inverse isomorphisms.
	%	The functions $\CYLcat$ and $\MAPcat$ are mutually inverse bijections between 
	%	the sets $\SFS$ and  $\Joincat$. 
%	 Moreover, they restrict to mutually inverse isomorphisms between $\SFScat^\circ$ and $\Joincat^\circ$. 
\end{theorem}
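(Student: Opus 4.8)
The plan is to leverage Theorem~\ref{thm:mainsimplified}, which already gives that $\CYL$ and $\MAP$ are mutually inverse bijections on objects; what remains is to upgrade this to morphisms, check functoriality, and verify the two functors are mutually inverse on morphisms. Throughout I write a morphism of $\SFScat$ as a commuting square $(\phi_1,\phi_2)$ of strong formal subdivisions $\phi_1 : X \to X'$, $\phi_2 : Y \to Y'$ with $\sigma' \circ \phi_1 = \phi_2 \circ \sigma$, and I set $q = \hat{0}_Y$, $q' = \hat{0}_{Y'}$, $\Gamma = \Cyl(\sigma)$, $\Gamma' = \Cyl(\sigma')$.

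First I would show $\CYLcat$ is well defined on morphisms. Given $(\phi_1,\phi_2)$, form $\phi = \phi_1 \sqcup \phi_2 : \Gamma \to \Gamma'$ as in \eqref{eq:CYLcat}, and verify the conditions of Definition~\ref{def:sfs}. Order-preservation and the rank-increasing property (with the rank functions \eqref{eq:rhoCyl}) follow immediately from the corresponding properties of $\phi_1,\phi_2$ together with the identity $\sigma'(\phi_1(z)) = \phi_2(\sigma(z))$ for the mixed relations of Definition~\ref{def:nonHausdorff}. Surjectivity of $\phi$ is clear, and strong surjectivity is a short case analysis. The heart is the subdivision identity \eqref{eq:strongsubdivisionequality}: given $z \in \Gamma$ and $w \in \Gamma'$ with $\phi(z) \le w$, I would split into cases by the $X$/$Y$ location of $z$ and $w$. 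When $z,w$ lie in the $X$-parts (resp.\ $Y$-parts) the sum reduces verbatim to \eqref{eq:strongsubdivisionequality} for $\phi_1$ (resp.\ $\phi_2$). The case $z \in X$, $w \in Y'$ is the interesting one: any contributing $z'$ must lie in $Y$ with $\sigma(z) \le z'$, so after cancelling the shift in \eqref{eq:rhoCyl} the sum becomes \eqref{eq:strongsubdivisionequality} for $\phi_2$ applied to the pair $(\sigma(z), w)$, which is admissible because $\phi_2(\sigma(z)) = \sigma'(\phi_1(z)) \le w$. The remaining case $z \in Y$, $w \in X'$ is vacuous, since no element of $Y'$ lies below an element of $X'$ in $\Gamma'$. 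Finally $\phi(q) = \phi_2(\hat{0}_Y) = \hat{0}_{Y'} = q'$ by Remark~\ref{rem:zerotozero}, and $\phi(z \vee q) = \phi(z) \vee q'$ follows from commutativity together with the identifications $x \vee q = \sigma(x)$ and $x' \vee q' = \sigma'(x')$, so $\phi$ is a morphism of $\Joincat^\circ$.

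The main obstacle is the well-definedness of $\MAPcat$, i.e.\ showing that a morphism $\phi : \Gamma \to \Gamma'$ of $\Joincat^\circ$ restricts to a commuting square. The containment $\phi(Y) \subseteq Y'$ is immediate from $\phi(q) = q'$ and order-preservation. The crux is the reverse containment $\phi^{-1}(\Gamma'_{\ge q'}) \subseteq \Gamma_{\ge q}$, equivalently $\phi(X) \subseteq X'$. I would argue by contradiction: suppose $z \in \Gamma$ with $\phi(z) \ge q'$ but $z \not\ge q$, so $z < z \vee q$. Restricting $\phi$ to the strong formal subdivision $\psi : \Gamma_{\ge z} \to \Gamma'_{\ge \phi(z)}$ (Remark~\ref{rem:restrictsfs}), the fibre $F = \psi^{-1}(\phi(z))$ over the bottom element is, by Remark~\ref{rem:altcharnearEulerian}, the boundary $\partial E$ of an Eulerian poset $E$ of positive rank, and hence $|F|$ is odd. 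Now the morphism condition gives $\phi(z \vee q) = \phi(z) \vee q' = \phi(z)$, so $z \vee q \in F$; moreover for any $w \in F$ one checks $w \vee q \in F$ and $w \vee (z \vee q) = w \vee q$, so $z \vee q$ is a join-admissible element of the lower Eulerian poset $F$ distinct from its minimum $z$. By Corollary~\ref{cor:oddequaleven} this forces $|F|$ to be even, a contradiction. Hence $z \ge q$, giving $\phi(X) \subseteq X'$. Once this is known, Remark~\ref{rem:restrictsfs} shows that $\phi_1 := \phi|_X : X \to X'$ (restriction to the preimage of the lower order ideal $X'$) and $\phi_2 := \phi|_Y : Y \to Y'$ (restriction to upper order ideals) are strong formal subdivisions, and the square commutes since $\sigma'(\phi_1(x)) = \phi_1(x) \vee q' = \phi(x \vee q) = \phi_2(\sigma(x))$.

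It then remains to check functoriality and that the two functors are mutually inverse, both of which are routine. Preservation of identities and of composition for $\CYLcat$ and $\MAPcat$ follows because all constructions are performed componentwise on the $X$- and $Y$-parts, so that composition in $\SFScat$ (vertical stacking of squares) and in $\Joincat^\circ$ (composition of strong formal subdivisions) match under disjoint union and under restriction. Finally, $\MAPcat \circ \CYLcat = \id$ on morphisms because restricting $\phi_1 \sqcup \phi_2$ to the $X$- and $Y$-parts recovers $(\phi_1,\phi_2)$, and $\CYLcat \circ \MAPcat = \id$ because $\phi = \phi|_X \sqcup \phi|_Y$ once $\phi(X) \subseteq X'$ and $\phi(Y) \subseteq Y'$ are established; combined with the object-level statement of Theorem~\ref{thm:mainsimplified}, this yields that $\CYLcat$ and $\MAPcat$ are mutually inverse isomorphisms of categories.
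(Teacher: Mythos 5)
Your proposal is correct and follows the same overall architecture as the paper: reduce the theorem to the well-definedness of $\CYLcat$ and $\MAPcat$ on morphisms (the object-level statement being Theorem~\ref{thm:mainsimplified}), and then observe that concatenation and restriction are inverse operations on morphisms. Your treatment of $\CYLcat$ matches the paper's Lemma~\ref{lem:CYLwelldefined} essentially verbatim, including the identification of the mixed case $z\in X$, $w\in Y'$ as the only nontrivial instance of \eqref{eq:strongsubdivisionequality}.

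The one step where you genuinely diverge is the crux of Lemma~\ref{lem:MAPwelldefined}, namely $\phi^{-1}(Y')=Y$. The paper reduces to a maximal element $x$ of $X$ with $\phi(x)\in Y'$, shows $\Gamma_{\ge x}=B_1$ by an interval-parity argument, and then contradicts Lemma~\ref{lem:parity} because the restricted subdivision $\Gamma_{\ge x}\to\Gamma'_{\ge y'}$ would map a two-element poset onto a singleton. You instead work with an arbitrary offending $z$, restrict to $\psi:\Gamma_{\ge z}\to\Gamma'_{\ge\phi(z)}$, and apply Corollary~\ref{cor:oddequaleven} to the fibre $F=\psi^{-1}(\phi(z))$ over the minimum: $F$ is the boundary of an Eulerian poset of positive rank by Remark~\ref{rem:altcharnearEulerian}, hence of odd cardinality, yet $z\vee q$ is a non-minimal join-admissible element of $F$ (the identity $w\vee(z\vee q)=w\vee q$ for $w\ge z$ checks out, and $w\vee q\in F$ follows from $\phi(w\vee q)=\phi(w)\vee q'=\phi(z)$), forcing $|F|$ even. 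Both are parity arguments, but yours avoids the reduction to maximal elements and is in fact the same mechanism the paper deploys in the remark following Definition~\ref{def:categorify} to show that $\Joincat$ is the union of $\EulPos$ and $\Joincat^\circ$; in that sense your version unifies the two arguments, at the cost of invoking the structural description of fibres from Remark~\ref{rem:altcharnearEulerian} rather than only the elementary Lemma~\ref{lem:parity}. Either route is complete.
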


The proof will be given in Section~\ref{sec:proof}. 
%Examples appear in   Example~\ref{ex:CylnearEulerian}, Example~\ref{ex:morphism}, and Example~\ref{ex:morphism2}. 

%\alan{remark on rank $0$. Maybe make it an example? Can do everything with rank $0$ stuff, still get arrow category}

\begin{remark}\label{rem:involution}
	Note that $\Mor(\SFScat)$ has a natural involution given by reflecting commutative diagrams through the diagonal from top left to bottom right:
	\[
	\begin{tikzcd} X \arrow[r, "\sigma"] \arrow[d, "\phi_1"] & Y \arrow[d, "\phi_2"] \\ X' \arrow[r, "\sigma'"] &  Y' \end{tikzcd} \mapsto 
		\begin{tikzcd} X \arrow[r, "\phi_1"] \arrow[d, "\sigma"] & X' \arrow[d, "\sigma'"] \\ Y \arrow[r, "\phi_2"] &  Y'. \end{tikzcd}
\]
Applying $\CYLcat$ to both sides, Theorem~\ref{thm:main} gives a corresponding involution $\iota$ on $\Mor(\Joincat^\circ)$. It follows from Remark~\ref{rem:cylinderphi} that $\iota$ preserves the non-Hausdorff mapping cylinder in the sense that for any $\phi: \Gamma \to \Gamma'$ in $\Mor(\Joincat^\circ)$, $\Cyl(\phi) = \Cyl(\iota(\phi))$. 
\end{remark}

\section{Examples of main results}\label{sec:examples}

The goal of this section is to study Theorem~\ref{thm:mainsimplified} 
and Theorem~\ref{thm:main} in several examples that will also be used throughout the remainder of the paper. 
In Section~\ref{ss:facelatticepolytope}, we consider the case when $\Gamma$ is the face lattice of a polytope. In Section~\ref{ss:basicexamples}, we consider the pyramid and bipyramid constructions, 
%compositions of strong formal subdivisions, 
and strong formal subdivisions into either $B_0$ or $B_1$. In Section~\ref{ss:constructingsfs}, we consider some general methods for constructing strong formal subdivisions. 
%Also, see Example~\ref{ex:CylnearEulerian}, Example~\ref{ex:morphism}, and Example~\ref{ex:morphism2} for examples of Theorem~\ref{thm:main}.  
Below we often identify isomorphic posets for convenience; this is justified by the fact that Theorem~\ref{thm:main} gives a functorial
version of Theorem~\ref{thm:mainsimplified}. 
We continue with the notation of Section~\ref{sec:subdivisions}.

We first recall the bijection of Theorem~\ref{thm:mainsimplified} and set notation to be freely used throughout this section. 
Let $\sigma: X \to Y$ be a strong formal subdivision  between  lower Eulerian posets $X$ and $Y$  with rank functions $\rho_X$ and $\rho_Y$ respectively. 
Consider a  triple $(\Gamma, \rho_\Gamma, q)$,
where $\Gamma$ is a  lower Eulerian poset  with rank function $\rho_\Gamma$,  and $q \neq \hat{0}_\Gamma$ is a join-admissible element of $\Gamma$.
Assume that $\sigma$ corresponds to $(\Gamma, \rho_\Gamma, q)$ under the bijection of Theorem~\ref{thm:mainsimplified}.  That is,  $\Gamma = \Cyl(\sigma)$ is the non-Hausdorff mapping cylinder of $\sigma$, $q = \hat{0}_Y$, $X = \Gamma \smallsetminus \Gamma_{\ge q}$,  $Y = \Gamma_{\ge q}$, and $\sigma(x) = x \vee q$ for all $x \in X$. Also, $\rho_\Gamma$ determines and is determined by $(\rho_X,\rho_Y)$ via \eqref{eq:rhoCyl}. 
%In what follows, we often identify isomorphic objects. 

\subsection{The face lattice of a polytope}\label{ss:facelatticepolytope}
%In this section, we consider the example when $\Gamma$ is  the face lattice of polytope, and $q$ is a nonempty face. 
%%\alan{forward reference to $CW$-complex stuff somewhere?? remove the projective hypothesis...} 
Let $P$ be a full-dimensional polytope in a real vector space $V$. 
Let $F$ be a nonempty face of $P$, and let $r = \dim F$. 
Consider the triple $(\Gamma,\rho_\Gamma,q)$ in $\JoinIdealLW^\circ$, where 
$\Gamma = \face(P)$ is the face lattice of $P$, $\rho_\Gamma$ is a choice of rank function for $\Gamma$, and $q = F \in \Gamma$.
After possible translation, we may assume that the origin lies in the relative interior of $F$. After possibly shifting the rank function, we may assume that $\rho_\Gamma$ is the natural rank function. 
Recall that for a face $G$ of $P$, $C(G)$ denotes the cone spanned by $G$. 
Consider the fan $\Sigma = \{ C(G) : G \in \face(P), 0 \notin G \}$. Then the support $|\Sigma|$ of $\Sigma$ is the cone $C(P)$ with faces $\{ C(G) : G \in \face(P), 0 \in G \}$. In particular, $C(F)$ is the minimal face of $C(P)$ and is an $r$-dimensional linear subspace of $V$.  Consider the linear map $\phi: V \to V/L$. 
Let $C = \phi(C(P))$. Then $C$ is a full-dimensional pointed cone in $V/L$, and $\face(C)$ is isomorphic to face lattice of the quotient polytope $P/F$. 
Observe that  $\phi$ induces a proper, surjective morphism of fans from $\Sigma$ to $C$.
We claim that the morphism is projective. Indeed, 
define $p: |\Sigma| \to \R$ to be the unique piecewise linear function with value $1$ on the nonzero vertices of $P$. The fact that $P$ is convex implies that $p$ is a strictly convex function, as desired. 
Let $\sigma: \face(\Sigma) \to \face(C)$ be the corresponding strong formal subdivision of rank $r$, where $\face(\Sigma)$ and $\face(C)$ are equipped with the natural rank function and the natural rank function shifted by $r$ respectively (see Example~\ref{ex:proper}). Then $\sigma = \MAP(\Gamma,\rho_\Gamma,q)$ under Theorem~\ref{thm:mainsimplified}.

Conversely, let $\phi: V \to W$ be a surjective linear map of real vector spaces. Let $L = \ker(\phi)$ and let $r = \dim L =  \dim V - \dim W$. Let $C$ be a full-dimensional pointed cone in $W$, and let $\Sigma$ be a fan in $V$ such that $\phi$ induces a projective morphism of fans from $\Sigma$ to $C$. Let $\sigma: \face(\Sigma) \to \face(C)$ be the corresponding strong formal subdivision  of rank $r$, where, after a possible common shift of rank functions, we may assume that $\face(\Sigma)$ and $\face(C)$
are equipped with the natural rank function and the natural rank function shifted by $r$ respectively (see Example~\ref{ex:proper}). 
Observe that $|\Sigma| = \phi^{-1}(C)$ is a full-dimensional cone in $V$ with minimal face $L$. Let $p: |\Sigma| \to \R$ be a 
piecewise linear function $p$ on $\Sigma$ that is strictly convex with respect to $\Sigma$. After possibly adding a constant, we may assume that $p$ takes strictly positive values on nonzero elements in $|\Sigma|$.  %\smallsetminus \{ 0_V \}$, where $0_V$ denotes the origin in $V$. 
Let $P = p^{-1}([0,1])$. The fact that $p$ is strictly convex implies that $P$ is convex, and the faces of $P$ not containing the origin are in inclusion-preserving bijection with the cones of $\Sigma$. On the other hand, the faces of $P$ containing the origin are in inclusion-preserving bijection with the faces of $|\Sigma| = \phi^{-1}(C)$, which are in bijection with the faces of $C$. Let $F$ be the nonempty face of $P$ containing the origin in its relative interior. 
Let $\Gamma = \face(P)$, let $\rho_\Gamma$ be the natural rank function, and let $q = F \in \Gamma$. Then $\CYL(\sigma) = (\Gamma,\rho_\Gamma,q)$ under Theorem~\ref{thm:mainsimplified}.

We explain the special case when $r = 0$ in a slightly different, but equivalent, way. %more detail.
%As an example, consider the  
%We will use the correspondence between polytopes and pointed cones described in Section~\ref{ss:fans}.  
Let $P$ be a  full-dimensonal polytope in a vector space $V$, and let
$v$ be a vertex of $P$. 
Consider the triple $(\Gamma,\rho_\Gamma,q)$ in $\JoinIdealLW^\circ$, where 
$\Gamma = \face(P)$ is the face lattice of $P$, $\rho_\Gamma$ is the natural rank function, and $q = v \in \Gamma$. After possible translation, we may assume that $v$ is the origin in $V$. Then  $\Sigma = \{ C(G) : G \in \face(P), 0 \notin G \}$ is a projective refinement of the pointed cone $C(P)$. Let $H$ be an affine hyperplace in $V$ that intersects all the rays of $\Sigma$ at a unique nonzero point. Let $Q = C(P) \cap H$ and let $\cS = \{ C(G) \cap H : G \in \face(P), 0 \notin G \}$ be the corresponding regular polyhedral subdivision of $Q$. Consider the induced strong formal subdivision $\sigma: \face(\cS) \to \face(Q)$, where $\face(\cS)$ and $\face(Q)$ are equipped with the natural rank functions (see Example~\ref{ex:polytope}). 
Then $\sigma = \MAP(\Gamma,\rho_\Gamma,q)$ under Theorem~\ref{thm:mainsimplified}.

Conversely, let $\cS$ be a regular polyhedral subdivision of a polytope $Q$ in a vector space $V$. 
Consider the induced strong formal subdivision $\sigma: \face(\cS) \to \face(Q)$, where $\face(\cS)$ and $\face(Q)$ are equipped with the natural rank functions (see Example~\ref{ex:polytope}). 
With the notation of Section~\ref{ss:polytopes}, let 
$A \subset Q$ be a finite subset containing the vertices of $Q$, 
and let $\omega: A \to \R$ be a function such that 
$\cS = \cS(\omega)$, i.e., $\cS$ is the projection of the bounded faces of 
the upper convex hull $\UH(\omega) \subset V \oplus \R$.  
Consider the pointed cone $C(\UH(\omega) \times \{ 1 \})$. By Example~\ref{ex:polyhedron}, the faces of $C(\UH(\omega) \times \{ 1 \})$ are the faces of the form $C(Q' \times \{ 1\})$, where $Q'$ is a face of $\UH(\omega)$, together with the ray $R$ through $(0_V,1,0) \in V \oplus \R \oplus \R$, where  $0_V$ denotes the origin in $V$. 
Let $P$ be the intersection of $C(\UH(\omega) \times \{ 1 \})$ with an affine hyperplane $H$ that intersects all rays of  $C(\UH(\omega) \times \{ 1 \})$ at a unique nonzero point. Let $v$ be the intersection of $R$ with $H$. Then $v$ is vertex of $P$. Let $\Gamma = \face(P)$, let $\rho_\Gamma$ be the natural rank function, and let $q = v \in \Gamma$. Then $\CYL(\sigma) = (\Gamma,\rho_\Gamma,q)$ under Theorem~\ref{thm:mainsimplified}.

%\begin{remark}
%	We have seen above that if $\sigma: X \to Y$ is a strong formal subdivision induced by a projective, surjective morphism between a  fan  and a pointed cone, then the non-Hausdorff mapping cylinder $\Cyl(\sigma)$ is the face lattice of a polytope.  
%	More generally, we will see in 
%	%Example~\ref{ex:CWpropermappingcylinder} 
%	the companion paper \cite{StapledonCWPosets}
%	that if $\sigma: X \to Y$ is induced by a proper, surjective morphism of fans as in Example~\ref{ex:proper}, then 
%	$\Gamma = \Cyl(\sigma)$ is a $CW$-poset, i.e.,  the face lattice of a  regular $CW$-complex. As a special case, if $\sigma$ is induced by a refinement of polyhedral subdivisions of a polytope as in Example~\ref{ex:polytope}, then 	$\Gamma = \Cyl(\sigma)$ is a $CW$-poset. 
%	%See Example~\ref{ex:CWsubdivisionmappingcylinder}. 
%\end{remark}

\begin{example}\label{ex:polygon}
	Let $P$ be a polygon with $s + 3$ vertices. 
	Let $\Gamma = \face(P)$, let $\rho_\Gamma$ be the natural rank function, and let $q$ be a vertex of $P$. 
	Let $Q = [0,1]$. Then $\face(Q) = B_2$. 
	Let $\cS$ be the regular polyhedral subdivision of $Q$ with $s$ interior vertices.  	 Let $\sigma: \face(\cS) \to \face(Q)$ be the corresponding strong formal subdivision of rank $0$, where $\face(\cS)$ and $\face(Q)$ are equipped with the natural rank functions. 	Then  $\CYL(\sigma) = (\Gamma,\rho_\Gamma,q)$ and $\MAP(\Gamma,\rho_\Gamma,q) = \sigma$ under Theorem~\ref{thm:mainsimplified}. 
\end{example}

\begin{example}\label{ex:polytopeB0}
	Let $P$ be a full-dimensional polytope in a vector space $V$ of dimension $r$.   Let $\Gamma = \face(P)$, let $\rho_\Gamma$ be the natural rank function, and let $q = P \in \Gamma$.  After possible translation, assume that the origin lies in the relative interior of $P$. Consider the projective fan $\Sigma = \{ C(F) : F \textrm{ proper face of } P \}$, and the unique projective morphism induced by the linear map $\phi: V \to \{ 0\}$. Let $\sigma: \face(\Sigma) \to \face(\{ 0\})$ be the corresponding strong formal subdivision of rank $r$, where   $\face(\Sigma) = \partial \face(P)$ and $\face(\{ 0\}) = B_0$
	are equipped with the natural rank function and the natural rank function shifted by $r$ respectively. 	Then  $\CYL(\sigma) = (\Gamma,\rho_\Gamma,q)$ and $\MAP(\Gamma,\rho_\Gamma,q) = \sigma$ under Theorem~\ref{thm:mainsimplified}. See Example~\ref{ex:B0} for a generalization. 
\end{example}

\begin{example}\label{ex:polytopeB1}
	Let $P$ be a full-dimensional polytope in a vector space $V$ of dimension $r + 1$.  Let $F$ be a facet of $P$.  Let $\Gamma = \face(P)$, let $\rho_\Gamma$ be the natural rank function, and let $q = F \in \Gamma$.  After possible translation, assume that the origin lies in the relative interior of $F$. Consider the fan $\Sigma = \{ C(G) : G \textrm{ proper face of } P, G \neq F \}$, and the unique projective morphism to the pointed cone $C = \R_{\ge 0} \subset \R$ induced by the linear map $\phi: V \to V/L \cong \R$, where $L$ is the linear subspace spanned by $F$. Let $\sigma: \face(\Sigma) \to \face(C)$ be the corresponding strong formal subdivision of rank $r$, where   $\face(\Sigma)$ and $\face(C) = B_1$
	are equipped with the natural rank function and the natural rank function shifted by $r$ respectively. Here $\face(\Sigma) = \face(P) \smallsetminus \{ F,P\}$ is a near-Eulerian poset with semisuspension $\face(P)$ and boundary $\partial \face(F)$. 	Then  $\CYL(\sigma) = (\Gamma,\rho_\Gamma,q)$ and $\MAP(\Gamma,\rho_\Gamma,q) = \sigma$ under Theorem~\ref{thm:mainsimplified}. See Example~\ref{ex:B1} for a generalization. 
\end{example}

\begin{example}\label{ex:polytopeidentity}
	Let $P$ be a polytope. Let $\Gamma = \face(\Pyr(P))$, let $\rho_\Gamma$ be the natural rank function, and let $q$ be the apex of $\Pyr(P)$. Let $\id_{\face(P)}: \face(P) \to \face(P)$ be the identity function, which is induced by the trivial subdivision of $P$, and assume that $\face(P)$ is equipped with the natural rank function.
	Then  $\CYL(\id_{\face(P)}) = (\Gamma,\rho_\Gamma,q)$ and $\MAP(\Gamma,\rho_\Gamma,q) = \id_{\face(P)}$ under Theorem~\ref{thm:mainsimplified}. See Example~\ref{ex:identity} for a generalization. 
\end{example} 

\begin{example}\label{ex:polytopebipyramid}
	Let $P$ be a polytope.   Let $\Gamma = \face(\Bipyr(P))$, let $\rho_\Gamma$ be the natural rank function, and let $q$ be an apex of $\Bipyr(P)$. Let $\cS$ be the pulling refinement of $P$ at a point in its relative interior, and let $\sigma: \face(\cS) \to \face(P)$ be the corresponding strong formal subdivision, where $\face(\cS)$ and $\face(P)$ are equipped with the natural rank functions. 	Then  $\CYL(\sigma) = (\Gamma,\rho_\Gamma,q)$ and $\MAP(\Gamma,\rho_\Gamma,q) = \sigma$ under Theorem~\ref{thm:mainsimplified}. See Example~\ref{ex:bipyramid} for a generalization. 
\end{example}

% \begin{example}
	% 	Let $P = [0,1]$ and let $\cS$ be the regular triangulation with $r$ interior vertices. The local $h$-polynomial is $rt$. By Example~\ref{ex:regulartopolytope}, $M(f)$ is the poset of faces of a polygon with $r + 3$ vertices. Ex: if $r = 1$, then get a square (can you draw this; labeling faces according to the subdivision). Then apply our theorem:

\subsection{Basic examples}\label{ss:basicexamples}

	We now consider a few examples involving general posets. 
	The first two examples describe the effect of the pyramid and bipyramid constructions respectively. 
	The first example also describes our results for the identity strong formal subdivision 	(c.f. Example~\ref{ex:identitypre}). 
%	We then consider  Theorem~\ref{thm:main} for compositions of strong formal subdivisions. 
	%Finally, 
	We then describe strong formal subdivisions to $B_0$ and $B_1$ respectively. 
	
		\begin{example}[Pyramid]\label{ex:identity}
		Let $B$ be a lower Eulerian poset with rank function $\rho_B$. 	Recall from Section~\ref{ss:posets}  that $\Pyr(B) = B \times B_1$, where $B_1 = \{ \hat{0}, \hat{1}\}$. Consider the natural rank function on $B_1$, and recall that there is an induced rank function $\rho_{B \times B_1}$ for $\Pyr(B)$. 
		%	Consider the rank function $\rho_{B_1}$ defined by $\rho_{B_1}(\hat{0}) = 0$, $\rho_{B_1}(\hat{1}) = 1$, and the rank function $\rho_{\Pyr(B)}$ defined by $\rho_{\Pyr(B)}(z,z') = \rho_B(z) + \rho_{B_1}(z')$  for all $z \in B$ and $z' \in B_1$. 
		
		Let $\Gamma = \Pyr(B)$, $\rho_\Gamma = \rho_{B \times B_1}$, and $q = (\hat{0}_B, \hat{1})$. 	Then $q \neq \hat{0}_\Gamma$ is join-admissible. Let $X = Y = B$ with $\rho_X = \rho_Y = \rho_B$.  If we identify $X$  with $B \times \{ \hat{0} \} \subset \Gamma$, and $Y = B$ with $B \times \{ \hat{1} \} \subset \Gamma$, then the corresponding strong formal subdivision $\sigma: X \to Y$ is the identity function $\id_B$ on $B$. %$\id: B \to B$  
		
		See Example~\ref{ex:productidentity} for a generalization. See also Example~\ref{ex:polytopeidentity} for a special case.

	\end{example}

		\begin{example}[Bipyramid]\label{ex:bipyramid}
		Let $B$ be an Eulerian poset of positive rank with the natural rank function $\rho_B$. 
		Recall from Section~\ref{ss:posets}  that 	the bipyramid $\Bipyr(B)$ of $B$ is the unique Eulerian poset 
		%with $\rank(\Bipyr(B)) = \rank(B) + 1$ 
		such that $\partial \Bipyr(B) = \partial B \times \partial B_2$.
		Let $\rho_{B_2}$ be the natural rank function for $B_2$, and 
		recall that there is an induced rank function $\rho_{\Bipyr(B)}$ for $\Bipyr(B)$.  Fix a choice of maximal element $\hat{z} \in \partial B_2$. 
		
%		
%		let $\rho_{\partial B}$ and $\rho_{\partial B_2}$ be the restrictions of the rank functions for $B$ and $B_2$ to $\partial B$ and $\partial B_2$ respectively. 
%		 
		
%		Define $\rho_{\Bipyr(B)}$ to be $\rho_{\partial B \times \partial B_2}$ on $\partial \Bipyr(B)$ and set $\rho_{\Bipyr(B)}(\hat{1}_{\Bipyr(B)}) = \rho_B(\hat{1}_B) + 1$.
%		%	
%		%	by $\rho_{\Bipyr(B)}(z,z') = \rho_{\partial B \times \partial B_2}\rho_{B}(z) + \rho_{B_2}(z')$ for $z \in \partial B$ and $z' \in \partial B_2$, and
%		Fix a choice of maximal element $\hat{z} \in \partial B_2$. 
%		%Let $\hat{z}'$ be the other maximal element in $\partial B_2$. 
%		
		
		Let $\Gamma = \Bipyr(B)$, $\rho_\Gamma = \rho_{\Bipyr(B)}$, and $q = (\hat{0}_B,\hat{z})$. Then $q \neq \hat{0}_\Gamma$ is join-admissible.
		Let $X = \Pyr(\partial B)$ with rank function $\rho_X = \rho_{\partial B \times B_1}$, where $\rho_{\partial B}$ is the restriction of $\rho_B$, and $\rho_{B_1}$ is the natural rank function on $B_1$.  Let $Y = B$ with rank function $\rho_Y = \rho_B$. If we identify $B_1$ with $B_2 \smallsetminus \{ \hat{z}, \hat{1}_{B_2} \}$, 
		identify
		$X$ with $\partial B \times B_1 \subset \Gamma$, and identify $Y$ with $\Gamma \smallsetminus \partial B \times B_1$ so that $\partial Y$ is identified with $\partial B \times \{ \hat{z} \}$, then  the corresponding strong formal subdivision $\sigma: X \to Y$ of rank $0$ is
		\[
		\sigma:  \Pyr(\partial B) \to B,
		\]
		\[
		\sigma(z,z') = \begin{cases}
			z &\textrm{if } z' = \hat{0} \in B_1, \\
			\hat{1}_B &\textrm{if } z' = \hat{1} \in B_1. \\
		\end{cases}
		\]
		
		See Example~\ref{ex:dualdiamondproductidentity} for a generalization. See also Example~\ref{ex:polytopebipyramid} for a special case.

	\end{example}

	We %first
	next give a complete description of strong formal subdivisions into the one element poset $B_0$. Recall that if $B$ is an Eulerian poset of positive rank, then its boundary is  $\partial B = B \smallsetminus \{ \hat{1}_B \}$.  
	
	\begin{example}[Strong formal subdivisions to $B_0$]\label{ex:B0}
		Let $B$ be an Eulerian poset of positive rank with rank function $\rho_B$. 	
		
		Let $\Gamma = B$, $\rho_\Gamma = \rho_B$, and  $q = \hat{1}_\Gamma$. 
		Then $q \neq \hat{0}_\Gamma$ is join-admissible. Let  $X = \partial B$ with $\rho_X$  the restriction of $\rho_B$. Let $Y = B_0 = \{ \hat{0} \}$ with rank function $\rho_Y(\hat{0}) = \rho_B(\hat{1}_B) - 1$. If we identify $Y$ with $\{ q \}$, then the corresponding strong formal subdivision $\sigma: X \to Y$ is the unique function $\sigma: \partial B \to B_0$. 
		
		Conversely, every strong formal subdivision from a lower Eulerian poset to $B_0$ has this form. Indeed, if $\sigma: X \to B_0$ is a strong formal subdivision, then $\Gamma = \Cyl(\sigma) = X \cup B_0$ is Eulerian with $\partial \Gamma = X$, and the rank functions are determined by Remark~\ref{rem:rankfunctionunique}.  
		
		%	Let $\Gamma$ be an Eulerian poset of positive rank with rank function $\rho_\Gamma$. Let $q = \hat{1}_\Gamma$ be the unique maximal element of $\Gamma$. Then $q \neq \hat{0}_\Gamma$ is join-admissible. 
		%	The corresponding strong formal subdivision $\sigma: X \to Y$ is the unique function from $X = \partial \Gamma$ to $Y = \{ q \} = B_0$. 
		%	Here $\rho_X$ is the restriction of $\rho_\Gamma$, and $\rho_Y(q) = \rho_\Gamma(q) - 1$. 

		%	Let $B$ be an Eulerian poset of positive rank. Let $X = \partial B$, $Y = B_0$, and let $\sigma: X \to Y$ be the unique map of posets. Then $\sigma$ is a strong formal subdivision of rank $\rank(B) - 1$.
		%	Let $\Gamma = B$ and $q = \hat{1}_B$. Then if we identify $Y = B_0$ with $\Gamma_{\ge q}$, then $\sigma$ corresponds to the pair $(\Gamma,q)$ under Corollary~\ref{cor:main} and Corollary~\ref{cor:qalternative}. 
		%	Conversely, every strong formal subdivision $\sigma: X \to B_0$ from a lower Eulerian poset to $B_0$ has this form. 
		
	\end{example}
	
	Recall that if $B$ is a poset, then $\overline{B}$ is the poset obtained from $B$ by adjoining a unique maximal element.  
	
	\begin{corollary}\label{cor:boundaryEulerian}
		Suppose that $\sigma: X \to Y$ is a strong formal subdivision between lower Eulerian posets  with rank functions $\rho_X$ and $\rho_Y$ respectively. Then $\overline{X}$ is Eulerian if and only if $\overline{Y}$ is Eulerian.
	\end{corollary}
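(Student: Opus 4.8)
The plan is to reduce the statement to the existence of strong formal subdivisions onto the one element poset $B_0$, and then to apply the composition Lemma~\ref{lem:composesfs}. The essential input is Example~\ref{ex:B0}, which shows that for a nonempty lower Eulerian poset $B$ one has $\overline{B}$ Eulerian if and only if the unique function $B \to B_0$ is a strong formal subdivision. Indeed, if $\overline{B}$ is Eulerian then it has positive rank (the adjoined $\hat{1}$ strictly exceeds $\hat{0}_B$), so $B = \partial \overline{B}$ and the unique map $B \to B_0$ is a strong formal subdivision by Example~\ref{ex:B0}; conversely, if $\tau\colon B \to B_0$ is a strong formal subdivision then $\Cyl(\tau) = B \cup B_0 = \overline{B}$ is Eulerian. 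Since $\sigma$ is surjective and $X$, $Y$ are nonempty, it therefore suffices to show that a strong formal subdivision $X \to B_0$ exists if and only if one $Y \to B_0$ exists.

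Both implications go through Lemma~\ref{lem:composesfs}, using that $\sigma\colon X \to Y$ is itself a strong formal subdivision. For the direction $\overline{Y}$ Eulerian $\Rightarrow \overline{X}$ Eulerian, take the strong formal subdivision $\tau\colon Y \to B_0$ provided by Example~\ref{ex:B0}. Then $\tau \circ \sigma\colon X \to B_0$ is a strong formal subdivision by the forward implication of Lemma~\ref{lem:composesfs} (composition of strong formal subdivisions), and hence $\overline{X} = \Cyl(\tau \circ \sigma)$ is Eulerian.

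For the converse, suppose $\overline{X}$ is Eulerian, so that the unique function $\tau_X\colon X \to B_0$ is a strong formal subdivision. Let $\tau\colon Y \to B_0$ be the unique function; it is order-preserving, and $\tau \circ \sigma = \tau_X$ because $B_0$ has a single element. To invoke the converse implication of Lemma~\ref{lem:composesfs} I must check that $\tau$ is rank-increasing for a compatible normalization. By Remark~\ref{rem:rankfunctionunique} the rank function on $B_0$ is pinned down up to the common shift; normalize $\rho_{B_0}(\hat{0}) = \rho_X(\hat{0}_X) + \rank(X)$, which by Example~\ref{ex:lowertolower} is exactly the value making $\tau_X$ a strong formal subdivision of rank $\rank(X)$. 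Applying Example~\ref{ex:lowertolower} to $\sigma$ gives $\rho_Y(\hat{0}_Y) = \rho_X(\hat{0}_X) + \rank(\sigma) = \rho_X(\hat{0}_X) + \rank(X) - \rank(Y)$, so every $y \in Y$ satisfies $\rho_Y(y) \le \rho_Y(\hat{0}_Y) + \rank(Y) = \rho_{B_0}(\hat{0})$; thus $\tau$ is rank-increasing. Now the converse implication of Lemma~\ref{lem:composesfs}, applied with the strong formal subdivisions $\sigma$ and $\tau \circ \sigma = \tau_X$, shows that $\tau$ is a strong formal subdivision, whence $\overline{Y} = \Cyl(\tau)$ is Eulerian.

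I expect the rank bookkeeping in the converse direction to be the only real obstacle: the converse half of Lemma~\ref{lem:composesfs} requires $\tau$ to be rank-increasing, which is not automatic and is precisely where the normalization of Example~\ref{ex:lowertolower} must be used to bound $\rho_Y(y)$ by $\rho_{B_0}(\hat{0})$. Everything else is a direct translation through the biconditional furnished by Example~\ref{ex:B0}.
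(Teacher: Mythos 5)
Your proof is correct and follows essentially the same route as the paper: both reduce the statement to Example~\ref{ex:B0} (a strong formal subdivision to $B_0$ exists iff the one-point completion is Eulerian) and then apply the two-way composition statement of Lemma~\ref{lem:composesfs} to the unique maps $\tau\colon Y \to B_0$ and $\tau\circ\sigma\colon X \to B_0$. Your explicit verification that $\tau$ is rank-increasing under the normalization $\rho_{B_0}(\hat{0}) = \rho_X(\hat{0}_X) + \rank(X)$ is a detail the paper leaves implicit, and it is carried out correctly.
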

	\begin{proof}
		%Let $r = \rank(\sigma)$. 
		By Remark~\ref{rem:rankfunctionunique},
		after possibly applying a common shift to $\rho_X$ and $\rho_Y$, we may assume that  $\rho_X(\hat{0}_X) = 0$ and $\rho_Y(\hat{0}_Y) = \rank(\sigma) = \rank(X) - \rank(Y)$. Consider $B_0 = \{ \hat{0} \}$ with rank function $\rho_{B_0}(\hat{0}) = \rank(X)$. 
		%$\rho_X$ is the natural rank function for $X$, i.e.,
		Let $\tau: Y \to B_0$ be the unique function  from $Y$ to $B_0$. Then $\tau \circ \sigma: X \to B_0$ is the unique function  from $X$ to $B_0$. 
		By Example~\ref{ex:B0}, $\tau \circ \sigma$ is a strong formal subdivision if and only if $\overline{X}$ is Eulerian, and $\tau$ is  a strong formal subdivision if and only if $\overline{Y}$ is Eulerian. 
		By Lemma~\ref{lem:composesfs}, $\tau \circ \sigma$ is a strong formal subdivision if and only if $\tau$ is a strong formal subdivision.
		
		%	By Example~\ref{ex:B0}, the statement is equivalent to showing that $\tau$ is a strong formal subdivision if and only if $\tau \circ \sigma$ is a strong formal subdivision. This follows from Lemma~\ref{lem:composesfs}.
	\end{proof}
	
	We next give a complete description of strong formal subdivisions into the two element poset $B_1$. 
	
	\begin{example}[Strong formal subdivisions to $B_1$]\label{ex:B1}
		Let $B$ be a near-Eulerian poset with rank function $\rho_B$. Recall from Section~\ref{ss:posets} the corresponding semisuspension $\tilde{\Sigma} B = B \cup   \{ \hat{z}, \hat{1}_{\Gamma}\}$ and boundary $\partial B = \{ z \in \tilde{\Sigma} B : z < \hat{z} \}$ with induced rank functions  $\rho_{\tilde{\Sigma} B}$ and $\rho_{\partial B}$ respectively. Here 
		$\rho_{\tilde{\Sigma} B}(\hat{1}_{\Gamma}) = \rho_{\tilde{\Sigma} B}(\hat{z}) + 1 = \rho_{B}(\hat{0}_B) + \rank(B) + 1$.

		Let $\Gamma = \tilde{\Sigma} B$, $\rho_\Gamma = \rho_{\tilde{\Sigma} B}$, and $q = \hat{z}$. 	Then $q \neq \hat{0}_\Gamma$ is join-admissible. Let $X = B$ with $\rho_X = \rho_B$, and let $Y = B_1 = \{ \hat{0}, \hat{1} \}$ with rank function $\rho_{B_1}(\hat{1}) = \rho_{B_1}(\hat{0}) + 1 = \rho_{B}(\hat{0}_B) + \rank(B)$. If we identify $Y$ with $\Gamma_{\ge \hat{z}}$, then  the corresponding strong formal subdivision $\sigma: X \to Y$ is 
		\[
		\sigma: B \to B_1,
		\]
		\[
		\sigma(z) = \begin{cases}
			\hat{0} &\textrm{if } z \in \partial B, \\
			\hat{1} &\textrm{otherwise. }
		\end{cases}
		\]
		
		Conversely, every strong formal subdivision from a lower Eulerian poset to $B_1$ has this form. Indeed, if $\sigma: X \to B_1$ is a strong formal subdivision, then $\Gamma = \Cyl(\sigma) = X \cup B_1$ is Eulerian and $X$ is near-Eulerian with  $\tilde{\Sigma} X = \Gamma$ and $\partial X = \sigma^{-1}(\hat{0})$. The rank functions are determined by Remark~\ref{rem:rankfunctionunique}.

	\end{example}

	In the corollary below,  the implication that $Y$ being near-Eulerian implies that $X$ is near-Eulerian was proved in the case when $\rank(\sigma) = 0$ in \cite{DKTPosetSubdivisions}*{Proposition~4.9}. 
	
	\begin{corollary}\label{cor:nearEulerian}
		Suppose that $\sigma: X \to Y$ is a strong formal subdivision between lower Eulerian posets   with rank functions $\rho_X$ and $\rho_Y$ respectively. Then $X$ is near-Eulerian if and only if $Y$ is near-Eulerian. Moreover, when these conditions hold, $\sigma^{-1}(\partial Y) = \partial X$. 
	\end{corollary}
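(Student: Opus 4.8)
The plan is to reduce both the equivalence and the identity $\sigma^{-1}(\partial Y) = \partial X$ to the description of strong formal subdivisions onto $B_1$ in Example~\ref{ex:B1}, together with Lemma~\ref{lem:composesfs}. For a lower Eulerian poset $B$, write $\partial B$ for the lower order ideal generated by the elements $z$ with $B_{\ge z} = B_1$; by Definition~\ref{def:nearEulerian} this agrees with the usual boundary when $B$ is near-Eulerian. Since $\partial B$ is a lower order ideal, there is a canonical order-preserving map $\pi_B : B \to B_1$ with $\pi_B^{-1}(\hat 0) = \partial B$, and Example~\ref{ex:B1} says precisely that $B$ is near-Eulerian if and only if $\pi_B$ is a strong formal subdivision (in which case $\pi_B$ is the unique such map to $B_1$, after equipping $B_1$ with the appropriate rank function). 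So the whole statement follows once I prove the factorization $\pi_X = \pi_Y \circ \sigma$, equivalently the main claim $\partial X = \sigma^{-1}(\partial Y)$: with $\sigma$ a strong formal subdivision, Lemma~\ref{lem:composesfs} gives that $\pi_Y$ is a strong formal subdivision if and only if $\pi_Y \circ \sigma = \pi_X$ is, which by Example~\ref{ex:B1} is exactly the assertion ``$Y$ near-Eulerian $\iff X$ near-Eulerian''.

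The heart of the argument is therefore the identity $\partial X = \sigma^{-1}(\partial Y)$, which I would prove by two inclusions using the restriction property of Remark~\ref{rem:restrictsfs}. For $\subseteq$, it suffices to treat a generator $z$ of $\partial X$, i.e.\ $X_{\ge z} = B_1$. By Remark~\ref{rem:restrictsfs}, $\sigma$ restricts to a strong formal subdivision $X_{\ge z} \to Y_{\ge \sigma(z)}$; this map is surjective with two-element source, so $|Y_{\ge \sigma(z)}| \le 2$, and Lemma~\ref{lem:parity} forbids $|Y_{\ge \sigma(z)}| = 1$, forcing $Y_{\ge \sigma(z)} = B_1$. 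Thus $\sigma(z)$ is a generator of $\partial Y$, and since $\sigma$ is order-preserving while $\partial Y$ is a lower order ideal, $\sigma(\partial X) \subseteq \partial Y$. For $\supseteq$, suppose $\sigma(x) \in \partial Y$, so $\sigma(x) \le y^{*}$ with $Y_{\ge y^{*}} = B_1$. Strong surjectivity provides $x' \ge x$ with $\sigma(x') = y^{*}$ and $\rho_X(x') = \rho_Y(y^{*})$, and Remark~\ref{rem:restrictsfs} then yields a rank-zero strong formal subdivision $X_{\ge x'} \to Y_{\ge y^{*}} = B_1$. Hence $X_{\ge x'}$ is a near-Eulerian poset of rank one, and I would observe that the only such poset is $B_1$ (its semisuspension is Eulerian of rank two, hence the diamond $B_2$). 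Therefore $X_{\ge x'} = B_1$, so $x'$ is a generator of $\partial X$, and $x \le x'$ gives $x \in \partial X$.

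Granting the main claim, the factorization $\pi_X = \pi_Y \circ \sigma$ is immediate from the equivalence $x \in \partial X \iff \sigma(x) \in \partial Y$, the conclusion follows as in the first paragraph, and the ``moreover'' is literally the main claim. The one piece of bookkeeping I would handle carefully is the choice of rank function on the target $B_1$: to legitimately invoke Lemma~\ref{lem:composesfs} I must check that $\pi_X$ and $\pi_Y$ are rank-increasing for a common rank function on $B_1$, which works out by setting $\rho_{B_1}(\hat 1) = \rho_X(\hat 0_X) + \rank(X) = \rho_Y(\hat 0_Y) + \rank(Y)$ (using Example~\ref{ex:lowertolower}) and noting that every generator of $\partial Y$ has rank exactly one below the top. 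The main obstacle is the reverse inclusion $\sigma^{-1}(\partial Y) \subseteq \partial X$: this is where strong surjectivity, rather than mere order-preservation, is essential, via the lift $x'$ and the recognition that a rank-one near-Eulerian poset must be $B_1$.
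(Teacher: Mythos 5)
Your proof is correct and follows essentially the same route as the paper's: both reduce the equivalence to the classification of strong formal subdivisions onto $B_1$ (Example~\ref{ex:B1}) combined with Lemma~\ref{lem:composesfs}, and both establish $\sigma^{-1}(\partial Y)=\partial X$ by the same two-inclusion argument using Remark~\ref{rem:restrictsfs}, Lemma~\ref{lem:parity}, and strong surjectivity of $\sigma$. The only (harmless) difference is organizational: you prove the boundary identity unconditionally first and read off both implications from the single factorization $\pi_X=\pi_Y\circ\sigma$, whereas the paper obtains ``$Y$ near-Eulerian $\Rightarrow X$ near-Eulerian'' by direct composition and only invokes the boundary identity for the converse direction.
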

	\begin{proof}
		By Remark~\ref{rem:rankfunctionunique},
		after possibly applying a common shift to $\rho_X$ and $\rho_Y$, we may assume that  $\rho_X(\hat{0}_X) = 0$ and $\rho_Y(\hat{0}_Y) = \rank(\sigma) = \rank(X) - \rank(Y)$. 
		Consider $B_1 = \{ \hat{0}, \hat{1} \}$ with rank function $\rho_{B_1}$ defined by %$\rho_{B_1}(\hat{0}) = \rank(X) - 1$, 
		$\rho_{B_1}(\hat{1}) = \rho_{B_1}(\hat{0}) + 1 = \rank(X)$.
		
		By Example~\ref{ex:B1}, $X$ is near-Eulerian if and only there exists a strong formal subdivision from $X$ to $B_1$. Similarly, $Y$ is near-Eulerian if and only there exists a strong formal subdivision from $Y$ to $B_1$. If  $\tau: Y \to B_1$ is a strong formal subdivision, then Lemma~\ref{lem:composesfs} implies that $\tau \circ \sigma : X \to B_1$ is a strong formal subdivision.

		Conversely, suppose that there exists a strong formal subdivision $\nu: X \to B_1$. 
		Let $\partial Y$ be  the lower order ideal of $Y$ generated by all elements $z \in Y$ 
		such that $Y_{\ge z} = B_1$.   Assume that we can show that $\sigma^{-1}(\partial Y) = \partial X$.  By Lemma~\ref{lem:parity}, since $|X|$ is even, it follows that $|Y|$ is even, and, in particular, $\rank(Y) \ge \rank(B_1) = 1$. Define $\tau: Y \to B_1$ by $\tau(y) = \hat{0}$ if $y \in \partial Y$, and $\tau(y) = \hat{1}$ otherwise. Then $\tau$ is order-preserving and rank-increasing and $\tau \circ \sigma = \nu$. 
		By Lemma~\ref{lem:composesfs}, $\tau$ is a strong formal subdivision, as desired.
		
		It remains to show that $\sigma^{-1}(\partial Y) = \partial X$.
		We first show that $\partial X \subset \sigma^{-1}(\partial Y)$. 
		Let $x \in \partial X$. We want to show that $\sigma(x) \in \partial Y$. 
		Since $\sigma$ is order preserving and $\partial Y$ is a lower order ideal, we reduce to the case when $X_{\ge x} = B_1$.  By Remark~\ref{rem:restrictsfs},
		$\sigma$ restricts to a strong formal subdivision 
		$X_{\ge x} = B_1 \to Y_{\ge \sigma(x)}$. By Lemma~\ref{lem:parity}, $|Y_{\ge \sigma(x)}|$ is even, and since $\sigma$ is surjective, $|Y_{\ge \sigma(x)}| \le 2$. Hence $Y_{\ge \sigma(x)} = B_1$, and $\sigma(x) \in \partial Y$.  
		
		Conversely, we  show that $\sigma^{-1}(\partial Y)  \subset  \partial X$. Consider an element $x \in X$ such that $\sigma(x) \in \partial Y$.
		Then there exists $y \in Y$ such that $\sigma(x) \le y$ and $Y_{\ge y} = B_1$. By strong surjectivity of $\sigma$, there exists $x' \in X$ with $x \le x'$, $\rho_X(x') = \rank(X) - 1$, and $\sigma(x') = y$. 
		By Remark~\ref{rem:restrictsfs}, $\sigma$ restricts to a strong formal subdivision 
		$\tilde{\sigma}: X_{\ge x'}  \to Y_{\ge y} = B_1$ of rank $0$. In particular, $\rank(X_{\ge x'}) = 1$. 
		If $\tilde{y}$ is the unique maximal element of $Y_{\ge y}$, then 
		$\tilde{\sigma}^{-1}(\tilde{y})$ consists of the maximal elements of 
		$X_{\ge x'}$. By \eqref{eq:strongsubdivisionequality}, $|\tilde{\sigma}^{-1}(\tilde{y})| = 1$, and hence $X_{\ge x'} = B_1$ and $x' \in \partial X$. 
		Since $\partial X$ is a lower order ideal and $x \le x'$, we deduce that $x \in \partial X$.	  
	\end{proof}
	
	We finish with some more examples of Theorem~\ref{thm:main}. 
			Suppose that $\sigma: X \to Y$ is a strong formal subdivision between lower Eulerian posets  with rank functions $\rho_X$ and $\rho_Y$ respectively, corresponding to a triple $(\Gamma, \rho_\Gamma,q)$ under Theorem~\ref{thm:mainsimplified}. 
			%Below, $\id_B$ denotes the identity function on $B$.
%			Then, using Example~\ref{ex:identity},
%			\begin{equation}\label{eq:identitymorphisms}
%					\CYLcat \left(\begin{tikzcd} X \arrow[r, "\sigma"] \arrow[d, "\id_X"] & Y \arrow[d, "\id_Y"] \\ X \arrow[r, "\sigma"] &  Y \end{tikzcd}\right) = (\id_\Gamma: \Gamma \to \Gamma), \: \: \: 
%				\CYLcat \left(\begin{tikzcd} X \arrow[r, "\id_X"] \arrow[d, "\sigma"] & X \arrow[d, "\sigma"] \\ Y \arrow[r, "\id_Y"] &  Y \end{tikzcd}\right) = (\sigma \times \id_{B_1}: \Pyr(X) \to \Pyr(Y)).
%			\end{equation}
%%	\[
%%	\CYLcat \left(\begin{tikzcd} X \arrow[r, "\sigma"] \arrow[d, "\id_X"] & Y \arrow[d, "\id_Y"] \\ X \arrow[r, "\sigma"] &  Y \end{tikzcd}\right) = (\id_\Gamma: \Gamma \to \Gamma), \: \: \: 
%%		\CYLcat \left(\begin{tikzcd} X \arrow[r, "\id_X"] \arrow[d, "\sigma"] & X \arrow[d, "\sigma"] \\ Y \arrow[r, "\id_Y"] &  Y \end{tikzcd}\right) = (\sigma \times \id_{B_1}: \Pyr(X) \to \Pyr(Y)).
%%	\]		
%%	where $(\sigma \times \id_{B_1})(x,z) = (\sigma(x),z)$ for all $(x,z) \in \Pyr(X)$. 
%	Then $\Cyl(\id_\Gamma) = \Cyl(\Pyr(\sigma)) = \Pyr(\Gamma)$ by Remark~\ref{rem:involution} and Example~\ref{ex:identity}. 
	
	Consider the case when  $\overline{X}$ and $\overline{Y}$ are Eulerian.  Let $\phi_1$ and $\phi_2$ be the unique strong formal subdivisions from $X$ and $Y$ respectively to $B_0$ (see Example~\ref{ex:B0}). Then, using Example~\ref{ex:B0},
			\[
	\CYLcat \left(\begin{tikzcd} X \arrow[r, "\sigma"] \arrow[d, "\phi_1"] & Y \arrow[d, "\phi_2"] \\ B_0 \arrow[r, "\id_{B_0}"] &  B_0 \end{tikzcd}\right) = (\phi: \Gamma \to B_1), \: \: \:
		\CYLcat \left(\begin{tikzcd} X \arrow[r, "\phi_1"] \arrow[d, "\sigma"] & B_0 \arrow[d, "\id_{B_0}"] \\ Y \arrow[r, "\phi_2"] &  B_0 \end{tikzcd}\right) = (\tilde{\sigma}: \overline{X} \to \overline{Y}),
	\]
		where $\tilde{\sigma}$ may be viewed as an extension of $\sigma$. 
		By  Remark~\ref{rem:involution} and Example~\ref{ex:B1}, 
			$\Gamma$ is near-Eulerian with boundary $X$ and semisuspension 
			$\tilde{\Sigma} \Gamma = \Cyl(\phi) = \Cyl(\tilde{\sigma})$. 
			
	Consider the case when $X$ and $Y$ are near-Eulerian.  Let 
	$\phi_1$ and $\phi_2$ be the unique strong formal subdivisions from $X$ and $Y$ respectively to $B_1$ (see Example~\ref{ex:B1}). Then, using Example~\ref{ex:B1},
	\[
	\CYLcat \left(\begin{tikzcd} X \arrow[r, "\sigma"] \arrow[d, "\phi_1"] & Y \arrow[d, "\phi_2"] \\ B_1 \arrow[r, "\id_{B_1}"] &  B_1 \end{tikzcd}\right) = (\phi: \Gamma \to B_2), \: \: \: 
	\CYLcat \left(\begin{tikzcd} X \arrow[r, "\phi_1"] \arrow[d, "\sigma"] & B_1 \arrow[d, "\id_{B_1}"] \\ Y \arrow[r, "\phi_2"] &  B_1 \end{tikzcd}\right) = (\tilde{\Sigma} \sigma: \tilde{\Sigma} X \to \tilde{\Sigma} Y),
	\]
	where $\tilde{\Sigma} \sigma$ may be viewed as an extension of $\sigma$. 
	By Remark~\ref{rem:involution}, $\Cyl(\phi) = \Cyl(\tilde{\Sigma} \sigma)$. 

	\subsection{Constructing strong formal subdivisions}\label{ss:constructingsfs}
	In this section, we consider some general methods for constructing strong formal subdivisions. 
	We first consider the effect of some constructions on posets  applied to elements of $\JoinIdealLW$. These can produce strong formal subdivisions using the identification of $\JoinIdealLW^\circ$ with $\SFS$ in Theorem~\ref{thm:mainsimplified}.  
	 We use the notation from Section~\ref{ss:posets} below. In particular, recall the direct product, dual diamond product, diamond product, and star product constructions. % from Section~\ref{ss:posets}.
%	At the end of the section, we also give a method of constructing rank $0$ strong formal subdivisions. 
	
	\begin{lemma}\label{lem:constructions}
		Consider $(\Gamma,\rho_{\Gamma},q),(\Gamma',\rho_{\Gamma'},q') \in \JoinIdealLW$. Then 
		\begin{enumerate}
			\item\label{i:directproduct} (Direct product)
			If $(q,q') \neq (\hat{0}_\Gamma,\hat{0}_{\Gamma'})$, then 
			$(\Gamma \times \Gamma',\rho_{\Gamma \times \Gamma'},(q,q')) \in \JoinIdealLW^\circ$. 
			
			\item\label{i:dualdiamondproduct} (Dual diamond product)
			Suppose that $\Gamma,\Gamma'$ are Eulerian of positive rank. 
			If $(q,q') \neq (\hat{0}_\Gamma,\hat{0}_{\Gamma'})$, $q \neq \hat{1}_\Gamma$, and $q' \neq \hat{1}_{\Gamma'}$, then 
			$(\Gamma \diamond^\ast \Gamma', \rho_{\Gamma \diamond^\ast \Gamma'}, (q,q'))  \in \JoinIdealLW^\circ$. 
			
			\item\label{i:diamondproduct} (Diamond product) 
			Suppose that $\Gamma,\Gamma'$ are Eulerian of positive rank.
			If $q \neq \hat{0}_\Gamma$ and $q' \neq \hat{0}_{\Gamma'}$, then 
			$(\Gamma \diamond \Gamma', \rho_{\Gamma \diamond \Gamma'}, (q,q'))  \in \JoinIdealLW^\circ$. 
			
			\item\label{i:starproduct} (Star product)
			Suppose that $\Gamma$ is Eulerian of positive rank. If $q' \neq \hat{0}_{\Gamma'}$, then  	$(\Gamma \ast \Gamma', \rho_{\Gamma \ast \Gamma'}, q')  \in \JoinIdealLW^\circ$.

		\end{enumerate}
	\end{lemma}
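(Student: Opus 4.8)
The plan is, for each of the four constructions, to verify separately the three requirements for membership in $\JoinIdealLW^\circ$: that the underlying poset is lower Eulerian, that the distinguished element $(q,q')$ (or $q'$) is not the minimal element, and that it is join-admissible. The first requirement is immediate in every case from Section~\ref{ss:posets}: the direct product of lower Eulerian posets is lower Eulerian, the dual diamond and diamond products of Eulerian posets of positive rank are Eulerian and hence lower Eulerian, and the star product $\Gamma \ast \Gamma'$ is lower Eulerian. The second requirement reduces to identifying $\hat{0}$ of each construction — namely $(\hat{0}_\Gamma,\hat{0}_{\Gamma'})$ for the direct and dual diamond products, the adjoined minimum for the diamond product, and $\hat{0}_\Gamma$ for the star product — and comparing it with the stated hypotheses. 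So the real content is join-admissibility, and the unifying idea is that joins in a direct product are computed coordinatewise: for $(a,a'),(b,b')$ in $\Gamma\times\Gamma'$, the join $(a,a')\vee(b,b')$ exists and equals $(a\vee b, a'\vee b')$ precisely when both coordinate joins exist. Since $q$ and $q'$ are join-admissible in $\Gamma$ and $\Gamma'$, this settles the direct-product case at once.

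For the dual diamond product I would use that, as a poset, $\Gamma \diamond^\ast \Gamma'$ is $\partial\Gamma \times \partial\Gamma'$ with a maximal element $\hat{1}$ adjoined, and that the hypotheses $q\neq\hat{1}_\Gamma$, $q'\neq\hat{1}_{\Gamma'}$ guarantee $(q,q')\in\partial\Gamma\times\partial\Gamma'$, so that it is a genuine element, distinct from $\hat{0}=(\hat{0}_\Gamma,\hat{0}_{\Gamma'})$ by hypothesis. Fixing $(a,a')\in\partial\Gamma\times\partial\Gamma'$, I would determine the common upper bounds of $(a,a')$ and $(q,q')$. Because the order on $\partial\Gamma$ is inherited from $\Gamma$ and $q$ is join-admissible in $\Gamma$, the coordinate joins $a\vee q$ and $a'\vee q'$ exist in $\Gamma$ and $\Gamma'$, and the analysis splits into two cases. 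If both $a\vee q\neq\hat{1}_\Gamma$ and $a'\vee q'\neq\hat{1}_{\Gamma'}$, then $(a\vee q,a'\vee q')$ lies in $\partial\Gamma\times\partial\Gamma'$ and is the unique minimal common upper bound, hence the join. Otherwise some coordinate join equals the top, the set of common upper bounds inside $\partial\Gamma\times\partial\Gamma'$ is empty, and the only common upper bound is $\hat{1}$, which is then the join. For the diamond product I would instead use the description $(\Gamma\diamond\Gamma')\smallsetminus\{\hat{0}\}=(\Gamma\smallsetminus\{\hat{0}_\Gamma\})\times(\Gamma'\smallsetminus\{\hat{0}_{\Gamma'}\})$; here $\Gamma\smallsetminus\{\hat{0}_\Gamma\}$ is an upper order ideal of $\Gamma$, so joins in it agree with joins in $\Gamma$, and the hypotheses $q\neq\hat{0}_\Gamma$, $q'\neq\hat{0}_{\Gamma'}$ place $(q,q')$ in the product part. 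The join of $(q,q')$ with the adjoined bottom is $(q,q')$, while any common upper bound of $(q,q')$ with another element is again in the product part, so the join is the coordinatewise join, which exists and is distinct from $\hat{0}$ since each coordinate join dominates the corresponding nonminimal element.

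The star product is the most transparent case. Writing $\Gamma\ast\Gamma'=\partial\Gamma\sqcup(\Gamma'\smallsetminus\{\hat{0}_{\Gamma'}\})$, the hypothesis $q'\neq\hat{0}_{\Gamma'}$ puts $q'$ in the second summand, so $q'\neq\hat{0}_{\Gamma\ast\Gamma'}=\hat{0}_\Gamma$. To check join-admissibility I would take $z\in\Gamma\ast\Gamma'$: if $z\in\partial\Gamma$, then by condition~(3) in the definition of the star product $z\le q'$, so $z\vee q'=q'$; if $z\in\Gamma'\smallsetminus\{\hat{0}_{\Gamma'}\}$, then every common upper bound of $z$ and $q'$ again lies in $\Gamma'\smallsetminus\{\hat{0}_{\Gamma'}\}$, on which the order agrees with that of $\Gamma'$, so $z\vee q'$ is simply the join taken in $\Gamma'$, which exists by join-admissibility of $q'$.

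I expect the dual diamond product to be the main obstacle, precisely because it is the one construction in which the coordinatewise join can fail to exist within the product part $\partial\Gamma\times\partial\Gamma'$: when a coordinate join reaches the top element of $\Gamma$ or $\Gamma'$, the product loses its would-be least upper bound, and one must verify that the adjoined maximal element $\hat{1}$ cleanly takes its place. Making the dichotomy between the two cases precise — and checking that in the second case the set of common upper bounds inside $\partial\Gamma\times\partial\Gamma'$ is genuinely empty, so that $\hat{1}$ is the \emph{unique} minimal upper bound and not merely \emph{an} upper bound — is the step that requires the most care.
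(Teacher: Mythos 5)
Your proposal is correct and follows essentially the same route as the paper's proof: joins are computed coordinatewise, with the dual diamond product handled by the dichotomy of whether a coordinate join reaches $\hat{1}_\Gamma$ or $\hat{1}_{\Gamma'}$ (in which case the adjoined maximum is the join), and the star product split according to which summand $z$ lies in. The paper's version is terser and leaves the lower Eulerian and non-minimality checks implicit, but the substance is identical.
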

	\begin{proof}
	 First consider \eqref{i:directproduct}.  For any $(z,z') \in \Gamma \times \Gamma'$, we have $(z,z') \vee (q,q') = (z \vee q, z' \vee q')$, and the result holds. Then \eqref{i:dualdiamondproduct} follows similarly. Explicitly, we have $\hat{1}_{\Gamma \diamond^\ast \Gamma'} \vee (q,q') = \hat{1}_{\Gamma \diamond^\ast \Gamma'}$ and  for any $(z,z') \in \partial \Gamma \times \partial \Gamma'$, we have 
	 $(z,z') \vee (q,q') = (z \vee q, z' \vee q') \in \partial \Gamma \times \partial \Gamma'$ if $z \vee q \in \partial \Gamma$ and $z' \vee q' \in \partial \Gamma'$, and   $(z,z') \vee (q,q') = \hat{1}_{\Gamma \diamond^\ast \Gamma'}$ otherwise. Also, \eqref{i:diamondproduct} follows similarly. Explicitly, we have $\hat{0}_{\Gamma \diamond \Gamma'} \vee (q,q') = (q,q')$, and for any $(z,z') \in (\Gamma \smallsetminus \{ \hat{0}_{\Gamma}\}) \times (\Gamma' \smallsetminus \{ \hat{0}_{\Gamma'}\})$, we have 
	 $(z,z') \vee (q,q') = (z \vee q, z' \vee q')
	 \in (\Gamma \smallsetminus \{ \hat{0}_{\Gamma}\}) \times (\Gamma' \smallsetminus \{ \hat{0}_{\Gamma'}\})$. Finally, consider \eqref{i:starproduct}. If $z \in \partial \Gamma$, then $z \vee q' = q'$. If $z \in \Gamma' \smallsetminus \{ \hat{0}_{\Gamma'} \}$, then $z \vee q' \in \Gamma' \smallsetminus \{ \hat{0}_{\Gamma'} \}$. 
	 
	\end{proof}
	
	\begin{example}\label{ex:productidentity}
		Suppose that $B$ is a lower Eulerian poset with rank function $\rho_B$.  Suppose that $(\Gamma,\rho_\Gamma,q) \in \JoinIdealLW^\circ$ corresponds to a strong formal subdivision $\sigma: X \to Y$. Consider the element $(B,\rho_B,\hat{0}_B) \in \JoinIdealLW$. 
		By \eqref{i:directproduct} in Lemma~\ref{lem:constructions}, $(B \times \Gamma, \rho_{B \times \Gamma},(\hat{0}_B,q)) \in \JoinIdealLW^\circ$ with corresponding strong formal subdivision
		\[
		\id_B \times \sigma: B \times X \to B \times Y,
		\]
		\[
		(z,x) \mapsto (z,\sigma(x)). 
		\]
		For example, setting $\Gamma = B_1$, $q = \hat{1}_{B_1}$, and $\sigma = \id_{B_0}$, recovers
		 Example~\ref{ex:identity}.  
	\end{example}

	\begin{example}\label{ex:productsubdivisions}
		Consider strong formal subdivisions $\sigma: X \to Y$  and $\sigma': X' \to Y'$.
		 between lower Eulerian posets $X,X',Y,Y'$ with rank functions $\rho_{X},\rho_{X'},\rho_{Y},\rho_{Y'}$ respectively.  
		 Recall that $X \times X'$ and $Y \times Y'$ are lower Eulerian posets with rank functions $\rho_{X \times X'}$ and $\rho_{Y \times Y'}$ respectively.  
		We claim that $\sigma \times \sigma': X \times X' \to Y \times Y'$ is a strong formal subdivision. Indeed, this follows from Lemma~\ref{lem:composesfs} and Example~\ref{ex:productidentity}, since $\sigma \times \sigma' = (\sigma \times \id_{Y'}) \circ (\id_X \times \sigma') = (\id_Y \times \sigma') \circ (\sigma \times \id_{X'})$. 
		
		Moreover, suppose that $(\Gamma,\rho_{\Gamma},q)$ and $(\Gamma',\rho_{\Gamma'},q')$ are the elements of $\JoinIdealLW^\circ$ corresponding to $\sigma$ and $\sigma'$ respectively. Then $(\Cyl(\sigma \times \sigma'),\rho_{\Cyl(\sigma \times \sigma')},(q,q'))$ corresponds to $\sigma \times \sigma'$. 
		 Observe that $\Cyl(\sigma \times \sigma') \subsetneq \Gamma \times \Gamma'$, but $\rho_{\Cyl(\sigma \times \sigma')}$ is not the restriction of $\rho_{\Gamma \times \Gamma'}$. More precisely,
		 \[
		 \rho_{\Cyl(\sigma \times \sigma')}(z,z') = \begin{cases}
		 	\rho_{X \times X'}(z,z') = \rho_{\Gamma \times \Gamma'}(z,z') &\textrm{ if } (z,z') \in X \times X', \\
		 			 	\rho_{Y \times Y'}[1](z,z') = \rho_{\Gamma \times \Gamma'}[-1](z,z') &\textrm{ if } (z,z') \in Y \times Y'.
		 \end{cases}
		 \]
		We also have
			\[
		\CYLcat \left(\begin{tikzcd} X \times X' \arrow[r, "\sigma \times \id_{X'}"] \arrow[d, "\id_X \times \sigma'"] & Y \times X'  \arrow[d, "\id_Y \times \sigma'"] \\ X \times Y' \arrow[r, "\sigma \times \id_{Y'}"] &  Y \times Y' \end{tikzcd}\right) = (\id_\Gamma \times \sigma': \Gamma \times X' \to \Gamma \times Y'),
		\]		
					\[
		\CYLcat \left(\begin{tikzcd} X \times X' \arrow[r, "\id_X \times \sigma'"] \arrow[d, "\sigma \times \id_{X'}"] & X \times Y'  \arrow[d, "\sigma \times \id_{Y'}"] \\ Y \times X' \arrow[r, "\id_Y \times \sigma'"] &  Y \times Y' \end{tikzcd}\right) = (\sigma \times \id_{\Gamma'} : X \times \Gamma'  \to Y \times \Gamma'),
		\]		 
		where $\Cyl(\id_\Gamma \times \sigma') = \Cyl(\sigma \times \id_{\Gamma'}) = \Gamma \times \Gamma'$ (c.f. Remark~\ref{rem:involution}). 
%		For example, setting $\Gamma' = B_1$, $X' = Y' = B_0$, and $\sigma' = \id_{B_0}$, recovers the special case at the end of Example~\ref{ex:composeCyl}. 
%		% \eqref{eq:identitymorphisms}. 

%		Suppose that $(\Gamma,\rho_\Gamma,q) \in \JoinIdealLW^\circ$ correspond to strong formal subdivisions $\sigma: X \to Y$ and $\sigma: X' \to Y'$ respectively.
	\end{example}

		\begin{example}\label{ex:dualdiamondproductidentity}
		Suppose that $B$ is an Eulerian poset of positive rank with rank function $\rho_B$.  Suppose that $(\Gamma,\rho_\Gamma,q) \in \JoinIdealLW^\circ$ corresponds to a strong formal subdivision $\sigma: X \to Y$.
		Assume that $\Gamma$ is an Eulerian poset of rank at least $2$, and 
		$q \notin \{ \hat{0}_\Gamma, \hat{1}_\Gamma \}$. 
		Consider the element $(B,\rho_B,\hat{0}_B) \in \JoinIdealLW$. 
		By \eqref{i:dualdiamondproduct} in Lemma~\ref{lem:constructions}, 
		$(B \diamond^\ast \Gamma, \rho_{B \diamond^\ast \Gamma}, (\hat{0}_\Gamma,q))  \in \JoinIdealLW^\circ$ with corresponding strong formal subdivision
		\[
		   \partial B \times X  \to B \diamond^\ast Y,
		\]
		\[
		(z,x) \mapsto 
		\begin{cases}
			(z,\sigma(x)) &\textrm{ if } \sigma(x) \neq \hat{1}_Y, \\
			\hat{1}_{B \diamond^\ast Y} &\textrm{ if } \sigma(x) = \hat{1}_Y. 
		\end{cases}
		\]
		For example, setting $\Gamma = B_2$, $X = Y = B_1$, and $\sigma = \id_{B_1}$, recovers Example~\ref{ex:bipyramid}.

	\end{example}
	
		\begin{example}\label{ex:starproductidentity}
		Suppose that $B$ is an Eulerian poset of positive rank with rank function $\rho_B$.  Suppose that $(\Gamma,\rho_\Gamma,q) \in \JoinIdealLW^\circ$ corresponds to a strong formal subdivision $\sigma: X \to Y$.
		Consider the element $(B,\rho_B,\hat{0}_B) \in \JoinIdealLW$. 
		By \eqref{i:starproduct} in Lemma~\ref{lem:constructions}, 
		$(B \ast \Gamma, \rho_{B \ast \Gamma}, q)  \in \JoinIdealLW^\circ$ with corresponding strong formal subdivision
		\[
		B \ast X \to Y,
		\]
		\[
		z \mapsto \begin{cases}
			q &\textrm{ if } z \in \partial B, \\
			\sigma(z) &\textrm{ if } z \in X \smallsetminus \{ \hat{0}_X \}.
		\end{cases}
		\]
		For example, setting $\Gamma = B_1$, $X = Y = B_0$, and $\sigma = \id_{B_0}$, recovers the construction in Example~\ref{ex:B0}. 
		\end{example}

\section{Proof of main results}\label{sec:proof}

In this section, we prove our main results Theorem~\ref{thm:mainsimplified} and Theorem~\ref{thm:main}. 
We will use the notation of Section~\ref{sec:subdivisions}.
%We continue with the notation of previous sections. 

Recall from Definition~\ref{def:bijections} that 
	 $\SFS$  is the set of strong formal subdivisions $\sigma: X \to Y$ between  lower Eulerian posets $X$ and $Y$  with rank functions $\rho_X$ and $\rho_Y$ respectively. 
Recall that  $\JoinIdealLW$ is the set of triples $(\Gamma, \rho_\Gamma, q)$,
where $\Gamma$ is a lower Eulerian poset  with rank function $\rho_\Gamma$,  and %$q \neq \hat{0}_\Gamma$ 
$q$ is a join-admissible element of $\Gamma$. 
Recall that $\JoinIdealLW^\circ \subset \JoinIdealLW$ is the subset of all triples $(\Gamma, \rho_\Gamma, q)$ such that $q \neq \hat{0}_\Gamma$. 	
Recall that
$\CYL :  \SFS \to \JoinIdealLW^\circ$ 
is defined 
\[
\CYL(\sigma: X \to Y) = (\Cyl(\sigma), \rho_{\Cyl(\sigma)},\hat{0}_Y), 
\]
where $\Cyl(\sigma)$ is the non-Hausdorff mapping cylinder of $\sigma$, and 
\begin{equation*}%\label{eq:rhoCyl}
	\rho_{\Cyl(\sigma)}(z) =  \begin{cases}
		\rho_X(z) &\textrm{ if } z \in X, \\
		\rho_Y(z) + 1 &\textrm{ if } z \in Y. 
	\end{cases}
\end{equation*}
Recall that $\MAP:   \JoinIdealLW^\circ \to \SFS$ is defined by 
\begin{equation*}%\label{eq:sfs}
	\MAP(\Gamma,\rho_\Gamma,q) : \Gamma \smallsetminus \Gamma_{\ge q} \to \Gamma_{\ge q},
\end{equation*}
\[
x \mapsto x \vee q,
\]
where the rank functions on $\Gamma \smallsetminus \Gamma_{\ge q}$ and $\Gamma_{\ge q}$ are  the corresponding restrictions of $\rho_\Gamma$ shifted by $0$ and $-1$ respectively.

%Recall that an element of $\SFS$ is a strong formal subdivision  $\sigma: X \to Y$  of lower Eulerian posets $X$ and $Y$ with rank functions $\rho_X$ and $\rho_Y$ respectively.  Recall that an element of $\JoinIdealLW$ is a triple 
%$(\Gamma,\rho_\Gamma,q)$, where $\Gamma$ is a locally  Eulerian poset with rank function $\rho_\Gamma$,  and  $q \neq \hat{0}_\Gamma$ is a join-admissible element of $\Gamma$.  Recall that 
%\[
%\CYL :  \SFS \to \JoinIdealLW,
%\]
%\[
%\CYL(\sigma: X \to Y) = (\Cyl(\sigma), \rho_{\Cyl(\sigma)},\hat{0}_Y), 
%\]
%where $\Cyl(\sigma)$ is the non-Hausdorff mapping cylinder of $\sigma$ (see Definition~\ref{def:nonHausdorffmc}), and 
%\[
%\rho_{\Cyl(\sigma)}(z) =  \begin{cases}
%	\rho_X(z) &\textrm{ if } z \in X, \\
%	\rho_Y(z) + 1 &\textrm{ if } z \in Y. 
%\end{cases}
%\]
%Also,
%\begin{equation*}%\label{eq:sfs}
%	\MAP(\Gamma,\rho_\Gamma,q) : \Gamma \smallsetminus \Gamma_{\ge q} \to \Gamma_{\ge q},
%\end{equation*}
%\[
%x \mapsto x \vee q,
%\]
%where the rank functions on $\Gamma \smallsetminus \Gamma_{\ge q}$ and $\Gamma_{\ge q}$ are  the corresponding restrictions of $\rho_\Gamma$ corresponding restrictions of $\rho_\Gamma$ shifted by $0$ and $-1$ respectively.

We need to show that $\CYL$ and $\MAP$ are well-defined, and that they are mutually inverse bijections. In what follows, we will freely use the notation above.

\begin{lemma}
	The function $\CYL :  \SFS \to \JoinIdealLW^\circ$ is well-defined. 
\end{lemma}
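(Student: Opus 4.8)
The plan is to check the four things needed for $(\Cyl(\sigma),\rho_{\Cyl(\sigma)},\hat{0}_Y)$ to lie in $\JoinIdealLW^\circ$: that $\Cyl(\sigma)$ is lower Eulerian, that $\rho_{\Cyl(\sigma)}$ defined by \eqref{eq:rhoCyl} is a rank function for it, that $\hat{0}_Y$ is join-admissible, and that $\hat{0}_Y\neq\hat{0}_{\Cyl(\sigma)}$. The underlying poset is already known to be well-defined, so I would start by pinning down the minimal element. Since $\sigma(\hat{0}_X)=\hat{0}_Y$ by Remark~\ref{rem:zerotozero}, the cylinder relation $x\le y \iff \sigma(x)\le y$ shows $\hat{0}_X$ lies below every element of $Y$, whence $\hat{0}_{\Cyl(\sigma)}=\hat{0}_X$ and $\Gamma_{\ge\hat{0}_Y}=Y$. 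As $X$ and $Y$ are disjoint in $\Cyl(\sigma)$, this at once gives $\hat{0}_Y\neq\hat{0}_X=\hat{0}_{\Cyl(\sigma)}$. For join-admissibility, I would observe that any common upper bound of $z$ and $\hat{0}_Y$ must lie in $Y$; hence the set of such bounds is $Y_{\ge z}$ if $z\in Y$ and $Y_{\ge\sigma(z)}$ if $z\in X$, each with a unique minimal element ($z$, respectively $\sigma(z)$). Thus $z\vee\hat{0}_Y$ exists and equals $\sigma(z)$ for $z\in X$, consistent with $\MAP$.

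To see that $\rho_{\Cyl(\sigma)}$ is a rank function I would verify that every covering relation raises the value by exactly $1$, treating three cases. A cover inside $X$ (resp.\ inside $Y$) is still a cover in $X$ (resp.\ $Y$), since no element of $Y$ can sit strictly below an element of $X$, so these are immediate from $\rho_X,\rho_Y$ being rank functions. The only interesting case is a cover $x<y$ with $x\in X$, $y\in Y$: from $x\le\sigma(x)\le y$ and $\sigma(x)\neq x$ one forces $\sigma(x)=y$, and strong surjectivity supplies $x'$ with $x\le x'$, $\sigma(x')=y$, $\rho_X(x')=\rho_Y(y)$; the covering property excludes $x<x'<y$, forcing $\rho_X(x)=\rho_Y(y)$, which is precisely the rank difference $1$ after the shift in \eqref{eq:rhoCyl}.

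The main work is showing $\Cyl(\sigma)$ is locally Eulerian, i.e.\ $\sum_{z\le z''\le z'}(-1)^{\rho_{\Cyl(\sigma)}(z'')}=0$ for all $z<z'$. The pure-$X$ and pure-$Y$ intervals reduce to local Eulerianness of $X$ and $Y$. The essential case is $z=x_0\in X$, $z'=y_0\in Y$, where the interval splits into an $X$-part $\{x_0\le x\in X:\sigma(x)\le y_0\}$ and a $Y$-part $\{y\in Y:\sigma(x_0)\le y\le y_0\}$, so the alternating sum becomes $A-B$ with $A=\sum_{x_0\le x,\,\sigma(x)\le y_0}(-1)^{\rho_X(x)}$ and $B=\sum_{\sigma(x_0)\le y\le y_0}(-1)^{\rho_Y(y)}$, the extra minus sign on $B$ coming from the rank shift of $Y$ in \eqref{eq:rhoCyl}. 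Here I would substitute the defining identity \eqref{eq:strongsubdivision} (with the given $x_0,y_0$) into $A$, using $(-1)^{\rho_Y(y_0)-\rho_X(x)}=(-1)^{\rho_Y(y_0)}(-1)^{\rho_X(x)}$, to obtain $A=(-1)^{\rho_Y(y_0)}$ when $\sigma(x_0)=y_0$ and $A=0$ otherwise; local Eulerianness of $Y$ yields exactly the same values for $B$, so $A-B=0$ in both cases.

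The crux of the lemma is this last cancellation between the subdivision identity \eqref{eq:strongsubdivision} on the $X$-part and the Eulerian condition on the $Y$-part; the step most likely to trip one up is keeping the two sign conventions aligned, namely the uniform rank shift by $1$ on $Y$ encoded in \eqref{eq:rhoCyl}. Everything else is bookkeeping on the cylinder's order relations.
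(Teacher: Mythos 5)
Your proposal is correct and follows essentially the same route as the paper: identify the minimal element via surjectivity, verify the rank function on mixed covers using strong surjectivity (you re-derive inline the relevant direction of Lemma~\ref{lem:usestronglysurjective}), and establish local Eulerianness in the mixed case by playing \eqref{eq:strongsubdivision} on the $X$-part against the Eulerian condition on the $Y$-part, exactly as in the paper's computation. The only (harmless) difference is that you spell out join-admissibility by describing the full set of common upper bounds, where the paper simply records $x\vee\hat{0}_Y=\sigma(x)$ and $y\vee\hat{0}_Y=y$.
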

\begin{proof}
	Consider a strong formal subdivision $\sigma: X \to Y$ between  lower Eulerian posets $X$ and $Y$  with rank functions $\rho_X$ and $\rho_Y$ respectively. 
	We need to show that $(\Cyl(\sigma), \rho_{\Cyl(\sigma)},\hat{0}_Y)$ is a well-defined element in $\JoinIdealLW^\circ$. 
	
	We first show that $\rho_{\Cyl(\sigma)}$ is a rank function for $\Cyl(\sigma)$. Suppose that $z'$ covers $z$ in $\Cyl(\sigma)$. 
	We need to show that $\rho_{\Cyl(\sigma)}(z') = \rho_{\Cyl(\sigma)}(z) + 1$. If $z,z' \in X$ or $z,z' \in Y$, then this follows since $\rho_X$ and $\rho_Y$ are rank functions. Hence we may assume that $z \in X$ and $z' \in Y$. By Lemma~\ref{lem:usestronglysurjective}, 
	%$\sigma(z) = z'$ and 
	$\rho_X(z) = \rho_Y(z')$, and we compute
	%	 and $\sigma(z) \le z'$ in $Y$. 
	%	Since $\sigma$ is strongly surjective, there is some $x$ in $X$ such that $z \le x$,  $\rho_X(x) = \rho_Y(z')$, and $\sigma(x) = z'$.
	%	Since $z'$ covers $z$ and $z \le x < \sigma(x) = z'$ in $\Cyl(\sigma)$, we deduce that $z = x$. 
	%Hence 
	$\rho_{\Cyl(\sigma)}(z') = \rho_Y(z') + 1 = \rho_X(z) + 1 =  \rho_{\Cyl(\sigma)}(z) + 1$, as desired.

	We next show that $\Cyl(\sigma)$ is locally Eulerian. Given elements $z < z'$ in $\Cyl(\sigma)$, we need to show that
	$\sum_{z \le z'' \le z'} (-1)^{\rho_{\Cyl(\sigma)}(z'')} = 0$. 
	If $z,z' \in X$ or $z,z' \in Y$, then this holds since $X$ and $Y$ are locally Eulerian. 
	Hence we may assume that $z \in X$, $z' \in Y$, and $\sigma(z) \le z'$ in $Y$. Using the fact that $Y$ is locally Eulerian, we compute
	\begin{align*}
		\sum_{z \le z'' \le z'} (-1)^{\rho_{\Cyl(\sigma)}(z'')} &= 
		\sum_{\substack{z \le x \in X \\ \sigma(x) \le z'}} (-1)^{\rho_X(x)} +  \sum_{ \sigma(z) \le y \le z' \in Y } (-1)^{\rho_Y(y) + 1} 
		\\
		&= 
		\sum_{\substack{z \le x \in X \\ \sigma(x) \le z'}} (-1)^{\rho_X(x)}   -  \begin{cases}
			(-1)^{\rho_Y(z')} &\textrm{ if } \sigma(z) = z', \\
			0 &\textrm{ otherwise. }
		\end{cases}
		\\
	\end{align*}
	The right hand side of the above expression is zero by \eqref{eq:strongsubdivision}, as desired. 	
	
	Since $\sigma$ is surjective, the minimal elements of $\Cyl(\sigma)$ are the minimal elements of $X$. Hence $\hat{0}_X$ is the unique minimal element of $\Cyl(\sigma)$, and $\Cyl(\sigma)$ is lower Eulerian. 
	
	Finally, using Definition~\ref{def:nonHausdorffmc}, we have $\hat{0}_Y \neq \hat{0}_\Gamma$ and $x \vee \hat{0}_Y = \sigma(x) \in \Cyl(\sigma)$ for all $x \in X$. Also,  $y \vee \hat{0}_Y = y$ for all $y \in Y$. We conclude that $\hat{0}_Y$ is a join-admissible element of $\Cyl(\sigma)$.
	
	%	observe that $Y$ defines an upper order ideal in $\Cyl(\sigma)$, and $Y \neq \Cyl(\sigma)$ since $\Cyl(\sigma) \smallsetminus Y = X$ is nonempty. Also, for any $x \in X$, we have $x \vee Y = \sigma(x) \in \Cyl(\sigma)$, so $Y$ is join-admissible.

\end{proof}

Our next goal is to
%We will 
show that $\MAP$ is well-defined. 
% in Lemma~\ref{lem:sfsGammaqissfs}. 
We introduce some temporary notation. Fix an element $(\Gamma,\rho_\Gamma,q)$ in $\JoinIdealLW^\circ$. Let $\hat{X} = \Gamma \smallsetminus \Gamma_{\ge q}$ and $\hat{Y} = \Gamma_{\ge q}$. 
Let $\rho_{\hat{X}} = \rho_\Gamma|_X$ be the restriction of $\rho_\Gamma$ to $\hat{X}$. 
Let  $\rho_{\hat{Y}} = \rho_\Gamma|_{\hat{Y}}[-1]$ be the shift by $-1$ of the restriction of $\rho_\Gamma$ to $\hat{Y}$, i.e. $\rho_{\hat{Y}}(y) =  \rho_\Gamma(y) - 1$ for all $y \in \hat{Y}$.
Let $\hat{\sigma} = \MAP(\Gamma,\rho_\Gamma,I)$, i.e., 
$\hat{\sigma}: \hat{X} \to \hat{Y}$ is given by $\hat{\sigma}(x) = x \vee q$ for all $x \in \hat{X}$. 

%Let $\rho_{\hat{X}}$ and $\rho_{\hat{Y}}$ be the restrictions of $\rho_\Gamma$ to $\hat{X}$ and $\hat{Y}$ respectively. 
%Then $\hat{\sigma}: \hat{X} \to \hat{Y}$ is given by $\hat{\sigma}(x) = x \vee I$ for all $x \in \hat{X}$. 

%We need to show that $\hat{\sigma}$ is a strong formal subdivision between locally Eulerian posets. 
We need to show that $\hat{\sigma}$ is an element of $\SFS$. 
Since $q \neq \hat{0}_\Gamma$, it follows that $\hat{X}$ and $\hat{Y}$ are nonempty. 	Moreover, since 
%If $q = \hat{0}_\Gamma$, then $\hat{\sigma}$ is the degenerate strong formal subdivision given by the unique function from $X' = \emptyset$ to $Y' = \Gamma$. Hence we may assume that $q \neq \hat{0}_\Gamma$. 
%%Since $q \neq \hat{0}_\Gamma$, it follows that 
%%Then $\hat{X}$ and $\hat{Y}$ are nonempty. Moreover,  
%Since 
$\hat{X}$ and $\hat{Y}$  are lower order and upper order ideals  of $\Gamma$ respectively, it follows that $\hat{X}$ and $\hat{Y}$ are lower Eulerian with rank functions $\rho_{\hat{X}}$ and $\rho_{\hat{Y}}$ respectively, and unique minimal elements $\hat{0}_{\hat{X}} = \hat{0}_\Gamma$ and $\hat{0}_{\hat{Y}} = q$ respectively.

By Remark~\ref{rem:joinorderpreserving}, if $z \le z'$ in $\Gamma$,  then $z \vee q \le z' \vee q$, and  we deduce that 
$\hat{\sigma}$ is order-preserving. 
We claim that $\hat{\sigma}$ is rank-increasing.
Indeed, for any $x \in \hat{X}$, we have $x < \hat{\sigma}(x)$ in $\Gamma$ and hence 
$\rho_{\hat{X}}(x) = \rho_{\Gamma}(x) \le \rho_\Gamma(\hat{\sigma}(x)) - 1 = \rho_{\hat{Y}}(\hat{\sigma}(x))$. It remains to show that $\hat{\sigma}$ is strongly surjective, and to check that \eqref{eq:strongsubdivision} holds.	

%The following lemma implies that $\hat{\sigma}$ is strongly surjective. 

We first establish strong surjectivity in the following preliminary lemma. In the proof, we use a weaker property than $\Gamma$ being locally Eulerian. Namely, we use the fact that $\Gamma$ is \emph{thin}, i.e.,  every interval of rank $2$ in $\Gamma$ is isomorphic to the Boolean algebra $B_2$. 

\begin{lemma}\label{lem:hatsigma}
	With the notation above, %the map $\hat{\sigma}$ is order-preserving. 
	consider $x \in \hat{X}$ and $y \in \hat{Y}$ with $\hat{\sigma}(x) \le y$. 
	%	Then there exists $x' \in \hat{X}$ such that $x \le x'$,  	$\rho_\hat{X}(x') = \rho_\hat{Y}[r](y)$, and $\hat{\sigma}(x') = y$. 
	Then there exists $x' \in \hat{X}$ such that $x \le x'$,  $\rho_\Gamma(y) - \rho_\Gamma(x') = 1$, and $\hat{\sigma}(x') = y$. In particular, $\hat{\sigma}$ is strongly surjective. 
\end{lemma}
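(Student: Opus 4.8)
The plan is to reduce the lifting property to producing a single well-chosen element covered by $y$. The key observation is that if $w$ is any element with $x \le w < y$ such that $y$ covers $w$ and $q \not\le w$, then automatically $\hat{\sigma}(w) = w \vee q = y$: indeed $q \le y$ and $w \le y$ give $w \le w \vee q \le y$, while $q \not\le w$ forces $w < w \vee q$, so the covering relation leaves no possibility except $w \vee q = y$. Such a $w$ lies in $\hat{X}$ and satisfies $\rho_\Gamma(y) - \rho_\Gamma(w) = 1$, so it is exactly the element $x'$ we seek. Thus the whole lemma reduces to finding some $w \in [x,y]$ with $q \not\le w$ that is covered by $y$.

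To produce such a $w$, I would take $w$ to be a maximal element of the set $S = \{ z \in [x,y] : q \not\le z \}$. This set is nonempty because $x \in \hat{X}$ means $q \not\le x$, so $x \in S$; hence $x \le w$. Since $q \le y$ we have $y \notin S$, so $w < y$. It remains to show that $y$ covers $w$, i.e. $\rho_\Gamma(y) - \rho_\Gamma(w) = 1$.

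This is the heart of the argument and the only place thinness enters. For any $z$ with $w < z \le y$, maximality of $w$ forces $q \le z$, whence $w \vee q \le z$; since $q \not\le w$ we also have $w < w \vee q \le y$. Therefore $w \vee q$ is the least element of $\{ z \in \Gamma : w < z \le y \}$, that is, $[w,y]$ has a unique atom, and that atom $w \vee q$ covers $w$. If $\rho_\Gamma(y) - \rho_\Gamma(w) \ge 2$, then $\rho_\Gamma(y) - \rho_\Gamma(w \vee q) \ge 1$, so $w \vee q < y$ and I may choose $b$ covering $w \vee q$ with $b \le y$; then $[w,b]$ has rank $2$, and thinness gives $[w,b] \cong B_2$, which has two distinct atoms. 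Both are atoms of $[w,y]$, contradicting uniqueness. Hence $\rho_\Gamma(y) - \rho_\Gamma(w) = 1$, and $w$ is the required $x'$.

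Finally I would translate this into strong surjectivity: since $\rho_{\hat{X}} = \rho_\Gamma|_{\hat{X}}$ and $\rho_{\hat{Y}} = \rho_\Gamma|_{\hat{Y}}[-1]$, the condition $\rho_\Gamma(y) - \rho_\Gamma(x') = 1$ reads $\rho_{\hat{X}}(x') = \rho_{\hat{Y}}(y)$, which is the rank condition in the definition of strong surjectivity. Surjectivity itself follows by applying the lifting statement with $x = \hat{0}_\Gamma$, which lies in $\hat{X}$ since $q \ne \hat{0}_\Gamma$ and satisfies $\hat{\sigma}(\hat{0}_\Gamma) = q \le y$ for every $y \in \hat{Y}$. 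The genuinely delicate point is the unique-atom argument via thinness in the third paragraph; everything else is routine bookkeeping.
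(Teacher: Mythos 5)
Your proof is correct, but it takes a genuinely different route from the paper's. The paper first reduces to the case where $\hat{\sigma}(x)$ covers $x$ (by walking up a maximal chain in $[x,\hat{\sigma}(x)]$ and noting every element of it has the same image), and then lifts a maximal chain $\hat{\sigma}(x)=y_0<\cdots<y_s=y$ rung by rung: at each step the rank-$2$ interval $[x_j,y_{j+1}]$ is isomorphic to $B_2$ by thinness, and $x_{j+1}$ is taken to be its unique middle element other than $y_j$. You instead make a single extremal choice --- a maximal element $w$ of $\{z\in[x,y]: q\not\le z\}$ --- observe that $(w,y]$ then has $w\vee q$ as its least element, and invoke thinness exactly once, via contradiction, to force $\rho_\Gamma(y)-\rho_\Gamma(w)=1$; the identity $\hat{\sigma}(w)=w\vee q=y$ is then immediate from the covering relation. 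Your argument is shorter, avoids the two-stage chain construction, and isolates the role of thinness in a single unique-atom step, while the paper's inductive lift is more explicit about how the fiber chain sits alongside the chain in $\hat{Y}$ (which is the picture reused elsewhere, e.g.\ in Corollary~\ref{cor:graded}). Both proofs use only that $\Gamma$ is thin and that $q$ is join-admissible, and your closing translation of the rank condition and the derivation of surjectivity from the case $x=\hat{0}_\Gamma$ match the paper's.
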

\begin{proof}
	Assume the first statement holds. We have already shown above that $\hat{\sigma}$ is order-preserving and rank-increasing. If we can show that $\hat{\sigma}$ is surjective, then the first statement implies that $\hat{\sigma}$ is strongly surjective. 
	To show that $\hat{\sigma}$ is surjective, 
	consider an element $y \in \hat{Y}$. It is enough to show that there exists $x \in \hat{X}$ such that $\hat{\sigma}(x) \le y$, since  then the first statement implies that there exists $x' \in \hat{X}$ with $\hat{\sigma}(x') = y$.  After possibly replacing $y$ by some $y' \le y$ in $\hat{Y}$, we may assume that $y$ is a minimal element, i.e., $y = \hat{0}_{\hat{Y}} = q$. 
	Consider any element $x$ in $\Gamma$ such that $x < q$, for example, $x = \hat{0}_{\Gamma}$. 
	Then $\hat{\sigma}(x) = x \vee q = q$, as desired.

	%	By the above argument, it is enough to show that there exists some $x \in \hat{X}$ with $\hat{\sigma}(x) \le y$. After possibly replacing $y$ by some $y' \le y$ in $\hat{Y}$, we may assume that $y$ is a minimal element of $\hat{Y}$. 	
	%	Since $y$ has positive rank $r + 1 > 0$ as an element of $\Gamma$, there is an element $x \in \Gamma$ of rank $r$ such  that $x \le y$. Then $x \in \hat{X}$ and $\hat{\sigma}(x) = x \vee I = y$, as desired. 

	%	We first verify that $\hat{\sigma}$ is order-preserving.
	%	Consider $x \le x' \in \hat{X}$. Since $\Gamma_{\ge x'} \cap I \subset \Gamma_{\ge x} \cap I$, it follows that 	$ \hat{\sigma}(x') = x' \vee I$ is an element of $\Gamma_{\ge x} \cap I$, and hence $\hat{\sigma}(x) = x \vee I \le \hat{\sigma}(x')$. 
	
	%	Consider $x \in \hat{X}$ and $y \in \hat{Y}$ with $\hat{\sigma}(x) \le y$. 
	We now prove the first statement. 
	Consider a maximal chain $x = z_0 < z_1 < \cdots < z_t = \hat{\sigma}(x)$ in the interval $[x,\hat{\sigma}(x)]$ of $\Gamma$. 
	%By Lemma~\ref{lem:usestronglysurjective}, 
	Here $t = \rho_\Gamma(\hat{\sigma}(x)) - \rho_\Gamma(x)$ and $z_i \in \hat{X}$ for $0 \le i < t$. 
	Fix some  $0 \le i < t$. Since $\hat{\sigma}$ is order-preserving, $\hat{\sigma}(x) \le \hat{\sigma}(z_i)$. 	Also, $z_i < \hat{\sigma}(x) \in \Gamma_{\ge q}$ implies that $\hat{\sigma}(z_i) =  z_i \vee q \le \hat{\sigma}(x)$.
	We deduce that $\hat{\sigma}(z_i) = \hat{\sigma}(x)$. 
	%	For $0 \le i < t$, 
	%	since $\hat{\sigma}$ is order-preserving, $\hat{\sigma}(x) \le \hat{\sigma}(z_i)$. 
	%	Also, $z_i < \hat{\sigma}(x) \in I$ implies that $\hat{\sigma}(z_i) =  z_i \vee I \le \hat{\sigma}(x)$. We deduce that $\hat{\sigma}(z_i) = \hat{\sigma}(x)$ for $0 \le i < t$.
	Suppose there exists  $x' \in \hat{X}$  with $z_{t - 1} \le x'$, $\rho_\Gamma(y) - \rho_\Gamma(x') = 1$, and $\hat{\sigma}(x') = y$.  Then $x \le z_{t - 1} \le x'$, and the lemma holds. 
	We conclude that, after possibly replacing $x$ by $z_{t - 1}$, we may and will assume that $t = \rho_\Gamma(\hat{\sigma}(x)) - \rho_\Gamma(x) =  1$. 
	
	Consider a maximal chain $\hat{\sigma}(x) = y_0 < y_1 < \cdots < y_s = y$ in the interval $[\hat{\sigma}(x), y]$ of  $\hat{Y}$, where $s = \rho_\Gamma(y) - \rho_\Gamma(\hat{\sigma}(x))$. Suppose that we can construct a chain 
	$x = x_0 < x_1 < \cdots < x_s$ in $\hat{X}$ such that  $\rho_\Gamma(y_i) - \rho_\Gamma(x_i) =  1$  and $\hat{\sigma}(x_i) = y_i$ for $0 \le i \le s$. 
	Then the lemma holds if we set $x' = x_s$. It remains to construct such a chain. 
	
	We proceed by induction. For $0 \le j \le s$, we claim there exists a chain  
	$x = x_0 < x_1 < \cdots < x_j$ in $\hat{X}$ such that
	$\rho_\Gamma(y_i) - \rho_\Gamma(x_i) =  1$  and $\hat{\sigma}(x_i) = y_i$ for $0 \le i \le j$.  The claim holds when $j = 0$ since $\rho_\Gamma(\hat{\sigma}(x)) - \rho_\Gamma(x) =  1$. Suppose the claim holds for some $0 \le j < s$. We will show that it holds for $j + 1$, completing the proof of the claim and the lemma.  Consider the  interval $[x_j,y_{j + 1}]$ in  $\Gamma$. We have $\rho_\Gamma(y_{j + 1}) - \rho_\Gamma(x_j) = 
	(\rho_\Gamma(y_{j + 1}) - \rho_\Gamma(y_j)) + (\rho_\Gamma(y_j) - \rho_\Gamma(x_j)) = 2$. 
	Since $\Gamma$ is thin, $[x_j,y_{j + 1}]$ is isomorphic to $B_2$. 
	%The only Eulerian poset of rank $2$ is $B_2$. 
	Hence $[x_j,y_{j + 1}]$ contains precisely two elements  with rank 
	$\rho_\Gamma(x_j) + 1 = \rho_\Gamma(y_j) = \rho_\Gamma(y_{j + 1}) - 1$ in $\Gamma$, one of which is $y_j$. 	Let $x_{j + 1}$ be the unique rank $\rho_\Gamma(y_j)$ element of $\Gamma$ contained in $[x_j,y_{j + 1}]$ such that $x_{j + 1} \neq y_j$. 
	If $x_{j + 1} \in \Gamma_{\ge q}$, then $\rho_\Gamma(x_{j + 1}) = \rho_\Gamma(x_j) + 1$ implies that $x_{j + 1} = x_j \vee q = \hat{\sigma}(x_j) = y_j$, a contradiction.  We deduce that 
	$x_{j + 1} \in \hat{X}$. Moreover, since $x_{j + 1} \le y_{j + 1}$ in $\Gamma$ and $\rho_\Gamma(y_{j + 1}) = \rho_\Gamma(x_{j + 1}) + 1$, we have $\hat{\sigma}(x_{j + 1}) = x_{j + 1} \vee q = 
	y_{j + 1}$, as desired.
\end{proof}

We now complete the proof that $\MAP$ is well-defined.

%\alan{TODO: think about this; above just need $\Gamma$ to be thin}

\begin{lemma}\label{lem:sfsGammaqissfs}
	The function $\MAP : \JoinIdealLW^\circ  \to \SFS$ is well-defined. 
\end{lemma}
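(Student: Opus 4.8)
The remaining content of Lemma~\ref{lem:sfsGammaqissfs} is to verify the strong formal subdivision identity \eqref{eq:strongsubdivision} for $\hat{\sigma}$, since the discussion preceding Lemma~\ref{lem:hatsigma} has already shown that $\hat{\sigma}$ is order-preserving and rank-increasing, and Lemma~\ref{lem:hatsigma} supplies strong surjectivity. The plan is to deduce \eqref{eq:strongsubdivision} directly from the local Eulerian condition on $\Gamma$ by splitting a suitable interval of $\Gamma$ into its $\hat{X}$-part and its $\hat{Y}$-part and re-indexing each piece using the join with $q$.

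First I would fix $x \in \hat{X}$ and $y \in \hat{Y}$ with $\hat{\sigma}(x) \le y$. Since $\hat{X}$ and $\hat{Y}$ are disjoint we have $x \neq y$, so $x < y$ in $\Gamma$, and local Eulerianness of $\Gamma$ gives $\sum_{x \le z \le y} (-1)^{\rho_\Gamma(z)} = 0$. Splitting the interval $[x,y]$ according to whether $z$ lies in the lower order ideal $\hat{X}$ or the upper order ideal $\hat{Y}$ yields
\begin{equation*}
	\sum_{\substack{x \le x' \le y \\ x' \in \hat{X}}} (-1)^{\rho_\Gamma(x')} + \sum_{\substack{x \le y' \le y \\ y' \in \hat{Y}}} (-1)^{\rho_\Gamma(y')} = 0.
\end{equation*}

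The key combinatorial step is to re-index both sums. Using that $q$ is join-admissible together with $q \le y$, I would check the equality of index sets $\{ x' \in \hat{X} : x \le x' \le y \} = \{ x' \in \hat{X} : x \le x',\ \hat{\sigma}(x') \le y \}$: if $x' \le y$ then $\hat{\sigma}(x') = x' \vee q \le y$ because $y$ is an upper bound of $\{x',q\}$ and the join is the least such, while conversely $x' \le \hat{\sigma}(x') \le y$. Hence the first sum is exactly the unsigned left-hand side of \eqref{eq:strongsubdivisionequality}. Similarly, every $y' \ge q$ lies in $\hat{Y}$, and $x,q \le y'$ forces $\hat{\sigma}(x) = x \vee q \le y'$, so the index set of the second sum is precisely the interval $[\hat{\sigma}(x), y]$ of $\Gamma$; thus the second sum is a local Eulerian sum over that interval.

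Finally I would evaluate the second sum by cases: it equals $(-1)^{\rho_\Gamma(y)}$ when $\hat{\sigma}(x) = y$ (a one-point interval) and $0$ when $\hat{\sigma}(x) < y$ (local Eulerianness again). Substituting back, the first sum equals the negative of the second, and multiplying through by $(-1)^{\rho_{\hat{Y}}(y)} = -(-1)^{\rho_\Gamma(y)}$ — this is where the shift $\rho_{\hat{Y}} = \rho_\Gamma|_{\hat{Y}}[-1]$ enters — turns the first sum into the signed expression $\sum (-1)^{\rho_{\hat{Y}}(y) - \rho_{\hat{X}}(x')}$ of \eqref{eq:strongsubdivision}, producing $1$ when $\hat{\sigma}(x) = y$ and $0$ otherwise. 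I expect the only delicate points to be the two re-indexings, which rely crucially on the join-admissibility of $q$ and on $y \in \Gamma_{\ge q}$, and the sign shift from $\rho_{\hat{Y}}$; the local Eulerian input and the two case evaluations are then routine, completing the verification that $\hat{\sigma} \in \SFS$ and hence that $\MAP$ is well-defined.
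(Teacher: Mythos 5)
Your proposal is correct and takes essentially the same route as the paper's proof: both split the Eulerian interval $[x,y]\subset\Gamma$ into its $\hat{X}$- and $\hat{Y}$-parts, identify the $\hat{Y}$-part with the interval $[\hat{\sigma}(x),y]$ via join-admissibility, and apply the Eulerian condition to $[x,y]$ and $[\hat{\sigma}(x),y]$ to land on \eqref{eq:strongsubdivision}. (One cosmetic slip: the unsigned first sum you produce is indexed as in \eqref{eq:strongsubdivision}, i.e.\ by $\hat{\sigma}(x')\le y$, not as in \eqref{eq:strongsubdivisionequality}, which requires $\hat{\sigma}(x')=y$.)
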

\begin{proof}
	Consider the setup above. By the above discussion and Lemma~\ref{lem:hatsigma}, it remains to show that \eqref{eq:strongsubdivision} holds.
	Consider $x \in \hat{X}$ and $y \in \hat{Y}$ such that $\hat{\sigma}(x) \le y$. Firstly, using the fact that $[x,y] \subset \Gamma$ is an Eulerian poset of positive rank, and then using the fact that $[\hat{\sigma}(x),y] \subset \Gamma$ is an Eulerian poset, we compute
	\begin{align*}
		\sum_{ \substack{x \le x' \in \hat{X} \\ \hat{\sigma}(x') \le y} } (-1)^{\rho_{\hat{Y}}(y) - \rho_{\hat{X}}(x')} &= -\sum_{\substack{z \in \Gamma \\ x \le z \le y \\ z \notin \hat{Y} }} (-1)^{\rho_\Gamma(y) - \rho_\Gamma(z)} 
		\\ 
		&= \sum_{\substack{z \in \Gamma \\ x \le z \le y \\ z \in \hat{Y} }} (-1)^{\rho_\Gamma(y) - \rho_\Gamma(z)} 
		\\
		&= \sum_{\substack{z \in \Gamma \\ \hat{\sigma}(x) \le z \le y }} (-1)^{\rho_\Gamma(y) - \rho_\Gamma(z)} 
		\\
		&=  \begin{cases}
			1 &\textrm{ if } \hat{\sigma}(x) = y, \\
			0 &\textrm{ otherwise. }
		\end{cases}
	\end{align*} 	

\end{proof}

Now that we have established that the functions $\CYL$ and $\MAP$ are well-defined,  we complete the proof of Theorem~\ref{thm:mainsimplified}. 

\begin{proof}[Proof of Theorem~\ref{thm:mainsimplified}]
	We need to show that the functions $\CYL: \SFS \to  \JoinIdealLW^\circ$  and $\MAP:  \JoinIdealLW^\circ \to \SFS$ are mutually inverse bijections.  
	
%	We first show that $\CYL$ and $\MAP$ define mutually inverse bijections between $\SFS \smallsetminus \SFS^\circ$ and $\JoinIdealLW \smallsetminus \JoinIdealLW^\circ$. 
%	Let $\sigma: X \to Y$ be the strong formal subdivision with $X = Y = \emptyset$. Then  $\CYL(\sigma) = \emptyset_{\JA}$ and $\MAP(\emptyset_{\JA}) = \sigma$.  Secondly, let $\sigma: X \to Y$ be a strong formal subdivision with $X = \emptyset$ and $Y \neq \emptyset$ a lower Eulerian poset with rank function $\rho_Y$.
%	Then $\MAP(\sigma) = (Y,\rho_Y[1],\hat{0}_Y)$.
%	Consider a triple $(\Gamma,\rho_\Gamma,\hat{0}_\Gamma)$ in $\JoinIdealLW$. Then $\MAP(\Gamma,\rho_\Gamma,\hat{0}_\Gamma)$ is the unique function from $\emptyset$ to $Y = \Gamma$ and $\rho_Y = \rho_\Gamma[-1]$. We deduce that
%	$\CYL(\MAP(\sigma)) = \sigma$ and  $\CYL(\MAP(\Gamma,\rho_\Gamma,\hat{0}_\Gamma)) = (\Gamma,\rho_\Gamma,\hat{0}_\Gamma)$. 
	
%	We next show that $\CYL$ and $\MAP$ define mutually inverse bijections between $\SFS$ and $\JoinIdealLW^\circ$. 
%	%We first show that $\MAP \circ \CYL$ is the identity on $\SFS^\circ$. 
	Let  $\sigma: X \to Y$ be a  strong formal subdivision  between lower Eulerian posets $X$ and $Y$ with rank functions $\rho_X$ and $\rho_Y$ respectively. 
%	If $X = Y = \emptyset$, then $\MAP(\CYL(\sigma)) = \MAP(\emptyset_{\JA}) = \sigma$. Hence we may assume that $Y \neq \emptyset$. 
	Then $\CYL(\sigma) = (\Cyl(\sigma), \rho_{\Cyl(\sigma)},\hat{0}_Y)$, where 
	\[
	\rho_{\Cyl(\sigma)}(z) =  \begin{cases}
		\rho_X(z) &\textrm{ if } z \in X, \\
		\rho_Y(z) + 1 &\textrm{ if } z \in Y. 
	\end{cases}
	\]		
	
	and 
	\begin{equation*}%\label{eq:sfs}
		\MAP(\CYL(\sigma)) : \Cyl(\sigma) \smallsetminus \Cyl(\sigma)_{\ge \hat{0}_Y} \to \Cyl(\sigma)_{\ge \hat{0}_Y},
	\end{equation*}
	\[
	x \mapsto x \vee \hat{0}_Y, 
	\]
	where the rank functions on $\Cyl(\sigma) \smallsetminus \Cyl(\sigma)_{\ge \hat{0}_Y}$ and $\Cyl(\sigma)_{\ge \hat{0}_Y}$ are  the  corresponding restrictions of $\rho_\Gamma$ shifted by $0$ and $-1$ respectively.
	Observe that $X = \Cyl(\sigma) \smallsetminus \Cyl(\sigma)_{\ge \hat{0}_Y}$,  $Y = \Cyl(\sigma)_{\ge \hat{0}_Y}$, and $x \vee \hat{0}_Y = \sigma(x)$ for all $x \in X$. We conclude that $\MAP(\CYL(\sigma)) = \sigma$, as desired.
	
%	We next show that $\CYL \circ \MAP$ is the identity on $\JoinIdealLW$. 
%	We have $(\CYL \circ \MAP)(\emptyset_{\JA}) = \emptyset_{\JA}$. 
	Conversely, consider a triple $(\Gamma,\rho_\Gamma,q)$ in $\JoinIdealLW^\circ$. 
	Let $X = \Gamma \smallsetminus \Gamma_{\ge q}$, $Y = \Gamma_{\ge q}$, and $\sigma = \MAP(\Gamma,\rho_\Gamma,q)$.
	Then
	\begin{equation*}%\label{eq:sfs}
		\sigma : X \to Y,
	\end{equation*}
	\[
	x \mapsto x \vee q,
	\]
	where the rank functions $\rho_X$ for $X$ and $\rho_Y$ for $Y$ are  the  corresponding restrictions of $\rho_\Gamma$ shifted by $0$ and $-1$ respectively. 		Then $\CYL(\sigma) = (\Cyl(\sigma), \rho_{\Cyl(\sigma)},\hat{0}_Y)$, where 
	\[
	\rho_{\Cyl(\sigma)}(z) =  \begin{cases}
		\rho_X(z) &\textrm{ if } z \in X, \\
		\rho_Y(z) + 1 &\textrm{ if } z \in Y. 
	\end{cases}
	\]		
	Observe that 
	$\Cyl(\sigma) = \Gamma$ as sets, and,   
	with this identification,  $\rho_{\Cyl(\sigma)} = \rho_\Gamma$, and $\hat{0}_Y = q$. 
	If we can show that $\Cyl(\sigma) = \Gamma$ as posets, then $\CYL(\sigma) = (\Gamma,\rho_\Gamma,q)$, completing the proof.
	
	Consider $z,z' \in \Cyl(\sigma)$. If $z,z'$ both lie in $X$ or both lie in $Y$, then $z \le z' \in \Cyl(\sigma)$ if and only if $z \le z'$ in  $\Gamma$. Suppose that 
	$z \in X$ and $z' \in Y$. Then $z \le z'$  in $\Cyl(\sigma)$ if and only if $\sigma(z) = z \vee q \le z'$ in $\Gamma$. 
	If $z \vee q \le z'$ in $\Gamma$, then $z \le z \vee q \le z'$ in $\Gamma$. On the other hand, by Remark~\ref{rem:joinorderpreserving}, if $z \le z'$ in $\Gamma$ then 
	$z \vee q \le z' \vee q = z'$ in $\Gamma$. We conclude that $z \le z' \in \Cyl(\sigma)$ if and only if $z \le z'$ in $\Gamma$. Hence $\Cyl(\sigma) = \Gamma$ as posets.
	
\end{proof}

We next give the proof of Theorem~\ref{thm:main}.

\begin{proof}[Proof of Theorem~\ref{thm:main}]
	By Lemma~\ref{lem:CYLwelldefined} and Lemma~\ref{lem:MAPwelldefined} below, $\CYLcat$ and $\MAPcat$ are well-defined functors. Then $\CYLcat$ and $\MAPcat$ induce mutually inverse bijections between $\Obj(\CYLcat)$ and $\Obj(\MAPcat)$ by Theorem~\ref{thm:mainsimplified}. Moreover, using Theorem~\ref{thm:mainsimplified} and tracing through the definitions, it follows that  $\CYLcat$ and $\MAPcat$ induce mutually inverse bijections between $\Mor(\CYLcat)$ and $\Mor(\MAPcat)$. In slightly more detail, with the notation of Definition~\ref{def:functorialbijections}, in the definition of $\CYLcat$, $\phi$ is constructed by concatenating $\phi_1$ and $\phi_2$, while in the definition of $\MAPcat$, $\phi_1$ and $\phi_2$ are obtained by restricting $\phi$, and these two operations are inverse to each other.
	
	%	, completing the proof. 
	
	%		\[
	%	(\MAP \circ \CYL) \left(\begin{tikzcd} X \arrow[r, "\sigma"] \arrow[d, "\phi_1"] & Y \arrow[d, "\phi_2"] \\ X' \arrow[r, "\sigma'"] &  Y' \end{tikzcd}\right) = \MAP(\phi: \Cyl(\sigma) \to \Cyl(\sigma')) = \begin{tikzcd} X \arrow[r, "\sigma"] \arrow[d, "\phi_1"] & Y \arrow[d, "\phi_2"] \\ X' \arrow[r, "\sigma'"] &  Y' \end{tikzcd},
	%	\]
	%	\[
	%		(\CYL \circ \MAP)(\phi: \Gamma \to \Gamma') = 	\CYL \left(\begin{tikzcd} X \arrow[r, "\sigma"] \arrow[d, "\phi_1"] & Y \arrow[d, "\phi_2"] \\ X' \arrow[r, "\sigma'"] &  Y' \end{tikzcd}\right) = (\phi: \Cyl(\sigma) \to \Cyl(\sigma')) = 
	%	\]
	
\end{proof}

\begin{lemma}\label{lem:CYLwelldefined}
	The functor $\CYLcat: \SFScat \to  \Joincat^\circ$ is well-defined.
\end{lemma}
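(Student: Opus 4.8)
The plan is to dispose of the object level first and then concentrate on morphisms. On objects, $\CYLcat$ agrees with $\CYL$, which was already shown to land in $\JoinIdealLW^\circ$; in particular both $\Cyl(\sigma)$ and $\Cyl(\sigma')$ are lower Eulerian and $\hat{0}_Y \neq \hat{0}_{\Cyl(\sigma)}$. So the real content is to show that, for a morphism of $\SFScat$ presented as a commutative square of strong formal subdivisions with vertical maps $\phi_1 : X \to X'$ and $\phi_2 : Y \to Y'$ satisfying $\sigma' \circ \phi_1 = \phi_2 \circ \sigma$, the concatenation $\phi : \Cyl(\sigma) \to \Cyl(\sigma')$ of \eqref{eq:CYLcat} is a well-defined morphism in $\Joincat^\circ$, and that $\CYLcat$ respects identities and composition.

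First I would check the structural properties of $\phi$ by the case analysis dictated by Definition~\ref{def:nonHausdorffmc}. Order-preservation and the rank-increasing property reduce on the pure cases $z,z' \in X$ and $z,z' \in Y$ to those of $\phi_1$ and $\phi_2$ (using \eqref{eq:rhoCyl}, the shift by $1$ on the $Y$-part cancels); the only mixed case, $z \in X$, $z' \in Y$ with $\sigma(z) \le z'$, is handled by rewriting $\sigma'(\phi_1(z)) = \phi_2(\sigma(z)) \le \phi_2(z')$ via commutativity of the square. Surjectivity of $\phi$ follows from surjectivity of $\phi_1$ and $\phi_2$ onto $X'$ and $Y'$. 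For strong surjectivity I would again split on the locations of $z$ and its upper bound $w$: the cases $z \in X,\, w \in X'$ and $z \in Y,\, w \in Y'$ are the strong surjectivity of $\phi_1$ and $\phi_2$, the case $z \in Y,\, w \in X'$ is vacuous, and the remaining case $z \in X,\, w \in Y'$ reduces to strong surjectivity of $\phi_2$ applied to $\sigma(z) \in Y$, using $\phi_2(\sigma(z)) = \sigma'(\phi_1(z)) \le w$.

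The main obstacle is verifying the strong-subdivision identity \eqref{eq:strongsubdivisionequality} for $\phi$. Here I would sum over $z''$ with $z \le z''$ in $\Cyl(\sigma)$ and $\phi(z'') = w$, and split into cases according to whether $w \in X'$ or $w \in Y'$. If $w \in X'$ then $z \in X$ and every such $z''$ lies in $X$, so the identity is exactly that of $\phi_1$. If $w \in Y'$ then every such $z''$ must lie in $Y$; after the $\pm 1$ shift of \eqref{eq:rhoCyl} cancels, the sum becomes $\sum (-1)^{\rho_{Y'}(w) - \rho_Y(z'')}$. When $z \in Y$ this is the identity for $\phi_2$ directly, and when $z \in X$ (the genuinely mixed case) the index set is $\{\, z'' \in Y : \sigma(z) \le z'',\ \phi_2(z'') = w \,\}$, which is precisely the index set in the identity for $\phi_2$ applied to $y := \sigma(z)$, valid since $\phi_2(\sigma(z)) = \sigma'(\phi_1(z)) \le w$. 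This gives $\phi$ a strong formal subdivision.

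It then remains to see that $\phi$ is a morphism in $\Joincat^\circ$, i.e. that $\phi(q) = q'$ and $\phi(z \vee q) = \phi(z) \vee q'$ for all $z$, where $q = \hat{0}_Y$ and $q' = \hat{0}_{Y'}$. The first holds because $\phi_2$ is a strong formal subdivision, so $\phi(\hat{0}_Y) = \phi_2(\hat{0}_Y) = \hat{0}_{Y'}$ by Remark~\ref{rem:zerotozero}. For the second I would use the explicit joins in $\Cyl(\sigma)$ established when proving $\CYL$ well-defined, namely $x \vee q = \sigma(x)$ for $x \in X$ and $y \vee q = y$ for $y \in Y$: for $z = x \in X$ both sides equal $\phi_2(\sigma(x)) = \sigma'(\phi_1(x))$ by commutativity, and for $z = y \in Y$ both sides equal $\phi_2(y)$. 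Finally, functoriality is immediate, since the identity square concatenates to $\id_{\Cyl(\sigma)}$ and concatenation commutes with composition of the vertical maps, giving $\CYLcat(\beta \circ \alpha) = \CYLcat(\beta) \circ \CYLcat(\alpha)$. The one genuine computation is the case analysis for \eqref{eq:strongsubdivisionequality}, and within it the mixed case $z \in X$, $w \in Y'$.
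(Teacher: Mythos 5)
Your proposal is correct and follows essentially the same route as the paper's proof: the same case analysis for order-preservation, rank-increase, and strong surjectivity, the same reduction of the key identity \eqref{eq:strongsubdivisionequality} to the identity for $\phi_2$ applied to $\sigma(z)$ in the mixed case, and the same verification of $\phi(q)=q'$ and compatibility with joins via commutativity of the square. No gaps.
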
 
\begin{proof}
	By Theorem~\ref{thm:mainsimplified}, $\CYLcat$ defines a well-defined function from $\Obj(\SFScat)$ to $\Obj(\Joincat^\circ)$. 
	Suppose we can show that the image of a morphism in $\SFScat$ under $\CYLcat$ is a well-defined morphism in $\Joincat^\circ$. Then by tracing through the definitions in Definition~\ref{def:functorialbijections}, it follows that $\CYLcat$ takes the identity morphism of an object in $\SFScat$ to the identity morphism of the corresponding object in $\Joincat^\circ$, and also that $\CYLcat$ commutes with composition of morphisms. We conclude that $\CYLcat$ is a well-defined functor. 
	
	Consider a morphism in $\SFScat$ given by the commutative diagram
	\[
	\begin{tikzcd} X \arrow[r, "\sigma"] \arrow[d, "\phi_1"] & Y \arrow[d, "\phi_2"] \\ X' \arrow[r, "\sigma'"] &  Y'. \end{tikzcd}
	\]
	Here $\sigma,\sigma',\phi_1,\phi_2$ are strong formal subdivisions.
	Let $\rho_X, \rho_{X'}, \rho_Y, \rho_{Y'}$ be the rank functions corresponding to $X, X', Y, Y'$ respectively. With the notation of Definition~\ref{def:bijections},    $\CYLcat(\sigma) = (\Cyl(\sigma),\rho_{\Cyl(\sigma)},q)$, where 
	$q = \hat{0}_Y \in \Cyl(\sigma)$, and $\CYLcat(\sigma') = (\Cyl(\sigma'),\rho_{\Cyl(\sigma')},q')$, where 
	$q' = \hat{0}_{Y'} \in \Cyl(\sigma')$.  By Definition~\ref{def:functorialbijections}, 
	the image of the above morphism under $\CYLcat$ is 
	\[
	\phi: \Cyl(\sigma) \to \Cyl(\sigma'),
	\]
	\[
	\phi(z) = \begin{cases}
		\phi_1(z) &\textrm{ if } z \in X, \\
		\phi_2(z) &\textrm{ if } z \in Y.
	\end{cases}
	\]

%	With the notation of Definition~\ref{def:bijections}, $\CYLcat(\sigma) = (\Cyl(\sigma),\rho_{\Cyl(\sigma)},q)$, where 
%	$q = \hat{0}_Y \in \CYLcat(\sigma)$, and $\CYLcat(\sigma') = (\Cyl(\sigma'),\rho_{\Cyl(\sigma')},q')$, where 
%	$q' = \hat{0}_{Y'} \in \CYLcat(\sigma')$. By Definition~\ref{def:functorialbijections}, the image of the above morphism under  $\CYLcat$ is 
%	\[
%	\phi: \Cyl(\sigma) \to \Cyl(\sigma'),
%	\]
%	\[
%	\phi(z) = \begin{cases}
%		\phi_1(z) &\textrm{ if } z \in X, \\
%		\phi_2(z) &\textrm{ if } z \in Y.
%	\end{cases}
%	\]
	Our goal is to show that $\phi$ defines a morphism from $\CYLcat(\sigma)$ to $\CYLcat(\sigma')$ in $\Joincat^\circ$. 
	
	We first show that $\phi$ is a strong formal subdivision.  	
	We claim that $\phi$ is order-preserving. Since $\phi_1$ and $\phi_2$ are order-preserving,  it is enough to show that 
	$\phi(x) \le \phi(y)$ in $\Cyl(\sigma')$ for any $x \in X$ and $y \in Y$ such that $\sigma(x) \le y$.  Since $\phi(x) = \phi_1(x) \in X'$ and $\phi(y) = \phi_2(y) \in Y'$, we need to show that $\sigma'(\phi_1(x)) \le \phi_2(y)$. Since $\phi_2$ is order-preserving, we have 
	$\sigma'(\phi_1(x)) =  \phi_2(\sigma(x)) \le \phi_2(y)$, 
	%	$\sigma'(\phi(x)) = \sigma'(\phi_1(x)) =  \phi_2(\sigma(x)) \le \phi_2(y) = \phi(y)$, 
	as desired.	
	
	We claim that $\phi$ is rank-increasing. This follows since $\phi_1$ and $\phi_2$ are rank-increasing. Explicitly, using \eqref{eq:rhoCyl},
	$$\rho_{\Cyl(\sigma)}(x) = \rho_X(x) \le \rho_{X'}(\phi_1(x)) = \rho_{\Cyl(\sigma')}(\phi(x)),$$ 
	for any $x \in X$, and, 
	$$\rho_{\Cyl(\sigma)}(y) = \rho_Y(y) + 1 \le \rho_{Y'}(\phi_2(y)) + 1 = \rho_{\Cyl(\sigma')}(\phi(y)),$$ 
	for any $y \in Y$. 
	
	Since $\phi_1$ and $\phi_2$ are surjective, it follows that $\phi$ is surjective. We want to show that $\phi$ is strongly surjective. Since $\phi_1$ and $\phi_2$ are strongly surjective, it is enough to show that given any $x \in X$ and $y' \in Y'$ such that  $\phi(x) \le y' \in \Cyl(\sigma')$, there exists some $y \in Y$ such that $\sigma(x) \le y$, $\phi(y) = y'$, and 
	$\rho_{\Cyl(\sigma)}(y) = \rho_{\Cyl(\sigma')}(y')$. Observe that 
	$\phi_2(\sigma(x)) = \sigma'(\phi_1(x)) = \sigma'(\phi(x))  \le y'$. Since $\phi_2$ is strongly surjective, there exists $\sigma(x) \le y$ such that $\rho_Y(y) = \rho_{Y'}(y')$ and $\phi_2(y) = \phi(y) = y'$. 
	By \eqref{eq:rhoCyl}, $\rho_{\Cyl(\sigma)}(y) = \rho_Y(y) + 1 = \rho_{Y'}(y') + 1 = \rho_{\Cyl(\sigma')}(y')$, as desired.
	
	It follows from \eqref{eq:rhoCyl} and the fact that $\phi_1$ and $\phi_2$ satisfy \eqref{eq:strongsubdivisionequality}, that $\phi$ also satisfies \eqref{eq:strongsubdivisionequality}. Explicitly, it is enough to show that \eqref{eq:strongsubdivisionequality} holds for any $x \in X$ and $y' \in Y'$ such that  $\phi(x) \le y'$. Since $\phi^{-1}(y') \subset Y$, we compute
	\begin{align*}
		\sum_{ \substack{x \le z \in \Cyl(\sigma) \\ \phi(z) = y'} } (-1)^{\rho_{\Cyl(\sigma')}(y') - \rho_{\Cyl(\sigma)}(z) } &= 
		\sum_{ \substack{\sigma(x) \le y \in Y \\ \phi_2(y) = y'} } (-1)^{\rho_{\Cyl(\sigma')}(y') - \rho_{\Cyl(\sigma)}(y) } \\
		&= 
		\sum_{ \substack{\sigma(x) \le y \in Y \\ \phi_2(y) = y'} } (-1)^{\rho_{Y'}(y') - \rho_{Y}(y) }.
	\end{align*}
	Since $\phi_2$ satisfies \eqref{eq:strongsubdivisionequality}, the final summation above is $1$, as desired. We conclude that $\phi$ is a strong formal subdivision. 
	
	Using Remark~\ref{rem:zerotozero} applied to $\phi_2$, we compute $$\phi(q) = \phi(\hat{0}_Y) = \phi_2(\hat{0}_Y) = \hat{0}_{Y'} = q'.$$
	Finally, for all $x \in X = \Cyl(\sigma) \smallsetminus \Cyl(\sigma)_{\ge q}$, we compute
	$$\phi(x \vee q) = \phi(\sigma(x)) = \phi_2(\sigma(x)) = \sigma'(\phi_1(x)) = \phi_1(x) \vee q' = \phi(x) \vee q'.$$ 
	This completes the proof that $\phi$ determines a well-defined morphism in $\Joincat^\circ$. 
\end{proof}

\begin{lemma}\label{lem:MAPwelldefined}
	The functor $\MAPcat: \Joincat^\circ \to  \SFScat$ is well-defined.
\end{lemma}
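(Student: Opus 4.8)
The object level is already settled by Lemma~\ref{lem:sfsGammaqissfs}, so the plan is to check that $\MAPcat$ carries a morphism of $\Joincat^\circ$ to a morphism of $\SFScat$ and respects identities and composition. Fix a morphism $\phi\colon\Gamma\to\Gamma'$ of $\Joincat^\circ$ from $(\Gamma,\rho_\Gamma,q)$ to $(\Gamma',\rho_{\Gamma'},q')$, and write $X=\Gamma\smallsetminus\Gamma_{\ge q}$, $Y=\Gamma_{\ge q}$, $X'=\Gamma'\smallsetminus\Gamma'_{\ge q'}$, $Y'=\Gamma'_{\ge q'}$, with $\sigma=\MAP(\Gamma,\rho_\Gamma,q)$ and $\sigma'=\MAP(\Gamma',\rho_{\Gamma'},q')$. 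The first task is to show that $\phi_1=\phi|_X$ and $\phi_2=\phi|_Y$ really are maps $X\to X'$ and $Y\to Y'$, i.e.\ that $\phi^{-1}(Y')=Y$. Since $\phi$ is order-preserving and $\phi(q)=q'$, any $y\ge q$ satisfies $\phi(y)\ge q'$, giving $\phi(Y)\subseteq Y'$ immediately. The substantive point, and the main obstacle, is the reverse inclusion $\phi(X)\subseteq X'$.

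I would prove $\phi(X)\subseteq X'$ by contradiction, via the parity obstruction of Corollary~\ref{cor:oddequaleven}. Suppose $z\in X$ with $\phi(z)\ge q'$, and set $w=z\vee q=\sigma(z)\in Y$, so $z<w$. The morphism identity $\phi(z\vee q)=\phi(z)\vee q'$, which by the remark following Definition~\ref{def:categorify} holds for all $z$, together with $\phi(z)\ge q'$, gives $\phi(w)=\phi(z)=:u$. Now restrict $\phi$ to the strong formal subdivision $\phi\colon\Gamma_{\ge z}\to\Gamma'_{\ge u}$ of Remark~\ref{rem:restrictsfs} and consider the fibre $F=\{v\in\Gamma_{\ge z}:\phi(v)=u\}=\phi^{-1}(u)\cap\Gamma_{\ge z}$. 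As the preimage of the lower order ideal $\{u\}$ of $\Gamma'_{\ge u}$, Remark~\ref{rem:restrictsfs} makes $\phi|_F\colon F\to\{u\}$ a strong formal subdivision onto a one-element poset, so by Example~\ref{ex:B0} the poset $\overline F$ is Eulerian of positive rank and hence $|F|$ is odd. On the other hand, $w\in F$ and $w\ne\hat 0_F=z$; and for every $v\in F$ one has $v\vee w=v\vee q$ (since $v\ge z$), with $\phi(v\vee q)=\phi(v)\vee q'=u$, so $v\vee q\in F$. As $v\vee q$ lies below every element of $F$ that is above both $v$ and $w$, it is the join of $v$ and $w$ in $F$; thus $w$ is a join-admissible element of the lower Eulerian poset $F$ with $w\ne\hat 0_F$, and Corollary~\ref{cor:oddequaleven} forces $|F|$ to be even. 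This contradiction shows $\phi(z)\not\ge q'$, i.e.\ $\phi(X)\subseteq X'$.

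With $\phi^{-1}(X')=X$ and $\phi^{-1}(Y')=Y$ in hand, the remaining steps are routine bookkeeping. Since $X'$ is a nonempty lower order ideal of $\Gamma'$ and $X=\phi^{-1}(X')$, the first part of Remark~\ref{rem:restrictsfs} shows $\phi_1\colon X\to X'$ is a strong formal subdivision; since $Y=\Gamma_{\ge q}$ and $\phi(q)=q'$, the second part of Remark~\ref{rem:restrictsfs} shows $\phi_2\colon Y=\Gamma_{\ge q}\to\Gamma'_{\ge q'}=Y'$ is a strong formal subdivision. In both cases the rank functions agree with those prescribed by $\MAP$ (no shift on $X$, a common shift by $-1$ on $Y$), which is harmless by Remark~\ref{rem:rankfunctionunique}. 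The square commutes because, for $x\in X$, both $\sigma'(\phi_1(x))$ and $\phi_2(\sigma(x))$ equal $\phi(x)\vee q'$, using the definition of $\sigma'$ (valid as $\phi(x)\in X'$) and the morphism identity. Hence $\MAPcat(\phi)$ is a morphism of $\SFScat$.

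Finally, I would verify functoriality by tracing through restrictions: $\id_\Gamma$ restricts to $\id_X$ and $\id_Y$, so $\MAPcat$ sends identities to identities, and the restriction of $\phi'\circ\phi$ to $X$ (resp.\ $Y$) is the composite of the restrictions of $\phi$ and $\phi'$ (using $\phi(X)\subseteq X'$ and $\phi(Y)\subseteq Y'$), so the associated commutative squares stack to give $\MAPcat(\phi')\circ\MAPcat(\phi)$. This completes the proof that $\MAPcat$ is a well-defined functor. The only delicate point is the inclusion $\phi(X)\subseteq X'$ of the second paragraph; everything else rests on Remark~\ref{rem:restrictsfs} and Theorem~\ref{thm:mainsimplified}.
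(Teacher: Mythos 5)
Your proposal is correct, and the routine parts (the two applications of Remark~\ref{rem:restrictsfs}, the rank bookkeeping via Remark~\ref{rem:rankfunctionunique}, commutativity of the square, and functoriality) coincide with the paper's proof. Where you genuinely diverge is in the crucial inclusion $\phi(X)\subseteq X'$. The paper first reduces to the case that $x$ is a \emph{maximal} element of $X$, then uses an interval-counting argument (every interval $[x\vee q,y]$ of positive rank has even cardinality, so $[x,y]$ would be odd) to force $\Gamma_{\ge x}=\{x,x\vee q\}=B_1$, and finally applies Lemma~\ref{lem:parity} to the restricted subdivision $B_1\to\{y'\}$ to get a parity contradiction. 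You instead work with an arbitrary offending $z\in X$, form the fibre $F=\phi^{-1}(u)\cap\Gamma_{\ge z}$ over $u=\phi(z)$, and play two parity facts against each other: $|F|$ is odd because $F\to B_0$ is a strong formal subdivision (Example~\ref{ex:B0}, or Lemma~\ref{lem:parity} directly), while $|F|$ is even because $w=z\vee q$ is a non-minimal join-admissible element of $F$ (Corollary~\ref{cor:oddequaleven}). Your verification that $v\vee w=v\vee q$ lies in $F$ and is the join of $v$ and $w$ computed \emph{within} $F$ is the delicate point, and it is carried out correctly; note also that invoking Corollary~\ref{cor:oddequaleven} is not circular, since that corollary depends only on Theorem~\ref{thm:mainsimplified}, which is proved before this lemma is needed. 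The trade-off: the paper's argument is more self-contained (only Lemma~\ref{lem:parity} plus elementary interval counting), whereas yours avoids the reduction to maximal elements and the explicit identification $\Gamma_{\ge x}=B_1$ at the cost of leaning on Example~\ref{ex:B0} and Corollary~\ref{cor:oddequaleven}; both are sound.
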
  
\begin{proof}
	By Theorem~\ref{thm:mainsimplified}, $\MAPcat$ defines a well-defined function from $\Obj(\Joincat^\circ)$ to $\Obj(\SFScat)$. 
	Suppose we can show that the image of a morphism in $\Joincat^\circ$ under $\MAPcat$ is a well-defined morphism in $\SFScat$. Then by tracing through the definitions in Definition~\ref{def:functorialbijections}, it follows that $\MAPcat$ takes the identity morphism of an object in $\Joincat^\circ$ to the identity morphism of the corresponding object in $\SFScat$, and also that $\MAPcat$ commutes with composition of morphisms. We conclude that $\MAPcat$ is a well-defined functor. 
	
	Consider a morphism  in $\Joincat^\circ$ between objects $(\Gamma,\rho_\Gamma,q)$ and $(\Gamma',\rho_{\Gamma'},q')$ defined by a strong formal subdivision $\phi: \Gamma \to \Gamma'$. 
	%	Suppose that $\MAP(\Gamma,\rho_\Gamma,q)$ is a strong formal subdivision $\sigma: X \to Y$, and  $\MAP(\Gamma',\rho_{\Gamma'},q')$ is a strong formal subdivision $\sigma': X' \to Y'$. Here $X = \Gamma \smallsetminus \Gamma_{\ge q}$, $Y = \Gamma_{\ge q}$, $X' = \Gamma' \smallsetminus \Gamma_{\ge q'}'$, and $Y' = \Gamma_{\ge q'}'$. 
	%	The rank functions $\rho_X$ and $\rho_{X'}$ are the corresponding restrictions of $\rho_{\Gamma}$ and $\rho_{\Gamma'}$ respectively, while 
	%		the rank functions $\rho_Y$ and $\rho_{Y'}$ are the corresponding restrictions of $\rho_{\Gamma}$ and $\rho_{\Gamma'}$ respectively shifted by $-1$ .
	%	Also, $\sigma(x) = x \vee q$ for all $x \in X$, and $\sigma'(x') = x' \vee q'$ for all $x' \in X'$. 
	Let $X = \Gamma \smallsetminus \Gamma_{\ge q}$, $Y = \Gamma_{\ge q}$, $X' = \Gamma' \smallsetminus \Gamma_{\ge q'}'$, and $Y' = \Gamma_{\ge q'}'$. 	
	By Definition~\ref{def:functorialbijections}, 
	the image of the morphism under $\MAPcat$ is defined by
	%	\[
	%	\MAP(\phi: \Gamma \to \Gamma') = 
	%	\begin{tikzcd} X \arrow[r, "\sigma"] \arrow[d, "\phi_1"] & Y \arrow[d, "\phi_2"] \\ X' \arrow[r, "\sigma'"] &  Y' \end{tikzcd},
	%	\]
	%	where $\phi_1(x) = \phi(x) \in X' \subset \Gamma'$ for all $x \in X$, and
	%	$\phi_2(y) = \phi(y) \in Y' \subset \Gamma'$ for all $y \in Y$. 
	%	%where $\MAP(\Gamma,\rho_\Gamma,q) = (\sigma: X \to Y)$
	%	
	\[
	\MAPcat(\phi: \Gamma \to \Gamma') = 
	\begin{tikzcd} X \arrow[r, "\boldsymbol{\cdot} \vee q"] \arrow[d, "\phi_1"] & Y \arrow[d, "\phi_2"] \\ X' \arrow[r, "\boldsymbol{\cdot} \vee q'"] &  Y' \end{tikzcd},
	\]
	%	\[
	%	\MAP(\phi: \Gamma \to \Gamma') = 
	%	\begin{tikzcd} \Gamma \smallsetminus \Gamma_{\ge q} \arrow[r, "\boldsymbol{\cdot} \vee q"] \arrow[d, "\phi_1"] & \Gamma_{\ge q} \arrow[d, "\phi_2"] \\ \Gamma' \smallsetminus \Gamma_{\ge q'}' \arrow[r, "\boldsymbol{\cdot} \vee q'"] &  \Gamma_{\ge q'}' \end{tikzcd},
	%	\]
	where $\phi_1$ and $\phi_2$ are restrictions of $\phi$. 
	
	Assume for the moment that 
	$\phi^{-1}(Y') = Y$. 
	%	$\phi^{-1}(\Gamma_{\ge q'}') = \Gamma_{\ge q}$, 
	Then the diagram above is well-defined, and 
	using Definition~\ref{def:functorialbijections}, we may check that it is a commutative diagram. 
	Indeed, for any $x \in X$, we have 
	\[
	\phi_1(x) \vee q' = \phi(x) \vee q' = \phi(x \vee q) = \phi_2(x \vee q).  
	\]
	Since $X'$ is a lower order ideal of $\Gamma'$, $\phi^{-1}(X') = X$,  and the rank functions on $X$ and $X'$ are the corresponding restrictions of $\rho_{\Gamma}$ and $\rho_{\Gamma'}$ respectively,  Remark~\ref{rem:restrictsfs} implies that the restriction  $\phi_1: X \to X'$ of $\phi$ is a strong formal subdivision. 
	Since $\phi(q) = q'$, Remark~\ref{rem:restrictsfs} implies that $\phi$ restricts to a strong formal subdivision $\Gamma_{\ge q} \to \Gamma_{\ge q'}$.  
	The rank functions on $Y$ and $Y'$ are the corresponding restrictions of $\rho_{\Gamma}$ and $\rho_{\Gamma'}$ respectively shifted by $-1$, and we deduce that 	$\phi_2$ is a strong formal subdivision using Remark~\ref{rem:rankfunctionunique}.
	
	It remains to show that  $\phi^{-1}(Y') = Y$. Since $\phi(q) = q'$ and $\phi$ is order-preserving, we have $\phi(Y) \subset Y'$ and hence 
	$Y \subset \phi^{-1}(Y')$. For the converse, suppose there exists $x \in X$ such that $\phi(x) \in Y'$. We need to deduce a contradiction. 
	If $x \le x' \in X$, then since $\phi$ is order-preserving, $\phi(x) \le \phi(x')$ in $\Gamma'$. Since $Y'$ is an upper order ideal of $\Gamma'$, we deduce that $\phi(x') \in Y'$. Hence, we may assume that $x$ is a maximal element of $X$. 	By Remark~\ref{rem:restrictsfs}, $\phi$ restricts to a strong formal subdivision $\Gamma_{\ge x} \to \Gamma_{\ge y'}'$. By maximality of $x$, $\Gamma_{> x} \subset Y$, and we deduce that 
	$\Gamma_{> x} = \Gamma_{\ge x \vee q}$ is lower Eulerian. If there exists  $x \vee q < y$ in $\Gamma$, then $|[x \vee q,y]|$ is even,  
	and $|[x,y]| = |[x \vee q,y]| + 1$ is odd, contradicting the Eulerian hypothesis. We deduce that $\Gamma_{\ge x} = \{ x , x \vee q \} = B_1$ and $|\Gamma_{\ge x}|$ is even.
	Since the restriction $\Gamma_{\ge x} \to \Gamma_{\ge y'}'$ is surjective and $\phi(x) = \phi(x) \vee q' = \phi(x \vee q) = y'$,  we deduce that $\Gamma_{\ge y'}' = \{ y' \}$ and $|\Gamma_{\ge y'}'|$ is odd. 
	This contradicts Lemma~\ref{lem:parity}. 
	
\end{proof}

\section{The $cd$-index of an Eulerian poset}\label{sec:cdindex}

In this section, we prove an application of Theorem~\ref{thm:mainsimplified} for $cd$-indices of Eulerian posets.

\subsection{Background on $cd$-indices}\label{ss:cdindexbackground}
We first recall some background on $cd$-indices and refer the reader to 
\cite{BayerCDIndexSurvey} and \cite{StanleyFlagfVectors}  for more details.
Let $B$ be an Eulerian poset of rank $n + 1$, for some nonnegative integer $n$. Let $\rho_B$ be the natural rank function. 
Given a subset  $S \subset [n] = \{ 1,\ldots,n \}$, define a monomial $w_S = w_1 \cdots w_n$  in noncommuting variables $a,b$ by setting 
$w_i = a - b$ if $i \notin S$, and  $w_i = b$ if $i \in S$, for $1 \le i \le n$. 
Let $f_S$ be the number of maximal chains of the $S$-rank selected poset $B_S = \{ z \in B : \rho_B(z) \in S \} \cup \{ \hat{0}, \hat{1} \}$. The 
\emph{$ab$-polynomial} is $\Psi(B) = \Psi(B;a,b) = \sum_{S \subset [n]} f_S w_S$. 
The \emph{$cd$-index} is the unique polynomial $\Phi(B) = \Phi(B;c,d)$ in noncommuting variables $c$ and $d$ such that $\Phi(B;a + b,ab + ba) = \Psi(B;a,b)$. The fact that the $cd$-index exists is due to Fine (see \cite{BKNewIndexPolytopes}). Observe that the $cd$-index is homogeneous of degree $n$, where we set $\deg(c) = 1$ and $\deg(d) = 2$. 
%the degree of $c$ to be $1$ 

Recall that a \emph{derivation} of a ring $A$ is a $\Z$-linear map $d: A \to A$ such that $d(ab) = a d(b) + d(a)b$ for all $a,b \in A$. In particular, $d(1) = 0$ and $d$ is determined by its values on a set of generators for $A$. In \cite{ERCoproductscdindex}, Ehrenborg and Readdy introduced the following derivations of the ring $\Z\langle c,d \rangle$: 
\begin{enumerate}
	\item  	Let $G$ be the derivation determined by $G(c) = d$ and $G(d) = cd$,
	\item  	Let $G'$ be the derivation determined by $G'(c) = d$ and $G'(d) = dc$,
	\item  	Let $D = G + G'$. 
\end{enumerate}
Let $B$ be an Eulerian poset of positive rank. They showed that 
\[
D(\Phi(B)) = \sum_{\hat{0}_B < z < \hat{1}_B}  \Phi([\hat{0}_B,z]) d \Phi([z,\hat{1}_B]).
\]

\begin{example}\label{ex:derivation}
	Let $B$ be an Eulerian poset of positive rank. 
	Ehrenborg and Readdy \cite{ERCoproductscdindex}*{Theorem~4.4, Lemma~5.1, Theorem~5.2} proved that 
	\[
	\Phi(\Pyr(B)) = \Phi(B)c + G(\Phi(B)) = c\Phi(B)  + G'(\Phi(B)) =  \frac{1}{2} \left[ \Phi(B) c +  c \Phi(B) +  D(\Phi(B)) \right].
	\]
	In particular, since $\Pyr(B_{n + 1}) = B_{n + 2}$, this allows one to recursively compute
	$\{ \Phi(B_{n + 1}) : n \ge 0 \}$.  For example, 
	\[
	\Phi(B_{n + 1}) = \begin{cases}
		1 &\textrm{if } n = 0, \\
		c &\textrm{if } n = 1, \\
		c^2 + d &\textrm{if } n = 2, \\
		c^3 + 2cd + 2dc	 &\textrm{if } n = 3, \\
		c^4 + 3 d c^2 + 5cd c + 3c^2d + 4d^2 		 &\textrm{if } n = 4. \\
	\end{cases}
	\]
\end{example}

\begin{example}\label{ex:cddual}
	Let $B$ be an Eulerian poset of positive rank. 
	Recall from Section~\ref{ss:posets} that the dual poset $B^*$ is the poset with the same elements as $B$ and with all orderings reversed. 
	Then $\Psi(B^*;a,b)$ is obtained from $\Psi(B;a,b)$ by reversing all monomials in $a$ and $b$, and $\Phi(B^*;c,d)$ is obtained from $\Phi(B;c,d)$ by reversing all monomials in $c$ and $d$.
	For example, $B_{n + 1}^* = B_{n + 1}$ and hence $\Phi(B_{n + 1})$ is invariant under reversing monomials (c.f. Example~\ref{ex:derivation}). 
\end{example}

\begin{example}\label{ex:ERPrismPyr}
	Let $B$ be an Eulerian poset of positive rank. 
	Ehrenborg and Readdy  		\cite{ERCoproductscdindex}*{Proposition~4.2} proved that
	\[
	\Phi(\Prism(B)) = \Phi(B)c +   D(\Phi(B)).
	\]
	By Example~\ref{ex:cddual}, this is equivalent to the following formula \cite{ERCoproductscdindex}*{Corollary~4.7}
	\[
	\Phi(\Bipyr(B)) = \Phi(\Prism(B^*)^*) =  c \Phi(B) + D(\Phi(B)).
	\]
	By Example~\ref{ex:derivation}, 
	% 		By \cite{ERCoproductscdindex}*{Lemma~5.1}, $\Phi(B)c + G(\Phi(B)) = 
	% 		c\Phi(B)  + G'(\Phi(B))$, so 
	we can reexpress these formulas as 
	$\Phi(\Prism(B)) = 2 G'(\Phi(B)) + c\Phi(B)$ and 
	$\Phi(\Bipyr(B)) = 2 G(\Phi(B)) + \Phi(B)c$ respectively. 
\end{example}

% \begin{example}\label{ex:cdboolean}
	% 	The $cd$-index of $B_{n + 1}$ for $0 \le n \le 4$ is given by
	% 	\[
	% 	\Phi(B_{n + 1}) = \begin{cases}
		% 		1 &\textrm{if } n = 0, \\
		% 		c &\textrm{if } n = 1, \\
		% 		c^2 + d &\textrm{if } n = 2, \\
		% 		 	c^3 + 2cd + 2dc	 &\textrm{if } n = 3, \\
		% 		 c^4 + 3 d c^2 + 5cd c + 3c^2d + 4d^2 		 &\textrm{if } n = 4. \\
		% 	\end{cases}
	% 	\]
	% \end{example}

%Recall from Example~\ref{ex:CWEulerian} that an Eulerian poset of positive rank that is a $CW$-poset is Gorenstein*. In particular, the face lattice of a polytope is Gorenstein*. 

\begin{example}\label{ex:cdGorenstein*}
	Let $B$ be a Gorenstein* poset over a field. Then Karu \cite{KaruCDIndex} proved that the coefficients of the $cd$-index $\Phi(B;c,d)$ are nonnegative integers. 
\end{example}

\begin{example}\label{ex:cdstarproduct}
	Let $B$ and $B'$ be Eulerian posets of positive rank. Recall from Section~\ref{ss:posets} that the star product  $B \ast B'$ is an Eulerian poset of rank $\rank(B) + \rank(B') - 1$.  For example,   $B_1 \ast B' = B'$, $B \ast B_1 = B$,  and $B \ast B_2 = \tilde{\Sigma} B$. 
	Stanley showed that the $cd$-index is multiplicative with respect to the star product, i.e., $\Phi(B \ast B') = \Phi(B)\Phi(B')$ \cite{StanleyFlagfVectors}. 
	In particular, using Example~\ref{ex:derivation}, $\Phi(\tilde{\Sigma} B) = \Phi(B)\Phi(B_2) = \Phi(B)c$. 
\end{example}

Let $B$ be a near-Eulerian poset. 
Recall that we may associate to $B$ two distinct Eulerian posets of positive rank; the semisuspension $\tilde{\Sigma} B$ and $\overline{\partial B}$. Recall that  $\partial B$ is the boundary of $B$ and $\overline{\partial B}$ is obtained from $\partial B$ by adjoining a maximal element. 
The \emph{local $cd$-index} is 
\begin{equation}\label{eq:localcdindex}
\ell^{\Phi}(B) = \ell^{\Phi}(B;c,d) := \Phi(\tilde{\Sigma} B) -\Phi(\overline{\partial B})c.
\end{equation}
This was first defined by Karu for near-Gorenstein* posets in \cite{KaruCDIndex}, and later considered for near-Eulerian posets in \cite[Section~3.1]{DKTPosetSubdivisions}. 
%The local $cd$-index may be considered a measure of how far $B$ is from being Eulerian. %See the example below. 
Since $\rank(\tilde{\Sigma} B) = \rank(B) + 1 = \rank(\overline{\partial B}) + 1$, the local $cd$-index is homogeneous of degree $\rank(B)$. 

\begin{example}\cite{DKTPosetSubdivisions}*{Proposition~3.7}
	Let $B$ be Eulerian of positive rank. By Example~\ref{ex:cdstarproduct}, $\ell^{\Phi}(B) = 0$. 
\end{example}

\begin{example}\label{ex:partialPyr}
	Let $B$ be an Eulerian poset of positive rank. Then $\Pyr(\partial B)$ is near-Eulerian with semisuspension $\Pyr(B)$ and $\partial \Pyr(\partial B) = \partial B$. By Example~\ref{ex:derivation},  $\ell^{\Phi}(\Pyr(\partial B)) = \Phi(\Pyr(B)) -\Phi(B)c = G(\Phi(B))$, where $G$ is the derivation of the ring $\Z\langle c,d \rangle$ determined by $G(c) = d$ and $G(d) = cd$. For example, using Example~\ref{ex:derivation}, 
	\[
	\ell^{\Phi}(\Pyr(\partial B_{n + 1})) = \begin{cases}
		0 &\textrm{if } n = 0, \\
		d &\textrm{if } n = 1, \\
		2cd + dc &\textrm{if } n = 2, \\
		dc^2  + 3cdc + 3c^2d + 4d^2 &\textrm{if } n = 3. \\
		%	c^4 + 3 d c^2 + 5cd c + 3c^2d + 4d^2 		 &\textrm{if } n = 4. \\
	\end{cases}
	\]
\end{example}

\begin{example}\label{ex:cdlocalnonnegative}
	Let $B$ be a near-Gorenstein* poset over a field. Then Ehrenborg and Karu \cite{EKDecompositionTheoremCDIndex}*{Theorem~5.6} proved that the coefficients of the local $cd$-index $\ell^\Phi(B)$ are nonnegative integers. 
\end{example}

\subsection{Subdivisions and $cd$-indices}
In this section, we state and prove our application to $cd$-indices, as well as giving some remarks and examples.

%We first recall the bijection of Theorem~\ref{thm:mainsimplified}. 
%Let $\sigma: X \to Y$ be a strong formal subdivision  between  lower Eulerian posets $X$ and $Y$  with rank functions $\rho_X$ and $\rho_Y$ respectively. 
%Consider a  triple $(\Gamma, \rho_\Gamma, q)$,
%where $\Gamma$ is a  lower Eulerian poset  with rank function $\rho_\Gamma$,  and $q \neq \hat{0}_\Gamma$ is a join-admissible element of $\Gamma$.
%Assume that $\sigma$ corresponds to $(\Gamma, \rho_\Gamma, q)$ under the bijection of Theorem~\ref{thm:mainsimplified}.  That is,  $\Gamma = \Cyl(\sigma)$ is the non-Hausdorff mapping cylinder of $\sigma$, $q = \hat{0}_Y$, $X = \Gamma \smallsetminus \Gamma_{\ge q}$,  $Y = \Gamma_{\ge q}$, and $\sigma(x) = x \vee q$ for all $x \in X$. Also, $\rho_\Gamma$ determines and is determined by $(\rho_X,\rho_Y)$ via \eqref{eq:rhoCyl}. 
%%In what follows, we often identify isomorphic objects. 

Recall from Remark~\ref{rem:altcharnearEulerian} that if $\sigma: X \to Y$ is a strong formal subdivision between lower Eulerian posets, then for all $y \in Y$, we write $X_{\le y} = \sigma^{-1}([\hat{0}_Y,y])$ and $X_{< y} = \sigma^{-1}([\hat{0}_Y,y))$, and 
either 
\begin{enumerate}
	\item $y = \hat{0}_Y$ and $X_{\le y}$ is the boundary of an Eulerian poset of positive rank, or,
	\item $y \neq \hat{0}_Y$ and $X_{\le y}$ is near-Eulerian with boundary $X_{< y}$. 
\end{enumerate}
The following proposition shows that if $Y$ is Eulerian, then the $cd$-index of the non-Hausdorff mapping cylinder $\Cyl(\sigma)$ can be expressed in terms of the $cd$-indices and local $cd$-indices of invariants associated to $\sigma$. 

\begin{proposition}\label{prop:cdsubdivision}
	Let $\Gamma$ be an Eulerian poset of positive rank with rank function $\rho_\Gamma$. Let  $q$ be a  join-admissible element of $\Gamma$ such that $q \notin \{ \hat{0}_\Gamma, \hat{1}_\Gamma \}$. Let $\sigma: X \to Y$ be the corresponding strong formal subdivision under Theorem~\ref{thm:mainsimplified}, i.e., %$\CYL(\sigma) = (\Gamma,\rho_\Gamma,q)$, 
	$X = \Gamma \smallsetminus \Gamma_{\ge q}$, $Y = \Gamma_{\ge q}$, $\sigma(x) = x \vee q$ for all $x \in X$, and $\Gamma = \Cyl(\sigma)$. Then 
	%			\[
	%	\Phi(\Gamma) = \ell^{\Phi}(X) + \frac{1}{2} \left[ \Phi(\overline{\partial X}) c + %\Phi_{\overline{f^{-1}(\hat{0}_Y)}} 
	%	\Phi(\overline{\sigma^{-1}(\hat{0}_Y})) c \Phi(Y) +  \sum_{\hat{0}_Y < y < \hat{1}_Y} \left(\ell^{\Phi}(\sigma^{-1}([\hat{0}_Y,y]))c + \Phi(\overline{\partial \sigma^{-1}([\hat{0}_Y,y])}) d \right)\Phi([y,\hat{1}_Y]) \right].
	%	\]
	\begin{equation}\label{eq:cdformula}
		\Phi(\Gamma) = \ell^{\Phi}(X) + \frac{1}{2} \left[ \Phi(\overline{\partial X}) c + %\Phi_{\overline{f^{-1}(\hat{0}_Y)}} 
		\Phi(\overline{X_{ \le \hat{0}_Y}}) c \Phi(Y) +  \sum_{\hat{0}_Y < y < \hat{1}_Y} \left(\ell^{\Phi}(X_{\le y})c + \Phi(\overline{X_{< y}}) d \right)\Phi([y,\hat{1}_Y]) \right].
	\end{equation}
	%\begin{equation}\label{eq:cdformula}
	%	\Phi(\Gamma) = \ell^{\Phi}(X) + \frac{1}{2} \left[ \Phi(\overline{X_{ < \hat{1}_Y}}) c + %\Phi_{\overline{f^{-1}(\hat{0}_Y)}} 
	%	\Phi(\overline{X_{ \le \hat{0}_Y}}) c \Phi(Y) +  \sum_{\hat{0}_Y < y < \hat{1}_Y} \left(\ell^{\Phi}(X_{\le y})c + \Phi(\overline{X_{< y}}) d \right)\Phi([y,\hat{1}_Y]) \right].
	%\end{equation}
\end{proposition}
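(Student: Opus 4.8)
The plan is to compute the $ab$-index $\Psi(\Gamma)$ by flag enumeration, decompose it according to how each chain crosses from $X$ to $Y$, and then convert back to the $cd$-index. Throughout I identify a $cd$-polynomial with its image under $c \mapsto a+b$, $d \mapsto ab+ba$, and I use that $\Gamma$ is Eulerian so that $\Psi(\Gamma)$ is this image of $\Phi(\Gamma)$. Since the substitution $\Z\langle c,d\rangle \to \Z\langle a,b\rangle$ is injective, it suffices to prove the displayed identity after applying it to the right-hand side.

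First I would establish the combinatorial heart of the argument. Because $X = \Gamma \smallsetminus \Gamma_{\ge q}$ is a lower order ideal, $Y = \Gamma_{\ge q}$ is an upper order ideal, and $x \le y$ in $\Gamma$ (for $x \in X$, $y \in Y$) holds exactly when $\sigma(x) \le y$, every maximal chain of $\Gamma$ crosses from $X$ to $Y$ exactly once. Enumerating flags and grouping by the first element $y$ lying in $Y$ gives
\begin{equation}\label{eq:flagcross}
\Psi(\Gamma) = \Psi(\overline{X}) + \sum_{\hat{0}_Y \le y < \hat{1}_Y} \Psi(\overline{X_{\le y}}) \, b \, \Psi([y,\hat{1}_Y]),
\end{equation}
where for fixed $y$ the ``lower'' factor $\Psi(\overline{X_{\le y}})$ assembles the interval $[\hat{0}_\Gamma,y]$ together with the empty ranks below $y$, the middle $b$ records $y$ itself, and $\Psi([y,\hat{1}_Y])$ is the ``upper'' factor. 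I would verify \eqref{eq:flagcross} by checking that, for fixed $y$, summing $\Psi([\hat{0}_\Gamma,x])\,b\,(a-b)^{\rho_\Gamma(y)-\rho_\Gamma(x)-1}$ over $x \in X_{\le y}$ reproduces the flag expansion of $\Psi(\overline{X_{\le y}})$.

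Next I would rewrite each factor in terms of local $cd$-indices. For a near-Eulerian poset $W$, splitting the flags of $\tilde{\Sigma}W$ according to whether they pass through the adjoined element $\hat{z}$ yields $\Psi(\overline{W}) = \Psi(\tilde{\Sigma}W) - \Psi(\overline{\partial W})\,b$; combined with $\Phi(\tilde{\Sigma}W) = \ell^{\Phi}(W) + \Phi(\overline{\partial W})\,c$ (the definition of $\ell^{\Phi}$) this gives $\Psi(\overline{W}) = \ell^{\Phi}(W) + \Phi(\overline{\partial W})\,a$. Applying this to $W = X$ and to $W = X_{\le y}$ for $\hat{0}_Y < y < \hat{1}_Y$ (using $\partial X_{\le y} = X_{<y}$), and treating $y = \hat{0}_Y$ separately since $\overline{X_{\le \hat{0}_Y}} = [\hat{0}_\Gamma,q]$ is already Eulerian, turns \eqref{eq:flagcross} into an expression built from the $cd$-polynomials $\ell^{\Phi}(X)$, $\ell^{\Phi}(X_{\le y})$, $\Phi(\overline{\partial X})$, $\Phi(\overline{X_{<y}})$, $\Phi(\overline{X_{\le \hat{0}_Y}})$, $\Phi(Y)$ and $\Phi([y,\hat{1}_Y])$, but carrying the non-symmetric letters $a$ and $b$ rather than $c$ and $d$.

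The main obstacle is the final passage from this non-symmetric expression to the symmetric right-hand side, which is where the factor $\tfrac12$ and the $d$-terms are produced. Writing $a = \tfrac12 c + \tfrac12(a-b)$, $b = \tfrac12 c - \tfrac12(a-b)$ and $ab = \tfrac12 d + \tfrac12(ab-ba)$, the difference between the expression coming from \eqref{eq:flagcross} and the claimed formula collapses to the single identity
\[
\Phi(\overline{\partial X})\,(a-b) - \Phi(\overline{X_{\le \hat{0}_Y}})\,(a-b)\,\Phi(Y) - \sum_{\hat{0}_Y < y < \hat{1}_Y}\Big(\ell^{\Phi}(X_{\le y})\,(a-b) - \Phi(\overline{X_{<y}})\,(ab-ba)\Big)\Phi([y,\hat{1}_Y]) = 0,
\]
all of whose factors are $cd$-polynomials. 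I expect this to follow from the Newtonian coalgebra identities of Ehrenborg and Readdy governing how multiplication by $a-b$ interacts with the coproduct $u \mapsto \sum_z \Phi([\hat{0},z]) \otimes \Phi([z,\hat{1}])$ of a $cd$-index, the same structure underlying $D = G + G'$, applied to the Eulerian intervals of $\Gamma$. As a consistency check, when $\sigma = \id_B$ one has $\Gamma = \Pyr(B)$, every $\ell^{\Phi}(X_{\le y})$ vanishes, and the whole computation reduces to $\Phi(\Pyr(B)) = \tfrac12\big[\Phi(B)c + c\Phi(B) + D(\Phi(B))\big]$, the identity above becoming the classical relation $\Phi(B)(a-b) - (a-b)\Phi(B) + \sum_z \Phi([\hat{0}_B,z])(ab-ba)\Phi([z,\hat{1}_B]) = 0$ for an Eulerian poset $B$.
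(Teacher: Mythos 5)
Your first two steps reproduce the first half of the paper's argument: the decomposition of $\Psi(\Gamma)$ obtained by grouping each maximal chain according to its first element lying in $Y$ is exactly the paper's first flag count, and the conversion $\Psi(\overline{W}) = \ell^{\Psi}(W) + \Psi(\overline{\partial W})\,a$ for near-Eulerian $W$ is the paper's equation \eqref{eq:psiB'}. Your reduction of the proposition to the displayed identity
\[
\Phi(\overline{\partial X})(a-b) - \Phi(\overline{X_{\le \hat{0}_Y}})(a-b)\Phi(Y) - \sum_{\hat{0}_Y < y < \hat{1}_Y}\Bigl(\ell^{\Phi}(X_{\le y})(a-b) - \Phi(\overline{X_{<y}})(ab-ba)\Bigr)\Phi([y,\hat{1}_Y]) = 0
\]
is also correct. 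The genuine gap is that you never prove this identity: you only say you ``expect'' it to follow from the Newtonian coalgebra identities of Ehrenborg and Readdy ``applied to the Eulerian intervals of $\Gamma$.'' That route does not go through as stated, because the quantities $\ell^{\Phi}(X_{\le y})$ and $\Phi(\overline{X_{<y}})$ are attached to the near-Eulerian fibers $X_{\le y}$ and to $\overline{X_{<y}}$, neither of which is an interval of $\Gamma$ (the semisuspension $\tilde{\Sigma}X_{\le y}$ adjoins elements not present in $\Gamma$), and the sum runs over elements of $Y$ only, not over all of $\Gamma$. The classical relation you cite in your consistency check is the special case $\sigma = \id_B$ of the identity, not a tool that proves the general case; establishing the general identity is essentially the whole remaining content of the proposition.

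The paper closes exactly this gap by a second, independent flag enumeration: it regroups the maximal chains of $\Gamma$ according to the \emph{last} element $x = z_j$ lying in $X$ rather than the first element lying in $Y$. For $\sigma(x) = \hat{1}_Y$ this contributes $\Psi(X) - \Psi(\partial X)(a-b)$; for $\sigma(x) = y \neq \hat{1}_Y$ the chains in $[y,\hat{1}_Y]$ pair up according to whether they contain $y$, producing factors of $a$ on the right of the lower factor instead of $b$. This yields a second expression for $\Psi(\Gamma)$, and subtracting it from the first is precisely your unproven identity; averaging the two expressions and substituting $c = a+b$, $d = ab+ba$ gives \eqref{eq:cdformula}. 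So your outline identifies the correct reduction, but the key step you defer to coalgebra machinery is the step the paper must (and does) prove directly by a second combinatorial count.
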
  

In the statement above, observe that $X_{ < y} = \partial X_{\le y}$ for $y \neq \hat{0}_Y$, and $\overline{X_{ \le \hat{0}_Y}} = [\hat{0}_\Gamma,q]$. 
The proof involves careful tracking of the chains in $\Gamma$. 
Before giving the proof, we make some remarks and give some examples.

\begin{remark}\label{rem:recursion}
	In Proposition~\ref{prop:cdsubdivision}, observe that all posets appearing in the right hand side of \eqref{eq:cdformula} have rank strictly less than $\rank(\Gamma)$, and contain strictly fewer elements than $\Gamma$. If we write
	$\ell^{\Phi}(X_{\le y}) = \Phi(\tilde{\Sigma} X_{\le y}) - \Phi(\overline{X_{< y}})c$, then $\rank(\overline{X_{< y}}) <  \rank(\Gamma)$,  and $\rank(\tilde{\Sigma} X_{\le y}) \le  \rank(\Gamma)$ with equality if and only if $y = \hat{1}_Y$. 
	Also, $|\overline{X_{< y}}| < |\Gamma|$, and $|\tilde{\Sigma} X_{\le y}| = |X_{\le y}| + 2 \le |\Gamma| = |X| + |Y|$, with equality if and only if $y = \hat{1}_Y$ and $|Y| = 2$. 
	
	We conclude that if we  further assume that $\rho_\Gamma(q,\hat{1}_\Gamma) > 1$ in the statement of Proposition~\ref{prop:cdsubdivision}, then the right hand side of \eqref{eq:cdformula} can be expressed in terms of the $cd$-indices of Eulerian posets of positive rank that contain strictly fewer elements than $\Gamma$. If $\rho_\Gamma(q,\hat{1}_\Gamma) = 1$, then $Y = B_1$ and we are in the situation of Example~\ref{ex:B1}. In that case, 
	\eqref{eq:cdformula} holds by definition, since $\Gamma = \tilde{\Sigma}X$ and the right hand side expands to $\ell^{\Phi}(X) + \frac{1}{2} (\Phi(\overline{\partial X}) c + 
	\Phi(\overline{\partial X}) c \Phi(B_1)) = \Phi(\tilde{\Sigma}X)$. 
\end{remark}

%\begin{remark}\label{rem:cdEKapproach}
%	Consider the setup of  Proposition~\ref{prop:cdsubdivision}.
%	In \cite{EKDecompositionTheoremCDIndex}, Ehrenborg and Karu give an alternative approach for computing $\Phi(\Gamma)$. They use the strong formal subdivision  $\widetilde{\phi}: \Gamma \to [\hat{0}_\Gamma,q] \ast \Pyr(\Gamma_{\ge q})$ from Example~\ref{ex:factor}, together with a decomposition theorem for strong formal subdivisions of rank $0$ (see \cite{EKDecompositionTheoremCDIndex}*{Theorem~2.7} for subdivisions in the sense of Remark~\ref{rem:KEdef}, and see \cite{DKTPosetSubdivisions}*{Theorem~1.1} for the general case). This expresses $\Phi(\Gamma)$ in terms of invariants associated with $\sigma$ together with the invariants $\{ \Phi(\Gamma_{\le y}) : \hat{0}_Y < y < \hat{1}_Y \}$, which can be computed inductively. 
%\end{remark}

\begin{example}\label{ex:Gorensteinstarnonnegative}
	Assume in the statement of Proposition~\ref{prop:cdsubdivision} that $\Gamma$ is a Gorenstein* poset. 
	We claim that all terms in \eqref{eq:cdformula} have nonnegative integer coefficients. 
	
	We now prove the claim.
	The posets $\{ [y,\hat{1}_Y] : y \neq \hat{1}_Y \}$ are intervals of positive rank in a Gorenstein* poset, and hence are Gorenstein*.
	Recall from Section~\ref{ss:posets} that if $B$ is the boundary of a near-Gorenstein poset, then $\overline{B}$ is Gorenstein*. For any $y \in Y$ with $y \neq \hat{0}_B$,
	it follows from %Remark~\ref{rem:altcharnearEulerian} and 
	Lemma~\ref{lem:GorensteinstarsubdivEK}  that 
	$X_{\le y}$ is near-Gorenstein* with boundary $X_{< y}$. Hence the posets
	$\{ \overline{X_{< y}} : y \neq \hat{0}_Y \}$ are Gorenstein*. 
	Lemma~\ref{lem:GorensteinstarsubdivEK}  also implies that  $\overline{X_{ \le \hat{0}_Y}}$ is Gorenstein*. Then  Example~\ref{ex:cdGorenstein*} implies that the coefficients of the following polynomials are nonnegative: $\Phi(\Gamma)$, %$\overline{\partial X}$,
	$\Phi(\overline{X_{ \le \hat{0}_Y}})$, $\{ \Phi(\overline{X_{< y}}) : y \neq \hat{0}_Y \}$, and $\{ \Phi([y,\hat{1}_Y]) : y \neq \hat{1}_Y \}$.
	Moreover, Example~\ref{ex:cdlocalnonnegative} implies that 
	the coefficients of $\{ \ell^{\Phi}(X_{\le y}) :  y \neq \hat{0}_Y \}$ are nonnegative.
\end{example}

\begin{example}\label{ex:identitycdindex}
	Let $B$ be an Eulerian poset of positive rank with rank function $\rho_B$. Recall from  Example~\ref{ex:identity} that the identity map $\id_B: B \to B$ is a strong formal subdivision with non-Hausdorff mapping cylinder $\Gamma = \Cyl(\id_B) =  \Pyr(B)$. 
%	
%	Let $X = Y = B$ with $\rho_X = \rho_Y = \rho_B$. Recall from  Example~\ref{ex:identity} that the identity map $\sigma: X \to Y$  has non-Hausdorff mapping cylinder $\Gamma = \Cyl(\sigma)$. 
	In this case, Proposition~\ref{prop:cdsubdivision} says that
	\begin{equation*}%\label{eq:cdpyramid}
		\Phi(\Pyr(B)) =   \frac{1}{2} \left[ \Phi(B) c +  c \Phi(B) +  \sum_{\hat{0}_B < z < \hat{1}_B}  \Phi([\hat{0}_B,z]) d \Phi([z,\hat{1}_B]) \right] = \frac{1}{2} \left[ \Phi(B) c +  c \Phi(B) +  D(\Phi(B)) \right].
	\end{equation*}
	Hence we recover the formula of Ehrenborg and Readdy from Example~\ref{ex:derivation}. 
	%This is precisely the first equation in
	%\cite{ERCoproductscdindex}*{Theorem~4.4} (see Example~\ref{ex:derivation}). 
\end{example}

\begin{example}\label{ex:cdbipyramid}
	Recall the setup of Example~\ref{ex:bipyramid}. 
	That is, let $B$ be an Eulerian poset of positive rank with the natural rank function $\rho_B$, and
	%	Fix a choice of maximal element $\hat{z} \in \partial B_2$. 
	consider the strong formal subdivision of rank $0$ given by
	\[
	\sigma:  \Pyr(\partial B) \to B,
	\]
	\[
	\sigma(z,z') = \begin{cases}
		z &\textrm{if } z' = \hat{0} \in B_1, \\
		\hat{1}_B &\textrm{if } z' = \hat{1} \in B_1. \\
	\end{cases}
	\]
	Recall that $\Gamma = \Cyl(\sigma) = \Bipyr(B) = B \diamond^\ast B_2 = \overline{\partial B \times \partial B_2}$. Recall from Example~\ref{ex:partialPyr} that $\ell^{\Phi}(\Pyr(\partial B)) = \Phi(\Pyr(B)) -\Phi(B)c = G(\Phi(B))$, where $G$ is the derivation of the ring $\Z\langle c,d \rangle$ determined by $G(c) = d$ and $G(d) = cd$.
	In this case, using Example~\ref{ex:derivation},  Proposition~\ref{prop:cdsubdivision} says that
	\begin{align*}
		\Phi(\Bipyr(B)) &= \Phi(\Pyr(B)) - \Phi(B)c + \frac{1}{2} \left[ \Phi(B) c + %\Phi_{\overline{f^{-1}(\hat{0}_Y)}} 
		c \Phi(B) 
		+  \sum_{\hat{0}_B < z < \hat{1}_B}  \Phi([\hat{0}_B,z]) d \Phi([z,\hat{1}_B])
		\right] \\
		&= 2\Phi(\Pyr(B)) - \Phi(B)c \\ 
		%	&= 2 G(\Phi(B)) + \Phi(B)c \\
		%	&= c \Phi(B) +  \sum_{\hat{0}_B < z < \hat{1}_B}  \Phi([\hat{0}_B,z]) d \Phi([z,\hat{1}_B]). \\
		&= c \Phi(B) +  D(\Phi(B)). \\
	\end{align*}	
	Hence we recover the formula of Ehrenborg and Readdy for $\Phi(\Bipyr(B))$, and hence the formula for $\Phi(\Prism(B))$, from Example~\ref{ex:ERPrismPyr}.  
	
	%	The last equation is precisely  \cite{ERCoproductscdindex}*{Corollary~4.7}.
	%	Using Example~\ref{ex:cddual}, we have 
	%\begin{align*}
	%	\Phi(\Prism(B)) &=  \Phi(\Bipyr(B^*)^*) \\
	%	&=  2\Phi(\Pyr(B)) - c\Phi(B) \\
	%	&=  \Phi(B)c +  \sum_{\hat{0}_B < x < \hat{1}_B}  \Phi([\hat{0}_B,x]) d \Phi([x,\hat{1}_B]). \\
	%\end{align*}
	%This is precisely the second equation in
	%\cite{ERCoproductscdindex}*{Proposition~4.2}.
	%\alan{TODO: need to read the rest of that paper; how to go from these equations to derivations??}
	
\end{example}

		\begin{example}%\label{ex:starproductidentityapply}
		Let $B$ be an Eulerian poset of positive rank with rank function $\rho_B$.  Suppose that $(\Gamma,\rho_\Gamma,q) \in \JoinIdealLW^\circ$ corresponds to a strong formal subdivision $\sigma: X \to Y$. Assume that $\Gamma$ is Eulerian and $q \neq \hat{1}_\Gamma$. Recall from Example~\ref{ex:starproductidentity} that 
			$(B \ast \Gamma, \rho_{B \ast \Gamma}, q)  \in \JoinIdealLW^\circ$ with corresponding strong formal subdivision
		\[
		B \ast X \to Y,
		\]
		\[
		z \mapsto \begin{cases}
			q &\textrm{ if } z \in \partial B, \\
			\sigma(z) &\textrm{ if } z \in X \smallsetminus \{ \hat{0}_X \}.
		\end{cases}
		\]
		Consider the effect of replacing $\sigma$ by the above strong formal subdivision on the  formula \eqref{eq:cdformula}. 
		This leaves the intervals $\{ [y,\hat{1}_Y] : y \in \partial Y \}$ unchanged, and 
		replaces all other posets $Z$ by $B \ast Z$, i.e., $Z \in \{ \Gamma,  \overline{X_{ \le \hat{0}_Y}},   \{ X_{\le y} : \hat{0}_Y \neq y \in Y \}, \{ \overline{X_{< y}} : \hat{0}_Y \neq y \in Y \}  \}$. 
		By Example~\ref{ex:cdstarproduct}, the effect is to  multiply all terms in the formula \eqref{eq:cdformula} by $\Phi(B)$ on the left. 
\end{example}

We end this section with the proof of Proposition~\ref{prop:cdsubdivision}. %The proof is similar in spirit to  the proof of \cite{ERCoproductscdindex}*{Proposition~4.2}, which also involves a careful calculation of the relevant chains. 

\begin{proof}[Proof of Proposition~\ref{prop:cdsubdivision}]
	We first consider some preliminaries.
	Let $B$ be an Eulerian poset of rank $n + 1$ and let $\rho_B$ be the natural rank function. Recall that given a subset  $S \subset [n]$, $w_S = w_1 \cdots w_n$, where 
	$w_i = a - b$ if $i \notin S$, and  $w_i = b$ if $i \in S$, for $1 \le i \le n$.  Recall that $f_S$ is the number of maximal chains of the $S$-rank selected poset $B_S = \{ z \in B : \rho_B(z) \in S \} \cup \{ \hat{0}_B, \hat{1}_B \}$, and $\Psi(B) = \sum_{S \subset [n]} f_S w_S$.
	
	Suppose that $B'$ is a near-Eulerian poset with semisuspension $B$, i.e., 
	$B = B' \cup \{ \hat{z}, \hat{1}_B \}$ for some maximal element $\hat{z} \in \partial B$, $\tilde{\Sigma} B' = B$, and $\overline{\partial B'} = [\hat{0}_B,\hat{z}]$. Let $f_S'$ be the number of maximal chains of $B_S$ such that all elements of the chain lie in $B' \cup \{ \hat{1}_B \}$, and let 
	$\Psi(B') = \sum_{S \subset [n]} f_S' w_S$. It follows that 
	\[
	\Psi(B') = \Psi(\tilde{\Sigma} B') - \Psi(\overline{\partial B'})b.
	\]
	Also, define $\ell^{\Psi}(B') = \ell^{\Phi}(B';a + b, ab + ba)$, or, equivalently, 
	$\ell^{\Psi}(B') = \Psi(\tilde{\Sigma} B') -\Psi(\overline{\partial B'})(a + b)$.
	Then 
	\begin{equation}\label{eq:psiB'}
		\Psi(B') = \ell^{\Psi}(B') + \Psi(\overline{\partial B'})a.
	\end{equation}

	% and  $\Phi(\Gamma;a + b,ab + ba) = \Psi(\Gamma;a,b)$. 
	
	We now proceed to the proof.
	Our goal is to compute $\Psi(\Gamma)$ in two different ways, which we will then average to produce the result. 
	Recall that $X_{\le y}$ is near-Eulerian with boundary $X_{< y}$ and $\rank(X_{\le y}) = \rank(X_{< y}) + 1 = \rho_\Gamma(\hat{0}_\Gamma,y) - 1$, for all $y \in Y$ with $y \neq \hat{0}_Y$. Also, $X_{\le \hat{0}_Y}$ has rank equal to $\rank(\sigma)$
	%$r = \rank(\sigma)$
	 and is the boundary of an Eulerian poset $\overline{X_{\le \hat{0}_Y}}$. 
	%Recall that $X$ is near-Eulerian with $\rank(\Gamma) = \rank(X) + 1$. 
	Consider a chain $z = \{\hat{0}_\Gamma = z_0 < z_1 < \cdots < z_s < z_{s + 1} = \hat{1}_\Gamma  \}$ in $\Gamma$ for some $s \ge 0$. 	We will examine the contribution of $z$ to $\Psi(\Gamma)$. 
	Let $0 \le j \le s$ be the largest index such that $z_j \in X$. 
	%Putting this together, we deduce that

	On the one hand, one may index chains $z$ according to the element $y = z_{j + 1} \in Y$. 
	From this perspective,
	$z$ is a 
	concatenation 
	of a chain with initial term 
	$\hat{0}_X$ in $X_{\le y} = \{ x \in X : \sigma(x) \le y \}$,  and a chain in $[y,\hat{1}_Y]$ with initial term $y$ and final term $\hat{1}_Y$, for
	some $y \in Y$. Conversely, a concatenation of such chains gives a chain $z$ as above. Considering the cases when $y = \hat{1}_Y$, $y = \hat{0}_Y$, and $\hat{0}_Y < y < \hat{1}_Y$ separately, and then using \eqref{eq:psiB'},	we deduce that 
	\begin{align*}
		\Psi(\Gamma) &= \Psi(X) + \Psi(\overline{X_{\le \hat{0}_Y}})b\Psi(Y) +
		\sum_{\hat{0}_Y < y < \hat{1}_Y} \Psi(X_{\le y})b\Psi([y,\hat{1}_Y]) \\
		&= \ell^{\Psi}(X) + \Psi(\overline{\partial X})a +  \Psi(\overline{X_{\le \hat{0}_Y}})b\Psi(Y) +
		\sum_{\hat{0}_Y < y < \hat{1}_Y} (\ell^{\Psi}(X_{\le y}) + \Psi(\overline{X_{< y}})a)b\Psi([y,\hat{1}_Y]).
	\end{align*}
	
	%	 \begin{equation}\label{eq:psi1}
		%	 	\Psi(\Gamma) = \Psi(X) + \Psi(\overline{X_{\le \hat{0}_Y}})b\Psi(Y) +
		%	 	\sum_{\hat{0}_Y < y < \hat{1}_Y} \Psi(X_{\le y})b\Psi([y,\hat{1}_Y]). 
		%	 \end{equation}
	
	On the other hand, one may index chains $z$ according to the element $x = z_{j} \in X$.
	From this perspective,
	$z$ is a 
	concatenation 
	of a chain  with initial term 
	$\hat{0}_X$ and final term $x$ in $X$, and a chain in $[\sigma(x),\hat{1}_Y]$ with final term $\hat{1}_Y$, for
	some $x \in X$.  Conversely, a concatenation of such chains gives a chain $z$ as above.

	Let $y = \sigma(x)$. We sum the contributions of the chains $z$ for different choices of $y$. 
	Suppose that 	$y  = \hat{1}_Y$, or, equivalently, $x \in X \smallsetminus \partial X$.  These chains give a contribution of
	$\Psi(X) - \Psi(\partial X)(a - b)$ to $\Psi(\Gamma)$.
	Suppose that 	$y  \neq \hat{1}_Y$. 
	Then chains in $[y,\hat{1}_Y]$ with final term $\hat{1}_Y$ come in pairs, where two chains in a pair differ depending on whether they contain $y$ or not. If 
	$y = \hat{0}_Y$, we get a contribution to $\Psi(\Gamma)$ of  $\Psi(\overline{X_{\le \hat{0}_Y}})a \Psi(Y)$. If $y \neq \hat{0}_Y$, we
	get a contribution to $\Psi(\Gamma)$ of 
	$(\Psi(X_{\le y}) - \Psi(\overline{X_{< y}})(a - b))a\Psi([y,\hat{1}_Y])$.  Putting this together and then using \eqref{eq:psiB'} gives
	\begin{align*}
		\Psi(\Gamma) &= \Psi(X) - \Psi(\partial X)(a - b) +
		\Psi(\overline{X_{\le \hat{0}_Y}})a \Psi(Y) + 	\sum_{\hat{0}_Y < y < \hat{1}_Y}  (\Psi(X_{\le y}) - \Psi(\overline{X_{< y}})(a - b))a\Psi([y,\hat{1}_Y])
		\\
		&= \ell^{\Psi}(X) + \Psi(\overline{\partial X})b +  \Psi(\overline{X_{\le \hat{0}_Y}})a \Psi(Y) + 	\sum_{\hat{0}_Y < y < \hat{1}_Y}  (\ell^{\Psi}(X_{\le y}) + \Psi(\overline{X_{< y}})b)a\Psi([y,\hat{1}_Y])
	\end{align*}
	
	%	      \begin{equation}\label{eq:psi2}
		%	      	\Psi(\Gamma) = \Psi(X) - \Psi(\partial X)(a - b) +
		%	      	\Psi(\overline{X_{\le \hat{0}_Y}})a \Psi(Y) + 	\sum_{\hat{0}_Y < y < \hat{1}_Y}  (\Psi(X_{\le y}) - \Psi(\overline{X_{< y}})(a - b))a\Psi([y,\hat{1}_Y]). 
		%	      \end{equation}
	
	Averaging our two expressions for $\Psi(\Gamma)$, and substituting $c = a + b$ and $d = ab + ba$, gives 
	\[
	\Phi(\Gamma) = \ell^{\Phi}(X) +  \frac{1}{2} \left[ 
	\Phi(\overline{\partial X})c + 
	\Phi(\overline{X_{\le \hat{0}_Y}})c \Phi(Y) + 
	\sum_{\hat{0}_Y < y < \hat{1}_Y}  (\ell^{\Phi}(X_{\le y})c + \Phi(\overline{X_{< y}})d)\Phi([y,\hat{1}_Y])
	\right].
	\]
	This is precisely \eqref{eq:cdformula}. 	      
\end{proof}

\section{Subdivisions of $CW$-posets}\label{sec:CWsubdivisions}

The prototypical example of a lower Eulerian poset coming from topology is 
a $CW$-poset, which is the face poset of a regular $CW$-complex. In this section, we develop analogues of our main results for $CW$-posets.  
In what follows, all regular $CW$-complexes are finite. 
 % If $Z$ and $Z'$ are homeomorphic topological spaces, then we write $Z \cong Z'$. 

\subsection{Background on regular $CW$-complexes and $CW$-posets}\label{ss:regularCW}

We first recall some basic facts about topology and regular $CW$-complexes. We refer the reader to \cite{HatcherAlgebraicTopology} and \cite{StanleySurveyEulerian} for details. 
For a nonnegative integer $n$, let $D^n$ denote the unit ball of dimension $n$, with boundary $\partial D^n$ an $(n - 1)$-dimensional sphere $S^{n - 1}$.
When $n = 0$, $S^{n - 1}$ is the empty set.
A \emph{closed $n$-cell}
is a topological space $D$ that is homeomorphic to $D^n$. A closed $n$-cell $D$ has a well-defined boundary $\partial D$, equal to the image of $S^{n - 1}$ under any homeomorphism of $D$ with $D^n$. We let $\Int D = D \smallsetminus \partial D$ denote the interior of $D$. An \emph{open $n$-cell} is a topological space homeomorphic to $\Int D^n$. 
%For example, when $m = 0$, a closed $m$-cell is a point with empty boundary.
If $Z$ and $Z'$ are topological spaces, then we write $Z \cong Z'$ if $Z$ and $Z'$ are homeomorphic.  
We will need the following remark. 

\begin{remark}\label{rem:Alexander}
	By Alexander's trick \cite{AlexanderDeformation}, if $D$ and $D'$ are closed $n$-cells, then any homeomorphism from $\partial D$ to $\partial D'$ extends to a homeomorphism from $D$ to $D'$.
\end{remark}

We now recall the definition of a regular $CW$-complex. 

\begin{definition}\label{def:regularCWcomplex}
	A  (finite) nonempty \emph{regular $CW$-complex} $\K$ is a Hausdorff topological space $|\K|$ together with a finite collection of 
	open cells $\{ e_\alpha \}_\alpha$ whose union is $|\K|$ such that 
	the closure $\overline{e}_\alpha$ of $e_\alpha$ in $|\K|$ is a closed cell and the boundary $\partial \overline{e}_\alpha$ is a union of some subcollection of the open cells $\{ e_\alpha \}_\alpha$, necessarily of dimension strictly less than the dimension of $e_\alpha$. 
	The dimension $\dim \K$ of $\K$ is the maximum dimension of a cell. 
	We say that $\K$ is a \emph{regular $CW$-sphere} or \emph{regular $CW$-ball} if $|\K|$ is homeomorphic to $S^{\dim \K}$ or $D^{\dim \K}$ respectively.
\end{definition}

	In what follows, it will be convenient to consider the \emph{empty cell} of $\K$ to be 
a closed cell of dimension $-1$ which is contained in every closed cell.
It  will also be convenient to  consider the empty set to be a regular $CW$-sphere of dimension $-1$ consisting of the empty cell. 
%If $\K$ is a regular $CW$-complex and $U$ is a topological space, then 
%a function $f: |\K| \to U$ from $|\K|$ %to a topological space $U$ 
%is continuous if and only if its restriction to each closed cell is continuous.

Let $\K$ be a regular $CW$-complex. A function $f: |\K| \to U$ from $|\K|$ to a topological space $U$ 
is continuous if and only if its restriction to each closed cell is continuous.
The \emph{face poset} $\face(\K)$ of $\K$ is the set of closed cells of $\K$, including the empty cell, with partial order given by inclusion. 
%Here we include the empty cell as a closed cell of dimension $-1$ which is contained in every closed cell. 
The poset $\face(\K)$ is lower Eulerian with natural rank function $\rho_{\face(\K)}(e_\alpha) = \dim e_\alpha + 1$. %\cite{StanleySurveyEulerian}. 
There is a bijection between nonempty lower order ideals $I$ of $\face(\K)$ % not equal to the minimal element $\hat{0}$, 
and subcomplexes $\K_I = \{ e_\alpha \}_{\alpha \in I}$ of $\K$, where 
$I = \{ \hat{0}_{\face(\K)} \}$ corresponds to the empty subcomplex. The topology of $|\K|$ is determined by $\face(\K)$; the order complex $\O(\face(\K) \smallsetminus \hat{0}_{\face(\K)})$ is the barycentric subdivision of $\K$ and hence $|\K| \cong |\O(\face(\K) \smallsetminus \hat{0}_{\face(\K)})|$. We recall the definition of a $CW$-poset.

\begin{definition}\label{def:CWposet}
	A poset of the form $\face(\K)$ for some  regular $CW$-complex $\K$ is called a \emph{$CW$-poset}. 
\end{definition}

Note that with our conventions, $B_0$ is a $CW$-poset, equal to the face poset of the empty complex. 	See \cite{BjornerPosets} for an alternative characterization of $CW$-posets.

The reduced Euler characteristic of $|\K|$ is the number of even-dimensional cells in $\K$ minus the number of odd-dimensional cells in $\K$, including the empty cell. 
Suppose that $\mathcal{A}$ is a (possibly empty) subcomplex of $\K$ and $\phi: S^{n - 1} \to |\mathcal{A}|$ is a homeomorphism. Then we can form a new regular $CW$-complex $\K'$ by \emph{attaching an $n$-cell} via $\phi$. That is, 
$|\K'|$ is the disjoint union of $D^n$ and $|\K|$ modulo the relation $\{  z \sim \phi(z) : z \in S^{n - 1} \}$ and open cells of $\K'$ are the images of the  open cells of $\K$ together with the image of the interior of $D^n$.

\begin{example}\label{ex:CWpolytope}
	Let $P$ be a polytope. Then we may view $P$ as a regular $CW$-ball with closed cells equal to the faces of $P$. The boundary $\partial P$ of $P$ is a regular $CW$-sphere. More generally, we may view a polyhedral subdivision $\cS$ of $P$ as a a regular $CW$-ball with closed cells equal to the elements  of $\cS$.
\end{example}

\begin{example}\label{ex:CWfan}
	Let $\Sigma$ be a fan in a real vector space $V$. Let $S \subset V$ be a sphere centered at the origin. Let $\Sigma \cap S$ denote the regular $CW$-complex with closed cells $\{ C \cap S : C \in \Sigma \}$. 
	Then $\face(\Sigma \cap S) = \face(\Sigma)$. 
	Suppose that $|\Sigma|$ is a full-dimensional (convex) cone in $V$ with boundary $\partial |\Sigma|$. We may consider the subfan $\partial \Sigma = \{ C \in \Sigma : C \subset \partial |\Sigma| \}$. 
	Then $\Sigma \cap S$ has dimension $\dim V - 1$. Moreover,  
	$\Sigma \cap S$ is a regular $CW$-sphere if $|\Sigma| = V$, and 
	$\Sigma \cap S$ is a regular $CW$-ball with boundary 	$\partial \Sigma \cap S$ otherwise.
\end{example}

\begin{example}\label{ex:CWEulerian}
	Let $B$ be a $CW$-poset. %Let $k$ be a field. 
	Assume that $B$ is Eulerian of positive rank. Let $\K$ be a regular $CW$-complex such that $\face(\K) = B$. 
	Since $|\K|$ is the closed cell corresponding to the unique maximal element $\hat{1}_B$ in $B$, we deduce that $\K$ is a regular $CW$-ball. Since the boundary of $|\K|$ is a sphere, we deduce that $B$ is Gorenstein*  (see Section~\ref{ss:posets}). 
	Conversely, if $\K$ is a nonempty regular $CW$-complex with a unique  maximal closed cell, then $\face(\K)$ is Gorenstein*  (and hence Eulerian of positive rank). 
	%	 with boundary $\partial \K$ satisfying $\face(\partial \K) = \partial B$. 
	For example, the face lattice of a polytope is Gorenstein*  (see Example~\ref{ex:CWpolytope}). 
\end{example}

\begin{example}\label{ex:CWsphere}
	Suppose that $\K$ is a regular $CW$-sphere	of dimension $n - 1$. 
	After choosing a homeomorphism $\phi: S^{n - 1} \to |\K|$, we may attach an $n$-cell via $\phi$ to obtain a regular $CW$-ball $\K'$ of dimension $n$ with a unique maximal closed cell. 
	By Example~\ref{ex:CWEulerian},  $\face(\K')$ is Gorenstein*, and 
	we deduce that 
	$\face(\K)$ is the boundary of a Gorenstein* poset  (and hence the boundary of an Eulerian poset of positive rank).  For example, the face poset of a complete fan is the boundary of a Gorenstein* poset  (see Example~\ref{ex:CWfan}). 
\end{example}

\begin{example}\label{ex:CWball}
	Suppose that $\K$ is a regular $CW$-ball of dimension $n$. 	
	%Let $k$ be a field. 
	The boundary subcomplex $\partial \K$ is the union of all closed cells  of dimension $n - 1$ which are contained in a unique cell of dimension $n$. 
	Then $\partial \K$ is a regular $CW$-sphere of dimension $n - 1$ and $|\partial \K| = \partial |\K|$. After choosing a homeomorphism $\phi: S^{n - 1} \to |\partial \K|$, we may attach an $n$-cell via $\phi$ to obtain a regular $CW$-sphere $\K'$. By Example~\ref{ex:CWsphere}, we may then attach an $(n + 1)$-dimensional cell to obtain a regular $CW$-ball $\K''$ such that $\face(\K'')$ is Gorenstein*. 
	%Eulerian. 
	We deduce that $\face(\K)$ is near-Gorenstein*  (and hence near-Eulerian) with semisuspension  $\tilde{\Sigma} \face(\K) = \face(\K'')$ and boundary $\partial ( \face(\K)) = \face(\partial K)$. 
	For example, the face poset of a polyhedral subdivision of a polytope is near-Gorenstein*  (see Example~\ref{ex:CWpolytope}). 

\end{example}

Let $\K$ and $\K'$ be  nonempty regular $CW$-complexes with open cells $\{ e_\alpha \}_{\alpha \in \face(\K)}$ and $\{ e_{\alpha'} \}_{\alpha' \in \face(\K')}$ respectively. 
%If $\K$ and $\K'$ are nonempty, 
Recall that the topological join $|\K| \star |\K'|$ is the 
quotient of $|\K| \times |\K'| \times [0,1]$ by the relations $(z,z_1',0) \sim (z,z_2',0)$ and   $(z_1,z',1) \sim (z_2,z',1)$ for all $z,z_1,z_2 \in |\K|$ and $z',z_1',z_2' \in |\K'|$. 
%If $\K'$ is empty, then let $|\K| \star |\K'| = |\K|$.  Similarly, if $\K$ is empty, then let $|\K| \star |\K'| = |\K'|$. 
Given nonempty cells $e_\alpha$ and 
$e_{\alpha'}$ in $\K$ and $\K'$ respectively, let $e_{\alpha, \alpha'}$ be the image of $e_\alpha \times e_{\alpha'} \times [0,1]$ in $|\K| \star |\K'|$. If $e_\alpha$ is nonempty and $e_{\alpha'}$ is the empty cell, then let $e_{\alpha, \alpha'}$ be the image of $e_\alpha$ in $|\K| \times |\K'| \times \{ 0 \}$. If $e_\alpha$ is the empty cell and $e_{\alpha'}$ is nonempty, then let $e_{\alpha, \alpha'}$ be the image of $e_{\alpha'}$ in $|\K| \times |\K'| \times \{ 1 \}$. If  $e_\alpha$ and $e_{\alpha'}$ are the empty cells, then let $e_{\alpha, \alpha'}$ be the empty cell. 
Then the \emph{join} $\K \star \K'$ is the regular $CW$-complex with open cells $\{ e_{\alpha, \alpha'} : (\alpha, \alpha') \in \face(\K) \times \face(\K') \}$, $|\K \star \K'| = |\K| \star |\K'|$, and $\face(\K \star \K') = \face(\K) \times \face(\K')$. 
We may extend the definition when $\K'$ or $\K$ is empty. Explicitly, 
if $\K'$ is empty, then define $e_{\alpha, \alpha'} = e_\alpha$ so that $\K \star \K'$ is identified with $\K$. Similarly, if  $\K$ is empty, then define $e_{\alpha, \alpha'} = e_{\alpha'}$ so that $\K \star \K'$ is identified with $\K'$.
%
%When $\K$ or $\K'$ are empty, we identify $\K \star \K'$ with $\K'$ or $\K$ respectively. 
Given subcomplexes $\mathcal{A}$ and $\mathcal{A}'$ of $\K$ and $\K'$ respectively, we may view $\mathcal{A} \star \mathcal{A}'$ as a subcomplex of $\K \star \K'$. In particular, 
%when $\K$ and $\K'$ are nonempty, 
we may view $\K$ and $\K'$ as the subcomplexes $\K \star \emptyset = \{ e_{\alpha, \hat{0}_{\face(\K')}} : \alpha \in \face(\K) \}$ and   $\emptyset \star \K' = \{ e_{\hat{0}_{\face(\K)},\alpha'} : \alpha' \in \face(\K') \}$ of $\K \star \K'$ respectively. 
We write $e_\alpha \star |\K'|$ to denote the union of $\{ e_{ \alpha, \alpha' } : \alpha' \in \K' \}$.

\begin{example}\label{ex:joinpoints}
	If $\K$ is a regular $CW$-complex, then the join of $\K$ with a point has face poset $\Pyr(\face(\K))$. The successive join of $n + 1$ points is the $n$-dimensional simplex $\Delta^n$,  a regular $CW$-ball with $\face(\Delta^n) = B_{n + 1}$.
	% Then $\Delta^n$ is a regular $CW$-ball with boundary $\partial \Delta^n$
	
	%	We have a well-defined boundary complex $\partial \Delta^n$ with $|\partial \Delta^n|$ homeomorphic to a sphere of dimension $n - 1$. When $n = 0$, we regard $\partial \Delta^n$ as the empty complex. 
\end{example}

\begin{example}\label{ex:joinballs}
	With the notation of Example~\ref{ex:joinpoints}, consider simplices $\Delta^n$ and $\Delta^{n'}$. 
	Since the topological join operation is associative, 
	$|\Delta^n \star \Delta^{n'}|$ is homeomorphic to $|\Delta^{n + n' + 1}|$. Moreover, the $(n + n')$-dimensional sphere $|\partial  \Delta^{n + n' + 1}|$ is homeomorphic to the union of $(n + n')$-dimensional balls $|\partial \Delta^n \star \Delta^{n'}|$ and $|\Delta^n \star \partial \Delta^{n'}|$ which intersect along an $(n + n' - 1)$-dimensional sphere $|\partial \Delta^n \star  \partial \Delta^{n'}|$. 
	
	If $\K$ is a regular $CW$-ball of dimension $n$, then there is a homeomorphism from  $|\K|$ to  $\Delta^n$ that takes $|\partial K|$ to $|\partial \Delta^n|$. 
	Then $\K \star \Delta^{n'}$ is a regular $CW$-ball of dimension $n + n' + 1$, with boundary the union of the regular $CW$-balls $\partial \K \star \Delta^{n'}$ and $\K \star \partial \Delta^{n'}$ of dimension  $n + n'$, which have common boundary $\partial K \star \partial \Delta^{n'}$. 
\end{example}

\subsection{Strong $CW$-regular subdivisions}

In this section, we define the notion of a strong $CW$-regular subdivision of $CW$-posets and state Theorem~\ref{thm:mainsimplifiedCW} and Theorem~\ref{thm:mainCW} that are analogous to Theorem~\ref{thm:mainsimplified} and Theorem~\ref{thm:main} respectively.

We will use the following notation throughout. 
Let $\sigma: X \to Y$ be an order-preserving, surjective function between $CW$-posets. Let $\K_X$ be a regular $CW$-complex with face poset $X$. 
For any $y \in Y$, recall that 
 $X_{\le y} =  \sigma^{-1}([\hat{0}_Y,y])$ and 
$X_{< y} = \sigma^{-1}([\hat{0}_Y,y))$. We write $\K_{X,\le y}$ and $\K_{X,< y}$ for the  subcomplexes of $\K_X$ corresponding to the lower order ideals $X_{\le y}$ and $X_{< y}$ respectively.

When $\rank(X) = \rank(Y)$, the following definition 
coincides with the notion of a $CW$-regular subdivision in \cite{Stanley92}.

\begin{definition}\label{def:CWsubdivision}
	Let  $\sigma: X \to Y$ be an order-preserving, rank-increasing, surjective function between $CW$-posets $X$ and $Y$  with rank functions $\rho_X$ and $\rho_Y$ respectively. 
	%Let $r = \rho_Y(\hat{0}_Y) - \rho_X(\hat{0}_X)$. 
	% = \rho_Y(\sigma(\hat{0}_X)) - \rho_X(\hat{0}_X) \ge 0$. 
	Let 	$\K_X$  be a %nonempty  
	regular $CW$-poset with  face poset $X$. 
	Then $\sigma$ is a  \emph{strong $CW$-regular subdivision} 
	%of rank $r$ 
	if for any $y$ in $Y$, 
	$\dim \K_{X,\le y} = \rho_Y(y) - \rho_X(\hat{0}_X) - 1$, and
	%		$\dim \K_{X,\le y} = \rho_Y(\hat{0},y) + r - 1$, and
	either
	\begin{enumerate}
		
		%	\item $y = \hat{0}_Y$ and $r = 0$, or,
		\item $y = \hat{0}_Y$ and $\K_{X,\le y}$ is a regular $CW$-sphere, or, 
		%homeomorphic to an $(r - 1)$-dimensional sphere, or,
		\item $y \neq \hat{0}_Y$ and $\K_{X,\le y}$ is a regular $CW$-ball with boundary $\K_{X,< y}$. 
		%		
		%		homeomorphic to an $(\rho_Y[r](y) - 1)$-dimensional ball with boundary $|\K_{X,< y}|$.	
	\end{enumerate}
\end{definition}

In Definition~\ref{def:CWsubdivision}, since the topology of $|\K_{X,\le y}|$ and $|\K_{X,< y}|$ only depends on the posets 
$X_{\le y}$ and  $X_{< y}$ respectively, the definition is independent of the choice of $\K_X$.

\begin{remark}\label{rem:CWregularisformal}
	Let  $\sigma: X \to Y$ be a strong $CW$-regular subdivision between $CW$-posets $X$ and $Y$  with rank functions $\rho_X$ and $\rho_Y$ respectively. 
	%Let $r = \rho_Y(\hat{0}_Y) - \rho_X(\hat{0}_X)$. 
	%	
	%	If $\sigma: X \to Y$ is a strong $CW$-regular subdivision of $CW$-posets, then 
	It follows from Definition~\ref{def:CWsubdivision},  Example~\ref{ex:CWsphere}, and Example~\ref{ex:CWball} that 
	for all $y$ in $Y$, $X_{\le y}$ is a lower Gorenstein* poset of rank $\rho_Y(y) - \rho_X(\hat{0}_X)$, and either 
	\begin{enumerate}
		\item $y = \hat{0}_Y$ and $X_{\le y}$ is the boundary of
		a Gorenstein* poset, or,
		% an Eulerian poset of positive rank, or,
		\item\label{i:nearGorenstein} $y \neq \hat{0}_Y$ and $X_{\le y}$ is near-Gorenstein* with boundary $X_{< y}$.
	\end{enumerate}
	%	That is, $\sigma$ is a Gorenstein* subdivision in the sense of Definition~\ref{def:Gorensteinstarsubdiv}. In particular, 
	%	$\sigma$ is a strong formal subdivision. 
	Then $\sigma$ is a strong formal subdivision  by Remark~\ref{rem:altcharnearEulerian}. 
	
\end{remark}

We define the rank of a strong $CW$-regular subdivision to be its rank as a strong formal subdivision, i.e., with the notation of Definition~\ref{def:CWsubdivision},  $\rank(\sigma) = \rank(X) - \rank(Y) = \rho_Y(\hat{0}_Y) - \rho_X(\hat{0}_X)$.

\begin{remark}\label{rem:restrictsfsCW}
	By Remark~\ref{rem:restrictsfs} and Definition~\ref{def:CWsubdivision},
	if $\sigma: X \to Y$ is a strong $CW$-regular subdivision %of rank $r$ 
	and $I$ is a  nonempty  lower order ideal of $Y$, then $\sigma$ restricts to a strong $CW$-regular subdivision $\sigma^{-1}(I) \to I$.
	% of rank $r$. 
	%	For any $x \in X$, $\sigma$ restricts to a strong $CW$-regular subdivision $X_{\ge x} \to Y_{\ge \sigma(x)}$. 
	%	% of rank $\rho_Y(\sigma(x)) - \rho_X(x)$. 
\end{remark}

\begin{remark}\label{rem:CWalt}
	Using Example~\ref{ex:joinballs}, we have the following alternative definition of a strong  $CW$-regular subdivision. 	Let  $\sigma: X \to Y$ be an order-preserving, rank-increasing, surjective function between $CW$-posets $X$ and $Y$  with rank functions $\rho_X$ and $\rho_Y$ respectively. 
	Let 	$\K_X$  be a %nonempty  
	regular $CW$-complex with  face poset $X$, and let $r = \rho_Y(\hat{0}_Y) - \rho_X(\hat{0}_X)$.	Then $\sigma$ is a  \emph{strong $CW$-regular subdivision} 
	%of rank $r$ 
	if for any $y \in Y$, there exists a homeomorphism $\psi: |\K_{X,\le y}| \to \overline{e_y} \star |\partial \Delta^r|$ that restricts to a homeomorphism 
	$|\K_{X,< y}| \to \partial \overline{e_y} \star |\partial \Delta^r|$ when $y \neq \hat{0}_Y$. 
\end{remark}

The following lemma will be proved in Section~\ref{ss:CWproofs} (c.f. Lemma~\ref{lem:composesfs}). 

\begin{lemma}\label{lem:composeCW}
	Let  $\sigma : X \to Y$ and $\tau: Y \to Z$ be strong formal subdivisions between $CW$-posets $X$, $Y$, $Z$ with   rank functions $\rho_X$, $\rho_Y$, $\rho_Z$  respectively (and hence 	$\tau \circ \sigma$ is a strong formal subdivision by Lemma~\ref{lem:composesfs}). Assume that $\sigma$ is a strong $CW$-regular subdivision. If 
	 $\tau$ is a 
	strong $CW$-regular subdivision,  then $\tau \circ \sigma$ is a strong $CW$-regular subdivision.  Conversely, assume that 
	$\tau \circ \sigma$ is a strong $CW$-regular subdivision. Additionally assume that $\sigma$ has rank $0$. Then $\tau$ is a strong $CW$-regular subdivision. 
%	Let  $\sigma : X \to Y$ and $\tau: Y \to Z$ be strong $CW$-regular subdivisions between $CW$-posets $X$, $Y$, $Z$ with   rank functions $\rho_X$, $\rho_Y$, $\rho_Z$  respectively. 
%	Then   $\tau \circ \sigma$ is a strong $CW$-regular subdivision.  
\end{lemma}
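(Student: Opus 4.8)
The plan is to reduce everything to the topological conditions in Definition~\ref{def:CWsubdivision}: by hypothesis, together with Lemma~\ref{lem:composesfs}, the maps $\sigma$, $\tau$ and $\tau\circ\sigma$ are all strong formal subdivisions, hence order-preserving, rank-increasing and surjective, so only the sphere/ball structure of the relevant subcomplexes needs checking. Fix regular $CW$-complexes $\K_X$, $\K_Y$, $\K_Z$ realizing $X$, $Y$, $Z$. For $z \in Z$ write $Y_{\le z} = \tau^{-1}(Z_{\le z})$ and $Y_{<z} = \tau^{-1}(Z_{<z})$; then $(\tau\circ\sigma)^{-1}(Z_{\le z}) = \sigma^{-1}(Y_{\le z})$, so the subcomplex of $\K_X$ to analyze is $\K_{\sigma^{-1}(Y_{\le z})}$. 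By Remark~\ref{rem:restrictsfsCW}, $\sigma$ restricts to a strong $CW$-regular subdivision $\sigma^{-1}(Y_{\le z}) \to Y_{\le z}$, and $\tau$ restricts to one $Y_{\le z} \to Z_{\le z}$.

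The key input is the following global join description, which I would establish as a preliminary claim: if $\sigma'\colon X' \to Y'$ is a strong $CW$-regular subdivision of rank $r$, then there is a homeomorphism $|\K_{X'}| \cong |\K_{Y'}| \star |\partial\Delta^r|$ carrying $|\K_{X',\le y'}|$ onto $\overline{e_{y'}} \star |\partial\Delta^r|$ for every $y' \in Y'$, and hence carrying the subcomplex of any lower order ideal onto its analogue joined with $|\partial\Delta^r|$. This is exactly the cell-wise data of Remark~\ref{rem:CWalt} assembled into one map, and I would prove it by induction on $|Y'|$. The base case $Y' = B_0$ is the $y'=\hat{0}$ clause of the definition, giving $|\K_{X'}| \cong S^{r-1} = |\partial\Delta^r|$. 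For the inductive step, remove a maximal element $y'$ of $Y'$ to obtain the lower order ideal $Y'' = Y' \smallsetminus \{y'\}$; the inductive homeomorphism on $|\K_{(\sigma')^{-1}(Y'')}|$ restricts on $|\K_{X',<y'}|$ to a homeomorphism onto $\partial\overline{e_{y'}}\star|\partial\Delta^r|$, and since $|\K_{X',\le y'}|$ and $\overline{e_{y'}}\star|\partial\Delta^r|$ are closed cells of the same dimension with this common boundary behaviour, Alexander's trick (Remark~\ref{rem:Alexander}) extends the boundary map over the top cell. Gluing the two pieces along $|\K_{X',<y'}|$ yields the homeomorphism. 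This induction is the main obstacle, as it is where the actual topology happens.

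For the forward direction, apply the claim to $\sigma^{-1}(Y_{\le z}) \to Y_{\le z}$, which has rank $r_1 = \rank(\sigma)$, obtaining $|\K_{\sigma^{-1}(Y_{\le z})}| \cong |\K_{Y,\le z}| \star |\partial\Delta^{r_1}|$ compatibly with $|\K_{\sigma^{-1}(Y_{<z})}| \cong |\K_{Y,<z}| \star |\partial\Delta^{r_1}|$. Since $\tau$ is a strong $CW$-regular subdivision, $\K_{Y,\le z}$ is a $CW$-sphere when $z = \hat{0}_Z$ and a $CW$-ball with boundary $\K_{Y,<z}$ otherwise. In the first case $S^d \star |\partial\Delta^{r_1}| \cong S^{d+r_1}$ is a sphere; in the second, Example~\ref{ex:joinballs} shows a ball joined with $|\partial\Delta^{r_1}|$ is a ball of the correct dimension whose boundary is $|\K_{Y,<z}| \star |\partial\Delta^{r_1}|$. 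Transporting through the homeomorphism, $\K_{X,\le z} = \K_{\sigma^{-1}(Y_{\le z})}$ is the required sphere/ball, its topological boundary coincides with $|\K_{\sigma^{-1}(Y_{<z})}| = |\K_{X,<z}|$, and since a subcomplex is determined by its underlying space the boundary subcomplex equals $\K_{X,<z}$. A dimension count with $r = r_1 + \rank(\tau)$ gives $\dim = \rho_Z(z) - \rho_X(\hat{0}_X) - 1$, so $\tau\circ\sigma$ satisfies Definition~\ref{def:CWsubdivision}.

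For the converse, the hypothesis $\rank(\sigma) = 0$ is precisely what makes the argument work: now $r_1 = 0$, so $|\partial\Delta^{0}| = \emptyset$ and the claim gives $|\K_{\sigma^{-1}(Y_{\le z})}| \cong |\K_{Y,\le z}|$ carrying $|\K_{\sigma^{-1}(Y_{<z})}|$ onto $|\K_{Y,<z}|$. Since $\tau\circ\sigma$ is a strong $CW$-regular subdivision, $\K_{X,\le z} = \K_{\sigma^{-1}(Y_{\le z})}$ is a $CW$-sphere (for $z=\hat{0}_Z$) or a $CW$-ball with boundary $\K_{X,<z} = \K_{\sigma^{-1}(Y_{<z})}$ (otherwise). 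Transporting this structure across the homeomorphism shows $\K_{Y,\le z}$ is the corresponding sphere or ball, with boundary subcomplex $\K_{Y,<z}$ (again using Example~\ref{ex:CWball} and that a subcomplex is determined by its underlying space), and the dimension matches because $\rho_X(\hat{0}_X) = \rho_Y(\hat{0}_Y)$. Hence $\tau$ is a strong $CW$-regular subdivision. I would note that positive rank would break this last step, since one cannot in general recover the homeomorphism type of a space from that of its join with a sphere.
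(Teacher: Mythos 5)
Your proposal is correct and follows essentially the same route as the paper: your preliminary claim is precisely the paper's Lemma~\ref{lem:KXKYstar} (proved there by the same induction on $|Y'|$ with Alexander's trick), and both directions are then obtained exactly as you describe, by transporting the sphere/ball structure across the join homeomorphism and using $|\partial\Delta^s\star\partial\Delta^{r}|\cong|\partial\Delta^{r+s}|$ for the forward direction and the triviality of the join with $|\partial\Delta^0|=\emptyset$ for the converse. The only cosmetic difference is that the paper phrases the forward direction through the cell-wise reformulation of Remark~\ref{rem:CWalt} rather than directly through Definition~\ref{def:CWsubdivision}.
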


We next consider some examples of strong $CW$-regular subdivisions.
The first example is analogous to Example~\ref{ex:B0}. 

\begin{example}\label{ex:CWB0}
	%We have the following analogue of Example~\ref{ex:B0}. 
	Let $B$ be an Eulerian poset of positive rank. 
	% with rank function $\rho_B$. 
	Recall from Example~\ref{ex:B0} that, with the appropriate rank functions, the unique function $\sigma: \partial B \to B_0$ is a strong formal subdivision, and all strong formal subdivisions to $B_0$ appear in this way. Moreover, $B$ is the non-Hausdorff mapping cylinder  $\Cyl(\sigma)$. 
	
	Assume further that $B$ is a $CW$-poset. Then 
	it follows from Example~\ref{ex:CWEulerian} %and Example~\ref{ex:CWB0} %and Theorem~\ref{thm:mainsimplifiedCW},
	that $\sigma: \partial B \to B_0$ is a strong $CW$-regular subdivision. In fact, all strong $CW$-regular subdivisions to $B_0$ appear in this way. 
	Indeed, by Example~\ref{ex:B0}, a strong $CW$-regular subdivision to $B_0$ has the form $\sigma: \partial B \to B_0$ for some Eulerian poset $B$ with positive rank. Definition~\ref{def:CWsubdivision} implies that $\partial B = \face(\K)$ for some regular $CW$-sphere $\K$, and by Example~\ref{ex:CWsphere}, $B$ is a  $CW$-poset. 
	
\end{example}

Below we give an example of a strong formal subdivision between $CW$-posets that is not a strong $CW$-regular subdivision. 

\begin{example}\label{ex:CWB0counter}
	%	Let $\K$ be a $CW$-regular sphere of dimension $r - 1$. Consider $\face(\K)$ with the natural rank function. Consider $B_0$ with the natural rank function shifted by $r$. Then the unique function $\sigma: \face(\K) \to B_0$ is a strong $CW$-regular subdivision of rank $r$.
	
	In contrast to Example~\ref{ex:CWB0}, let $\K$ be a $CW$-regular complex of dimension $r - 1$ such that $|\K|$ is a homology sphere over $\Q$ but $|\K|$ is not homeomorphic to a sphere. Then $\face(\K)$ is the boundary of a Gorenstein* poset. By Example~\ref{ex:B0},  with the appropriate rank function, 
	the unique function $\sigma: \face(\K) \to B_0$ is a strong formal subdivision of rank $r$ that is not a strong $CW$-regular subdivision. 
\end{example}

The next example is analogous to Example~\ref{ex:B1}. 

\begin{example}\label{ex:CWB1}
	%We have the following analogue of Example~\ref{ex:B0}. 
	Let $B$ be a near-Eulerian poset. 
	Recall from Example~\ref{ex:B1} that, with the appropriate rank functions, the function  
	$\sigma: B \to B_1 = \{ \hat{0}, \hat{1} \}$ determined by $\sigma^{-1}(\hat{0}) = \partial B$ is  a strong formal subdivision, 
	and all strong formal subdivisions to $B_1$ appear in this way.  
	Moreover, the semisuspension $\tilde{\Sigma} B$ is the non-Hausdorff mapping cylinder  $\Cyl(\sigma)$. 
	
	Assume further that $B = \face(\K)$ for some regular $CW$-ball. 
	Recall from Example~\ref{ex:CWball} that $\partial \K$  is a
	$CW$-regular sphere. It follows that 
	$\sigma$ is a strong $CW$-regular subdivision. 
	In fact, all strong $CW$-regular subdivisions to $B_1$ appear in this way. 
	Indeed,  by Example~\ref{ex:B1}, a strong $CW$-regular subdivision to $B_1$ has the form $\sigma: B \to B_1$ for some near-Eulerian poset $B$ with $\sigma^{-1}(\hat{0}) = \partial B$. Definition~\ref{def:CWsubdivision} implies that $B = \face(\K)$ for some regular $CW$-ball $\K$.
	Moreover, in this case, by Example~\ref{ex:CWball}, $\tilde{\Sigma} B$  is a $CW$-poset. 
	
	%	Theorem~\ref{thm:mainsimplifiedCW} implies that the semisuspension $\tilde{\Sigma} B = \Cyl(\sigma)$ is a $CW$-poset. Let $\K$ be a regular $CW$-ball with $\face(\K) = \tilde{\Sigma} B$, as in Example~\ref{ex:CWEulerian}. Then $B = \face(\K')$, where $\K'$ is a regular $CW$-ball obtained from $\partial \K$ by removing a maximal open cell. 
	
\end{example}

\begin{example}\label{ex:CWidentity}
	Let $B$ be a $CW$-poset with rank function $\rho_B$. 
	Then the identity function $\id_B: B \to B$ is a strong $CW$-regular subdivision (c.f. Example~\ref{ex:identitypre}). 
\end{example}

\begin{lemma}\label{lem:CWproper}
	Consider a linear map $\phi: V' \to V$  inducing a proper, surjective morphism between fans $\Sigma'$ and $\Sigma$ in real vector spaces $V'$ and $V$ respectively.  
	Recall from Example~\ref{ex:proper} that we may consider the induced function between the corresponding  face posets
	$\sigma: \face(\Sigma') \to \face(\Sigma),$
	where $\sigma(C')$ is the smallest element of $\Sigma$ containing $\phi(C')$. Let $r = \dim \ker(\phi)$. Consider $\face(\Sigma')$ and $\face(\Sigma)$ equipped with  the natural rank function and the natural rank function shifted by $r$ respectively.
	Then $\sigma$ is a strong $CW$-regular subdivision of rank $r$. 
\end{lemma}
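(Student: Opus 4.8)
The plan is to verify directly that $\sigma$ satisfies the hypotheses and the two defining conditions of Definition~\ref{def:CWsubdivision}, with rank $r$. First I would record that $X = \face(\Sigma')$ and $Y = \face(\Sigma)$ are indeed $CW$-posets: by Example~\ref{ex:CWfan}, intersecting a fan with a sphere centred at the origin realizes its face poset, so I fix a sphere $S' \subset V'$ about the origin and set $\K_X = \Sigma' \cap S'$. That $\sigma$ is order-preserving is immediate, and it is rank-increasing because $\dim\phi(C') \ge \dim C' - r$ together with $\phi(C') \subseteq \sigma(C')$ gives $\dim C' \le \dim\sigma(C') + r = \rho_Y(\sigma(C'))$; surjectivity will fall out of the geometric description below. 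It then remains to analyze, for each $C \in Y$, the subcomplex $\K_{X,\le C}$ attached to the lower ideal $X_{\le C} = \{C' \in \Sigma' : \sigma(C') \le C\}$.

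The geometric heart of the argument --- and the step I expect to be the main obstacle --- is the identification
$$X_{\le C} = \{\, C' \in \Sigma' : \phi(C') \subseteq C \,\} = \{\, C' \in \Sigma' : C' \subseteq \phi^{-1}(C) \,\},$$
together with the claim that these cones form a fan $\Sigma'|_{\phi^{-1}(C)}$ subdividing $\phi^{-1}(C)$, i.e. that $\phi^{-1}(C)$ is exactly the union of the cones of $\Sigma'$ it contains. The first identity uses that $\sigma(C')\subseteq C$ iff $\phi(C')\subseteq C$, since $\phi(C') \subseteq \sigma(C')$ and $\sigma(C')$ is the smallest cone of $\Sigma$ containing $\phi(C')$. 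For the covering claim I would argue that $\sigma(C')$ is the unique cone of $\Sigma$ whose relative interior contains $\phi(\operatorname{relint} C') = \operatorname{relint}\phi(C')$ (the relative interior of a convex subset of a cone lies in the relative interior of the smallest face containing it). Then, given $v \in \phi^{-1}(C)$, properness $\phi^{-1}(|\Sigma|) = |\Sigma'|$ puts $v$ in the relative interior of a unique $C' \in \Sigma'$; since $\phi(v) \in C$ lies in the relative interior of a unique cone of $\Sigma$, that cone must be $\sigma(C')$, which is therefore a face of $C$, giving $C' \subseteq \phi^{-1}(C)$.

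With the combinatorics in place I would pass to topology. Set $W' = \operatorname{span}\phi^{-1}(C) = \phi^{-1}(\operatorname{span} C)$; then $\phi^{-1}(C)$ is a full-dimensional convex cone in $W'$ of dimension $\dim C + r$, with lineality space $L = \ker\phi$, and $S' \cap W'$ is a sphere about the origin in $W'$. Since every cone of $\Sigma'|_{\phi^{-1}(C)}$ lies in $W'$, the subcomplex $\K_{X,\le C}$ equals $(\Sigma'|_{\phi^{-1}(C)}) \cap (S' \cap W')$, so Example~\ref{ex:CWfan} applies with ambient space $W'$. When $C = \hat 0_Y = \{0\}$ we have $\phi^{-1}(C) = L = W'$, so $\Sigma'|_{L}$ is complete in $W'$ and $\K_{X,\le \hat 0_Y}$ is a regular $CW$-sphere of dimension $\dim W' - 1 = r - 1 = \rho_Y(\hat0_Y) - \rho_X(\hat0_X) - 1$, which is condition (1). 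When $C \neq \hat 0_Y$ the cone $C$ is pointed and nonzero, hence $\phi^{-1}(C) \subsetneq W'$, and Example~\ref{ex:CWfan} gives that $\K_{X,\le C}$ is a regular $CW$-ball of dimension $\dim W' - 1 = \dim C + r - 1 = \rho_Y(C) - 1$ with boundary $\partial(\Sigma'|_{\phi^{-1}(C)}) \cap (S'\cap W')$.

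Finally I would identify this boundary with $\K_{X,< C}$. Because $\phi$ is a surjective, hence open, linear map restricting to a surjection $W' \to \operatorname{span} C$, it pulls back relative interiors to relative interiors and relative boundaries to relative boundaries, so $\partial\phi^{-1}(C) = \phi^{-1}(\partial C)$. Hence the boundary cones $C' \subseteq \partial\phi^{-1}(C)$ are exactly those with $\phi(C') \subseteq \partial C$, i.e. with $\sigma(C')$ a proper face of $C$, which is precisely $X_{<C}$. This matches condition (2) of Definition~\ref{def:CWsubdivision}, completes the verification of both conditions, and shows $\rank(\sigma) = \rank(X) - \rank(Y) = r$. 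Surjectivity of $\sigma$ also follows, since a maximal cone of $\Sigma'|_{\phi^{-1}(C)}$ is full-dimensional in $\phi^{-1}(C)$, so its image has relative interior meeting $\operatorname{relint} C$ and thus $\sigma$-value $C$.
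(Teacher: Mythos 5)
Your proof is correct and follows essentially the same route as the paper: both reduce to the restricted fan $\Sigma'|_{\phi^{-1}(C)}$, apply Example~\ref{ex:CWfan} to its sphere section, and distinguish the cases $C = \{0\}$ versus $C$ pointed and nonzero. The only difference is that you spell out details the paper leaves implicit, namely that $\phi^{-1}(C)$ is covered by the cones of $\Sigma'$ it contains (via properness and relative interiors) and that $\partial\phi^{-1}(C) = \phi^{-1}(\partial C)$ identifies the boundary subcomplex with $\K_{X,<C}$.
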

\begin{proof}
	Let $X = \face(\Sigma')$ and $Y = \face(\Sigma)$. Using Example~\ref{ex:CWfan}, let $\K_X$ and $\K_Y$ be the regular $CW$-complexes obtained by intersecting $\Sigma'$ and $\Sigma$ respectively with spheres centered at the origin in $V'$ and $V$ respectively. 
	We first verify that $\sigma$ is order-preserving; if $C_1' \le C_2'$ in $X$, then $\phi(C_1') \subset \phi(C_2') \subset V$, and hence $\sigma(C_1') \le \sigma(C_2')$ in $Y$. We next verify that 
	$\sigma$ is rank-increasing; if $C'$ in $X$, then 
	$C' \subset \phi^{-1}(\sigma(C'))$ implies that $\rho_X(C') = \dim C' \le \dim \phi^{-1}(\sigma(C')) = \dim \sigma(C') + r = \rho_Y(\sigma(C'))$. 
	
	Fix a pointed cone $C$ in $\Sigma$. Let $V_C$ be the linear span of $C$ in $V$, and let $V_C' = \phi^{-1}(V_C)$. Consider the restriction $\Sigma'|_{\phi^{-1}(C)}$ of $\Sigma'$ to $\phi^{-1}(C)$. Then $\phi$ restricts to a surjection $V_C' \to V_C$ which induces a proper, surjective morphism of fans from $\Sigma'|_{\phi^{-1}(C)}$ to $C$. 
	In particular, if $C'$ is a maximal cone of $\Sigma'|_{\phi^{-1}(C)}$, then $\sigma(C') = C$, and we deduce that $\sigma$ is surjective.
	
	Consider the subcomplex $\K_{X,\le C}$ with $\face(\K_{X,\le C}) = \face(\Sigma'|_{\phi^{-1}(C)})$. 
	By Example~\ref{ex:CWfan}, $\K_{X,\le C}$ has dimension $\dim C + r - 1 = \rho_{Y}(C) - 1$, and either
	\begin{enumerate}
		
		%	\item $y = \hat{0}_Y$ and $r = 0$, or,
		\item 	$\phi^{-1}(C) = V_C'$ and $\K_{X,\le C}$ is a regular $CW$-sphere, or, 
		%homeomorphic to an $(r - 1)$-dimensional sphere, or,
		\item 	$\phi^{-1}(C) \neq V_C'$  and $\K_{X,\le C}$ is a regular $CW$-ball with  $\face(\partial \K_{X,\le C}) = \face(\partial \Sigma'|_{\phi^{-1}(C)})$. 
	\end{enumerate}
	Since $\phi$ is surjective and $C$ is a pointed cone, we have 
	$\phi^{-1}(C) = V_C'$ if and only if $C = V_C$ if and only if $C = \{ 0_V \}$, where $0_V$ denotes the origin in $V$. 
	Moreover, if $C \neq \{ 0_V \}$, then $\partial |\phi^{-1}(C)| = \phi^{-1}(\partial |C|)$, and it follows that
	the subcomplex $\K_{X,< C}$ is determined by $\face(\K_{X,< C}) = \face(\partial \Sigma'|_{\phi^{-1}(C)})$. 
	The result now follows from Definition~\ref{def:CWsubdivision}. 	
\end{proof}

\begin{example}\label{ex:CWsubdivision}
	Recall from Example~\ref{ex:polytope} that  given 
	polyhedral subdivisions $\cS'$ and $\cS$ of a polytope $P$ such that $\cS'$ is a refinement of $\cS$, there is an induced strong formal subdivision of rank $0$ between the corresponding  face posets
	$\sigma: \face(\cS') \to \face(\cS),$
	where $\sigma(F')$ is the smallest element of $\cS$ containing $F'$, and  
	$\face(\cS')$ and $\face(\cS)$  are equipped with the natural rank functions. Recall from Section~\ref{ss:fans} that there is a corresponding refinement of fans %from $\Sigma(\cS')$ to $\Sigma(\cS)$ 
	that induces $\sigma$. Then Lemma~\ref{lem:CWproper} implies that $\sigma$ is a strong $CW$-regular subdivision of rank $0$. This also follows from the proof of 
	\cite{KatzStapledon16}*{Lemma~3.25}. 
\end{example}

Recall from Definition~\ref{def:bijections} that 
$\SFS$  is the set of strong formal subdivisions $\sigma: X \to Y$ between  lower Eulerian posets $X$ and $Y$  with rank functions $\rho_X$ and $\rho_Y$ respectively, and 
$\JoinIdealLW$ is the set of triples $(\Gamma, \rho_\Gamma, q)$,
where $\Gamma$ is a  lower Eulerian poset  with rank function $\rho_\Gamma$,  and $q$ is a join-admissible element of $\Gamma$.
%Recall the sets $\SFS$ and $\JoinIdealLW$ from Definition~\ref{def:bijections}. 
Recall that $\JoinIdealLW^\circ \subset \JoinIdealLW$ is the subset of all triples $(\Gamma, \rho_\Gamma, q)$ such that $q \neq \hat{0}_\Gamma$. 	
Using Remark~\ref{rem:CWregularisformal}, we define the following subsets.

\begin{definition}\label{def:CWbijections}
	Let $\SFSCW \subset \SFS$  be the set of strong $CW$-regular subdivisions $\sigma: X \to Y$ between  $CW$-posets $X$ and $Y$  with rank functions $\rho_X$ and $\rho_Y$ respectively. 
	% with rank functions $\rho_X$ and $\rho_Y$ respectively. 
	Let $\JoinIdealCW \subset \JoinIdealLW$ be the set of triples $(\Gamma, \rho_\Gamma, q)$,
	where $\Gamma$ is a  $CW$-poset  with rank function $\rho_\Gamma$,  and $q$ is a join-admissible element of $\Gamma$ such that $\Gamma_{\ge q}$ is a $CW$-poset.
		Let $\JoinIdealCW^\circ \subset \JoinIdealCW$ be the subset of all triples $(\Gamma, \rho_\Gamma, q)$ such that $q \neq \hat{0}_\Gamma$. 	
\end{definition}

We are now ready to state our analogue of Theorem~\ref{thm:mainsimplified} for $CW$-posets. Recall the functions $\CYL: \SFS \to  \JoinIdealLW^\circ$  and $\MAP:  \JoinIdealLW^\circ \to \SFS$ from Definition~\ref{def:bijections}. Recall that  Theorem~\ref{thm:mainsimplified} states that $\CYL$ and $\MAP$ are 
mutually inverse bijections.

\begin{theorem}\label{thm:mainsimplifiedCW}
	The functions $\CYL: \SFS \to  \JoinIdealLW^\circ$  and $\MAP:  \JoinIdealLW^\circ \to \SFS$ restrict to mutually inverse bijections between $\SFSCW$ and $\JoinIdealCW^\circ$.
	%	The functions $\CYL$ and $\MAP$ are mutually inverse bijections between 
	%	the sets $\SFS$ and  $\JoinIdealLW$. 
\end{theorem}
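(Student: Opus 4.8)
The plan is to leverage Theorem~\ref{thm:mainsimplified}, which already establishes that $\CYL$ and $\MAP$ are mutually inverse bijections between $\SFS$ and $\JoinIdealLW^\circ$. Since $\SFSCW \subset \SFS$ and $\JoinIdealCW^\circ \subset \JoinIdealLW^\circ$, it suffices to show that these two subsets correspond to each other under the bijection. Concretely, I would prove two implications: first, that $\CYL(\SFSCW) \subseteq \JoinIdealCW^\circ$, and second, that $\MAP(\JoinIdealCW^\circ) \subseteq \SFSCW$. Because $\CYL$ and $\MAP$ are already known to be inverse bijections on the larger sets, establishing these two containments forces them to restrict to mutually inverse bijections between $\SFSCW$ and $\JoinIdealCW^\circ$.

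For the forward direction, I would start with a strong $CW$-regular subdivision $\sigma : X \to Y$ between $CW$-posets and must show that $\Cyl(\sigma)$ is a $CW$-poset and that $q = \hat{0}_Y$ is join-admissible with $\Gamma_{\ge q} = Y$ a $CW$-poset. That $Y$ is a $CW$-poset is given, and join-admissibility of $\hat{0}_Y$ with $\Gamma_{\ge \hat{0}_Y} = Y$ is immediate from the construction in $\CYL$; the substantive task is to construct a regular $CW$-complex $\K_\Gamma$ with $\face(\K_\Gamma) = \Cyl(\sigma)$. This is exactly where I would invoke the geometric picture behind Remark~\ref{rem:CWalt}: for each $y \in Y$, the subcomplex $\K_{X,\le y}$ is homeomorphic to $\overline{e_y} \star |\partial \Delta^r|$ (a ball or sphere), and one assembles $\K_\Gamma$ from $\K_X$ together with the cells of $Y$ by attaching, for each $y$, a cell $\overline{e_y} \star \Delta^r$ along the sphere $|\K_{X,\le y}|$. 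The compatibility of these attachings across the poset $Y$, using Example~\ref{ex:joinballs} and Alexander's trick (Remark~\ref{rem:Alexander}), produces a genuine regular $CW$-complex whose face poset is $\Cyl(\sigma)$. This construction is precisely what is promised in the statement preceding Lemma~\ref{lem:CWmappingcylinder}, and I would route the argument through that lemma.

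For the reverse direction, I would take $(\Gamma, \rho_\Gamma, q) \in \JoinIdealCW^\circ$, so $\Gamma$ and $\Gamma_{\ge q} = Y$ are $CW$-posets, and set $\sigma = \MAP(\Gamma,\rho_\Gamma,q) : X \to Y$ with $X = \Gamma \smallsetminus \Gamma_{\ge q}$. Here $X$ is a lower order ideal of the $CW$-poset $\Gamma$, hence itself a $CW$-poset (being the face poset of a subcomplex $\K_X$ of $\K_\Gamma$). I must verify the defining conditions of Definition~\ref{def:CWsubdivision}: for each $y \in Y$, the subcomplex $\K_{X,\le y}$ has the correct dimension and is a regular $CW$-sphere (when $y = \hat{0}_Y = q$) or a regular $CW$-ball with boundary $\K_{X,< y}$ (otherwise). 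The key tool is Lemma~\ref{lem:GorensteinstarsubdivEK}, which, applied to the relevant intervals of the locally Gorenstein* poset $\Gamma$, identifies $X_{\le y}$ as the boundary of a Gorenstein* poset or as near-Gorenstein*; combined with the $CW$-structure coming from $\K_\Gamma$ and the characterizations in Example~\ref{ex:CWsphere} and Example~\ref{ex:CWball}, this upgrades the combinatorial conclusion to the required topological one.

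The main obstacle will be the forward direction: producing an honest regular $CW$-complex realizing $\Cyl(\sigma)$, rather than merely checking combinatorial conditions. The delicate point is that attaching cells $\overline{e_y}\star\Delta^r$ must be done coherently so that the attaching maps of higher cells restrict correctly to lower ones, and so that the boundary of each new cell is the previously-built sphere $|\K_{X,\le y}|$; verifying that these spheres really are spheres (not just homology spheres) uses the fact that $Y$ itself is assumed to be a $CW$-poset, which is exactly why the extra hypothesis $\Gamma_{\ge q}$ being a $CW$-poset appears in the definition of $\JoinIdealCW^\circ$. I would handle this by an induction on the rank of $y$, at each stage invoking Alexander's trick to extend a homeomorphism of boundary spheres to the attached cell, and I expect the cleanest path is to prove the stronger homeomorphism statement of Lemma~\ref{lem:CWmappingcylinder} first and then read off the $CW$-poset conclusion as a corollary.
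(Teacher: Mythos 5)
Your overall architecture is exactly the paper's: reduce to the two containments $\CYL(\SFSCW)\subseteq\JoinIdealCW^\circ$ and $\MAP(\JoinIdealCW^\circ)\subseteq\SFSCW$, and your forward direction (building $\K_\Gamma$ by re-cellulating $|\K_Y\star\Delta^r|$ via a homeomorphism $|\K_X|\cong|\K_Y\star\partial\Delta^r|$, assembled inductively over $Y$ with Alexander's trick) is essentially Lemma~\ref{lem:KXKYstar} and Lemma~\ref{lem:CWmappingcylinder} as proved in the paper.

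The reverse direction, however, has a genuine gap. You propose to deduce that $\K_{X,\le y}$ is a regular $CW$-ball (resp.\ sphere) from the fact that $X_{\le y}$ is near-Gorenstein* (resp.\ the boundary of a Gorenstein* poset), via Lemma~\ref{lem:GorensteinstarsubdivEK} together with Examples~\ref{ex:CWsphere} and~\ref{ex:CWball}. But those examples only give the implication from topology to combinatorics: a regular $CW$-ball has near-Gorenstein* face poset, not conversely. The Gorenstein* conditions are rational-homology conditions and are strictly weaker than being a ball or sphere; the paper's own Example~\ref{ex:CWB0counter} (a regular $CW$-complex realizing a homology sphere that is not a sphere) is precisely a strong formal subdivision between $CW$-posets that is \emph{not} a strong $CW$-regular subdivision, so no purely combinatorial upgrade of this kind can work. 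The actual argument (Lemma~\ref{lem:CWconverse}) is topological: one reduces to $\Gamma=[\hat{0}_\Gamma,y]$ Eulerian, so that $\partial\K_\Gamma$ is a genuine sphere; by induction on $|Y|$ the restriction $\widehat{\sigma}:X_{<y}\to[\hat{0}_Y,y)$ is already a strong $CW$-regular subdivision, so by Lemma~\ref{lem:CWmappingcylinder} the complementary subcomplex $\K_{\Cyl(\widehat{\sigma})}$ is a regular $CW$-ball; then $|\K_{X,\le y}|$ is the closure of the complement of that embedded ball in the sphere $|\partial\K_\Gamma|$, and one needs the generalized Schoenflies theorem to conclude it is a ball with boundary $\K_{X,<y}$. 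That Schoenflies input (or some substitute for it) is missing from your sketch and cannot be replaced by the Gorenstein* bookkeeping.
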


The proof will be given in Section~\ref{ss:CWproofs}. 
Before stating the corresponding theorem analogous to Theorem~\ref{thm:main}, we consider some examples of Theorem~\ref{thm:mainsimplifiedCW}. 
Firstly, see Example~\ref{ex:CWB0} and Example~\ref{ex:CWB1}  above.

\begin{example}\label{ex:identityCWpyramid}
	Let $B$ be a $CW$-poset with rank function $\rho_B$. Suppose that $B = \face(\K)$ for some regular $CW$-complex $\K$. Then $\Pyr(B) = \face(\K \star \Delta^0)$ is the non-Hausdorff mapping cylinder of the identity function $\id_B: B \to B$ (c.f. Example~\ref{ex:identity}, Example~\ref{ex:CWidentity}). 
	
%	More generally, suppose that $B$ is a $CW$-poset with rank function $\rho_B$. Suppose that $(\Gamma,\rho_\Gamma,q) \in \JoinIdealCW^\circ$ corresponds to a strong $CW$-regular subdivision $\sigma: X \to Y$. 
%	As in Example~\ref{ex:productidentity}, $(B \times \Gamma, \rho_{B \times \Gamma},(\hat{0}_B,q)) \in \JoinIdealCW^\circ$ with corresponding strong formal subdivision
%	\[
%	\id_B \times \sigma: B \times X \to B \times Y,
%	\]
%	\[
%	(z,x) \mapsto (z,\sigma(x)). 
%	\]
%	
%	
%	OLD BELOW
%			Suppose that $B$ is a lower Eulerian poset with rank function $\rho_B$.  Suppose that $(\Gamma,\rho_\Gamma,q) \in \JoinIdealLW^\circ$ corresponds to a strong formal subdivision $\sigma: X \to Y$. Consider the element $(B,\rho_B,\hat{0}_B) \in \JoinIdealLW$. 
%	By \eqref{i:directproduct} in Lemma~\ref{lem:constructions}, $(B \times \Gamma, \rho_{B \times \Gamma},(\hat{0}_B,q)) \in \JoinIdealLW^\circ$ with corresponding strong formal subdivision
%	\[
%	\id_B \times \sigma: B \times X \to B \times Y,
%	\]
%	\[
%	(z,x) \mapsto (z,\sigma(x)). 
%	\]
%	For example, setting $\Gamma = B_1$, $q = \hat{1}_{B_1}$, and $\sigma = \id_{B_0}$, recovers
%	Example~\ref{ex:identity}. 
\end{example}

\begin{example}
	The topological join construction on regular $CW$-complexes implies that the analogue of \eqref{i:directproduct} in Lemma~\ref{lem:constructions} holds. That is, if $(\Gamma,\rho_{\Gamma},q),(\Gamma',\rho_{\Gamma'},q') \in \JoinIdealCW$ and $(q,q') \neq (\hat{0}_\Gamma,\hat{0}_{\Gamma'})$, then $(\Gamma \times \Gamma',\rho_{\Gamma \times \Gamma'},(q,q')) \in \JoinIdealLW^\circ$. Together with Lemma~\ref{lem:composeCW}, the analogues of Example~\ref{ex:productidentity} and Example~\ref{ex:productsubdivisions} hold. 
In particular, given strong $CW$-regular subdivisions $\sigma: X \to Y$  and $\sigma': X' \to Y'$, the product $\sigma \times \sigma': X \times X' \to Y \times Y'$ is a strong $CW$-regular subdivision.
\end{example}

%In the following two examples, we consider Theorem~\ref{thm:mainsimplifiedCW} applied to proper, surjective morphisms of fans, and refinements of subdivisions of polytopes respectively. 

\begin{example}\label{ex:CWpropermappingcylinder}
	Consider a linear map $\phi: V' \to V$  inducing a proper, surjective morphism between fans $\Sigma'$ and $\Sigma$ in real vector spaces $V'$ and $V$ respectively.  
    Let $r = \dim \ker(\phi)$. 
	Recall from Example~\ref{ex:proper} and Lemma~\ref{lem:CWproper} that we may consider the induced strong $CW$-regular subdivision of rank $r$ between the corresponding  face posets
	$\sigma: \face(\Sigma') \to \face(\Sigma),$
	where $\sigma(C')$ is the smallest element of $\Sigma$ containing $\phi(C')$, and  $\face(\Sigma')$ and $\face(\Sigma)$ are equipped with  the natural rank function and the natural rank function shifted by $r$ respectively.
	
	Let $\K$ be the regular $CW$-complex with closed cells obtained by 
	intersecting the cones $\{ C' \times \{ 0\} : C' \in \Sigma' \}$ and 
	$\{ \phi^{-1}(C) \times \R_{\ge 0} : C \in \Sigma \}$ with a sphere $S$ centered at the origin in $V'$. 
	If $\CYL(\sigma) = (\Gamma,\rho_\Gamma,q)$, then 
	$\Gamma = \face(\K)$ is the non-Hausdorff mapping cylinder $\Cyl(\sigma)$ of $\sigma$, 
	$\rho_\Gamma$ is the natural rank function, and $q = (\ker(\phi) \times \R_{\ge 0}) \cap S \in \face(\K)$. 
	
	Additionally assume that the morphism of fans is projective. That is, 
	there exists
	$p: |\Sigma'| \to \R$ that is 	piecewise linear  with respect to $\Sigma'$, and  
	%	a 
	%	piecewise linear function $p$ on $\Sigma'$ 
	such that the restriction of $p$ to $\phi^{-1}(C)$ is strictly convex with respect to the restriction $\Sigma'|_{\phi^{-1}(C)}$ for every maximal cone $C$ in $\Sigma$. For each cone $C' \in \Sigma'$, let 
	$\tilde{C}' = \{ (v,p(v)) : v \in C' \} \subset V' \oplus \R$. 
	For each cone $C \in \Sigma$, let 
	$\tilde{C} = \{ (v,\lambda) : v \in \phi^{-1}(C), p(v) \le \lambda \} \subset V' \oplus \R$. Then the union of the cones  
	 $\{ \tilde{C}' : C' \in \Sigma' \}$ and $\{ \tilde{C} : C \in \Sigma \}$ forms a fan $\tilde{\Sigma}$ with  $\face(\tilde{\Sigma}) = \Cyl(\sigma)$. 
\end{example}

\begin{example}\label{ex:CWsubdivisionmappingcylinder}
	Recall from Example~\ref{ex:polytope} and Example~\ref{ex:CWsubdivision} that  given 
	polyhedral subdivisions $\cS'$ and $\cS$ of a polytope $P$ such that $\cS'$ is a refinement of $\cS$, there is an induced strong $CW$-regular subdivision of rank $0$ between the corresponding  face posets
	$\sigma: \face(\cS') \to \face(\cS),$
	where $\sigma(F')$ is the smallest element of $\cS$ containing $F'$, and  
	$\face(\cS')$ and $\face(\cS)$  are equipped with the natural rank functions.
	
	Recall from Section~\ref{ss:polytopes} that if $P$ is contained in a real vector space $V$, then the  pyramid $\Pyr(P)$  of $P$ is the convex hull of $P \times \{ 0 \}$ and $v = (0_V,1)$, where $0_V$ denotes the origin in $V$. For any face $F$ of $\cS$, let $\Conv{F \times \{ 0 \},v}$ denote the convex hull of $F \times \{ 0 \}$ and the apex $v$. For example, when $F$ is the empty face, $\Conv{F \times \{ 0 \},v} = v$. 
	Let $\K$ be the regular $CW$-complex with $|\K| = \Pyr(P)$, and closed cells consisting of $\{ F' \times \{ 0 \} \in F' \in \cS' \}$ and 
	$\{ \Conv{F \times \{ 0 \},v} : F \in \cS \}$. 
	If $\CYL(\sigma) = (\Gamma,\rho_\Gamma,q)$, then 
	$\Gamma = \face(\K)$ is the non-Hausdorff mapping cylinder of $\sigma$, 
	$\rho_\Gamma$ is the natural rank function, and $q = v \in \face(\K)$. 
\end{example}

Recall from Definition~\ref{def:categorify} that $\EulPos$ is the category with elements given by  pairs $(B,\rho_B)$, where $B$ is a lower Eulerian poset and $\rho_B$ is a rank function for $B$, and with
morphisms given by strong formal subdivisions, and $\SFScat$ is the corresponding arrow category $\Arr(\EulPos)$. 
Recall that $\Joincat$ is the category with objects given by elements of $\JoinIdealLW$. 
%triples $(\Gamma, \rho_\Gamma, q)$,
%where $\Gamma$ is a  lower Eulerian poset  with rank function $\rho_\Gamma$,  and $q \neq \hat{0}_\Gamma$ is a join-admissible element of $\Gamma$. 
A morphism  between objects $(\Gamma, \rho_\Gamma, q)$ and $(\Gamma', \rho_{\Gamma'}, q')$ in  $\Joincat$ is given by
a strong formal subdivision 
$\phi: \Gamma \to \Gamma'$ such that $\phi(q) = q'$ 
%	and $\Gamma_{\ge q} = \phi^{-1}(\Gamma_{\ge q'}')$ 
and
$\phi(z \vee q) = \phi(z) \vee q'$ for all $z \in \Gamma \smallsetminus \Gamma_{\ge q}$.
Recall that $\Joincat^\circ$ is the full subcategory of $\Joincat$ with objects 
$\JoinIdealLW^\circ$. 

We now define categories $\SFSCWcat$ and $\JoinCWcat$ whose objects are the sets $\SFSCW$  and $\JoinIdealCW$ respectively from Definition~\ref{def:CWbijections}. Using Remark~\ref{rem:CWregularisformal}, we may view these as subcategories of $\SFScat$ and $\Joincat$ respectively.

\begin{definition}\label{def:CWcategorify}
	Let $\CWPos$ be the subcategory of $\EulPos$ with elements given by  pairs $(B,\rho_B)$, where $B$ is a $CW$-poset and $\rho_B$ is a rank function for $B$, and with
	morphisms given by strong $CW$-regular subdivisions. We define $\SFSCWcat$ to be the corresponding arrow category $\Arr(\CWPos)$. 
	
	Let $\JoinCWcat$ be the category with objects 
	$\JoinIdealCW$. 
%	given by
%	triples $(\Gamma, \rho_\Gamma, q)$,
%	where $\Gamma$ is a  $CW$-poset  with rank function $\rho_\Gamma$,  and $q$ is a join-admissible element of $\Gamma$ such that $\Gamma_{\ge q}$ is a $CW$-poset. 
	A morphism  between objects $(\Gamma, \rho_\Gamma, q)$ and $(\Gamma', \rho_{\Gamma'}, q')$ in  $\JoinCWcat$ is given by
	a strong $CW$-regular subdivision 
	$\phi: \Gamma \to \Gamma'$ such that $\phi(q) = q'$,
	%	and $\Gamma_{\ge q} = \phi^{-1}(\Gamma_{\ge q'}')$ 
	$\phi(z \vee q) = \phi(z) \vee q'$ for all $z \in \Gamma \smallsetminus \Gamma_{\ge q}$, and $\phi$ restricts to a strong $CW$-regular subdivision $\Gamma_{\ge q} \to \Gamma_{\ge q'}'$. 	Let $\JoinCWcat^\circ$ be the full subcategory of $\JoinCWcat$ with objects 
	$\JoinIdealCW^\circ$. 

\end{definition}

In Definition~\ref{def:CWcategorify},  $\CWPos$ is well-defined by 
 Lemma~\ref{lem:composeCW} and Example~\ref{ex:CWidentity}. By Example~\ref{ex:CWB0counter}, $\CWPos$  is not the full subcategory of $\EulPos$ with objects given by pairs $(B,\rho_B)$, where $B$ is a $CW$-poset. 

We are now ready to state our analogue of Theorem~\ref{thm:main} for $CW$-posets. Recall the functors $\CYLcat: \SFScat \to \Joincat^\circ$ and $\MAPcat: \Joincat^\circ \to \SFScat$ from Definition~\ref{def:functorialbijections}. Recall that  Theorem~\ref{thm:main} states that $\CYLcat$ and $\MAPcat$ are 
mutually inverse isomorphisms.

\begin{theorem}\label{thm:mainCW}
	The functors $\CYLcat: \SFScat \to  \Joincat^\circ$  and $\MAPcat:  \Joincat^\circ \to \SFScat$ restrict to mutually inverse isomorphisms between $\SFSCWcat$ and $\JoinCWcat^\circ$. 
\end{theorem}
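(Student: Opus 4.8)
The plan is to deduce Theorem~\ref{thm:mainCW} from the isomorphism of categories already established in Theorem~\ref{thm:main}, by verifying that the functors $\CYLcat$ and $\MAPcat$ carry the two $CW$-subcategories into one another. On objects there is nothing new to do: Theorem~\ref{thm:mainsimplifiedCW} already states that $\CYL$ and $\MAP$ restrict to mutually inverse bijections between $\SFSCW = \Obj(\SFSCWcat)$ and $\JoinIdealCW^\circ = \Obj(\JoinCWcat^\circ)$. Since (by Remark~\ref{rem:CWregularisformal} and Definition~\ref{def:CWcategorify}) $\SFSCWcat$ and $\JoinCWcat^\circ$ are subcategories of $\SFScat$ and $\Joincat^\circ$, on whose morphism sets Theorem~\ref{thm:main} already provides mutually inverse bijections, it suffices to show that $\CYLcat$ maps $\Mor(\SFSCWcat)$ into $\Mor(\JoinCWcat^\circ)$ and that $\MAPcat$ maps $\Mor(\JoinCWcat^\circ)$ into $\Mor(\SFSCWcat)$; being restrictions of mutually inverse bijections, these inclusions are then automatically inverse bijections.

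I would dispatch the $\MAPcat$ direction first, as it is essentially formal. Given a morphism $\phi\colon \Gamma \to \Gamma'$ in $\JoinCWcat^\circ$, write $\MAPcat(\phi)$ as the commutative square with horizontal edges $\sigma = \MAP(\Gamma,\rho_\Gamma,q)$ and $\sigma' = \MAP(\Gamma',\rho_{\Gamma'},q')$ and vertical edges the restrictions $\phi_1 = \phi|_X$, $\phi_2 = \phi|_Y$, where $X = \Gamma \smallsetminus \Gamma_{\ge q}$, $Y = \Gamma_{\ge q}$ (and similarly $X',Y'$). The horizontal edges are strong $CW$-regular subdivisions by Theorem~\ref{thm:mainsimplifiedCW}; the edge $\phi_2\colon Y \to Y'$ is a strong $CW$-regular subdivision because this is built into the definition of a morphism of $\JoinCWcat$ in Definition~\ref{def:CWcategorify}. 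For $\phi_1$, recall from the proof of Lemma~\ref{lem:MAPwelldefined} that $\phi^{-1}(X') = X$; since $X'$ is a nonempty lower order ideal of $\Gamma'$, Remark~\ref{rem:restrictsfsCW} shows that $\phi_1$ is again a strong $CW$-regular subdivision. Hence all four edges are morphisms in $\CWPos$ and $\MAPcat(\phi)\in\Mor(\SFSCWcat)$.

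For the $\CYLcat$ direction, let a morphism of $\SFSCWcat$ be given by a commutative square with edges $\sigma,\sigma',\phi_1,\phi_2$, all strong $CW$-regular subdivisions, and let $\phi\colon \Cyl(\sigma)\to\Cyl(\sigma')$ be the map produced by $\CYLcat$ in Definition~\ref{def:functorialbijections}. By Theorem~\ref{thm:main} we already know $\phi$ is a strong formal subdivision with $\phi(q)=q'$ and $\phi(z\vee q)=\phi(z)\vee q'$, and its restriction $\Cyl(\sigma)_{\ge q}\to\Cyl(\sigma')_{\ge q'}$ is exactly $\phi_2$, a strong $CW$-regular subdivision. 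The one remaining point, and the heart of the matter, is that $\phi$ is itself a strong $CW$-regular subdivision: for each $w\in\Cyl(\sigma')$ the lower ideal $\phi^{-1}(\Cyl(\sigma')_{\le w})$ must realize as a regular $CW$-ball (or sphere, when $w$ is minimal) with boundary $\phi^{-1}(\Cyl(\sigma')_{< w})$. I would argue by cases on whether $w\in X'$ or $w\in Y'$. When $w\in X'$, the ideal $\Cyl(\sigma')_{\le w}$ lies in $X'$ and $\phi^{-1}(\Cyl(\sigma')_{\le w})=\phi_1^{-1}(X'_{\le w})$, so the required topology and dimension bookkeeping are precisely the defining property of the strong $CW$-regular subdivision $\phi_1$ at $w$ (the rank shift in \eqref{eq:rhoCyl} is trivial here since both elements lie on the $X$-side). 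When $w\in Y'$, the key observation is that the preimage is again a non-Hausdorff mapping cylinder: setting $W=\phi_2^{-1}([q',w])$, a lower order ideal of $Y$, one checks directly from the order description of $\Cyl(\phi)$ in Remark~\ref{rem:cylinderphi} (using $\sigma'\circ\phi_1=\phi_2\circ\sigma$) that $\phi^{-1}(\Cyl(\sigma')_{\le w})=\Cyl\bigl(\sigma|_{\sigma^{-1}(W)}\colon \sigma^{-1}(W)\to W\bigr)$, and that $\phi^{-1}(\Cyl(\sigma')_{< w})$ is obtained from this cylinder by deleting the fiber $\phi_2^{-1}(w)$.

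To finish the $w\in Y'$ case I would invoke the object-level topology. By Remark~\ref{rem:restrictsfsCW}, $\sigma|_{\sigma^{-1}(W)}\colon \sigma^{-1}(W)\to W$ is a strong $CW$-regular subdivision, so by Lemma~\ref{lem:CWmappingcylinder} its cylinder satisfies $|\K_{\Cyl(\sigma|)}|\cong|\K_W|\star\Delta^{r}$ with $|\K_{\sigma^{-1}(W)}|\cong|\K_W|\star\partial\Delta^{r}$, where $r=\rank(\sigma)$. Meanwhile, applying Definition~\ref{def:CWsubdivision} to the strong $CW$-regular subdivision $\phi_2$ at the element $w$ identifies $W=\phi_2^{-1}([q',w])$ as a regular $CW$-sphere when $w=q'=\hat{0}_{Y'}$ and as a regular $CW$-ball with boundary $\phi_2^{-1}([q',w))$ when $w>q'$. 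Joining a ball or sphere with a simplex yields a ball (Example~\ref{ex:joinballs}), so $\Cyl(\sigma|)$ is a regular $CW$-ball, and a direct comparison of its boundary $(\partial\K_W\star\Delta^r)\cup(\K_W\star\partial\Delta^r)$ with the subcomplex obtained by deleting $\phi_2^{-1}(w)=W\smallsetminus\partial W$ shows the boundary equals $\phi^{-1}(\Cyl(\sigma')_{< w})$, as required. I expect the genuine obstacle to be exactly this last topological identification: realizing the preimage as the sub-mapping-cylinder over $W$ and matching its intrinsic $CW$-ball boundary with $\phi^{-1}(\Cyl(\sigma')_{< w})$ on the nose rather than merely up to homeomorphism, while keeping the rank/dimension conventions of \eqref{eq:rhoCyl} and Definition~\ref{def:CWsubdivision} consistent throughout; leaning on the join-with-a-simplex picture of Lemma~\ref{lem:CWmappingcylinder} should make this identification transparent.
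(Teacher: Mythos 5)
Your proposal is correct and follows essentially the same route as the paper: the object-level statement is delegated to Theorem~\ref{thm:mainsimplifiedCW}, the $\MAPcat$ direction is handled by restricting $\phi$ to the lower order ideal $X'$ via Remark~\ref{rem:restrictsfsCW}, and the $\CYLcat$ direction reduces to showing $\phi$ is a strong $CW$-regular subdivision by identifying $\phi^{-1}(\Cyl(\sigma')_{\le w})$ for $w\in Y'$ with the mapping cylinder of $\sigma$ over $W=\phi_2^{-1}([q',w])$ and invoking the join-with-a-simplex description. The paper packages this last step as Lemma~\ref{lem:CWmorphisms} together with Corollary~\ref{cor:CWcyl} (itself a consequence of Lemma~\ref{lem:CWmappingcylinder} and Example~\ref{ex:joinballs}), whereas you inline the same computation directly.
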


The proof will be given in Section~\ref{ss:CWproofs}.

\subsection{Proof of results on $CW$-posets}\label{ss:CWproofs}
In this section we give the proofs of Lemma~\ref{lem:composeCW}, Theorem~\ref{thm:mainsimplifiedCW}, and Theorem~\ref{thm:mainCW}. 

\begin{proof}[Proof of Theorem~\ref{thm:mainsimplifiedCW}]
	As we explain below, the proof follows immediately from Theorem~\ref{thm:mainsimplified}, together with   Lemma~\ref{lem:CWmappingcylinder} and Lemma~\ref{lem:CWconverse} that we will prove later in the section. 
	
	Recall from Theorem~\ref{thm:mainsimplified} that 
	the functions $\CYL: \SFS \to  \JoinIdealLW^\circ$  and $\MAP:  \JoinIdealLW^\circ \to \SFS$ are mutually inverse bijections.
	We need to show that the subsets $\SFSCW \subset \SFS$ and 
	$\JoinIdealCW^\circ \subset \JoinIdealLW^\circ$ satisfy
	$\CYL(\SFSCW) \subset \JoinIdealCW^\circ$ and $\MAP(\JoinIdealCW^\circ) \subset \SFSCW$. 
	
	We first show that $\CYL(\SFSCW) \subset \JoinIdealCW^\circ$. Consider a  strong $CW$-regular subdivision $\sigma: X \to Y$ between  $CW$-posets $X$ and $Y$  with rank functions $\rho_X$ and $\rho_Y$ respectively. Recall that 
	$\CYL(\sigma: X \to Y) = (\Gamma, \rho_\Gamma, q)$, where 
	$\Gamma = \Cyl(\sigma)$ and $q = \hat{0}_Y$. Then $\Gamma_{\ge q} = Y$ is a $CW$-poset, and $\Gamma$ is a $CW$-poset by Lemma~\ref{lem:CWmappingcylinder} below. By Definition~\ref{def:CWbijections}, $\CYL(\sigma: X \to Y)  \in 
	\JoinIdealCW^\circ$. 
	
	We next show that $\MAP(\JoinIdealCW^\circ) \subset \SFSCW$. Consider a triple $(\Gamma, \rho_\Gamma, q)$ in $\JoinIdealCW^\circ$, where 
	$\Gamma$ is a  $CW$-poset  with rank function $\rho_\Gamma$,  and $q \neq \hat{0}_\Gamma$ is a join-admissible element of $\Gamma$ such that $\Gamma_{\ge q}$ is a $CW$-poset.	
	Consider the corresponding strong formal subdivision
	$\MAP(\Gamma,\rho_\Gamma,q) : \Gamma \smallsetminus \Gamma_{\ge q} \to \Gamma_{\ge q}$. This is a strong $CW$-regular subdivision by Lemma~\ref{lem:CWconverse}. 
	By Definition~\ref{def:CWbijections}, $\MAP(\Gamma,\rho_\Gamma,q) \in 
	\SFSCW$. 
\end{proof}

\begin{proof}[Proof of Theorem~\ref{thm:mainCW}]
	As we explain below, the proof follows immediately from Theorem~\ref{thm:main} and Theorem~\ref{thm:mainsimplifiedCW}, together with  Lemma~\ref{lem:CWmorphisms} that we will prove later in the section. 
	
	Recall from Theorem~\ref{thm:main}  that	the functors $\CYLcat: \SFScat \to  \Joincat^\circ$  and $\MAPcat:  \Joincat^\circ \to \SFScat$ restrict to mutually inverse isomorphisms. We want to show that they restrict to mutually inverse isomorphisms between $\SFSCWcat$ and $\JoinCWcat^\circ$. The statement that they restrict to mutually inverse bijections between $\Obj(\SFSCWcat)$ and $\Obj(\JoinCWcat^\circ)$ is Theorem~\ref{thm:mainsimplifiedCW}. 
	A morphism in $\SFScat$ between objects in $\SFSCWcat$ has the form
	\[
	\begin{tikzcd} X \arrow[r, "\sigma"] \arrow[d, "\phi_1"] & Y \arrow[d, "\phi_2"] \\ X' \arrow[r, "\sigma'"] &  Y', \end{tikzcd}
	\]
	where $\sigma$ and $\sigma'$ are strong $CW$-regular subdivisions and $\phi_1$ and $\phi_2$ are strong formal subdivisions. The corresponding morphism in $\Joincat^\circ$ between objects in $\JoinCWcat^\circ$ is the strong formal subdivision
	\[
	\phi: \Cyl(\sigma) \to \Cyl(\sigma'),
	\]
	\[
	\phi(z) = \begin{cases}
		\phi_1(z) &\textrm{ if } z \in X, \\
		\phi_2(z) &\textrm{ if } z \in Y.
	\end{cases}
	\]
	We need to show that the morphism determined by $\phi_1,\phi_2$ lies in $\SFSCWcat$ if and only if the morphism  determined by $\phi$ lies in $\JoinCWcat^\circ$. 
	By Definition~\ref{def:CWcategorify}, both these conditions have the requirement that $\phi_2$ is a strong $CW$-regular subdivision, and we need to show that $\phi_1$ is a strong $CW$-regular subdivision if and only if $\phi$ is a strong $CW$-regular subdivision. This is the content of Lemma~\ref{lem:CWmorphisms}.	
\end{proof}

\begin{remark}\label{rem:converseCW}
	Suppose that the converse statement in Lemma~\ref{lem:composeCW} holds without the assumption that $\rank(\sigma) = 0$. 
%	We have seen in Lemma~\ref{lem:composeCW} that strong $CW$-regular subdivisions compose. We can consider if an analogue of the converse implication in Lemma~\ref{lem:composesfs} holds. That is, 
%	let  $\sigma : X \to Y$ and $\tau: Y \to Z$ be strong formal subdivisions between lower Eulerian posets $X$, $Y$, $Z$ with   rank functions $\rho_X$, $\rho_Y$, $\rho_Z$  respectively.
%	  Assume that $\sigma$ and  $\tau \circ \sigma$ are strong $CW$-regular subdivisions. Can we conclude that $\tau$ is a strong $CW$-regular subdivision? 
	%If such a statement held, 
	Then in the proof of Theorem~\ref{thm:mainCW} above, if we assume that $\phi_1$ is a strong $CW$-regular subdivision, then it follows that $\phi_2$ is a strong $CW$-regular subdivision. 
	Indeed, the composition $\phi_2 \circ \sigma =  \sigma' \circ \phi_1$ is a strong $CW$-regular subdivision by Lemma~\ref{lem:composeCW}, and hence $\phi_2$ is a strong $CW$-regular subdivision, again by Lemma~\ref{lem:composeCW}. Then one could simplify Definition~\ref{def:CWcategorify} by not requiring that a morphism  in $\JoinCWcat$ induced by a strong $CW$-regular subdivision $\phi: \Gamma \to \Gamma'$ restricts to a strong $CW$-regular subdivision $\Gamma_{\ge q} \to \Gamma_{\ge q'}'$. 
	
%	For example, we claim that the statement holds if we additionally assume that $\rank(\sigma) = \rank(\tau) = 0$. In that case, $\tau^{-1}(\hat{0}_Z) = \hat{0}_Y$. For $z \in Z$ with $z \neq \hat{0}_Z$, since $\tau \circ \sigma$ is a strong $CW$-regular subdivision, $\K_{X,\le z}$ is a regular $CW$-ball with boundary $\K_{X,< z}$. By Remark~\ref{rem:restrictsfsCW}, $\sigma$ restricts to a  rank $0$
%	strong $CW$-regular subdivision from $X_{\le z}$ to $Y_{\le z}$. 
%	Then Lemma~\ref{lem:KXKYstar} below implies that  $\K_{\le z}$ is a regular $CW$-ball with boundary $\K_{Y,< z}$, as desired.
\end{remark}

\begin{lemma}\label{lem:KXKYstar}
	Let  $\sigma: X \to Y$ be a strong $CW$-regular subdivision of rank $r$ between $CW$-posets $X$ and $Y$  with rank functions $\rho_X$ and $\rho_Y$ respectively. 	Let 	$\K_X$ and $\K_Y$ be  %nonempty 
	regular $CW$-complexes with  face posets $X$ and $Y$ respectively, and open cells $\{ e_x : x \in X \}$ and $\{ e_y : y \in X \}$ respectively. Then there exists a homeomorphism
	\[
	\psi: |\K_X| \to |\K_Y \star \partial \Delta^r|,
	\]
	such that $\psi(e_x) \subset e_{\sigma(x)} \star |\partial \Delta^r|$ for all $x \in X$. 
\end{lemma}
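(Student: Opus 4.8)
I want to build the homeomorphism $\psi$ inductively over the skeleta of $\K_X$, or equivalently by induction on the lower order ideals of $Y$, using the cell-by-cell structure provided by the strong $CW$-regular subdivision condition. The key structural input is Remark~\ref{rem:CWalt}: for each $y \in Y$, the subcomplex $\K_{X,\le y}$ is homeomorphic to $\overline{e_y} \star |\partial \Delta^r|$, compatibly with its boundary $\K_{X,<y}$. The target $|\K_Y \star \partial \Delta^r|$ decomposes as the union of the pieces $\overline{e_y} \star |\partial \Delta^r|$ over $y \in Y$, glued along faces, so the strategy is to match these pieces with the subcomplexes $\K_{X,\le y}$ of the source and assemble the local homeomorphisms into a global one.

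**Main steps.** First I would set up the induction. Enumerate the nonempty elements $y_1, y_2, \ldots, y_m$ of $Y$ in an order compatible with the rank function (so that whenever $y_i < y_j$ we have $i < j$); then $Y_{\le y_k} = \{\hat 0_Y, y_1, \ldots, y_k\}$ is a lower order ideal for each $k$, assuming the enumeration is chosen to respect this — more carefully I would induct over the lower order ideals $I$ of $Y$, adding one maximal element $y$ at a time. At stage $I' = I \cup \{y\}$, I have by the inductive hypothesis a homeomorphism $\psi_I : |\K_{X,I}| \to |\K_{Y,I} \star \partial \Delta^r|$ (where $\K_{X,I} = \sigma^{-1}(\K_{Y,I})$) satisfying the cell condition, and I want to extend it over the new region $\K_{X,\le y} = \sigma^{-1}(\overline{e_y})$. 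The overlap between the old region and the new cell $\overline{e_y} \star |\partial \Delta^r|$ is exactly $\partial \overline{e_y} \star |\partial \Delta^r|$, which corresponds on the source side to $\K_{X,<y}$. By Remark~\ref{rem:CWalt} there is a homeomorphism $|\K_{X,\le y}| \to \overline{e_y}\star|\partial\Delta^r|$ restricting to a homeomorphism $|\K_{X,<y}| \to \partial\overline{e_y}\star|\partial\Delta^r|$. The two homeomorphisms $\psi_I$ and this one agree (up to adjustment) on $|\K_{X,<y}|$, both being homeomorphisms of a closed cell boundary, so I can use Alexander's trick (Remark~\ref{rem:Alexander}) to glue: since $\K_{X,\le y}$ and $\overline{e_y}\star\partial\Delta^r$ are both closed cells of the same dimension $\rho_Y(y) + r - \rho_X(\hat 0_X) = \dim(\overline{e_y}\star\partial\Delta^r)$, any homeomorphism of their boundaries extends to a homeomorphism of the balls. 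This produces $\psi_{I'}$.

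**The main obstacle.** The delicate point is ensuring the local extensions glue into a \emph{globally well-defined, continuous} homeomorphism respecting the cell condition $\psi(e_x) \subset e_{\sigma(x)}\star|\partial\Delta^r|$, rather than merely producing homeomorphisms of each piece that might be incompatible along shared faces of lower-dimensional cells. The issue is that a cell $\overline{e_y}$ generally meets several other cells of $Y$, so when I extend over $\overline{e_y}\star\partial\Delta^r$ I must respect the homeomorphism already fixed on \emph{all} of $\partial\overline{e_y}\star|\partial\Delta^r|$, not just along one face. Choosing a rank-compatible ordering guarantees that at each stage $\partial\overline{e_y}$ is already entirely contained in the previously-built region $\K_{Y,I}$, so the boundary data is fully determined before the extension; continuity of the assembled map then follows from the fact (recalled in Section~\ref{ss:regularCW}) that a function on a regular $CW$-complex is continuous iff its restriction to each closed cell is. I also need the extension to send $e_x$ into the correct target cell: since Alexander's trick only guarantees a homeomorphism of balls, I would arrange the base-case homeomorphism and each extension to carry the open cells of $\K_{X,\le y}$ into the open region $e_y \star |\partial\Delta^r|$, which is automatic because $\sigma(x) = y$ precisely for the cells $x$ in $\K_{X,\le y}\smallsetminus\K_{X,<y}$. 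Verifying this cell-tracking compatibility carefully across the induction is where the real work lies; the topology (Alexander's trick plus the join decomposition of Example~\ref{ex:joinballs}) is otherwise standard.
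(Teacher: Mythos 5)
Your proposal is correct and follows essentially the same route as the paper's proof: induction over lower order ideals of $Y$ obtained by adjoining (equivalently, removing) one maximal element $y$ at a time, extending the inductively given homeomorphism over $|\K_{X,\le y}|$ via Alexander's trick applied to the boundary homeomorphism onto $|\partial\overline{e_y}\star\partial\Delta^r|$, and checking continuity cell by cell. The points you flag as delicate (the boundary being fully determined before extension, and the cell-tracking condition) are handled in the paper exactly as you describe.
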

\begin{proof}
	Let $\{ e_z : z \in \partial B_{r + 1} \}$ denote the open cells of the regular $CW$-sphere $\partial \Delta^r$ of dimension $r - 1$ (see Example~\ref{ex:joinpoints}). We use the notation of Section~\ref{ss:regularCW}. In particular,  $\K_Y \star \partial \Delta^r$ has open cells $\{ e_{y,z} : (y,z) \in Y \times \partial B_{r + 1} \}$. If $Y = B_0$, then the result follows from Remark~\ref{rem:CWalt}.  
	
	Assume that $Y \neq B_0$. 
	Fix a maximal element $y \neq \hat{0}_Y$ of $Y$, and consider the lower order ideal $I = Y \smallsetminus \{ y \}$. By Remark~\ref{rem:restrictsfsCW},  $\sigma$ restricts to a strong $CW$-regular subdivision $\sigma^{-1}(I) \to I$. 
	By induction on $|Y|$ and using the $Y = B_0$ case above, there exists a 
	homeomorphism $\psi_I: |\K_{\sigma^{-1}(I)}| \to |\K_I \star \partial \Delta^r|$ such that $\psi_I(e_x) \subset e_{\sigma(x)} \star |\partial \Delta^r|$ for all $x \in \sigma^{-1}(I)$.

	We want to extend $\psi_I$ to construct $\psi$.
	By definition, $|\K_X| \smallsetminus |\K_{\sigma^{-1}(I)}| = |\K_{X,\le y}| \smallsetminus |\K_{X,< y}|$, and $|\K_Y \star \partial \Delta^r| \smallsetminus |\K_I \star \partial \Delta^r| =  e_y \star |\partial \Delta^r|$. 
	Recall that the closed cell corresponding to $y$ is $\overline{e_y} = \K_{[\hat{0}_Y, y]}$ with boundary $\partial \overline{e_y} = \K_{[\hat{0}_Y, y)}$. 
	Observe that $\psi_I$ restricts to a homeomorphism $\widehat{\psi}_I: |\K_{X,<y}| \to |\partial \overline{e_y} \star \partial \Delta^r|$. 
	%$|\K_{[\hat{0}_Y, y)} \star \partial \Delta^r|$. 
	Suppose that we can construct  a homeomorphism $\widehat{\psi}: \vert\K_{X,\le y} \vert \to \vert \overline{e_y} \star \partial \Delta^r\vert$ that extends $\widehat{\psi}_I$. 
	We have the following commutative diagrams where the vertical maps are inclusions. 
	\[
	\begin{tikzcd} \vert\K_{\sigma^{-1}(I)}\vert \arrow[r, "\psi_I"]  & \vert\K_I \star \partial \Delta^r\vert  \\ \vert\K_{X,<y} \vert \arrow[hookrightarrow,u] \arrow[hookrightarrow,d] \arrow[r, "\widehat{\psi}_I"] &  \vert \partial \overline{e_y} \star \partial \Delta^r\vert \arrow[hookrightarrow,u]\arrow[hookrightarrow,d] \\ 
		\vert\K_{X,\le y} \vert \arrow[r,"\widehat{\psi}"] & \vert \overline{e_y} \star \partial \Delta^r\vert. \end{tikzcd}
	\]
	Then $\psi_I$ and $\widehat{\psi}$ induce a bijection 	$\psi: |\K_X| \to |\K_Y \star \partial \Delta^r|$. In fact, since the restrictions of $\psi$ and $\psi^{-1}$ to any closed cell are continuous, we deduce that $\psi$ is a homeomorphism. It follows from our construction that $\psi(e_x) \subset e_{\sigma(x)} \star |\partial \Delta^r|$ for all $x \in X$. 
	
	It remains to construct $\widehat{\psi}$. 
	By Example~\ref{ex:joinballs} and Remark~\ref{rem:CWalt}, 
	$\K_{X,\le y}$ is a regular $CW$-ball with boundary $\K_{X,< y}$ that is homeomorphic to the regular $CW$-ball $\overline{e_y} \star \partial \Delta^r$  with boundary $\partial \overline{e_y} \star \partial \Delta^r$. By  Remark~\ref{rem:Alexander}, Alexander's trick implies that the homeomorphism $\widehat{\psi}_I: |\K_{X,<y}| \to |\partial \overline{e_y} \star \partial \Delta^r|$ extends to a homeomorphism $\widehat{\psi}: \vert\K_{X,\le y} \vert \to \vert \overline{e_y} \star \partial \Delta^r\vert$, as desired.

\end{proof}

\begin{lemma}\label{lem:CWmappingcylinder}
	Let  $\sigma: X \to Y$ be a strong $CW$-regular subdivision of rank $r$ between $CW$-posets $X$ and $Y$  with rank functions $\rho_X$ and $\rho_Y$ respectively. Let $\Gamma = \Cyl(\sigma)$ be the 
	corresponding non-Hausdorff mapping cylinder. Then $\Gamma$ is a $CW$-poset. Moreover, if	$\K_Y$ and $\K_\Gamma$ are  %nonempty 
	regular $CW$-complexes with  face posets $Y$ and $\Gamma$ respectively, then there is a homeomorphism between $|\K_\Gamma|$ and  $|\K_Y \star \Delta^r|$ that restricts to a homeomorphism between $|\K_X|$ and  $|\K_Y \star \partial  \Delta^r|$. 
\end{lemma}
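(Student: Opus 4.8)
The plan is to prove both assertions by exhibiting one explicit regular $CW$-complex $\L$ whose underlying space is $|\K_Y \star \Delta^r|$ and whose face poset is $\Gamma = \Cyl(\sigma)$; once such an $\L$ is built, $\Gamma$ is a $CW$-poset by Definition~\ref{def:CWposet}, and the general homeomorphism statement will follow from the fact that the homeomorphism type of a regular $CW$-complex is determined by its face poset. First I would fix regular $CW$-complexes $\K_X$ and $\K_Y$ with face posets $X$ and $Y$ (these exist since $X,Y$ are $CW$-posets), with open cells $\{e_x\}_{x\in X}$ and $\{e_y\}_{y\in Y}$; after a common shift I may assume $\rho_X,\rho_Y$ are the natural rank functions adjusted so that $\rho_\Gamma(z)=\dim(\text{cell})+1$. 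Applying Lemma~\ref{lem:KXKYstar} produces a homeomorphism $\psi:|\K_X|\to|\K_Y\star\partial\Delta^r|$ with $\psi(e_x)\subset e_{\sigma(x)}\star|\partial\Delta^r|$ for all $x\in X$.

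Next I would define the open cells of $\L$, indexed by $\Gamma = X\sqcup Y$: for $x\in X$ set $e^\L_x=\psi(e_x)\subset|\K_Y\star\partial\Delta^r|$, and for $y\in Y$ set $e^\L_y = e_y\star\Int\Delta^r$, i.e. the image of $e_y\times\Int\Delta^r\times(0,1)$ in $|\K_Y\star\Delta^r|$, with the convention that for $y=\hat{0}_Y$ (the empty cell of $\K_Y$) this is $\Int\Delta^r$ sitting at the $t=1$ slice. A coordinatewise analysis of the join quotient $|\K_Y|\times|\Delta^r|\times[0,1]$ shows these are pairwise disjoint open cells partitioning $|\K_Y\star\Delta^r|$: points with $w\in|\partial\Delta^r|$ together with the $t=0$ slice make up $|\K_Y\star\partial\Delta^r|$ and are covered by the $\psi(e_x)$, while points with $t>0$ and $w\in\Int\Delta^r$ each lie in a unique $e^\L_y$ (the $\K_Y$-coordinate selecting $y$ when $t<1$, and $y=\hat{0}_Y$ when $t=1$). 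One checks $e^\L_y$ is an open cell of dimension $\dim e_y + r + 1 = \rho_\Gamma(y)-1$, matching the rank function $\rho_{\Cyl(\sigma)}$ from \eqref{eq:rhoCyl}.

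The substantive step, which I expect to be the main obstacle, is verifying that $\L$ is a \emph{regular} $CW$-complex with $\face(\L)=\Gamma$, that is, that the closure of each cell is a closed ball whose boundary is exactly the union of the $e^\L_z$ with $z<z'$ in $\Gamma$. For $x\in X$ this is immediate since $\psi$ is a cell-preserving homeomorphism and $X$ is a lower order ideal of $\Gamma$. For $y\in Y$ the closure of $e^\L_y$ is $\overline{e_y}\star\Delta^r$, a closed ball by Example~\ref{ex:joinballs} of dimension $\dim e_y + r +1$, with boundary $(\partial\overline{e_y}\star\Delta^r)\cup(\overline{e_y}\star\partial\Delta^r)$. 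Here I would argue that $\partial\overline{e_y}\star\Delta^r=\bigcup_{y'<y}\overline{e_{y'}}\star\Delta^r$ supplies exactly the cells $e^\L_{y'}$ for $y'<y$ in $Y$, while Remark~\ref{rem:restrictsfsCW} (restricting $\sigma$ to the lower ideal $X_{\le y}$) together with the containment $\psi(e_x)\subset e_{\sigma(x)}\star|\partial\Delta^r|$ shows that $\psi$ restricts to a homeomorphism $|\K_{X,\le y}|\to|\overline{e_y}\star\partial\Delta^r|$, so that $\overline{e_y}\star\partial\Delta^r$ supplies exactly the cells $e^\L_x$ for $x\in X_{\le y}$. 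Since the elements of $\Gamma$ strictly below $y$ are precisely $\{y'\in Y : y'<y\}\cup X_{\le y}$, this matches the order of $\Gamma$; reconciling these two pieces across the homeomorphism $\psi$, and confirming that the resulting characteristic maps are well defined and continuous on each closed cell, is the delicate bookkeeping. (The case $y=\hat{0}_Y$, where $\overline{e^\L_y}=\Delta^r$ and $\partial\Delta^r$ corresponds to the regular $CW$-sphere $\K_{X,\le\hat{0}_Y}$ of Definition~\ref{def:CWsubdivision}, is a degenerate instance of the same computation.)

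Having produced $\L$, I conclude $\Gamma$ is a $CW$-poset, and by construction $|\L|=|\K_Y\star\Delta^r|$ with the subcomplex of $\L$ on the lower ideal $X$ equal to $|\K_Y\star\partial\Delta^r|$, so the required homeomorphism and its restriction hold for $\L$ via the identity. For an arbitrary regular $CW$-complex $\K_\Gamma$ with $\face(\K_\Gamma)=\Gamma$, I would invoke the fact recalled in Section~\ref{ss:regularCW} that $|\K_\Gamma|\cong|\O(\Gamma\smallsetminus\hat{0}_\Gamma)|\cong|\L|$ through the canonical homeomorphisms to the barycentric subdivision, which carry each closed cell $\overline{e_z}$ of $\K_\Gamma$ onto the corresponding closed cell of $\L$. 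This cell-preserving homeomorphism $|\K_\Gamma|\to|\K_Y\star\Delta^r|$ therefore sends the $X$-subcomplex of $\K_\Gamma$, namely $|\K_X|$, onto $|\K_Y\star\partial\Delta^r|$, completing the proof.
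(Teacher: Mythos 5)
Your proposal is correct and follows essentially the same route as the paper's proof: both apply Lemma~\ref{lem:KXKYstar} to obtain the cell-compatible homeomorphism $\psi:|\K_X|\to|\K_Y\star\partial\Delta^r|$ and then endow $|\K_Y\star\Delta^r|$ with the alternative regular $CW$-structure whose cells are $\psi(e_x)$ for $x\in X$ and $e_y\star\Int\Delta^r$ for $y\in Y$, verifying that closures match lower order ideals of $\Gamma$ exactly as you describe. Your closing paragraph passing from this particular model to an arbitrary $\K_\Gamma$ via the barycentric subdivision is left implicit in the paper but is a correct and harmless addition.
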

\begin{proof}
	Let 	$\K_X$ and $\K_Y$ be  %nonempty 
	regular $CW$-complexes with  face posets $X$ and $Y$ respectively, and open cells $\{ e_x : x \in X \}$ and $\{ e_y : y \in X \}$ respectively.
	Let $\{ e_z : z \in B_{r + 1} \}$ denote the open cells of the regular $CW$-ball $\Delta^r$ of dimension $r$ (see Example~\ref{ex:joinpoints}). We use the notation of Section~\ref{ss:regularCW}. In particular,  $\K_Y \star \Delta^r$ has open cells $\{ e_{y,z} : (y,z) \in Y \times B_{r + 1} \}$. 
	
	We will define $\K_\Gamma = \{ \hat{e}_z : z \in \Gamma \}$ to be an alternative regular $CW$-structure for $|\K_Y \star  \Delta^r|$.  By Lemma~\ref{lem:KXKYstar}, we may fix a homeomorphism
	\[
	\psi: |\K_X| \to |\K_Y \star \partial \Delta^r|,
	\]
	such that $\psi(e_x) \subset e_{\sigma(x)} \star |\partial \Delta^r|$ for all $x \in X$. For $z \in \Gamma$, we define
	\[
	\hat{e}_z = \begin{cases}
		\psi(e_z) &\textrm{if } z \in X, \\
		e_{z,\hat{1}_{B_r}} &\textrm{if } z \in Y. 
	\end{cases}
	\]
	Observe that $\hat{e}_z$ is an open cell for all $z \in \Gamma$, and 
	$|\K_Y \star \Delta^r|$ is the disjoint union of $\{ \hat{e}_z : z \in \Gamma \}$.  We need to check that the corresponding closed cells $\overline{\hat{e}_z}$ are a union of the open cells $\{ \hat{e}_{z'} : z' \in \Gamma, z' \le z \}$. 	For any $x \in X$, the closure  $\overline{\hat{e}_x} = \overline{\psi(e_x)}$ is 
	the union of $\{ \hat{e}_{x'} = \psi(e_{x'}) : x' \in [\hat{0}_X, x] \subset X \} = \{ \hat{e}_z : z \in \Gamma, z \le x \}$. 	For any $y \in Y$, $\overline{\hat{e}_y} = \overline{e_{y,\hat{1}_{B_r}}}$ is the union of 
	$\{ \hat{e}_{y'} = e_{y',\hat{1}_{B_r}}  : y' \in [\hat{0}_Y,y] \subset Y \}$ and
	$\{ e_{y'} \star |\partial \Delta^r|  : y' \in [\hat{0}_Y,y] \subset Y \}$. Observe that $e_{y'} \star |\partial \Delta^r|$ is the union of $\{ \psi(e_x) : x \in X, \sigma(x) = y' \}$.  We conclude that 
	$\overline{\hat{e}_y}$ is the union of $\{ \hat{e}_z : z \in \Gamma, z \le y \}$, as desired.

\end{proof}

Suppose that $\K_Y$ is a regular $CW$-ball with face poset $Y$. 
Recall from Example~\ref{ex:CWball} that $Y$ is near-Eulerian and $\K_{\partial Y}$ is the boundary of $\K_Y$.

\begin{corollary}\label{cor:CWcyl}
	Let  $\sigma: X \to Y$ be a strong $CW$-regular subdivision of rank $r$ between $CW$-posets $X$ and $Y$  with rank functions $\rho_X$ and $\rho_Y$ respectively. Let $\Gamma = \Cyl(\sigma)$ be the 
	corresponding non-Hausdorff mapping cylinder. Let	$\K_Y$ and $\K_\Gamma$ be  %nonempty 
	regular $CW$-complexes with  face posets $Y$ and $\Gamma$ respectively. 
	Suppose that $\K_Y$ is a regular $CW$-ball. 
	Let 
	$\tilde{\sigma}: \partial X \to \partial Y$ be the restriction of $\sigma$, where $\partial X = \sigma^{-1}(\partial Y)$. 
	Then $\K_\Gamma$ is a regular $CW$-ball of dimension $\rank(X)$ with boundary  the union of the open cells in the regular $CW$-balls $\K_{\Cyl(\tilde{\sigma})}$ and $\K_X$ of dimension $\rank(X) - 1$, which have common boundary $\K_{\partial X}$. 
\end{corollary}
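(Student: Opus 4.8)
The plan is to combine the homeomorphism of Lemma~\ref{lem:CWmappingcylinder} with the join decomposition of Example~\ref{ex:joinballs}, and then to identify the resulting boundary pieces intrinsically as subcomplexes of $\K_\Gamma$. First I would record the ambient structure. Since $\K_Y$ is a regular $CW$-ball, Example~\ref{ex:CWball} shows that $Y$ is near-Eulerian with $\face(\partial \K_Y) = \partial Y$, and Corollary~\ref{cor:nearEulerian} then shows that $X$ is near-Eulerian with $\partial X = \sigma^{-1}(\partial Y)$. Because $\partial Y$ is a lower order ideal of $Y$, Remark~\ref{rem:restrictsfsCW} shows that $\tilde{\sigma} : \partial X \to \partial Y$ is a strong $CW$-regular subdivision, and tracking rank functions via Example~\ref{ex:lowertolower} shows it again has rank $r$. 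A direct check from Definition~\ref{def:nonHausdorffmc} shows that $\Cyl(\tilde{\sigma}) = \partial X \sqcup \partial Y$ is a lower order ideal of $\Gamma$, that $X \cup \partial Y = X \cup \Cyl(\tilde{\sigma})$ is a lower order ideal of $\Gamma$, and that $X \cap \Cyl(\tilde{\sigma}) = \partial X$. Since lower order ideals of $\Gamma$ correspond bijectively to subcomplexes of $\K_\Gamma$, and this correspondence respects unions and intersections, the whole statement reduces to the single combinatorial identity $\partial \Gamma = X \cup \partial Y$, where $\partial \Gamma$ denotes the boundary of the near-Eulerian poset $\Gamma$.

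Next I would check that the three complexes are balls of the asserted dimensions. By Lemma~\ref{lem:CWmappingcylinder} there is a homeomorphism $|\K_\Gamma| \cong |\K_Y \star \Delta^r|$ restricting to $|\K_X| \cong |\K_Y \star \partial \Delta^r|$; applying Lemma~\ref{lem:CWmappingcylinder} to $\tilde{\sigma}$ gives $|\K_{\Cyl(\tilde{\sigma})}| \cong |\K_{\partial Y} \star \Delta^r| = |\partial \K_Y \star \Delta^r|$. Since $\K_Y$ is a ball of dimension $\rank(Y) - 1$, Example~\ref{ex:joinballs} identifies $\K_Y \star \Delta^r$ as a ball of dimension $\rank(X)$ and its two boundary pieces $\K_Y \star \partial \Delta^r$ and $\partial \K_Y \star \Delta^r$ as balls of dimension $\rank(X) - 1$. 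Combined with $\dim \K_\Gamma = \rank(\Gamma) - 1 = \rank(X)$ from Corollary~\ref{cor:graded}, this shows that $\K_\Gamma$, $\K_X$, and $\K_{\Cyl(\tilde{\sigma})}$ are regular $CW$-balls of dimensions $\rank(X)$, $\rank(X)-1$, and $\rank(X)-1$ respectively.

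The main step, and the principal obstacle, is the identity $\partial \Gamma = X \cup \partial Y$. The homeomorphism of Lemma~\ref{lem:CWmappingcylinder} is not cellular for the join cell structure on $\K_Y \star \Delta^r$, so the boundary decomposition of Example~\ref{ex:joinballs} cannot simply be transported; instead $\partial \Gamma$ must be computed intrinsically. Since $\K_\Gamma$ is a ball, Example~\ref{ex:CWball} gives $\face(\partial \K_\Gamma) = \partial \Gamma$, the lower order ideal generated by $S = \{ z \in \Gamma : \Gamma_{\ge z} = B_1 \}$. I would observe that the maximal elements of $\Gamma = \Cyl(\sigma)$ are precisely those of $Y$ (no element of $X$ is maximal, since $x < \sigma(x)$ in $\Gamma$), and that for $z \in X$ one has $\Gamma_{\ge z} = X_{\ge z} \sqcup Y_{\ge \sigma(z)}$. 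Hence $S = S_Y \sqcup S_X$, where $S_Y = \{ z \in Y : Y_{\ge z} = B_1 \}$ generates $\partial Y$ by Definition~\ref{def:nearEulerian} applied to $Y$, and $S_X = \{ x \in X : x \text{ maximal in } X,\ \sigma(x) \text{ maximal in } Y \}$. Strong surjectivity then gives $\langle S_X \rangle = X$: for any $x \in X$, extend $\sigma(x)$ to a maximal element $y$ of $Y$ and lift to some $x' \ge x$ with $\sigma(x') = y$ and $\rho_X(x') = \rho_Y(y)$, which forces $x'$ to be maximal in $X$ exactly as in the proof of Lemma~\ref{lem:liftchains}, so $x' \in S_X$. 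Therefore $\partial \Gamma = \langle S_Y \rangle \cup \langle S_X \rangle = \partial Y \cup X$, as required.

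Finally I would identify the common boundary. Running the same computation for $\tilde{\sigma} : \partial X \to \partial Y$ — where now the target $\partial Y = \face(\partial \K_Y)$ is a regular $CW$-sphere, so that the analogue of $S_Y$ is empty because no element $z$ of the boundary of an Eulerian poset satisfies $(\partial Y)_{\ge z} = B_1$ — yields $\partial \Cyl(\tilde{\sigma}) = \partial X$, i.e. $\partial \K_{\Cyl(\tilde{\sigma})} = \K_{\partial X}$. On the other side, since $\K_X$ is a ball, Example~\ref{ex:CWball} together with Corollary~\ref{cor:nearEulerian} gives $\face(\partial \K_X) = \sigma^{-1}(\partial Y) = \partial X$, so $\partial \K_X = \K_{\partial X}$ as well. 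Thus the two boundary balls $\K_X$ and $\K_{\Cyl(\tilde{\sigma})}$ share the common boundary $\K_{\partial X}$, and combining this with the reduction of the first paragraph and the ball structure of the second paragraph completes the proof.
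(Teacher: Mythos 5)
Your proof is correct, but it takes a genuinely different route at the crucial step. The paper's proof is purely topological: it takes the homeomorphism $\psi\colon |\K_\Gamma|\to|\K_Y\star\Delta^r|$ of Lemma~\ref{lem:CWmappingcylinder}, observes (by inspecting the \emph{construction} in that lemma, where the cells of $\K_\Gamma$ are built inside $|\K_Y\star\Delta^r|$) that $\psi$ carries $|\K_{\Cyl(\tilde\sigma)}|$ onto $|\K_{\partial Y}\star\Delta^r|$ and $|\K_X|$ onto $|\K_Y\star\partial\Delta^r|$, and then simply pulls back the boundary decomposition of Example~\ref{ex:joinballs} through $\psi^{-1}$. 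You instead use $\psi$ only to certify that $\K_\Gamma$, $\K_X$, and $\K_{\Cyl(\tilde\sigma)}$ are balls of the right dimensions, and you identify the boundary decomposition combinatorially: since $\K_\Gamma$ is a ball, Example~\ref{ex:CWball} reduces the claim to computing the near-Eulerian boundary $\partial\Gamma$ as the lower order ideal generated by $\{z:\Gamma_{\ge z}=B_1\}$, which you split over $X$ and $Y$ and evaluate using strong surjectivity (the lifting argument from Lemma~\ref{lem:liftchains}); the common-boundary claim follows from the analogous computation for $\tilde\sigma$ together with Corollary~\ref{cor:nearEulerian}. Your worry that the boundary decomposition ``cannot simply be transported'' is not actually an obstruction for the specific $\psi$ of Lemma~\ref{lem:CWmappingcylinder} — that is exactly what the paper does — but your alternative has the advantage of not mining the proof of that lemma: it only uses its statement, at the cost of an extra combinatorial computation. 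One tiny imprecision: the lower order ideal of $\Gamma$ generated by $S_Y$ is $\partial Y\cup\partial X$ rather than $\partial Y$ (it picks up the elements of $X$ lying below $S_Y$), but since $\langle S_X\rangle=X\supseteq\partial X$ the union $\partial\Gamma=X\cup\partial Y$ is unaffected.
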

\begin{proof}
	Consider the homeomorphism $\psi: |\K_\Gamma| \to |\K_Y \star \Delta^r|$ constructed in
	Lemma~\ref{lem:CWmappingcylinder}, with $\psi(|\K_X|) = |\K_Y \star \partial \Delta^r|$. 
	By Remark~\ref{rem:restrictsfsCW}, $\tilde{\sigma}: \partial X \to \partial Y$ is a strong $CW$-regular subdivision. By Lemma~\ref{lem:CWmappingcylinder}, there is a 
	homeomorphism from $\psi': |\K_{\Cyl(\tilde{\sigma}})| \to
	|\K_{\partial Y} \star \Delta^r|$ such that 
	$\psi'(|\K_{\partial X}|) = 	|\K_{\partial Y} \star \partial \Delta^r|$. 
	In fact, 
	it follows from the proof of 
	Lemma~\ref{lem:CWmappingcylinder} that $\psi'$ is obtained by restricting   $\psi$.  
	
	By Example~\ref{ex:joinballs}, $\K_Y \star \Delta^r$ is  a regular $CW$-ball of dimension $\rank(X) = \rank(Y) + r$ with boundary  the union of the open cells in $\K_{\partial Y} \star \Delta^r$ and $\K_Y \star \partial \Delta^r$. Moreover, $\K_{\partial Y} \star \Delta^r$ and $\K_Y \star \partial \Delta^r$ are regular $CW$-balls of dimension $\rank(X) - 1$ with common boundary 
	$\K_{\partial Y} \star \partial \Delta^r$. 
	The result now follows by applying $\psi^{-1}$ to the terms above.
	
	%	Applying $\psi^{-1}$, we deduce that $\K_\Gamma$ is a regular $CW$-ball with boundary  the union of
	%	the regular $CW$-balls $\K_{\Cyl(\tilde{\sigma}})$ and $\K_X$, which have common boundary $\K_{\partial X}$. 
\end{proof}

\begin{proof}[Proof of Lemma~\ref{lem:composeCW}]
	Let $r = \rank(\sigma)$ and $s = \rank(\tau)$. Consider an element $z \in Z$. Then  $\sigma$ restricts to a strong $CW$-regular subdivision $X_{\le z} = (\tau \circ \sigma)^{-1}([\hat{0}_Z,z]) \to Y_{\le z} = \tau^{-1}([\hat{0}_Z,z])$.
	By Lemma~\ref{lem:KXKYstar}, there exists a  homeomorphism
	$\psi: |\K_{X,\le z}| \to |\K_{Y,\le z} \star \partial \Delta^r|$
	such that $\psi(e_x) \subset e_{\sigma(x)} \star |\partial \Delta^r|$ for all $x \in X$ with $(\tau \circ \sigma)(x) \le z$.   In particular, $\psi$ restricts to a homeomorphism $|\K_{X,< z}| \cong |\K_{Y,< z} \star \partial \Delta^r|$ when $z \neq \hat{0}_Z$.
	
	If $r = 0$, then $\rho_X(\hat{0}_X) = \rho_Y(\hat{0}_Y)$, and  $\psi$ is a homeomorphism from $|\K_{X,\le z}|$ to $|\K_{Y,\le z}|$ that restricts to a  homeomorphism $|\K_{X,< z}| \cong |\K_{Y,< z}|$ when $z \neq \hat{0}_Z$. It then follows from Definition~\ref{def:CWsubdivision} that if 
	$\tau \circ \sigma$ is strong $CW$-regular subdivision, then 
    $\tau$ is a strong $CW$-regular subdivision. 
	
	Assume that $\tau$ is a strong $CW$-regular subdivisions.
%	Let  $\sigma : X \to Y$ and $\tau: Y \to Z$ be strong $CW$-regular subdivisions between $CW$-posets $X$, $Y$, $Z$ with   rank functions $\rho_X$, $\rho_Y$, $\rho_Z$  respectively. 
%	By Lemma~\ref{lem:composesfs} and Remark~\ref{rem:CWregularisformal}, $\tau \circ \sigma$ is a strong formal subdivision. 
%Consider an element $z \in Z$. 
Then $\tau$ restricts to a strong $CW$-regular subdivision $ Y_{\le z} = \tau^{-1}([\hat{0}_Z,z]) \to [\hat{0}_Z,z]$. 
%, and $\sigma$ restricts to a strong $CW$-regular subdivision $(\tau \circ \sigma)^{-1}([\hat{0}_Z,z]) \to \tau^{-1}([\hat{0}_Z,z])$.
%	By Lemma~\ref{lem:KXKYstar}, there exists a  homeomorphism
%	$\psi: |\K_{X,\le z}| \to |\K_{Y,\le z} \star \partial \Delta^r|$
%	such that $\psi(e_x) \subset e_{\sigma(x)} \star |\partial \Delta^r|$ for all $x \in X$ with $(\tau \circ \sigma)(x) \le z$.   In particular, $\psi$ restricts to a homeomorphism $|\K_{X,< z}| \cong |\K_{Y,< z} \star \partial \Delta^r|$ when $z \neq \hat{0}_Z$. 
	By Remark~\ref{rem:CWalt}, 
	there exists a homeomorphism $\psi': |\K_{Y,\le z}| \to \overline{e_z} \star |\partial \Delta^s|$ that restricts to a homeomorphism 
	$|\K_{Y,< z}| \to \partial \overline{e_z} \star |\partial \Delta^s|$ when $z \neq \hat{0}_Z$.  Then $\psi'$ induces a  homeomorphism
	$|\K_{Y,\le z} \star \partial \Delta^r| \cong \overline{e_z} \star |\partial \Delta^s \star \partial \Delta^r|$ that restrictions to a homeomorphism $|\K_{Y,< z} \star \partial \Delta^r| \cong \partial \overline{e_z} \star |\partial \Delta^s \star \partial \Delta^r|$ if $z \neq \hat{0}_Z$. By Example~\ref{ex:joinballs}, $|\partial \Delta^s \star \partial \Delta^r|$ is a sphere of dimension $r + s - 1$, and hence 
	$|\partial \Delta^s \star \partial \Delta^r| \cong |\partial \Delta^{r + s}|$. Composing with $\psi$, 
	we deduce that 	there exists a homeomorphism $|\K_{X,\le z}| \cong \overline{e_z} \star |\partial \Delta^{r + s}|$ that restricts to a homeomorphism 
	$|\K_{X,< z}| \cong \partial \overline{e_z} \star |\partial \Delta^{r + s}|$ when $z \neq \hat{0}_Z$. The result now follows from Remark~\ref{rem:CWalt}.

\end{proof}

\begin{lemma}\label{lem:CWconverse}
	Let  $\sigma: X \to Y$ be a strong formal subdivision  between lower Eulerian posets $X$ and $Y$  with rank functions $\rho_X$ and $\rho_Y$ respectively. Let $\Gamma = \Cyl(\sigma)$ be the 
	corresponding non-Hausdorff mapping cylinder. 
	Suppose that $Y$ are $\Gamma$ are $CW$-posets. Then $X$ is a $CW$-poset and $\sigma$ is a strong $CW$-regular subdivision. 
\end{lemma}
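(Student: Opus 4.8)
The plan is to prove the statement by induction on $|\Gamma|$, after first disposing of the easy half. Fix a regular $CW$-complex $\K_\Gamma$ with $\face(\K_\Gamma) = \Gamma$. Recall from Theorem~\ref{thm:mainsimplified} that, writing $q = \hat{0}_Y$, we have $X = \Gamma \smallsetminus \Gamma_{\ge q}$, which is a lower order ideal of $\Gamma$. By the correspondence between nonempty lower order ideals of $\face(\K_\Gamma)$ and subcomplexes of $\K_\Gamma$ from Section~\ref{ss:regularCW}, the subcomplex $\K_X \subset \K_\Gamma$ has face poset $X$, so $X$ is a $CW$-poset. It then remains to verify that $\sigma$ satisfies Definition~\ref{def:CWsubdivision}.

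For the inductive step I would fix $y \in Y$ and note that the condition to be checked at $y$ depends only on $X_{\le y}$ and $X_{< y}$. By Remark~\ref{rem:restrictsfs}, $\sigma$ restricts to a strong formal subdivision $\sigma_y : X_{\le y} \to [\hat{0}_Y, y]$, whose non-Hausdorff mapping cylinder is the interval $[\hat{0}_\Gamma, y] = \face(\overline{e_y})$; since $\overline{e_y}$ is a closed cell of $\K_\Gamma$ this is a $CW$-poset, and $[\hat{0}_Y, y]$, being the face poset of the corresponding closed cell of $\K_Y$, is a $CW$-poset. If $[\hat{0}_\Gamma, y] \subsetneq \Gamma$, the inductive hypothesis applied to $\sigma_y$ shows $\sigma_y$ is a strong $CW$-regular subdivision, and its defining condition at the top element $y$ is precisely the condition required of $\sigma$ at $y$. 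Since every maximal element of $\Gamma = \Cyl(\sigma)$ lies in $Y$ (as $x < \sigma(x)$ for $x \in X$), the only uncovered case is when $\Gamma$ has a unique maximal element $\hat{1}_\Gamma = \hat{1}_Y$ and $y = \hat{1}_Y$; this is the base case, treated directly (the minimal instances, where $Y = B_0$ or $Y=B_1$, are covered by Example~\ref{ex:CWB0} and Example~\ref{ex:CWB1}).

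In the base case $\Gamma$ is Eulerian, so $\K_\Gamma$ is a regular $CW$-ball of dimension $d := \rank(X) = \rank(\Gamma) - 1$ by Example~\ref{ex:CWEulerian} and Corollary~\ref{cor:graded}, with boundary sphere $S = \K_{\partial \Gamma}$, where $\partial \Gamma = X \cup \partial Y$ and $\partial Y = Y \smallsetminus \{\hat{1}_Y\}$. If $Y = B_0$ then condition (1) at $y = \hat{0}_Y$ holds since $\K_X = \partial \K_\Gamma$ is a sphere (Example~\ref{ex:CWB0}). Otherwise $\partial Y \neq \emptyset$; as $\partial Y$ is a lower order ideal of $Y$ with $\sigma^{-1}(\partial Y) = X_{< \hat{1}_Y} =: \partial X$, Remark~\ref{rem:restrictsfs} gives a strong formal subdivision $\tilde{\sigma} : \partial X \to \partial Y$ with $\Cyl(\tilde{\sigma})$ equal to the face poset of the subcomplex $St := \K_{\partial Y \cup \partial X} \subset S$ (one checks $\partial Y \cup \partial X$ is a lower order ideal of $\partial\Gamma$). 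As $\Cyl(\tilde{\sigma})$ and $\partial Y$ are $CW$-posets with $|\Cyl(\tilde{\sigma})| < |\Gamma|$, the inductive hypothesis shows $\tilde{\sigma}$ is a strong $CW$-regular subdivision. Applying Lemma~\ref{lem:CWmappingcylinder} to $\tilde{\sigma}$ (of rank $r = \rank(\sigma)$) yields a homeomorphism $|St| \cong |\K_{\partial Y} \star \Delta^r|$ restricting to $|\K_{\partial X}| \cong |\K_{\partial Y} \star \partial \Delta^r|$. Since $\K_{\partial Y}$ is a sphere, the join identities of Example~\ref{ex:joinballs} identify $St$ as a $(d-1)$-ball with boundary the $(d-2)$-sphere $\K_{\partial X}$, and from the posets one reads off $S = \K_X \cup St$ with $\K_X \cap St = \K_{\partial X} = \partial St$.

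The remaining, and crux, step is to deduce that $\K_X$ is a regular $CW$-ball with boundary $\K_{\partial X}$, which verifies condition (2) at $y = \hat{1}_Y$ and closes the induction. Here $\K_X = \overline{S \smallsetminus St}$ is the closure of the complement, in the sphere $S$, of the open top-dimensional ball $\Int St$, whose boundary sphere $\K_{\partial X}$ is bicollared. That this complement is again a ball is the single point where Alexander's trick (Remark~\ref{rem:Alexander}) does not suffice by itself: it is the content of the generalized Schoenflies theorem for the flat sphere $\K_{\partial X} \subset S$, and I expect it to be the main obstacle. Concretely, one may instead glue two copies of the ball $\K_\Gamma$ along $St$ to obtain, by Alexander's trick, a $d$-ball whose boundary sphere is the double of $\K_X$ along $\K_{\partial X}$, and then invoke that a compact manifold with spherical boundary whose double is a sphere is a ball. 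Either way, once $\K_X$ is known to be a ball with the correct boundary, Definition~\ref{def:CWsubdivision} is satisfied and $\sigma$ is a strong $CW$-regular subdivision.
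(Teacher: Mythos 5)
Your proposal is correct and follows essentially the same route as the paper's proof: reduce to the case where $\Gamma$ is Eulerian and $y$ is its maximal element by restricting $\sigma$ over the interval $[\hat{0}_\Gamma,y]$, settle the $Y=B_0$ case via the boundary sphere of the ball $\K_\Gamma$, apply the inductive hypothesis and Lemma~\ref{lem:CWmappingcylinder} to the restriction $X_{<y}\to[\hat{0}_Y,y)$ to see that its cylinder is a ball with boundary $\K_{X,<y}$, and conclude with the generalized Schoenflies theorem. The only cosmetic difference is that you induct on $|\Gamma|$ where the paper inducts on $|Y|$; the decomposition of $\partial\K_\Gamma$ and the final topological input are identical.
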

\begin{proof}
	Let $\K_\Gamma$ be a  
	regular $CW$-complex with  face poset $\Gamma$. 
	%	Let 	$\K_Y$ and $\K_\Gamma$ be  %nonempty 
	%	regular $CW$-complexes with  face posets $Y$ and $\Gamma$ respectively, and open cells $\{ e_y : y \in Y \}$ and $\{ e_z : z \in \Gamma \}$ respectively.
	Since $X$ is a lower order ideal of $\Gamma$, we may consider the corresponding subcomplex $\K_X$ of $\K_\Gamma$ with face poset $X$. Let $r = \rank(\sigma)$.
	
	Consider an element $y \in Y$. We need to show that  
	$\dim \K_{X,\le y} = r - 1 + \rho_Y(\hat{0}_Y,y)$, and
	%		$\dim \K_{X,\le y} = \rho_Y(\hat{0},y) + r - 1$, and
	either
	\begin{enumerate}
		
		%	\item $y = \hat{0}_Y$ and $r = 0$, or,
		\item $y = \hat{0}_Y$ and $\K_{X,\le y}$ is a regular $CW$-sphere, or, 
		%homeomorphic to an $(r - 1)$-dimensional sphere, or,
		\item $y \neq \hat{0}_Y$ and $\K_{X,\le y}$ is a regular $CW$-ball with boundary $\K_{X,< y}$. 
		%		
		%		homeomorphic to an $(\rho_Y[r](y) - 1)$-dimensional ball with boundary $|\K_{X,< y}|$.	
	\end{enumerate}
	
	By Remark~\ref{rem:restrictsfs}, $\sigma$ restricts to a strong formal subdivision  $\sigma': X_{\le y} \to [\hat{0}_Y,y]$ with  $\Cyl(\sigma') = [\hat{0}_\Gamma,y] \subset \Gamma$. After possibly replacing $\sigma$ with $\sigma'$, we may assume that $\Gamma$ is Eulerian and $y = \hat{1}_\Gamma$.

	Then $\K_\Gamma$ 
	%	Then $\K_\Gamma = \overline{e_{\hat{1}_\Gamma}}$ 
	is a  regular $CW$-ball of dimension $r +  \rho_Y(\hat{0}_Y,y)$. If $y = \hat{0}_Y$, then 
	$\K_{X, \le y}$ is the boundary of $\K_\Gamma$, and hence is a regular $CW$-sphere of dimension $r - 1$, as desired. In particular, this establishes the result when $Y = B_0$. 
	
	Assume that $y \neq \hat{0}_Y$. 
	%, and let $I = [\hat{0}_Y,y)$. 
	By Remark~\ref{rem:restrictsfs},  $\sigma$ restricts to a strong formal subdivision $\widehat{\sigma}: X_{< y} \to [\hat{0}_Y,y)$. Observe that $\Cyl(\widehat{\sigma})$ is a lower order ideal of $\Gamma$. By induction on $|Y|$ and using the case $Y = B_0$ established above, we may assume that $\widehat{\sigma}$ is a strong $CW$-regular subdivision. By Lemma~\ref{lem:CWmappingcylinder}, 
	there is a homeomorphism 
	$|\K_{\Cyl(\widehat{\sigma})}| \cong |\K_{[\hat{0}_Y,y)} \star \Delta^r|$
	that restricts to a homeomorphism $|\K_{X,< y}| \cong |\K_{[\hat{0}_Y,y)}  \star \partial  \Delta^r|$, where $\K_{[\hat{0}_Y,y)}$ is a regular $CW$-sphere of dimension $\rho_Y(\hat{0}_Y,y) - 2$ with face poset $[\hat{0}_Y,y)$. By Example~\ref{ex:joinballs}, 
	$\K_{[\hat{0}_Y,y)} \star \Delta^r$ is a regular $CW$-ball of dimension 
	$r  - 1 + \rho_Y(\hat{0}_Y,y)$ with boundary $\K_{[\hat{0}_Y,y)} \star \partial \Delta^r$. We conclude that $\K_{\Cyl(\widehat{\sigma})}$ is a regular $CW$-ball of dimension $r - 1 + \rho_Y(\hat{0}_Y,y)$ with boundary 
	$\K_{X,< y}$. 
	
	Observe that $|\partial \K_\Gamma|$ is a sphere of dimension  $r - 1 + \rho_Y(\hat{0}_Y,y)$, and is
	the union of 
	$|\K_{X, \le y}|$ and $|\K_{\Cyl(\widehat{\sigma})}|$. Also, the intersection of 	$|\K_{X, \le y}|$ and $|\K_{\Cyl(\widehat{\sigma})}|$ is $|\K_{X, < y}|$. Hence $|\K_{X, \le y}|$ is obtained from a sphere of dimension $r - 1 + \rho_Y(\hat{0}_Y,y)$ by removing the interior of a $r - 1 + \rho_Y(\hat{0}_Y,y)$-dimensional disk (embedded as a topological submanifold). It follows that $\K_{X, \le y}$ is a regular $CW$-ball of dimension $r - 1 + \rho_Y(\hat{0}_Y,y)$ with boundary $\K_{X, < y}$; for example, this follows from the generalized Schoenflies theorem \cite{MazurEmbeddings,BrownProof}. 
\end{proof}

\begin{lemma}\label{lem:CWmorphisms}
	Let $X, X', Y, Y'$ be $CW$-posets with rank functions $\rho_X, \rho_{X'}, \rho_Y, \rho_{Y'}$ respectively. Consider a commutative diagram
	\[
	\begin{tikzcd} X \arrow[r, "\sigma"] \arrow[d, "\phi_1"] & Y \arrow[d, "\phi_2"] \\ X' \arrow[r, "\sigma'"] &  Y', \end{tikzcd}
	\]
	where $\sigma$, $\sigma'$, and $\phi_2$ are strong $CW$-regular subdivisions, and 
	$\phi_1$ is a strong formal subdivision. 
	%and $\phi_2$ are strong formal subdivisions. 	
	Consider the function
	\[
	\phi: \Cyl(\sigma) \to \Cyl(\sigma'),
	\]
	\[
	\phi(z) = \begin{cases}
		\phi_1(z) &\textrm{ if } z \in X, \\
		\phi_2(z) &\textrm{ if } z \in Y.
	\end{cases}
	\]
	Then $\phi_1$ is a strong $CW$-regular subdivision if and only if $\phi$ is a strong $CW$-regular subdivision. 
\end{lemma}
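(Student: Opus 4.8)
The plan is to verify the conditions of Definition~\ref{def:CWsubdivision} for $\phi$ directly, element by element in the target $\Cyl(\sigma')$, and to observe that they split into two families: those indexed by the copy of $X'$ inside $\Cyl(\sigma')$, which reproduce verbatim the conditions defining $\phi_1$, and those indexed by the copy of $Y'$, which hold automatically. Throughout I write $\Gamma=\Cyl(\sigma)$, $\Gamma'=\Cyl(\sigma')$, $q=\hat{0}_Y$, $q'=\hat{0}_{Y'}$, so that $X=\Gamma\smallsetminus\Gamma_{\ge q}$, $Y=\Gamma_{\ge q}$, and similarly for the primed objects; recall from \eqref{eq:rhoCyl} that $\rho_\Gamma$ restricts to $\rho_X$ on $X$ and to $\rho_Y[1]$ on $Y$. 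First I note that $\phi$ is in any case a strong formal subdivision: it is the image of the given commutative square under $\CYLcat$, which is well defined by Lemma~\ref{lem:CYLwelldefined} since $\sigma,\sigma',\phi_2$ (strong $CW$-regular, hence strong formal by Remark~\ref{rem:CWregularisformal}) and $\phi_1$ are all strong formal subdivisions; in particular $\phi$ is order-preserving, rank-increasing and surjective. Moreover $\Gamma$ and $\Gamma'$ are $CW$-posets by Lemma~\ref{lem:CWmappingcylinder} applied to $\sigma$ and $\sigma'$. Fix a regular $CW$-complex $\K_\Gamma$ with face poset $\Gamma$; for $w\in\Gamma'$ I write $\Gamma_{\le w}=\phi^{-1}(\Gamma'_{\le w})$, $\Gamma_{<w}=\phi^{-1}(\Gamma'_{<w})$, and let $\K_{\Gamma,\le w}$, $\K_{\Gamma,<w}$ be the corresponding subcomplexes.

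For $w\in X'$, the interval $\Gamma'_{\le w}$ is contained in the lower order ideal $X'$, and since $\phi_2$ maps $Y$ into $Y'$ no element of $Y$ can satisfy $\phi(y)\le w$; hence $\Gamma_{\le w}=\phi_1^{-1}(X'_{\le w})\subset X$ and $\Gamma_{<w}=\phi_1^{-1}(X'_{<w})$. That is, $\K_{\Gamma,\le w}$ and $\K_{\Gamma,<w}$ are precisely the subcomplexes attached to $\phi_1$ at $w$. Because the rank functions agree on $X$ and $X'$ under \eqref{eq:rhoCyl}, and $\hat{0}_{\Gamma'}=\hat{0}_{X'}$ lies in $X'$, the clause of Definition~\ref{def:CWsubdivision} for $\phi$ at such a $w$ is word-for-word the clause for $\phi_1$ at $w$. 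Consequently $\phi$ satisfies the definition at every $w\in X'$ if and only if $\phi_1$ is a strong $CW$-regular subdivision.

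It then remains to show that the clause of Definition~\ref{def:CWsubdivision} for $\phi$ holds unconditionally at every $w\in Y'$ (each such $w$ differs from $\hat{0}_{\Gamma'}$, so only the ball case is relevant). Using the commutativity $\sigma'\circ\phi_1=\phi_2\circ\sigma$ one computes, for $w\in Y'$, that $\Gamma_{\le w}=\sigma^{-1}(I)\sqcup I$ where $I=\{y\in Y:\phi_2(y)\le w\}$ is a nonempty lower order ideal of $Y$, and that with the inherited order this poset is exactly the non-Hausdorff mapping cylinder $\Cyl(\sigma_I)$ of the restriction $\sigma_I\colon\sigma^{-1}(I)\to I$, which is a strong $CW$-regular subdivision by Remark~\ref{rem:restrictsfsCW}. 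Since $\phi_2$ is a strong $CW$-regular subdivision, $\K_I$ is a regular $CW$-ball when $w\neq q'$ and a regular $CW$-sphere when $w=q'$. In the ball case, Corollary~\ref{cor:CWcyl} applied to $\sigma_I$ shows that $\K_{\Gamma,\le w}=\K_{\Cyl(\sigma_I)}$ is a regular $CW$-ball whose boundary is the union of $\K_{\sigma^{-1}(I)}$ and $\K_{\Cyl(\widetilde{\sigma_I})}$ along $\K_{\partial\sigma^{-1}(I)}$; identifying $\partial I=\{y:\phi_2(y)<w\}$ and using $\sigma^{-1}(\partial I)=\partial\,\sigma^{-1}(I)$ (Corollary~\ref{cor:nearEulerian}), one checks that this boundary has cells indexed exactly by $\Gamma_{<w}=\sigma^{-1}(I)\sqcup\{y:\phi_2(y)<w\}$, so $\K_{\Gamma,<w}=\partial\K_{\Gamma,\le w}$. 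In the sphere case $w=q'$ one has $\Gamma_{<q'}=\sigma^{-1}(I)$, and Lemma~\ref{lem:CWmappingcylinder} together with Example~\ref{ex:joinballs} (the join of a sphere with a ball is a ball whose boundary is the join of the sphere with the boundary of the ball) gives that $\K_{\Gamma,\le q'}$ is a ball with boundary $\K_{\sigma^{-1}(I)}=\K_{\Gamma,<q'}$; the dimension counts follow from Lemma~\ref{lem:CWmappingcylinder} and \eqref{eq:rhoCyl}. Combining the two families of conditions yields $\phi$ strong $CW$-regular $\iff$ $\phi_1$ strong $CW$-regular. I expect the main obstacle to be precisely this last step: carefully matching the boundary subcomplex produced by Corollary~\ref{cor:CWcyl} with $\K_{\Gamma,<w}$ via the poset-level bookkeeping for the cylinder decomposition $\Gamma_{\le w}=\Cyl(\sigma_I)$, and confirming that the $w\in Y'$ clauses genuinely require no hypothesis on $\phi_1$.
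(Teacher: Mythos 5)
Your proposal is correct and follows essentially the same route as the paper's proof: split the target elements of $\Cyl(\sigma')$ into those in $X'$ (where the conditions for $\phi$ reproduce exactly those for $\phi_1$) and those in $Y'$ (where $\Gamma_{\le w}$ is identified with the cylinder of a restriction of $\sigma$ over the ball $\K_{Y_{\le w}}$, and Corollary~\ref{cor:CWcyl} supplies the ball-with-boundary statement). Your separate treatment of the sphere case $w=q'$ via Lemma~\ref{lem:CWmappingcylinder} and Example~\ref{ex:joinballs} is in fact slightly more careful than the paper, which folds that degenerate case into the general $w\in Y'$ argument.
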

\begin{proof}
	Let $\Gamma = \Cyl(\sigma)$ and $\Gamma' = \Cyl(\sigma')$ with corresponding rank functions $\rho_\Gamma$ and $\rho_{\Gamma'}$ respectively determined by \eqref{eq:rhoCyl}. By Lemma~\ref{lem:CWmappingcylinder}, there are regular $CW$-complexes $\K_\Gamma$ and $\K_{\Gamma'}$ with face posets $\Gamma$ and $\Gamma'$ respectively. Let $\K_X$ and $\K_{X'}$ be the subcomplexes of $\K_\Gamma$ and $\K_{\Gamma'}$ corresponding to $X$ and $X'$ respectively. Let $\K_Y$ and $\K_{Y'}$ be regular $CW$-complexes with face posets $Y$ and $Y'$ respectively. Observe that $\phi$ is a strong formal subdivision by Lemma~\ref{lem:CYLwelldefined}. Let $r = \rank(\sigma)$ and $r_2 = \rank(\phi_2)$. 
	Recall that for any $z' \in \Gamma'$, we write $\Gamma_{\le z'} = \phi^{-1}([\hat{0}_{\Gamma'},z'])$ and $\Gamma_{< z'} = \phi^{-1}([\hat{0}_{\Gamma'},z'))$, with corresponding subcomplexes 
	$\K_{\Gamma, \le z'}$ and $\K_{\Gamma, < z'}$ of $\K_\Gamma$. 
	Similarly, recall that for any $y' \in Y'$, we write $X_{\le y'} = ( \phi_2 \circ \sigma)^{-1}([\hat{0}_{Y'},y'])$ and  $X_{< y'} = ( \phi_2 \circ \sigma)^{-1}([\hat{0}_{Y'},y'))$, with corresponding subcomplexes 
	$\K_{X, \le y'}$ and $\K_{X, < y'}$ of $\K_X$. Also, 
	$Y_{\le y'} = \phi_2^{-1}([\hat{0}_{Y'},y'])$ and 	$Y_{< y'} = \phi_2^{-1}([\hat{0}_{Y'},y'))$, with corresponding subcomplexes 
	$\K_{Y, \le y'}$ and $\K_{Y, < y'}$ of $\K_Y$.

	%	Recall that we write $\K_{\Gamma, \le z'} = \K_{\phi^{-1}([\hat{0}_{\Gamma'},z'])}$ and $\K_{\Gamma, < z'} = \K_{\phi^{-1}([\hat{0}_{\Gamma'},z'))}$ for any $z' \in \Gamma'$. Similarly,  recall that we write $\K_{X, \le y'} = \K_{( \phi_2 \circ \sigma)^{-1}([\hat{0}_{Y'},y'])}$, $\K_{X, < y'} = \K_{( \phi_2 \circ \sigma)^{-1}([\hat{0}_{Y'},y'))}$, $\K_{Y, \le y'} = \K_{\phi_2^{-1}([\hat{0}_{Y'},y'])}$,   and $\K_{Y, < y'} = \K_{\phi_2^{-1}([\hat{0}_{Y'},y'))}$ for any $y' \in Y'$.
	
	Assume that $\phi$ is a strong $CW$-regular subdivision. Since $X'$ is a lower order ideal of $\Gamma'$ and $\phi^{-1}(X') = X$, 
	$\phi_1$ is a strong $CW$-regular subdivision by Remark~\ref{rem:restrictsfsCW}.

	Conversely,	suppose that $\phi_1$ is a strong $CW$-regular subdivision. 
	Consider an element $z' \in \Gamma'$. We need to show that $\dim \K_{\Gamma,\le z'} = \rho_{\Gamma'}(z') - \rho_{\Gamma}(\hat{0}_\Gamma) - 1 = \rho_{Y'}(z') - \rho_X(\hat{0}_X) = \rho_{Y'}(\hat{0}_{Y'},z') + r + r_2$,
	and	either
	\begin{enumerate}
		
		\item $z' = \hat{0}_{\Gamma'}$ and $\K_{\Gamma,\le z'}$ is a regular $CW$-sphere, or, 
		%homeomorphic to an $(r - 1)$-dimensional sphere, or,
		\item $z' \neq \hat{0}_{\Gamma'}$ and $\K_{\Gamma,\le z'}$ is a regular $CW$-ball with boundary $\K_{\Gamma,< z'}$. 
		%		
		%		homeomorphic to an $(\rho_Y[r](y) - 1)$-dimensional ball with boundary $|\K_{X,< y}|$.	
	\end{enumerate}
	If $z' \in X'$ then $\K_{\Gamma,\le z'} = \K_{X,\le z'}$ and the result follows from the fact that $\phi_1$ is a strong $CW$-regular subdivision.
	Assume that $z' \in Y'$. Note that $z' \neq \hat{0}_\Gamma$. 
	%	Let $X_{\le z'} =  (\phi_2 \circ \sigma)^{-1}([\hat{0}_{Y'},z'])$ and 
	%	$X_{< z'} =  (\phi_2 \circ \sigma)^{-1}([\hat{0}_{Y'},z'))$. 
	%		Let $Y_{\le z'} =  \phi_2^{-1}([\hat{0}_{Y'},z'])$ and 
	%	$Y_{< z'} =  \phi_2^{-1}([\hat{0}_{Y'},z'))$. 
	%	Let $X_{\le z'}' =  (\sigma')^{-1}([\hat{0}_{Y'},z'])$ and 
	%	$X_{< z'}' =  (\sigma')^{-1}([\hat{0}_{Y'},z'))$. 
	Restricting the above commutative diagram gives a  commutative diagram
	%	By Remark~\ref{rem:restrictsfs} and Remark~\ref{rem:restrictsfsCW}, we have a commutative diagram
	\[
	\begin{tikzcd}  X_{\le z'} \arrow[r, "\sigma"] \arrow[d, "\phi_1"] & Y_{\le z'} \arrow[d, "\phi_2"] \\ X_{\le z'}'  \arrow[r, "\sigma'"] &  {[}\hat{0}_{Y'},z'], \end{tikzcd}
	\]
	%	\[
	%	\begin{tikzcd}  (\phi_2 \circ \sigma)^{-1}({[}\hat{0}_{Y'},z']) \arrow[r, "\sigma"] \arrow[d, "\phi_1"] & \phi_2^{-1}({[}\hat{0}_{Y'},z']) \arrow[d, "\phi_2"] \\ (\sigma')^{-1}({[}\hat{0}_{Y'},z'])  \arrow[r, "\sigma'"] &  {[}\hat{0}_{Y'},z'], \end{tikzcd}
	%	\]
	where 
	%$(\phi_2 \circ \sigma)^{-1}([\hat{0}_{Y'},z']) = (\sigma' \circ \phi_1)^{-1}([\hat{0}_{Y'},z'])$, and 
	all the maps are strong $CW$-regular subdivisions by Remark~\ref{rem:restrictsfsCW}.  
	Then $\K_{[\hat{0}_{Y'},z']}$ is a regular $CW$-ball of dimension $\rho_{Y'}(\hat{0}_{Y'},z') - 1$ with boundary $\K_{[\hat{0}_{Y'},z')}$. By Corollary~\ref{cor:CWcyl} applied to the restriction of $\phi_2$ above, $\K_{Y,\le z'}$ is a regular $CW$-ball of dimension $\rho_{Y'}(\hat{0}_{Y'},z') + r_2 - 1$ with boundary $\K_{Y,< z}$. By Corollary~\ref{cor:CWcyl} applied to the restriction of $\sigma$ above, $\K_{\Gamma,\le z'}$ is a regular $CW$-ball of dimension $\rho_{Y'}(\hat{0}_{Y'},z') + r + r_2$ with boundary  the union of the open cells in the regular $CW$-balls $\K_{\Cyl(\tilde{\sigma})}$ and $\K_{X, \le z'}$, which have common boundary $\K_{X, < z}$. Here  $\tilde{\sigma}:  X_{< z'} \to Y_{< z'}$
	%	$\tilde{\sigma}:  (\phi_2 \circ \sigma)^{-1}([\hat{0}_{Y'},z')) \to \phi_2^{-1}([\hat{0}_{Y'},z'))$
	is the restriction of $\sigma$. The boundary of $\K_{\Gamma,\le z'}$ is the union of the cells of $\K_\Gamma$ indexed by 
	$\{ x \in X : x \le z' \in \Gamma \}$ and $\{ y \in Y : y < z' \in \Gamma \}$. That is,  $\partial \K_{\Gamma,\le z'} =  \K_{\Gamma,< z'}$, as desired.

\end{proof}

\section{Generalizations and further questions}\label{sec:generalizations}

In this section we discuss further questions and possible generalizations. 

%\subsection{Questions}

%\alan{Can anything interesting be said about the set of all strong formal subdivisions with $Y$ fixed? Finn asked this. We have done this for $Y = B_0$ and $Y = B_1$. What about $Y = B_2$? If a poset admits a sfs to $B_n$, then is it unique?}

\begin{question}
	Are there versions of Theorem~\ref{thm:intromain} for other interesting classes of posets?
\end{question}

For example, Theorem~\ref{thm:intromainCW} is a version of  Theorem~\ref{thm:intromain} for $CW$-posets.
%, which are special cases of lower Eulerian posets. 
In Section~\ref{ss:locallyEulerian}, 
%At the end of this section, 
we will outline how one may replace the lower Eulerian condition by the locally Eulerian condition. 
One could also try to relax the Eulerian condition. For example, one might consider ranked posets that are thin (see the discussion before the proof of Lemma~\ref{lem:hatsigma}). 

Recall that a poset $B$ with a unique minimal element is \emph{lower Gorenstein*} if every interval of positive rank is Gorenstein*. One may ask for a version of 
 Theorem~\ref{thm:main} for lower Gorenstein* posets. A suggestion for the replacement of the notion of a strong formal subdivision is the following definition,  which should be compared to Ehrenborg and Karu's definition of subdivision in Remark~\ref{rem:KEdef}.
 
 \begin{definition}\label{def:Gorensteinstarsubdiv}
 	Let $\sigma: X \to Y$ be an order-preserving, rank-increasing, surjective function between lower Gorenstein* posets $X$ and $Y$ with rank functions $\rho_X$  and $\rho_Y$ respectively.  
 	Then $\sigma$ is a \emph{Gorenstein* subdivision} if
 	for any $y$ in $Y$, $X_{\le y}$ is a lower Gorenstein* poset of rank $\rho_Y(y) - \rho_X(\hat{0}_X)$,  
 	and either 
 	\begin{enumerate}
 		\item $y = \hat{0}_Y$ and $X_{\le y}$ is the boundary of
 		a Gorenstein* poset, or,
 		% an Eulerian poset of positive rank, or,
 		\item\label{i:nearGorenstein} $y \neq \hat{0}_Y$ and $X_{\le y}$ is near-Gorenstein* with boundary $X_{< y}$.
 	\end{enumerate}
 \end{definition} 
 By Remark~\ref{rem:CWregularisformal}, strong $CW$-regular subdivisions are Gorenstein* subdivisions. 
  With the notation of Definition~\ref{def:Gorensteinstarsubdiv},  Lemma~\ref{lem:GorensteinstarsubdivEK} states that if $\sigma: X \to Y$ is a strong formal subdivision between lower Gorenstein* posets and the non-Hausdorff mapping cylinder $\Cyl(\sigma)$ is lower Gorenstein*, then $\sigma$ is a Gorenstein* subdivision. 
%  In particular, by Theorem~\ref{thm:mainsimplifiedCW}, strong $CW$-regular subdivisions are Gorenstein* subdivisions (see also Remark~\ref{rem:CWregularisformal}). 
  
% By Remark~\ref{rem:altcharnearEulerian}, we may alternatively define a Gorenstein* subdivision to be a strong formal subdivision $\sigma: X \to Y$ between lower Gorenstein* posets such that for any $y$ in $Y$, either 
% \begin{enumerate}
% 	\item $y = \hat{0}_Y$ and $X_{\le y}$ is the boundary of
% 	a Gorenstein* poset, or,
% 	\item\label{i:nearGorenstein} $y \neq \hat{0}_Y$ and $X_{\le y}$ is near-Gorenstein*. % with boundary $X_{< y}$.
% \end{enumerate}

%Alternatively, one could ask if there is a version of Theorem~\ref{thm:main} for Gorenstein* subdivisions, which we will define below in Definition~\ref{def:Gorensteinstarsubdiv}. 
%
%
%The following definition should be compared to Ehrenborg and Karu's definition of subdivision in Remark~\ref{rem:KEdef}. 
%Recall that a poset $B$ with a unique minimal element is lower Gorenstein* if every interval of positive rank is Gorenstein*. 
%
%
%
%A concrete question is as follows. 

Recall that in Section~\ref{ss:basicexamples} we studied strong formal subdivisions to either $B_0$ or $B_1$. Observe that $B_0$ and $B_1$ have the property that they do not admit strong formal subdivisions to lower Eulerian posets with strictly fewer elements. Recall that $\EulPos$ is the category with elements given by  pairs $(B,\rho_B)$, where $B$ is a lower Eulerian poset and $\rho_B$ is a rank function for $B$, and with
morphisms given by strong formal subdivisions. Lemma~\ref{lem:parity}, Corollary~\ref{cor:boundaryEulerian}, and Corollary~\ref{cor:nearEulerian}  give constraints on the morphisms in $\EulPos$. 

%
% Recall that  $\EulPos$ is the category of lower Eulerian posets with morphisms given by strong formal subdivisions. Let $S$ be the subset of $\Obj(\EulPos)$ given by the 
%boundaries of Eulerian posets of positive rank.  Equivalently, $S$ consists of lower Eulerian posets that admit  a strong formal subdivision to $B_0$. Let $S'$ be the subset of $\Obj(\EulPos)$ given by near-Eulerian posets. Equivalently, $S'$ consists of lower Eulerian posets that admit a strong formal subdivision to $B_1$. 
%It follows from Corollary~\ref{cor:boundaryEulerian} and Corollary~\ref{cor:nearEulerian} that $\EulPos$ is the union of the full subcategories with objects given $S$, $S'$, and $\Obj(\EulPos) \smallsetminus (S \cup S')$.  

\begin{question}
	Which lower Eulerian posets $X$ have the property that they do not admit a strong formal subdivision $\sigma: X \to Y$ for some $Y$ with $|Y| < |X|$ (equivalently, any strong formal subdivision $\sigma: X \to Y$ is an isomorphism of posets)? 
	%For example,  $B_0$ and $B_1$ have this property. 
	What can be said about the structure of the category $\EulPos$?
\end{question}

Recall that  Corollary~\ref{cor:oddequaleven} gives an obstruction to a lower Eulerian poset containing a non-minimal join-admissible element. 
%One may ask if there are other obstructions. Most ambitiously, one could ask the following question. 

\begin{question}
	Consider a lower Eulerian poset $\Gamma$. Are there other obstructions to
		$\Gamma$ containing a non-minimal join-admissible element, or, equivalently,  $\Gamma$ being the non-Hausdorff mapping cylinder of some strong formal subdivision between lower Eulerian posets?
%		 %Is there a  nice criterion to detect when 
%	$\Gamma$ contains a non-minimal join-admissible element, or, equivalently,  when $\Gamma$ is the non-Hausdorff mapping cylinder of some strong formal subdivision between lower Eulerian posets?
%	%	Is there is a stronger condition  on a lower Eulerian poset $\Gamma$ (than the conclusion of Corollary~\ref{cor:oddequaleven}) that can detect the existence of a non-minimal join-admissible element? Equivalently, is there a nice criterion to detect when $\Gamma$ is the non-Hausdorff mapping cylinder of a strong formal subdivision between lower Eulerian posets?
\end{question} 

Recall that Proposition~\ref{prop:cdsubdivision} gives a formula for the $cd$-index of the lower Eulerian poset $\Gamma$. Note that the coefficients on the left hand side of \eqref{eq:cdformula} are integers, while the right hand side of \eqref{eq:cdformula} involves a factor of $\frac{1}{2}$. 

\begin{question}
	Is there a version of Proposition~\ref{prop:cdsubdivision} which does not involve a factor of $\frac{1}{2}$? See the special case 
	of Example~\ref{ex:derivation}. 
	%See also Remark~\ref{rem:cdEKapproach} for an alternative approach. 
\end{question}

The following question was considered in Remark~\ref{rem:converseCW}. 
%In Remark~\ref{rem:converseCW}, we discussed whether an analogue of Lemma~\ref{lem:composesfs} holds for $CW$-posets, extending Lemma~\ref{lem:composeCW}. 

\begin{question}
		Does the converse statement in Lemma~\ref{lem:composeCW} hold without the assumption that $\sigma$ has rank $0$ (c.f. Lemma~\ref{lem:composesfs})? That is,
		let  $\sigma : X \to Y$ and $\tau: Y \to Z$ be strong formal subdivisions between lower Eulerian posets $X$, $Y$, $Z$ with   rank functions $\rho_X$, $\rho_Y$, $\rho_Z$  respectively.
	Assume that $\sigma$ and  $\tau \circ \sigma$ are strong $CW$-regular subdivisions. Is $\tau$ necessarily  a strong $CW$-regular subdivision? 
\end{question}

As we will explain, taking duals of posets induces an involution on a subset of strong formal subdivisions that is possibly worthy of further investigation. 
Let $\Gamma$ be a lower Eulerian poset. We say that an element $q$ of $\Gamma$ is \emph{meet-admissible} if for any element $z$ in $\Gamma$, 
the meet (or  greatest lower bound) $z \wedge q$ exists. 
Let
$\JoinIdealLW' = \{ (\Gamma,\rho_\Gamma,q) \in \JoinIdealLW : \Gamma \textrm{ is Eulerian}, q \notin \{ \hat{0}_\Gamma, \hat{1}_\Gamma \}, q \textrm{ is meet-admissible}\}$. Given an element $(\Gamma,\rho_\Gamma,q) \in \JoinIdealLW'$, we may consider the dual poset $\Gamma^*$ with induced rank function $\rho_{\Gamma*}$. Then $(\Gamma^*,\rho_{\Gamma^*},q) \in \JoinIdealLW'$. We have an induced involution on $\JoinIdealLW'$ sending 
$(\Gamma,\rho_\Gamma,q)$ to $(\Gamma^*,\rho_{\Gamma^*},q)$. 
By Theorem~\ref{thm:mainsimplified}, we have an induced involution on a subset of $\SFS$. For example, recall that in Section~\ref{ss:facelatticepolytope} we considered the case when $\Gamma$ is the face lattice of a polytope. Then we obtain an involution on all strong formal subdivisions induced by a projective, surjective morphism from a fan to a nonzero pointed cone.    

\subsection{Subdivisions of locally Eulerian posets}\label{ss:locallyEulerian}

%We end the section by outlining 
We outline how our main results extend to strong formal subdivisions between locally Eulerian posets. Firstly, the definition of strong formal subdivision given in \cite[Definition~3.17]{KatzStapledon16} requires that the posets only be locally Eulerian (in Definition~\ref{def:sfs} simply replace the word `lower' by `locally' to get the definition).  

We first introduce a definition. Let $B$ be a poset, and let $I$ be a nonempty  upper order ideal of $B$. Given an element $z \in B$, let 
$z \vee I$ be the unique minimal element of $\Gamma_{\ge z} \cap I$ if it exists. We say that $I$  is \emph{join-admissible} if $z \vee I$ exists for all $z \in B$. 

As in Definition~\ref{def:bijections}, let $\SFS$ be the set of all strong formal subdivisions $\sigma: X \to Y$ between  locally Eulerian posets $X$ and $Y$  with rank functions $\rho_X$ and $\rho_Y$ respectively. 	
Let $\JoinIdealLW$ be the set of triples $(\Gamma, \rho_\Gamma, I)$,
where $\Gamma$ is a locally Eulerian poset  with rank function $\rho_\Gamma$  and %$q \neq \hat{0}_\Gamma$ 
$I$ is a join-admissible  upper order ideal of $\Gamma$. 
Let $\JoinIdealLW^\circ \subset \JoinIdealLW$ be the subset of all triples $(\Gamma, \rho_\Gamma, I)$ such that $I$ doesn't contain any minimal elements of $\Gamma$.
Our proof of Theorem~\ref{thm:mainsimplified} in Section~\ref{sec:proof} generalizes to give mutually inverse bijections $\CYL$ and $\MAP$ between
	$\SFS$ and $\JoinIdealLW^\circ$. 
%strong formal subdivisions $\sigma: X \to Y$ between locally Eulerian posets $X$ and $Y$  with rank functions $\rho_X$ and $\rho_Y$ respectively, and triples $(\Gamma,\rho_\Gamma,I)$, where $\Gamma$ is a locally  Eulerian poset with a rank function $\rho_\Gamma$,  and $I \subset \Gamma$ is a join-admissible upper order ideal that doesn't contain any minimal elements of $\Gamma$. 
Explicitly, 
\[
\CYL(\sigma: X \to Y) = (\Cyl(\sigma), \rho_{\Cyl(\sigma)},Y), 
\]
where $\Cyl(\sigma)$ is the non-Hausdorff mapping cylinder of $\sigma$, and 
\begin{equation*}%\label{eq:rhoCylsigma}
	\rho_{\Cyl(\sigma)}(z) =  \begin{cases}
		\rho_X(z) &\textrm{ if } z \in X, \\
		\rho_Y(z) + 1 &\textrm{ if } z \in Y. 
	\end{cases}
\end{equation*}
Also,
\begin{equation*}%\label{eq:sfs}
	\MAP(\Gamma,\rho_\Gamma,I) : \Gamma \smallsetminus I \to I,
\end{equation*}
\[
x \mapsto x \vee I,
\]
where the rank functions on $\Gamma \smallsetminus I$ and $I$ are  the  corresponding restrictions of $\rho_\Gamma$ shifted by $0$ and $-1$ respectively.

We explain how to
recover  Theorem~\ref{thm:mainsimplified} from this result. With the notation above, since $\sigma$ is surjective, it  
follows from Definition~\ref{def:nonHausdorffmc}  that the minimal elements of $\Gamma = \Cyl(\sigma)$ are precisely the minimal elements of $X$. In particular, $\Gamma$ is lower Eulerian if and only if $X$ is lower Eulerian. 	By Remark~\ref{rem:zerotozero}, if $X$ is lower Eulerian, then $Y$ is lower Eulerian. In this case, let $q = \hat{0}_Y$ be the unique minimal element of $Y$, so that $I = \Gamma_{\ge q}$. Since $I \neq \Gamma$, we have $q \neq \hat{0}_\Gamma$. 

Finally, we want a functorial version of the above correspondence, generalizing Theorem~\ref{thm:main}. As in Definition~\ref{def:categorify}, 
Let $\EulPos$ be the category with elements given by  pairs $(B,\rho_B)$, where $B$ is a locally Eulerian poset and $\rho_B$ is a rank function for $B$, and with
morphisms given by strong formal subdivisions, and let
 $\SFScat$  to be the corresponding arrow category $\Arr(\EulPos)$.
	Let $\Joincat$ be a category with objects $\JoinIdealLW$. We describe the morphisms. 	A morphism  between objects $(\Gamma, \rho_\Gamma, I)$ and $(\Gamma', \rho_{\Gamma'}, I')$ in  $\Joincat$ is %defined by
	a strong formal subdivision 
	$\phi: \Gamma \to \Gamma'$ such that $\phi^{-1}(I') = I$ 
	%	and $\Gamma_{\ge q} = \phi^{-1}(\Gamma_{\ge q'}')$ 
	and
	$\phi(z \vee I) = \phi(z) \vee I'$ for all $z \in \Gamma \smallsetminus I$. 
	Let $\Joincat^\circ$ be the full subcategory of $\Joincat$ with objects 
	$\JoinIdealLW^\circ$. 
	Our proof of Theorem~\ref{thm:main} in Section~\ref{sec:proof} generalizes to give mutually inverse isomorphisms $\CYLcat$ and $\MAPcat$ between
	$\SFScat$ and $\Joincat^\circ$. 
	Above, if one restricts only to $\Gamma$ that are lower Eulerian in the definition of $\Joincat^\circ$, then this agrees with the definition of $\Joincat^\circ$ in Definition~\ref{def:categorify} by
	 the last paragraph of the proof of Lemma~\ref{lem:MAPwelldefined}.

\bibliography{KLStheory}
\bibliographystyle{amsalpha}

\end{document}